\DeclareMathOperator\Std{Std}
\DeclareMathOperator\res{res}
\newtheorem{thm}{Theorem}[section]
\theoremstyle{plain}
\newtheorem{lem}[thm]{Lemma}
\newtheorem{prop}[thm]{Proposition}
\newtheorem{cor}[thm]{Corollary}
\theoremstyle{definition}
\newtheorem{defn}[thm]{Definition}
\newtheorem{example}[thm]{Example}
\theoremstyle{remark}
\newtheorem{rem}[thm]{Remark}
\newtheorem{conjecture}[thm]{Conjecture}
\definecolor{A}{rgb}{.75,1,.75}
\def\Sym{\mathfrak{S}}
\numberwithin{equation}{section}
\newcommand{\Z}{\mathbb Z}
\newcommand{\N}{\mathbb N}
\newcommand{\Cl}{\mathcal C}
\newcommand{\mHcn}{\mathcal{H}_{\Delta}(n)} 
\newcommand{\mHfcn}{\mathcal{H}^f_{\Delta}(n)} 
\newcommand{\mhcn}{\mathfrak{H}_{\Delta}(n)} 
\newcommand{\mhgcn}{\mathfrak{H}^g_{\Delta}(n)} 
\newcommand{\undla}{\underline{\lambda}}
\newcommand{\HC}{\mathcal{H}(n)}
\newcommand{\undQ}{\underline{Q}}
\begin{document}

\title[Affine Hecke-Clifford algebras]{On  representation theory of cyclotomic Hecke-Clifford algebras}
	\author{Lei Shi}\address{Max-Planck-Institut f\"Ur Mathematik\\
	Vivatsgasse 7, 53111 Bonn\\
	Germany}
\email{leishi@mpim-bonn.mpg.de}

\author{Jinkui Wan}\address{School of Mathematics and Statistics\\
  Beijing Institute of Technology\\
  Beijing, 100081, P.R. China}
\email{wjk302@hotmail.com}

\begin{abstract}
 In this article, we give an explicit construction of the simple modules  for both non-degenerate and degenerate cyclotomic Hecke-Clifford superalgebras over an algebraically closed field of characteristic not equal to $2$ under certain condition in terms of parameters in defining these algebras. As an application, we obtain a sufficient condition on the semi-simplicty of  these cyclotomic Hecke-Clifford superalgebras  via a dimension comparison. As a byproduct, both generic non-degenerate and degenerate cyclotomic Hecke-Clifford superalgebras are shown to be semisimple.

\end{abstract}
\maketitle

 \setcounter{tocdepth}{1}
\tableofcontents

\section{Introduction}

The representation theory of Hecke algebras associated with symmetric groups, along with their cyclotomic generalizations-known as cyclotomic Hecke algebras or Ariki-Koike algebras has been extensively studied in the literature (see the review papers \cite{Ar2, K3, Ma1} and references therein). In particular, Ariki and Koike provided in \cite{AK} a construction of the simple modules of generic cyclotomic Hecke algebras. Moreover, Ariki \cite{Ar1} established a necessary and sufficient condition for determining the semisimplicity for general cyclotomic Hecke algebras. {\color{black} In precise, let $\mathscr{H}_n(v,\underline{Q})$ be the cyclotomic Hecke algebra with Hecke parameter $v\neq1$ and cyclotomic parameter $\underline{Q}={\color{black}(Q_1,Q_2,\ldots,Q_l)}$. {\color{black} In \cite{Ar1}, }Ariki defined the following polynomial \begin{equation}
		P_{\mathscr{H}}(v,\underline{Q}):=\prod_{i=1}^{n}(1+v+\cdots+v^{i-1})(\prod_{\substack{1\leq i<j\leq l\\|d|<n}}(v^dQ_i-Q_j))
		\end{equation} and showed that $\mathscr{H}_n(v,\underline{Q})$  is semisimple if and only if $P_{\mathscr{H}}(v,\underline{Q})\neq 0$. This semisimple criterion enables the definition of a semisimple deformation for cyclotomic Hecke algebras that may not initially be semisimple. }In \cite{HM1, HM2}, several graded cellualr bases were constructed using semisimple deformation and seminormal bases \cite{Ma2} for cyclotomic Hecke algebras.  Recently, Evseev and Mathas \cite{EM} have used content system to extend these results to cyclotomic quiver Hecke algebras of types $A$ and $C$.

The Hecke-Clifford superalgebra \(\HC\), a quantum deformation of the Sergeev superalgebra \(\Cl_n \rtimes S_n\) (also referred to as the degenerate Hecke-Clifford superalgebra), was introduced by Olshanski \cite{Ol} to establish a super-analog of Schur-Jimbo duality involving the quantum enveloping algebra of the queer Lie superalgebra. {\color{black}The affine and cyclotomic analogue  of  Hecke-Clifford superalgebra \(\HC\) and Sergeev superalgebra \(\Cl_n \rtimes S_n\) were introduced in \cite{JN,N2}.  The representation theory of these algebras has been systematically studied, }revealing intriguing connections to Lie algebra representations \cite{BK1, BK2} (see also \cite{K1} for additional references). Recently progress on the Hecke-Clifford superalgebras and its various generalization including affine and cyclotomic analogues as well as analogues in other types rather than type A can be found in \cite{HKS, KL, W, WW1, WW2} and references therein. 

{\color{black}In \cite{JN}, the Hecke-Clifford superalgebra \(\HC\) with a generic quantum parameter was proved to be semisimple and a complete set of simple modules for \(\HC\) in terms of strict partitions and standard tableaux was constructed, generalizing \cite{N1, N2} for Sergeev superalgebra \(\Cl_n \rtimes S_n\). This framework connects closely with Schur’s work on the spin representations of symmetric groups \cite{Sch}.  In the cyclotomic situation, a construction of simple modules using induction and restriction functors has been obtained in [BK1, BK2]. It is an interesting problem to construct these simple modules via an explicit basis and actions in terms of multipartitions and standard tableaux analogous to the construction in \cite{Ar1} for cyclotomic Hecke algebras. Moreover, giving a necessary and sufficient condition for determining the semisimplicity for general non-degenerate and degenerate cyclotomic Hecke-Clifford superalgebras is still an open problem. These motivate the study of our work.
}

 Here is a quick summary of the main results of this paper. We first introduce the notion of separate parameters for cyclotomic Hecke-Clifford superalgebras and derive the equivalent description in terms of polynomials in the parameters. We then give an explicit construction of a class of non-isomorphic irreducible representations for cyclotomic Hecke-Clifford superalgebras with separate parameters. By a dimension comparison, we show that these irreducible representations exactly exhaust all non-isomorphic irreducible representations and moreover the cyclotomic Hecke-Clifford superalgebras with separate parameters are semsimple. 
 
Let us describe in some detail.  We first introduce three sets $ \mathscr{P}^{m}_{n},  \mathscr{P}^{\mathsf{s},m}_{n},  \mathscr{P}^{\mathsf{ss},m}_{n}$ of multipartitions $\undla=(\lambda^{(1)},\ldots,\lambda^{(m)})$, $\undla=(\lambda^{(0)}, \lambda^{(1)},\ldots,\lambda^{(m)})$ or $\undla=(\lambda^{(0_-)},\lambda^{(0_+)}, \lambda^{(1)},\ldots,\lambda^{(m)})$ which are mix of partitions $\lambda^{(1)},\ldots,\lambda^{(m)}$ and strict partitions $\lambda^{(0)}, \lambda^{(0_-)},\lambda^{(0_+)}$,  respectively. These three sets turn out to correspond to the three types of polynomials $f\in\{f^{(0)}_{\underline{Q}}(X_1), f^{(\mathsf{s})}_{\underline{Q}}(X_1), f^{(\mathsf{ss})}_{\underline{Q}}(X_1)\}$ in the definition of cyclotomic Hecke-Clifford superalgebras $\mHfcn$ with $\underline{Q}=(Q_1,\ldots,Q_m)$.  {\color{black}Then we introduce the separate condition for the parameters $q$ and $\underline{Q}$ in terms of the contents (also called residues) of the boxes in the multipartitions. We also introduce a polynomial $P_n^{(\bullet)}(q^2,\underline{Q})\in\{P_n^{(\mathsf{0})}(q^2,\underline{Q}), P_n^{(\mathsf{s})}(q^2,\underline{Q}), P_n^{(\mathsf{ss})}(q^2,\underline{Q})\}$ in $q^2$ and $\underline{Q}$, which can be regarded as the generalization of Poincar\'{e} polynomial for the Hecke algebras associated to symmetric groups. It turnes out that $P_n^{(\bullet)}(q^2,\underline{Q})\neq 0$ if and only if for any $\underline{\mu}\in\mathscr{P}^{\bullet,m}_{n+1}$, $(q,\undQ)$ is separate with respect to $\underline{\mu}$. We show that whenever $P_n^{(\bullet)}(q^2,\underline{Q})\neq 0$, }we can define an irreducible representation $\mathbb{D}(\undla)$ over the cyclotomic Hecke-Clifford superalgebra $\mHfcn$ for each multipartition $\undla\in \mathscr{P}^{m}_{n},  \mathscr{P}^{\mathsf{s},m}_{n},  \mathscr{P}^{\mathsf{ss},m}_{n}$, respectively. Our approach to obtaining the explicit construction of $\mathbb{D}(\undla)$ is inspired by the construction of the so-called completely splittable irreducible representations of degenerate affine Hecke-Clifford superalgebras established in \cite{Wa} (which generalizes \cite{K2, Ra, Ru}) by the {\color{black} second author}. More precisely, for each multipartition $\undla$, we {\color{black}define a space $\mathbb{D}(\undla)$ which is a direct sum of the irreducible $\mathcal{A}_n$-modules associated to content sequences corresponding to standard tableaux of type $\undla$, where $\mathcal{A}_n$ is the subalgebra of the affine Hecke-Clifford algebras generated by $X^{\pm1}_1,\ldots,X^{\pm1}_n$ and $C_1,\ldots,C_n$.  Moreover, the action of $X_1$ satisfies $f_{\underline{Q}}(X_1)=0$. }Then for separate parameters, we are able to define the action of $T_1,\ldots,T_{n-1}$ on $\mathbb{D}(\undla)$ and this makes $\mathbb{D}(\undla)$ admit a $\mHfcn$-module. 

 It is known that the Robinson–Schensted–Knuth correspondence for standard tableaux and its generalizations including for  standard tableaux for multipartitions and shifted standard tableaux gives rise to nice formulas (cf. \cite{DJM, Sa} ) involving the numbers of associated standard tableaux, see Lemma \ref{lem:formula}. This together with the dimension formula for the irreducible modules $\mathbb{D}(\undla)$ shows that the sum of the square of all dimensions of $\mathbb{D}(\undla)$ with proper powers of $2$ coincides with the dimension of the cyclotomic Hecke-Clifford superalgebra $\mHfcn$.  By Wedderburn Theorem for  associative superalgebras, we deduce that the cyclotomic Hecke-Clifford superalgebra $\mHfcn$ with separate parameters $q$ and $\underline{Q}$ is semisimple.  {\color{black}Our main result can be stated as follows, where the unexplained notation can be found in Section \ref{preliminary} and \ref{cyclotomic}.}
{ \color{black}\begin{thm}
 	Let  $q\neq \pm 1\in\mathbb{K}^*$ and $\undQ=(Q_1,Q_2,\ldots,Q_m)\in(\mathbb{K}^*)^m$.  Assume $f=f^{(\bullet)}_{\undQ}$ and   $P^{(\bullet)}_{n}(q^2,\undQ)\neq 0$, with $\bullet\in\{\mathtt{0},\mathtt{s},\mathtt{ss}\}$. Then  $\mHfcn$ is a (split) semisimple algebra and 
 	$$
 	\{\mathbb{D}(\undla)|~ \undla\in\mathscr{P}^{\bullet,m}_{n}\}$$ forms a complete set of pairwise non-isomorphic irreducible $\mHfcn$-module. Moreover,  $\mathbb{D}(\undla)$ is of type  $\texttt{M}$ if and only if $\sharp \mathcal{D}_{\undla}$  is even and is of type  $\texttt{Q}$ if and only if $\sharp \mathcal{D}_{\undla}$  is odd.  	
 \end{thm}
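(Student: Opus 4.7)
The plan is to combine the explicit construction of $\mathbb{D}(\undla)$ with a dimension comparison, and then invoke the super-Wedderburn theorem to deduce semisimplicity, completeness, and the $\mathtt{M}/\mathtt{Q}$ dichotomy in one stroke. First I would check that under the hypothesis $P^{(\bullet)}_n(q^2,\undQ)\neq 0$ each space $\mathbb{D}(\undla)$ really is an irreducible $\mHfcn$-supermodule. Its underlying $\mathcal{A}_n$-module is the direct sum, over standard tableaux $T$ of shape $\undla$, of the irreducible $\mathcal{A}_n$-modules attached to the content sequence of $T$; since $f_{\undQ}$ vanishes on every allowed value of the first content, the cyclotomic relation $f_{\undQ}(X_1)=0$ is automatic. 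The separate hypothesis guarantees that the Young-seminormal denominators do not vanish, and therefore $T_1,\ldots,T_{n-1}$ admit a well-defined action that bijectively interchanges the $\mathcal{A}_n$-isotypic summands. Irreducibility follows in the spirit of \cite{Wa}: the $\mathcal{A}_n$-decomposition is multiplicity-free and the $T_i$ permute the summands transitively, so no proper nonzero submodule can exist. Distinct $\undla,\underline{\mu}\in\mathscr{P}^{\bullet,m}_n$ give non-isomorphic modules because the multiset of joint $(X_1,\ldots,X_n)$-eigenvalues is an isomorphism invariant and recovers $\undla$.

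Next I would compute $\dim\mathbb{D}(\undla)$ as an explicit power of $2$ times $|\Std(\undla)|$, and determine the type $\mathtt{M}/\mathtt{Q}$ of $\mathbb{D}(\undla)$ by deciding whether an odd endomorphism exists. Such an odd endomorphism exists exactly when an unpaired Clifford generator commutes with the action on each $\mathcal{A}_n$-summand, and the parity of this count is precisely $\sharp\mathcal{D}_{\undla}$. Setting $\delta_{\undla}=1$ for type $\mathtt{M}$ and $\delta_{\undla}=2$ for type $\mathtt{Q}$, the super-Wedderburn formula for a split semisimple associative superalgebra reads
\[
\dim \mHfcn \;=\; \sum_{\undla\in\mathscr{P}^{\bullet,m}_n} \frac{(\dim\mathbb{D}(\undla))^2}{\delta_{\undla}}.
\]
The heart of the argument is to verify this identity combinatorially by applying the RSK-type formulas packaged in Lemma \ref{lem:formula}, which for each of $\bullet\in\{\mathtt{0},\mathtt{s},\mathtt{ss}\}$ match the sum of $|\Std(\undla)|^2$ (after extracting the appropriate Clifford $2$-powers) with the known dimension of $\mHfcn$.

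Once the dimension identity is established, super-Wedderburn immediately forces $\mHfcn$ to be split semisimple, $\{\mathbb{D}(\undla):\undla\in\mathscr{P}^{\bullet,m}_n\}$ to be a complete list of pairwise non-isomorphic simple supermodules, and the types to be as stated. I expect the main obstacle to lie in the combinatorial bookkeeping of the $2$-powers: each of the three flavours $\bullet\in\{\mathtt{0},\mathtt{s},\mathtt{ss}\}$ contributes Clifford factors from its strict components $\lambda^{(0)}$, $\lambda^{(0_-)}$, $\lambda^{(0_+)}$, and the parity controlling the type dichotomy interacts non-trivially with the RSK count. Aligning these factors so that Lemma \ref{lem:formula} yields the super-Wedderburn identity uniformly across the three cases is the delicate part of the argument; by comparison, checking the defining relations of $\mHfcn$ on $\mathbb{D}(\undla)$ is essentially mechanical once the seminormal formulas and the separate condition are in hand.
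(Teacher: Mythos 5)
Your proposal follows essentially the same route as the paper: construct $\mathbb{D}(\undla)$ explicitly as a sum of $\mathcal{A}_n$-isotypic pieces indexed by standard tableaux, show irreducibility by transitive action of the intertwiners, read off the type from the endomorphism algebra of the $\mathcal{A}_n$-seed (which the paper proves equals $\mathrm{End}_{\mHcn}(\mathbb{D}(\undla))$), verify the dimension identity via the RSK-type formulas of Lemma \ref{lem:formula} together with Lemma \ref{lem:std-num}, and then invoke the super-Wedderburn dimension comparison (Corollary \ref{cor:dim-compare}) to conclude semisimplicity and completeness. No substantive difference from the argument in the paper.
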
 
}
{\color{black}As an application, we deduce that generic non-degenerate cyclotomic Hecke-Clifford superalgebras are semisimple. We remark that the above construction also works for degenerate cyclotomic Hecke-Clifford superalgebras with the notion of separate parameters modified accordingly.  We conjecture that condition $P^{(\bullet)}_{n}(q^2,\undQ)\neq 0$ is also necessary for the cyclotomic Hecke-Clifford superalgebras to be semisimple and we will work on this in a future project. 
}

Here is the layout of the paper. In Section \ref{preliminary}, we review some basics on superalgebras, multipartitions and standard tableaux  and set up various notations needed in the remainder of the paper. 
In Section \ref{cyclotomic}, we recall the notion of cyclotomic Hecke-Clifford superalgebras and introduce the separate parameters. 
In Section 4,  we construct an irreducible $\mHfcn$-module for each $\undla$ by assuming the parameters $q$ and $\underline{Q}$ are separate  and show that  in this situation these $\mathbb{D}(\undla)$ exhaust all  non-isomorphic irreducible $\mHfcn$-module  by a dimension comparison. Hence the superalgebra $\mHfcn$ with separate parameters is proved to be semisimple. 
{\color{black}In Section 5, we establish the analogue of the construction in Section 3 and Section 4 for degenerate cyclotomic Hecke-Clifford superalgebras. }

Throughout the paper, we shall assume $\mathbb{K}$ is an algebraically closed field of characteristic different from $2$.  Let $\mathbb{N}=\{1,2,\ldots\}$ be the set of positive integers and denote by $\mathbb{Z}_+$ the set of non-negative integers.

{\bf Acknowledgements.} The first author is grateful to Max Planck Institute for Mathematics in Bonn for its hospitality and financial support. The first author is also supported by the Natural Science Foundation of Beijing Municipality (No. 1232017). He thanks Shuo Li for many
useful discussions and feedback. The second author is supported by
NSFC 12122101 and NSFC 12071026. Both the authors are supported by the National Natural Science Foundation of China
(No. 12431002).

\section{Preliminary}\label{preliminary}
\subsection{Some basics about superalgebras} 
 We shall recall some basic notions of superalgebras, referring the
reader to~\cite[\S 2-b]{BK1}. {\color{black} By a superspace over $\mathbb{K}$, we mean a $\mathbb{Z}_2$-graded vector space.} Let us denote by
$\bar{v}\in\mathbb{Z}_2$ the parity of a homogeneous vector $v$ of a
vector superspace. By a superalgebra, we mean a
$\mathbb{Z}_2$-graded associative algebra. Let $\mathcal{A}$ be a
superalgebra. A $\mathcal{A}$-module means a $\mathbb{Z}_2$-graded
left $\mathcal{A}$-module. A homomorphism $f:V\rightarrow W$ of
$\mathcal{A}$-modules $V$ and $W$ means a linear map such that $
f(av)=(-1)^{\bar{f}\bar{a}}af(v).$  Note that this and other such
expressions only make sense for homogeneous $a, f$ and the meaning
for arbitrary elements is to be obtained by extending linearly from
the homogeneous case.  Let $V$ be a finite dimensional
$\mathcal{A}$-module. Let $\Pi
 V$ be the same underlying vector space but with the opposite
 $\mathbb{Z}_2$-grading. The new action of $a\in\mathcal{A}$ on $v\in\Pi
 V$ is defined in terms of the old action by $a\cdot
 v:=(-1)^{\bar{a}}av$. Note that the identity map on $V$ defines
 an isomorphism from $V$ to $\Pi V$. By forgeting the grading we may consider any superalgebra $\mathcal{A}$ as the usual algebra which will be denoted by $|\mathcal{A}|$. Similarly, any $\mathcal{A}$-supermodule $V$ can be considered as a usual $|\mathcal{A}|$-module denoted by $|V|$. A superalgebra analog of Schur's Lemma (cf. \cite{K2}) states that the endomorphism
 algebra $\text{End}_{\mathcal{A}}(M)$ of a finite dimensional irreducible module $\mathcal{A}$-module $V$  is either one dimensional or two dimensional. In the
 former case, we call the module $M$ of {\em type }\texttt{M} while in
 the latter case the module $V$ is called of {\em type }\texttt{Q}.  


\begin{lem}\cite[Lemma 12.2.1, Corollary 12.2.10]{K1}\label{lem:type MQ}
	Suppose $V$ is an irreducible $\mathcal{A}$-module. If $V$ is of type $\texttt{M}$, then by forgetting the grading, $|V|$ is an irreducible $|\mathcal{A}|$-module. If $V$ is of type $\texttt{Q}$, then by forgetting the grading, $|V|$  is isomorphic to a direct sum of two non-isomorphic irreducible $|\mathcal{A}|$-modules. That is, there exist two non-isomorphic irreducible $|\mathcal{A}|$-modules $V^+,V^-$ such that $|V|\cong V^+\oplus V^-$ as $|\mathcal{A}|$-modules. Moreover if $V_1,\cdots,V_m$ (resp. $V_{m+1},\cdots,V_n$) are pairwise non-isomorphic irreducible $\mathcal{A}$-modules of type \texttt{M} (resp. \texttt{Q}), then $$\{|V_1|, \cdots,|V_m|, V_{m+1}^\pm, \cdots, V_{n}^\pm\}$$ is a set of pairwise non-isomorphic $|\mathcal{A}|$-modules. 
\end{lem}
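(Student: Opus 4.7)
The plan is to use the parity involution $\sigma:V\to V$ defined by $\sigma|_{V_{\bar 0}}=\mathrm{id}$ and $\sigma|_{V_{\bar 1}}=-\mathrm{id}$. It satisfies the twisted equivariance $\sigma(a\cdot v)=\phi(a)\cdot\sigma(v)$, where $\phi(a_{\bar 0}+a_{\bar 1})=a_{\bar 0}-a_{\bar 1}$ is the parity automorphism of $\mathcal{A}$; consequently $\sigma$ carries $|\mathcal{A}|$-submodules of $|V|$ to $|\mathcal{A}|$-submodules, and a subspace of $V$ is $\sigma$-stable if and only if it is $\mathbb{Z}_2$-graded. This single observation is the thread running through every case.

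For the type $\mathtt{M}$ statement, given a nonzero $|\mathcal{A}|$-submodule $W\subseteq|V|$, both $W\cap\sigma(W)$ and $W+\sigma(W)$ are $\sigma$-stable and $|\mathcal{A}|$-invariant, hence are genuine graded $\mathcal{A}$-submodules of $V$, so each equals $0$ or $V$ by irreducibility. The only residual possibility is $V=W\oplus\sigma(W)$ with $W\notin\{0,V\}$; the linear map $\tau$ equal to $+\mathrm{id}$ on $W$ and $-\mathrm{id}$ on $\sigma(W)$ is then $|\mathcal{A}|$-linear, and the relation $\sigma\tau=-\tau\sigma$ shows $\tau$ is purely odd as a linear endomorphism. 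An odd $|\mathcal{A}|$-linear $\tau$ satisfies both $\tau(a_{\bar 1}v)=a_{\bar 1}\tau(v)$ (by $|\mathcal{A}|$-linearity) and, if it were a graded $\mathcal{A}$-endomorphism, $\tau(a_{\bar 1}v)=-a_{\bar 1}\tau(v)$; comparing these and using $\operatorname{char}\mathbb{K}\neq 2$ forces $\mathcal{A}_{\bar 1}\cdot V=0$, whereupon irreducibility makes $V$ concentrated in a single parity, so $\sigma=\pm\mathrm{id}$ and $W=\sigma(W)$, contradicting the direct-sum decomposition. Thus $|V|$ is irreducible.

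For the type $\mathtt{Q}$ statement, super-Schur supplies an odd $J\in\operatorname{End}_{\mathcal{A}}(V)$ which after rescaling satisfies $J^2=-\mathrm{id}$. Both $\sigma$ and $J$ commute with $\mathcal{A}_{\bar 0}$ and anti-commute with $\mathcal{A}_{\bar 1}$, so $\sigma J$ is $|\mathcal{A}|$-linear, and $(\sigma J)^2=\mathrm{id}$ by combining $\sigma J=-J\sigma$ with $\sigma^2=\mathrm{id}$. Setting $V^\pm:=\ker(\sigma J\mp\mathrm{id})$ produces the $|\mathcal{A}|$-decomposition $|V|=V^+\oplus V^-$, and the identity $\sigma\cdot\sigma J=-\sigma J\cdot\sigma$ shows $\sigma$ exchanges $V^+$ and $V^-$. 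Irreducibility of $V^\pm$ follows because a proper nonzero $|\mathcal{A}|$-submodule $W\subsetneq V^+$ would make $W\oplus\sigma(W)\subsetneq V$ a proper nonzero $\sigma$-stable, hence graded, $\mathcal{A}$-submodule of $V$, contradicting irreducibility; non-isomorphism is immediate because $\sigma J$ is an $|\mathcal{A}|$-endomorphism acting by the distinct scalars $\pm 1$ on $V^\pm$, so any $|\mathcal{A}|$-isomorphism $V^+\to V^-$ would intertwine these actions and force $+1=-1$.

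For the final pairwise non-isomorphism claim, I decompose any purported $|\mathcal{A}|$-isomorphism $f$ between two entries of the list as $f=f_{\bar 0}+f_{\bar 1}$ into its even and odd linear components, both automatically $|\mathcal{A}|$-linear, and reduce to graded Schur. The even part $f_{\bar 0}$ is a genuine graded $\mathcal{A}$-module homomorphism between the underlying graded modules $V_i$ (for type $\mathtt{Q}$ entries one first reassembles $V_j^+$ and $V_j^-=\sigma(V_j^+)$ back into $V_j$ using $\sigma$ and $J$), so the pairwise non-isomorphism of $V_1,\ldots,V_n$ combined with Schur rules out a nonzero $f_{\bar 0}$; the odd part $f_{\bar 1}$ is then killed by the same $\mathcal{A}_{\bar 1}$-annihilation argument as in the type $\mathtt{M}$ case. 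The hard part throughout will be the degenerate situation $\mathcal{A}_{\bar 1} V=0$, where $V$ collapses to an $\mathcal{A}_{\bar 0}$-module concentrated in a single parity and several of the graded Schur-type arguments above break down; this subcase must be handled directly at the ungraded level using ordinary Schur over $\mathcal{A}_{\bar 0}$, together with bookkeeping of the parity concentration to distinguish otherwise identical underlying modules.
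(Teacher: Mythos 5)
The paper cites this lemma directly from Kleshchev's book without reproducing a proof, so there is nothing internal to compare against; I will instead assess your argument on its own terms. The overall strategy — exploiting the parity involution $\sigma$, the observation that $\sigma$-stable subspaces are graded, and the interplay between $\sigma$ and the odd super-endomorphism $J$ in the type $\texttt{Q}$ case — is the right one and matches what Kleshchev does. But there are two genuine logical gaps.

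In the type $\texttt{M}$ argument you introduce the odd $|\mathcal{A}|$-linear involution $\tau$ and then write ``if it were a graded $\mathcal{A}$-endomorphism, $\tau(a_{\bar 1}v)=-a_{\bar 1}\tau(v)$; comparing these \ldots forces $\mathcal{A}_{\bar 1}\cdot V=0$.'' The hypothesis in that conditional is never justified: $\tau$ is $|\mathcal{A}|$-linear, which is \emph{not} the same as being an $\mathcal{A}$-supermodule homomorphism, and nothing in the setup makes $\tau$ a super-morphism. Consequently you have not ruled out the case $\mathcal{A}_{\bar 1}V\neq 0$, which is the generic one. The gap is easy to close with the same tools you already have in hand: since $\tau$ is $|\mathcal{A}|$-linear and $\sigma(av)=(-1)^{\bar a}a\sigma(v)$, the composite $\sigma\tau$ \emph{is} an odd element of $\operatorname{End}_{\mathcal{A}}(V)$; type $\texttt{M}$ says $\operatorname{End}_{\mathcal{A}}(V)=\mathbb{K}\cdot\mathrm{id}$ is purely even, so $\sigma\tau=0$, hence $\tau=0$, contradiction. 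This also dissolves the ``hard part'' you flag at the end: once one uses $\sigma\tau$ rather than $\tau$, the degenerate subcase $\mathcal{A}_{\bar 1}V=0$ needs no separate treatment in the $\texttt{M}$ irreducibility step.

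In the type $\texttt{Q}$ argument the construction of $V^\pm$ as eigenspaces of the $|\mathcal{A}|$-linear involution $\sigma J$ and the irreducibility of $V^\pm$ are fine, but the non-isomorphism claim is not: an $|\mathcal{A}|$-isomorphism $f\colon V^+\to V^-$ need not intertwine $\sigma J$, because $\sigma J$ is an endomorphism of $|V|$ rather than the action of an element of $\mathcal{A}$, so the conclusion ``$+1=-1$'' does not follow. The standard repair is a dimension count on the commutant. The map $f\mapsto f_0+\sigma f_1$, where $f=f_0+f_1$ is the decomposition into parity-preserving and parity-reversing parts, gives a linear bijection $\operatorname{End}_{|\mathcal{A}|}(|V|)\cong\operatorname{End}_{\mathcal{A}}(V)_{\bar 0}\oplus\operatorname{End}_{\mathcal{A}}(V)_{\bar 1}$, so in type $\texttt{Q}$ one has $\dim\operatorname{End}_{|\mathcal{A}|}(|V|)=\dim\operatorname{End}_{\mathcal{A}}(V)=2$; if $V^+\cong V^-$ then $\operatorname{End}_{|\mathcal{A}|}(|V|)\cong M_2(\mathbb{K})$ has dimension $4$, a contradiction. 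The final pairwise non-isomorphism claim should then be argued along similar lines (again comparing parities of the components and using the bijection above, rather than the vague ``reassemble $V_j^\pm$ into $V_j$'' step), and once the $\sigma\tau$ trick is in place the $\mathcal{A}_{\bar 1}V=0$ subcase you flag no longer requires an ad hoc treatment.
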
 

Denote by $\mathcal{J}(\mathcal{A})$  the usual (non-super) Jacobson radical of $\mathcal{A}$,  that is $\mathcal{J}(\mathcal{A})=\mathcal{J}(|\mathcal{A}|)$. We call $\mathcal{A}$ is semisimple if $\mathcal{J}(\mathcal{A})=0$. By Lemma \ref{lem:type MQ}, for any irreducible $\mathcal{A}$-module $V$, we have 
\begin{equation}\label{eq:JA-nill-V}
	\mathcal{J}(\mathcal{A}) V=0. 
	\end{equation}

\begin{lem}\cite[Lemma 12.2.7]{K1}\label{lem:semisimple1}
Let $\mathcal{A}$ be a finite dimensional superalgebra. Then $\mathcal{A}/\mathcal{J}(\mathcal{A})$ is semisimple.
\end{lem}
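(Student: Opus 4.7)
The assertion is a general fact about finite-dimensional superalgebras, so the plan is to reduce it to the classical statement that $|\mathcal{A}|/\mathcal{J}(|\mathcal{A}|)$ is a semisimple ordinary algebra. The only gluing step required is to check that the quotient $\mathcal{A}/\mathcal{J}(\mathcal{A})$ is a genuine superalgebra, which amounts to verifying that $\mathcal{J}(\mathcal{A})=\mathcal{J}(|\mathcal{A}|)$ is $\mathbb{Z}_2$-graded.

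First I would introduce the parity involution $\sigma:\mathcal{A}\to\mathcal{A}$ defined on homogeneous elements by $\sigma(a)=(-1)^{\bar{a}}a$. This is an (ungraded) algebra automorphism of $|\mathcal{A}|$, because $\sigma(ab)=(-1)^{\overline{ab}}ab=(-1)^{\bar{a}+\bar{b}}ab=\sigma(a)\sigma(b)$. Since every algebra automorphism of a finite-dimensional algebra preserves its Jacobson radical, $\sigma(\mathcal{J}(|\mathcal{A}|))=\mathcal{J}(|\mathcal{A}|)$. Using the standing assumption $\mathrm{char}(\mathbb{K})\neq 2$, the eigenspace decomposition of $\sigma$ on $\mathcal{A}$ is precisely $\mathcal{A}=\mathcal{A}_{\bar 0}\oplus \mathcal{A}_{\bar 1}$. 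Consequently the $\sigma$-stable subspace $\mathcal{J}(\mathcal{A})$ splits as
\[
\mathcal{J}(\mathcal{A})=\bigl(\mathcal{J}(\mathcal{A})\cap\mathcal{A}_{\bar 0}\bigr)\oplus \bigl(\mathcal{J}(\mathcal{A})\cap\mathcal{A}_{\bar 1}\bigr),
\]
so it is a $\mathbb{Z}_2$-graded two-sided ideal.

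Next I would form the quotient $\mathcal{B}:=\mathcal{A}/\mathcal{J}(\mathcal{A})$, which now inherits a $\mathbb{Z}_2$-grading and is therefore a superalgebra. By construction, forgetting the grading gives $|\mathcal{B}|=|\mathcal{A}|/\mathcal{J}(|\mathcal{A}|)$. The classical Wedderburn--Artin theorem for finite-dimensional ordinary algebras says $|\mathcal{A}|/\mathcal{J}(|\mathcal{A}|)$ is semisimple, and in particular its usual Jacobson radical is zero: $\mathcal{J}(|\mathcal{B}|)=0$. Unwinding the definition $\mathcal{J}(\mathcal{B}):=\mathcal{J}(|\mathcal{B}|)$ from the paragraph preceding the lemma, this exactly says $\mathcal{J}(\mathcal{B})=0$, i.e. $\mathcal{B}=\mathcal{A}/\mathcal{J}(\mathcal{A})$ is semisimple as a superalgebra.

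The only subtle point, and the one worth emphasizing, is the gradedness of $\mathcal{J}(\mathcal{A})$: once this is in hand the rest is immediate from the ungraded theory. I expect the parity-automorphism argument above to be the cleanest way to establish it, and the char $\neq 2$ hypothesis is unavoidable here since it is precisely what lets the $\pm 1$-eigenspaces of $\sigma$ recover the grading of $\mathcal{A}$.
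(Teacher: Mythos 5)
Your proof is correct. Note that the paper does not prove this lemma at all: it is quoted verbatim from \cite[Lemma 12.2.7]{K1}, so there is no in-house argument to compare against. Your route is the standard one and matches what is needed here: the parity automorphism $\sigma(a)=(-1)^{\bar a}a$ preserves $\mathcal{J}(|\mathcal{A}|)$, and since $\mathrm{char}\,\mathbb{K}\neq 2$ its $\pm 1$-eigenspaces recover the grading, so the radical is a graded ideal and the quotient is a genuine superalgebra; then, because the paper defines $\mathcal{J}(\mathcal{A}):=\mathcal{J}(|\mathcal{A}|)$ and semisimplicity as vanishing of this radical, the conclusion is exactly the classical fact $\mathcal{J}\bigl(|\mathcal{A}|/\mathcal{J}(|\mathcal{A}|)\bigr)=0$ for finite-dimensional algebras. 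One cosmetic remark: that last fact is a basic property of the Jacobson radical of an Artinian algebra rather than the Wedderburn--Artin structure theorem itself, but this does not affect the validity of your argument.
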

%
%
%

\begin{cor}\label{cor:dim-compare}
Let $\mathcal{A}$ be a finite dimensional superalgebra. Suppose $\{V_1, V_2,\ldots, V_s\}$ is a class of non-isomorphic irreducible $\mathcal{A}$-modules of type \texttt{M} and $\{U_1,U_2,\ldots, U_t\}$ is a class of non-isomorphic irreducible $\mathcal{A}$-modules of type \texttt{Q}. If
\begin{equation}\label{eq:dimequal}
\dim \mathcal{A}=\sum_{i=1}^s(\dim V_i)^2+\sum_{j=1}^t\frac{(\dim U_j)^2}{2}, 
\end{equation}
then $\mathcal{A}$ is semisimple. 
\end{cor}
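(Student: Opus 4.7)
The plan is to bootstrap from the hypothesis \eqref{eq:dimequal} by comparing $\dim\mathcal{A}$ with $\dim(\mathcal{A}/\mathcal{J}(\mathcal{A}))$ via the classical Wedderburn--Artin theorem applied to the underlying ordinary algebra. Set $\overline{\mathcal{A}}:=\mathcal{A}/\mathcal{J}(\mathcal{A})$, which is a semisimple superalgebra by Lemma \ref{lem:semisimple1}.

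First, by \eqref{eq:JA-nill-V} the radical $\mathcal{J}(\mathcal{A})$ annihilates every irreducible $\mathcal{A}$-module, so each of $V_1,\ldots,V_s,U_1,\ldots,U_t$ descends to a pairwise non-isomorphic irreducible $\overline{\mathcal{A}}$-module of the same type. Forgetting the grading and invoking Lemma \ref{lem:type MQ}, the enlarged family
$$\bigl\{\,|V_1|,\ldots,|V_s|,\ U_1^+,U_1^-,\ldots,U_t^+,U_t^-\,\bigr\}$$
consists of $s+2t$ pairwise non-isomorphic irreducible $|\overline{\mathcal{A}}|$-modules, with $\dim U_j^++\dim U_j^-=\dim U_j$ for each $j$.

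Since $\mathbb{K}$ is algebraically closed, Wedderburn--Artin for the semisimple ordinary algebra $|\overline{\mathcal{A}}|$ yields
$$\dim\overline{\mathcal{A}}=\dim|\overline{\mathcal{A}}|\ \geq\ \sum_{i=1}^{s}(\dim V_i)^2+\sum_{j=1}^{t}\Bigl((\dim U_j^+)^2+(\dim U_j^-)^2\Bigr)\ \geq\ \sum_{i=1}^{s}(\dim V_i)^2+\sum_{j=1}^{t}\frac{(\dim U_j)^2}{2},$$
where the second step uses the elementary inequality $a^2+b^2\geq(a+b)^2/2$. Comparing with \eqref{eq:dimequal} gives $\dim\overline{\mathcal{A}}\geq\dim\mathcal{A}$, while $\overline{\mathcal{A}}$ being a quotient of $\mathcal{A}$ forces the reverse inequality. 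Hence $\mathcal{J}(\mathcal{A})=0$ and $\mathcal{A}$ is semisimple.

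The argument is essentially a bookkeeping chain: quotient out $\mathcal{J}(\mathcal{A})$, forget the $\mathbb{Z}_2$-grading, then count dimensions via Wedderburn--Artin. The only mildly delicate point is passing from the superalgebra data to an ungraded dimension count, but Lemma \ref{lem:type MQ} has already packaged exactly the translation needed, so there is no genuine obstacle.
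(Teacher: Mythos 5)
Your proof is correct and takes essentially the same route as the paper: quotient out $\mathcal{J}(\mathcal{A})$, pass to the underlying ordinary algebra, apply Wedderburn--Artin, and compare dimensions. The only cosmetic difference is that the paper invokes the equality $\dim U_j^+=\dim U_j^-$ (a standard feature of type \texttt{Q} modules, giving $(\dim U_j^+)^2+(\dim U_j^-)^2=\tfrac{(\dim U_j)^2}{2}$ exactly), whereas you sidestep it with the elementary inequality $a^2+b^2\geq(a+b)^2/2$; this is slightly more robust but the two arguments are otherwise identical.
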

\begin{proof}
By \eqref{eq:JA-nill-V}, each $V_i$ and $U_j$ are annihilated by $\mathcal{J}(\mathcal{A})=\mathcal{J}(|\mathcal{A}|)$ and hence all $V_i$ and $U_j$ admit $|A|/\mathcal{J}(|\mathcal{A}|)$-modules for $1\leq i\leq s, 1\leq j\leq t$. Moreover by applying Wedderburn Theorem to the usual algebra $|\mathcal{A}|/\mathcal{J}(|\mathcal{A}|)$ which is semisimple according to Lemma \ref{lem:semisimple1}   and then by Lemma \ref{lem:type MQ}, we have 
$$
\dim |\mathcal{A}|/\mathcal{J}(|\mathcal{A}|)\geq \sum_{i=1}^s(\dim V_i)^2+\sum_{j=1}^t((\dim U^+_j)^2+(\dim U_j^-)^2)=\sum_{i=1}^s(\dim V_i)^2+\sum_{j=1}^t\frac{(\dim U_j)^2}{2}.
$$
This together with the assumption \eqref{eq:dimequal} leads to $\mathcal{J}(\mathcal{A})=0$ as $\dim \mathcal{A}=\dim |\mathcal{A}|$ and we obtain that $\mathcal{A}$ is semisimple {\color{black} by Lemma \ref{lem:semisimple1}}.

\end{proof}

Given two superalgebras $\mathcal{A}$ and $\mathcal{B}$, we view
the tensor product of superspaces $\mathcal{A}\otimes\mathcal{B}$
as a superalgebra with multiplication defined by
$$
(a\otimes b)(a'\otimes b')=(-1)^{\bar{b}\bar{a'}}(aa')\otimes (bb')
\qquad (a,a'\in\mathcal{A}, b,b'\in\mathcal{B}).
$$
Suppose $V$ is an $\mathcal{A}$-module and $W$ is a
$\mathcal{B}$-module. Then $V\otimes W$ affords $A\otimes B$-module
denoted by $V\boxtimes W$ via
$$
(a\otimes b)(v\otimes w)=(-1)^{\bar{b}\bar{v}}av\otimes bw,~a\in A,
b\in B, v\in V, w\in W.
$$
If $V$ is an irreducible $\mathcal{A}$-module and $W$ is an
irreducible $\mathcal{B}$-module, $V\boxtimes W$ may not be
irreducible. Indeed, we have the following standard lemma (cf.
\cite[Lemma 12.2.13]{K1}).
\begin{lem}\label{tensorsmod}
Let $V$ be an irreducible $\mathcal{A}$-module and $W$ be an
irreducible $\mathcal{B}$-module.
\begin{enumerate}
\item If both $V$ and $W$ are of type $\texttt{M}$, then
$V\boxtimes W$ is an irreducible
$\mathcal{A}\otimes\mathcal{B}$-module of type $\texttt{M}$.

\item If one of $V$ or $W$ is of type $\texttt{M}$ and the other one
is of type $\texttt{Q}$, then $V\boxtimes W$ is an irreducible
$\mathcal{A}\otimes\mathcal{B}$-module of type $\texttt{Q}$.

\item If both $V$ and $W$ are of type $\texttt{Q}$, then
$V\boxtimes W\cong X\oplus \Pi X$ for a type $\texttt{M}$
irreducible $\mathcal{A}\otimes\mathcal{B}$-module $X$.
\end{enumerate}
Moreover, all irreducible $\mathcal{A}\otimes\mathcal{B}$-modules
arise as constituents of $V\boxtimes W$ for some choice of
irreducibles $V,W$.
\end{lem}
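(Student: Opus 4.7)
The plan is to determine $V\boxtimes W$ by computing its super endomorphism algebra via super Jacobson density, and then case-analyzing the three possibilities. By the superalgebra version of Schur's lemma invoked before Lemma~\ref{lem:type MQ}, $D_V := \mathrm{End}_{\mathcal{A}}(V)$ equals $\mathbb{K}$ when $V$ is of type $\texttt{M}$, and is a $2$-dimensional algebra $\mathbb{K}[\theta_V]/(\theta_V^2 - c_V)$ with $\theta_V$ odd and $c_V\in\mathbb{K}^*$ when $V$ is of type $\texttt{Q}$ (and similarly for $D_W$). Super Jacobson density identifies the image of $\mathcal{A}$ in $\mathrm{End}_{\mathbb{K}}(V)$ with the full super-commutant of $D_V$, which is either all of $\mathrm{End}_{\mathbb{K}}(V)$ (type $\texttt{M}$) or a queer subalgebra $Q(V)$ (type $\texttt{Q}$). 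A standard double-centralizer computation then yields
\[
\mathrm{End}_{\mathcal{A}\otimes\mathcal{B}}(V\boxtimes W) \;\cong\; D_V\otimes D_W
\]
as superalgebras, where the tensor product is the \emph{super} one, so that $\theta_V\otimes 1$ and $1\otimes\theta_W$ anticommute when both are present.

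Next I would run through the three cases. If both $V,W$ are type $\texttt{M}$, the image of $\mathcal{A}\otimes\mathcal{B}$ is all of $\mathrm{End}_{\mathbb{K}}(V\otimes W)$, so $V\boxtimes W$ is irreducible, and $D_V\otimes D_W=\mathbb{K}$ gives type $\texttt{M}$. If exactly one is type $\texttt{Q}$, say $W$, then $D_V\otimes D_W\cong D_W$ is $2$-dimensional with an odd element squaring to a nonzero scalar, while the image of $\mathcal{A}\otimes\mathcal{B}$ is the simple type $\texttt{Q}$ superalgebra $\mathrm{End}_{\mathbb{K}}(V)\otimes Q(W)$ whose unique irreducible has super-dimension matching that of $V\boxtimes W$; hence $V\boxtimes W$ is irreducible of type $\texttt{Q}$. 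If both are type $\texttt{Q}$, then $D_V\otimes D_W$ is the rank-$2$ Clifford superalgebra on odd generators $\theta_V\otimes 1,\ 1\otimes\theta_W$, which over algebraically closed $\mathbb{K}$ of characteristic $\neq 2$ is isomorphic to the simple type-$\texttt{M}$ matrix superalgebra $M(1|1)$; matching this against the possible shapes $\prod_i M_{a_i|b_i}(\mathbb{K})\times\prod_j M_{c_j}(Q(1))$ of the endomorphism algebra of a finite-dimensional semisimple supermodule forces $V\boxtimes W\cong X\oplus\Pi X$ for a single type $\texttt{M}$ irreducible $X$, since any other configuration would violate either the parity of the odd part or the simplicity of $D_V\otimes D_W$.

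For the final assertion that every irreducible $\mathcal{A}\otimes\mathcal{B}$-module $M$ arises as a constituent of some $V\boxtimes W$, I would restrict $M$ along $\mathcal{A}\hookrightarrow \mathcal{A}\otimes\mathcal{B}$, $a\mapsto a\otimes 1$, pick an irreducible $\mathcal{A}$-submodule $V\subseteq M$, form $H := \mathrm{Hom}_{\mathcal{A}}(V, M)$ with its natural $\mathcal{B}$-action by centralization, pick an irreducible $\mathcal{B}$-submodule $W\subseteq H$, and observe that the evaluation map $V\boxtimes W\to M$ is a nonzero (hence surjective) $\mathcal{A}\otimes\mathcal{B}$-homomorphism; combining with the three cases above exhibits $M$ as a constituent of $V\boxtimes W$. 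The main obstacle I anticipate is the careful bookkeeping of Koszul signs when identifying $\mathrm{End}_{\mathcal{A}\otimes\mathcal{B}}(V\boxtimes W)$ with $D_V\otimes D_W$ as \emph{super}algebras: it is precisely the anticommutation of odd factors that produces the Clifford algebra (rather than a polynomial ring) in Case~3, so a sign slip would wrongly collapse Case~3 to a type~$\texttt{M}$ irreducible rather than the direct sum $X\oplus\Pi X$.
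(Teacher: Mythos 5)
The paper does not prove this lemma at all: it is quoted as a standard fact from \cite[Lemma 12.2.13]{K1}, and your sketch is essentially the standard proof given there --- super Schur's lemma plus density/double-centralizer to identify $\mathrm{End}_{\mathcal{A}\otimes\mathcal{B}}(V\boxtimes W)\cong D_V\otimes D_W$, a case analysis on whether this is $\mathbb{K}$, a rank-one Clifford algebra, or $M(1|1)$, and the restriction-plus-$\mathrm{Hom}_{\mathcal{A}}(V,M)$ evaluation argument for the final assertion. Your argument is correct; the only steps worth making explicit are that in Case 3 you need $V\boxtimes W$ to be semisimple before matching endomorphism-algebra shapes (this follows from your own setup, since the image of $\mathcal{A}\otimes\mathcal{B}$ in $\mathrm{End}_{\mathbb{K}}(V\otimes W)$ is the simple superalgebra $Q(V)\otimes Q(W)\cong M(mn|mn)$, so the module is automatically semisimple over it), and that the modules are taken finite dimensional (as they are throughout the paper), which is what legitimizes both the super Schur's lemma and the choice of irreducible submodules $V\subseteq M$ and $W\subseteq\mathrm{Hom}_{\mathcal{A}}(V,M)$ in the last step.
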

If $V$ is an irreducible $\mathcal{A}$-module and $W$ is an
irreducible $\mathcal{B}$-module, denote by $V\circledast W$ an
irreducible component of $V\boxtimes W$. Thus,
$$
V\boxtimes W=\left\{
\begin{array}{ll}
V\circledast W\oplus \Pi (V\circledast W), & \text{ if both } V \text{ and } W
 \text{ are of type }\texttt{Q}, \\
V\circledast W, &\text{ otherwise }.
\end{array}
\right.
$$

\subsection{Some combinatorics}
For $n\in \N$, let $\mathscr{P}_n$ be the set of partitions of {\color{black}$n$} and denote by $\ell(\mu)$ the number of nonzero parts in the partition $\mu$ for each $\mu\in\mathscr{P}_n$. Let $\mathscr{P}^m_n$ be the set of all $m$-multipartitions of $n$ for $m\geq 0$, where we use convention that $\mathscr{P}^0_n=\emptyset$. Let $\mathscr{P}^\mathsf{s}_n$ be the set of strict partitions of $n$. Then for $m\geq 0$, set
 $$
 \mathscr{P}^{\mathsf{s},m}_{n}:=
\cup_{a=0}^{n}( \mathscr{P}^{\mathsf{s}}_a\times \mathscr{P}^{m}_{n-a}),\qquad \mathscr{P}^{\mathsf{ss}, m}_{n}:=
\cup_{a+b+c=n}(\mathscr{P}^{\mathsf{s}}_a \times~ \mathscr{P}^{\mathsf{s}}_b\times \mathscr{P}^{m}_{c})$${\bf We will formally write  $\mathscr{P}^{\mathsf{0},m}_{n}=\mathscr{P}^m_n$.  In convention,   for any $\undla\in  \mathscr{P}^{\mathsf{0},m}_{n}$, we write  $\undla=(\lambda^{(1)},\cdots,\lambda^{(m)})$ while for any $\undla\in  \mathscr{P}^{\mathsf{s},m}_{n}$, we write  $\undla=(\lambda^{(0)},\lambda^{(1)},\cdots,\lambda^{(m)})$, i.e. we shall put the strict partition in the $0$-th component. {\color{black} In addition}, for any $\undla\in  \mathscr{P}^{\mathsf{ss},m}_{n}$, we write  $\undla=(\lambda^{(0_-)},\lambda^{(0_+)},\lambda^{(1)},\cdots,\lambda^{(m)})$, i.e. we shall put two strict partitions in the $0_-$-th component and the $0_+$-th component.  }{\color{black} We will see the justification of the choice of the notations $0_+,0_-$ later on in the definition of cyclotomic Hecke-Clifford superalgebras.}


We will also identify the (strict) partition with the corresponding (shifted) young diagram.  For any  $\undla\in\mathscr{P}^{\bullet,m}_{n}$ with $\bullet\in\{\mathsf{0},\mathsf{s},\mathsf{ss}\}$ and $m\in \N$, the box in the $l$-th component with row $i$, colum $j$ will be denoted by $(i,j,l)$  with $l\in\{1,2,\ldots,m\},$ or $l\in\{0,1,2,\ldots,m\}$ or $l\in\{0_-,0_+,1,2,\ldots,m\}$ in the case $\bullet=\mathsf{0},\mathsf{s},\mathsf{ss},$ respectively. We also use the notation $\alpha=(i,j,l)\in \undla$ if the diagram of $\undla$ has a box $\alpha$ on the $l$-th component of row $i$ and colum $j$. We use $\Std(\undla)$ to denote the set of standard tableaux of shape $\undla$. One can also regard each $\mathfrak{t}\in\Std(\undla)$ as a bijection $\mathfrak{t}:\undla\rightarrow \{1,2,\ldots, n\}$ satisfying $\mathfrak{t}((i,j,l))=k$ if the box occupied by $k$ is located in the $i$th row, $j$th column in the $l$-th component $\lambda^{(l)}$. We use $\mathfrak{t}^{\undla}$ to denote the standard tableaux obtained by inserting the symbols $1,2,\ldots,n$ consecutively by rows from the first component of $\undla$. 

\begin{defn} Let $\undla\in\mathscr{P}^{\bullet,m}_{n}$ with $\bullet\in\{\mathsf{0},\mathsf{s},\mathsf{ss}\}$.  We define$$
	\mathcal{D}_{\undla}:=\begin{cases} \emptyset,&\text{if $\undla\in\mathscr{P}^{\mathsf{0},m}_n$,}\\
		\{(a,a,0)|(a,a,0)\in \undla,\,a\in\N\}, &\text{if $\undla\in\mathscr{P}^{\mathsf{s},m}_{n}$,}\\
	\big\{(a,a,l)|(a,a,l)\in \undla,\,a\in\N, l\in\{0_-,0_+\}\big\}, &\text{if $\undla\in\mathscr{P}^{\mathsf{ss}, m}_{n}.$}
		\end{cases}
	$$ For any $\mathfrak{t}\in\Std(\undla), $ we define $$\mathcal{D}_{\mathfrak{t}}:=\{\mathfrak{t}((a,a,l))|(a,a,l)\in\mathcal{D}_{\undla}\}.
	$$
\end{defn}

\begin{example} Let $\undla=(\lambda^{(0)},\lambda^{(1)})\in \mathscr{P}^{\mathsf{s},1}_{5}$, where via the identification with strict Young diagrams and Young diagrams: 
 $$
	\lambda^{(0)}=\young(\,\, ,:\,),\qquad \lambda^{(1)}=\young(\,,\,).
	$$
Then 
$$
\mathfrak{t}^{\undla}=\Biggl(\young(12,:3),\quad \young(4,5)\Biggr).
$$ 
and  an example of standard tableau is as follows: 
$$
\mathfrak{t}=\Biggl(\young(13,:5),\quad \young(2,4)\Biggr)\in \Std(\undla). 
$$ We have $$\mathcal{D}_{\undla}=\{(1,1,0),(2,2,0)\},\qquad \mathcal{D}_{\mathfrak{t}}=\{1,5\}.
$$
\end{example}

Let $\mathfrak{S}_n$ be the symmetric group on ${1,2,\ldots,n}$ with basic transpositions $s_1,s_2,\ldots, s_{n-1}$.  Clearly $\mathfrak{S}_n$ acts on the set of tableaux of shape $\underline{\lambda}$. 
\begin{defn}Let  $\undla\in\mathscr{P}^{\bullet,m}_{n}$ with $\bullet\in\{\mathsf{0},\mathsf{s},\mathsf{ss}\}$.  For any standard tableaux $\mathfrak{t}\in \Std(\undla)$, if $s_l \mathfrak{t}$ is still standard, {\color{black} the simple transposition $s_l$ is said to be admissible} with respect to $\mathfrak{t}$. We set $$P(\undla):=\biggl\{\tau=s_{k_t}\ldots s_{k_1}\biggm|~\begin{matrix}&s_{k_l} \,\text{is admissible with respect to }\\
		& s_{k_{l-1}}\ldots s_{k_1}\mathfrak{t}^{\undla}, \,\text{for $l=1,2,\cdots,t$}
		\end{matrix}\biggr\}.
$$ 
\end{defn}
 The following results should be known, however we did not find the detail proof in literature and hence we include one in the following. 
\begin{lem}\label{lem:admissible}
  Let  $\undla\in\mathscr{P}^{\bullet,m}_{n}$ with $\bullet\in\{\mathsf{0},\mathsf{s},\mathsf{ss}\}$.  
  \begin{enumerate} 
  	\item Let $\mathfrak{s}\in \Std(\undla)$ and {\color{black}$i\in[1, n-2]$}. Then $s_i\mathfrak{s},s_{i+1}s_i\mathfrak{s},s_is_{i+1}s_i\mathfrak{s}\in \Std(\undla)$ if and only if $s_{i+1}\mathfrak{s},s_{i}s_{i+1}\mathfrak{s},s_{i+1}s_{i}s_{i+1}\mathfrak{s}\in \Std(\undla)$.
  	\item Let $\mathfrak{s}\in \Std(\undla)$ and $i,j\in[1, n-1]$ such that $|i-j|>1$. Then $s_i\mathfrak{s},s_{j}s_i\mathfrak{s}\in \Std(\undla)$ if and only if $s_{j}\mathfrak{s},s_{i}s_{j}\mathfrak{s}\in \Std(\undla)$.
  	\item Let $\mathfrak{s}, \mathfrak{t}\in \Std(\undla),\,\tau\in\mathfrak{S}_n$ such that $\tau\mathfrak{s}=\mathfrak{t}$. Let $\tau=s_{i_k}\cdots s_{i_1}$ be any reduced expression of $\tau$. Then $s_{i_l}$ is admissible with respect to $s_{i_{l-1}}\cdots s_{i_1}\mathfrak{s}$ for $l=1,2,\cdots,k$.
  	\end{enumerate}
\end{lem}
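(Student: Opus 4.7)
My strategy is to treat the three parts in order, using parts (1) and (2) as local combinatorial compatibilities that feed the inductive proof of (3). The basic observation underlying everything is that for any $\mathfrak{u}\in\Std(\undla)$ and any simple transposition $s_l$, the tableau $s_l\mathfrak{u}$ is again standard if and only if the entries $l$ and $l+1$ lie either in different components of $\undla$, or in the same component but in different rows and different columns. This criterion is uniform across the three types $\bullet\in\{\mathsf{0},\mathsf{s},\mathsf{ss}\}$, because shifted and unshifted standardness both amount to ``increase along rows and down columns within each component.''

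\textbf{Parts (1) and (2).} For (1), let $A,B,C$ denote the boxes occupied by $i,i+1,i+2$ in $\mathfrak{s}$, and call two boxes \emph{collinear} if they lie in the same row or the same column of the same component. A direct unpacking of standardness after each successive application of $s_i$, then $s_{i+1}s_i$, then $s_is_{i+1}s_i$ shows that all three intermediate tableaux are standard iff each of the three pairs $\{A,B\}$, $\{A,C\}$, $\{B,C\}$ is non-collinear. The braid-conjugate sequence $s_{i+1}$, $s_is_{i+1}$, $s_{i+1}s_is_{i+1}$ produces exactly the same three geometric conditions on the same three pairs, proving the equivalence. For (2), the disjointness of the supports when $|i-j|>1$ implies that the swap of $\{i,i+1\}$ and the swap of $\{j,j+1\}$ act independently on four distinct boxes; both sides reduce to the conjunction of non-collinearity of $\{i,i+1\}$ and of $\{j,j+1\}$.

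\textbf{Part (3).} I would induct on $k=\ell(\tau)$; the base case $k=0$ is vacuous. The inductive heart is the following claim: if $\tau\neq e$ and both $\mathfrak{s}$ and $\mathfrak{t}=\tau\mathfrak{s}$ are standard, then some right descent $s_l$ of $\tau$ is admissible for $\mathfrak{s}$. To prove this, suppose for contradiction that every right descent $s_l$ (that is, every $l$ with $\tau(l)>\tau(l+1)$) had $l$ and $l+1$ in the same row or the same column of the same component of $\mathfrak{s}$. Fixing any such descent and taking, without loss of generality, the same-row case, the adjacent boxes $\mathfrak{s}^{-1}(l)$ and $\mathfrak{s}^{-1}(l+1)$ carry the values $\tau(l)>\tau(l+1)$ in that left-to-right order in $\mathfrak{t}$, contradicting row-increase in $\mathfrak{t}$; the column case is analogous. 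Choosing an admissible right descent $s_l=s_{i_1}$ and applying the induction hypothesis to $\tau s_{i_1}$ acting on the standard tableau $s_{i_1}\mathfrak{s}$ produces one reduced expression $\tau=s_{i_k}\cdots s_{i_1}$ along which every intermediate tableau is standard. Finally, Matsumoto's theorem says that any two reduced expressions in $\mathfrak{S}_n$ are connected by a sequence of braid moves $s_is_{i+1}s_i\leftrightarrow s_{i+1}s_is_{i+1}$ and commutation moves $s_is_j\leftrightarrow s_js_i$ with $|i-j|>1$; parts (1) and (2) are precisely the statements that each such move preserves the property ``all intermediate tableaux are standard,'' so the existence at one reduced expression extends to all of them.

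\textbf{Main obstacle.} The technical crux is the existence of an admissible right descent in part (3), which the contrapositive argument resolves cleanly by forcing a failure of standardness at $\mathfrak{t}$. The casework in (1) and (2) must be carried out with attention to shifted components in types $\mathsf{s}$ and $\mathsf{ss}$, but no new phenomena arise because standardness in the shifted setting still depends only on row- and column-increase within each component, so the uniform non-collinearity criterion covers all three cases without modification.
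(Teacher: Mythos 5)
Your proof is correct, and parts (1) and (2) are essentially identical to the paper's: both reduce everything to the observation that $s_l\mathfrak{u}$ is standard iff $l$ and $l+1$ are not ``adjacent'' (collinear) in $\mathfrak{u}$, and then unpack the braid/commutation moves into non-collinearity conditions on the relevant pairs. Where you genuinely diverge from the paper is in constructing the one ``good'' reduced expression in part (3). The paper builds it explicitly using the statistic $O(\mathfrak{s},\mathfrak{t})$ (the largest $m$ such that $1,\dots,m-1$ agree in $\mathfrak{s}$ and $\mathfrak{t}$) and a box-shuffling argument that moves $m'$ down to position $m$, then inducts on $O$. You instead use a descent argument: if $s_l$ is any right descent of $\tau$ (so $\tau(l)>\tau(l+1)$) and $l,l+1$ were collinear in $\mathfrak{s}$, then the boxes $\mathfrak{s}^{-1}(l),\mathfrak{s}^{-1}(l+1)$ would carry decreasing entries along a row or column of $\mathfrak{t}=\tau\mathfrak{s}$, contradicting standardness of $\mathfrak{t}$. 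This is cleaner and, as a bonus, it is strictly stronger than what you need: it shows that \emph{every} right descent of $\tau$ is admissible. In particular, given any reduced expression $\tau=s_{i_k}\cdots s_{i_1}$, the rightmost factor $s_{i_1}$ is automatically a right descent, hence admissible, and $s_{i_k}\cdots s_{i_2}$ is a reduced expression of $\tau s_{i_1}$; so a direct induction on length already proves (3) for the \emph{given} reduced expression without ever invoking Matsumoto or parts (1) and (2). Your write-up does not exploit this and falls back on Matsumoto anyway, which is fine (and matches the paper's final step), but it is worth noting that your descent observation makes the Matsumoto detour optional.
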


 \begin{proof}
 	Observe that for any $\mathfrak{t}\in \Std(\undla)$ and $1\leq k\leq n-1$, $s_k\mathfrak{t}$ is standard if and only if $k,k+1$ are not adjacent in $\mathfrak{t}$.
	
	(1)  Let $\mathfrak{s}\in \Std(\undla)$ and {\color{black}$i\in[1,n-2]$}. The observation implies that  $s_i\mathfrak{s},s_{i+1}s_i\mathfrak{s},s_is_{i+1}s_i\mathfrak{s}$ are all standard  if and only if $j,k$ are not adjacent in $\mathfrak{s}$ for any $j\neq k\in\{i,i+1,i+2\}$. Similarly, $s_{i+1}\mathfrak{s},s_{i}s_{i+1}\mathfrak{s},s_{i+1}s_{i}s_{i+1}\mathfrak{s}\in \Std(\undla)$ 
if and only if $j,k$ are not adjacent in $\mathfrak{s}$ for any $j\neq k\in\{i,i+1,i+2\}$. Hence (1) holds.

 (2) Let $\mathfrak{s}\in \Std(\undla)$ and {\color{black}$i,j\in[1,n-1]$} such that $|i-j|>1$. Then by the above observation again, we obtain that $s_i\mathfrak{s},s_{j}s_i\mathfrak{s}\in \Std(\undla)$(or $s_{j}\mathfrak{s},s_{i}s_{j}\mathfrak{s}\in \Std(\undla)$) if and only if $i,i+1$ are not adjacent in $\mathfrak{s}$ and $j,j+1$ are not adjacent  in $\mathfrak{s}$.  Hence (2) holds. 
 	
	(3) For $\mathfrak{s}, \mathfrak{t}\in \Std(\undla),\,\tau\in\mathfrak{S}_n$ such that $\tau\mathfrak{s}=\mathfrak{t}$, we first claim: \begin{align}&\text{$\exists$ a reduced expression of $\tau=s_{i_k}\cdots s_{i_1}$, such that $s_{i_l}$ is}\nonumber\\
 			&\text{ admissible with respect to $s_{i_{l-1}}\cdots s_{i_1}\mathfrak{s}$, for $l=1,2,\cdots,k$. }\label{tiny claim}
 			\end{align}
We define $O(\mathfrak{s},\mathfrak{t})$ to be the maximal $m\leq n$ such that {\color{black}$1,2,\cdots,m-1$} in $\mathfrak{s}$ are at the same position as in $\mathfrak{t}$. Clearly $O(\mathfrak{s}, \mathfrak{t})=n$ if and only if $\mathfrak{s}=\mathfrak{t}$. Then we use induction downward on $O(\mathfrak{s},\mathfrak{t})$ to prove \eqref{tiny claim}. Actually, if $m=O(\mathfrak{s}, \mathfrak{t})<n$ and $\mathfrak{s}(i,j,l)=m$, then $\mathfrak{t}(i,j,l)=m'>m$. By our definition of $O(\mathfrak{s}, \mathfrak{t})$,  it is clear that for any $m\leq u<m'$, $u$ and $m'$ are not adjacent in $\mathfrak{t}$. This is equivalent to say each $s_u$ is admissible with respect to $s_{u}\cdots s_{m'-1}\mathfrak{t}$ for $m\leq u<m'$ and hence $s_{m}\cdots s_{m'-1}\mathfrak{t}\in\Std(\undla)$.  Note that $O(\mathfrak{s}, s_{m}\cdots s_{m'-1}\mathfrak{t})>m$.  By induction, for $\tau'\in\Sym_{\{m,m+1,\cdots,n\}}$ such that $\tau'\mathfrak{s}=s_{m}\cdots s_{m'-1}\mathfrak{t}$, {\color{black} the claim}\eqref{tiny claim} holds. That is, {\color{black} there exists} a reduced expression of $\tau=s_{i'_{k'}}\cdots s_{i'_1}$, such that $s_{i'_l}$ is admissible with respect to $s_{i'_{l-1}}\cdots s_{i'_1}'\mathfrak{s}$, for $l=1,2,\cdots,k'$. Combing with that $s_{m'-1}\cdots s_{m+1}s_{m}$ is a minimal left representative of $\Sym_{\{m,m+1,\cdots,n\}}$ in $\Sym_n$, we deduce that \eqref{tiny claim} holds for $\mathfrak{s},\,\mathfrak{t}$ and $\tau$. Now (3) follows from \eqref{tiny claim}, part (1), (2) of the Lemma  and Matsumoto's Lemma.
  \end{proof}
 
 \begin{cor}\label{bij}
 	There is a bijection $\psi:\, P(\undla)\rightarrow \Std(\undla)$.
 	\end{cor}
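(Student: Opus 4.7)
The plan is to take $\psi$ to be the obvious evaluation map $\psi(\tau) := \tau\cdot\mathfrak{t}^{\undla}$ and verify that Lemma~\ref{lem:admissible} already contains everything needed. First I would check that $\psi$ lands in $\Std(\undla)$: if $\tau = s_{k_t}\cdots s_{k_1}\in P(\undla)$, then by the very definition of $P(\undla)$ the partial products $s_{k_l}\cdots s_{k_1}\mathfrak{t}^{\undla}$ are standard for every $l$, so in particular the final tableau $\tau\mathfrak{t}^{\undla}$ is standard.

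For surjectivity, given any $\mathfrak{t}\in\Std(\undla)$, the symmetric group $\mathfrak{S}_n$ acts transitively on bijective fillings of the boxes of $\undla$ with $\{1,\ldots,n\}$, so there is a (unique) $\tau\in\mathfrak{S}_n$ with $\tau\mathfrak{t}^{\undla}=\mathfrak{t}$. Fix any reduced expression $\tau=s_{i_k}\cdots s_{i_1}$. By Lemma~\ref{lem:admissible}(3) applied with $\mathfrak{s}=\mathfrak{t}^{\undla}$, every $s_{i_l}$ is admissible with respect to $s_{i_{l-1}}\cdots s_{i_1}\mathfrak{t}^{\undla}$, so $\tau\in P(\undla)$, proving surjectivity.

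For injectivity, suppose $\psi(\tau_1)=\psi(\tau_2)$, i.e. $\tau_1\mathfrak{t}^{\undla}=\tau_2\mathfrak{t}^{\undla}$. Writing $\mathfrak{t}^{\undla}$ as a bijection $\undla\to\{1,\ldots,n\}$ and using the convention $(\sigma\mathfrak{t})(\alpha)=\sigma(\mathfrak{t}(\alpha))$, any $\sigma$ fixing $\mathfrak{t}^{\undla}$ must fix each value $k\in\{1,\ldots,n\}$; thus the action of $\mathfrak{S}_n$ on tableaux is free and we conclude $\tau_1=\tau_2$.

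Since the only non-trivial input is Lemma~\ref{lem:admissible}(3), which has already been proved, no new obstacle arises here; the corollary is essentially a restatement of that lemma in bijective form. The one point that deserves a line of comment is that elements of $P(\undla)$ are group elements (not reduced words): different admissible expressions may collapse to the same $\tau\in\mathfrak{S}_n$, but this is harmless for the map $\psi$ since only the value $\tau\mathfrak{t}^{\undla}$ enters.
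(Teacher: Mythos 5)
Your proposal is correct and matches the paper's own argument: the paper also defines $\psi(\tau):=\tau\mathfrak{t}^{\undla}$, notes that well-definedness and injectivity are immediate, and invokes Lemma~\ref{lem:admissible}(3) for surjectivity. You have merely filled in the routine details (membership in $\Std(\undla)$ via the partial products, and freeness of the $\mathfrak{S}_n$-action for injectivity) that the paper leaves implicit.
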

 	
 	\begin{proof}
 		  Set $\psi(\tau):=\tau\mathfrak{t}^{\undla}$.  Clearly, $\psi$ is a well-defined injective map. By Lemma \ref{lem:admissible} (3), $\psi$ is surjective.
 		\end{proof}

\section{Cyclotomic Hecke-Clifford superalgebras and separate parameters}\label{cyclotomic}
\subsection{Affine Hecke-Clifford algebra $\mHcn$}
Let $q\neq \pm 1$ be an invertible element in $\mathbb{K}$ and  set 
$$\epsilon=q-q^{-1}.$$
 It follows from  \cite{JN} that the non-degenerate affine Hecke-Clifford algebra $\mHcn$ is
the superalgebra over $\mathbb{K}$ generated by even generators
$T_1,\ldots,T_{n-1},X^{\pm 1}_1,\ldots,X^{\pm 1}_n$ and odd generators
$C_1,\ldots,C_n$ subject to the following relations
\begin{align}
T_i^2=\epsilon T_i+1,\quad T_iT_j =T_jT_i, &\quad
T_iT_{i+1}T_i=T_{i+1}T_iT_{i+1}, \quad|i-j|>1,\label{Braid}\\
X_iX_j&=X_jX_i, X_iX^{-1}_i=X^{-1}_iX_i=1 \quad 1\leq i,j\leq n, \label{Poly}\\
C_i^2=1,C_iC_j&=-C_jC_i, \quad 1\leq i\neq j\leq n, \label{Clifford}\\
T_iX_i&=X_{i+1}T_i-\epsilon(X_{i+1}+C_iC_{i+1}X_i),\label{PX1}\\
T_iX_{i+1}&=X_iT_i+\epsilon(1+C_iC_{i+1})X_{k+1},\label{PX2}\\
T_iX_j&=X_jT_i, \quad j\neq i, i+1, \label{PX3}\\
T_iC_i=C_{i+1}T_i, T_iC_{i+1}&=C_iT_i-\epsilon(C_i-C_{i+1}),T_iC_j=C_jT_i,\quad j\neq i, i+1, \label{PC}\\
X_iC_i=C_iX^{-1}_i, X_iC_j&=C_jX_i,\quad 1\leq i\neq j\leq n.
\label{XC}
\end{align}

For each permutation $w\in\mathfrak{S}_n$ with an reduced expression $w=s_{i_1}s_{i_2}\cdots s_{i_r}$ for some $1\leq i_1,\ldots,i_{r}\leq n-1$ with $r\geq 0$, there exits a element $T_w:=T_{i_1}\cdots T_{i_r}$ and it is independent of the choice of the reduced expression of $w$ due to the Braid relation \eqref{Braid}. For $\alpha=(\alpha_1,\ldots,\alpha_n)\in\mathbb{Z}^n$ and
$\beta=(\beta_1,\ldots,\beta_n)\in\mathbb{Z}_2^n$, set
$X^{\alpha}=X_1^{\alpha_1}\cdots X_n^{\alpha}$ and
$C^{\beta}=C_1^{\beta_1}\cdots C_n^{\beta_n}$. Then we have the
following.
\begin{lem}\cite[Theorem 2.3]{BK1}\label{lem:PBWNon-dege}
The set $\{X^{\alpha}C^{\beta}T_w~|~ \alpha\in\mathbb{Z}^n,
\beta\in\mathbb{Z}_2^n, w\in {\mathfrak{S}_n}\}$ forms a basis of $\mHcn$.
\end{lem}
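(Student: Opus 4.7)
The plan is a standard PBW-type argument divided into spanning and linear independence. For spanning, I would use the defining relations as a rewriting system to reduce any word in the generators to the canonical form $X^{\alpha}C^{\beta}T_w$, moving all $X$-factors to the left, then all $C$-factors, then leaving the $T_w$ on the right. Each of the cross relations --- \eqref{PX1}--\eqref{PX3} for $T_iX_j$, \eqref{PC} for $T_iC_j$, and \eqref{XC} for $X_iC_j$ --- allows one to swap adjacent generators at the cost of producing error terms with strictly fewer occurrences of $T$'s on the right or strictly shorter $T_w$. An induction on the lexicographic pair (number of $T$-factors, total word length) guarantees that the straightening terminates. The Braid relation \eqref{Braid} ensures that $T_w$ is independent of the choice of reduced expression, so the canonical form is well defined.

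For linear independence, I would construct a faithful module for $\mHcn$ on the superspace
\[
V = \mathbb{K}[X_1^{\pm 1},\ldots,X_n^{\pm 1}]\otimes \mathrm{Cl}(C_1,\ldots,C_n),
\]
letting $X_i$ act by multiplication, $C_i$ by Clifford multiplication (with the twist $X_iC_i = C_iX_i^{-1}$ coming from \eqref{XC}), and $T_i$ by a Demazure--Lusztig-type operator
\[
T_i := s_i + \epsilon\cdot R_i(X_i,X_{i+1},C_i,C_{i+1}),
\]
where $s_i$ simultaneously permutes $X_i\leftrightarrow X_{i+1}$ and $C_i\leftrightarrow C_{i+1}$, and the rational correction $R_i$ is forced by \eqref{PX1}--\eqref{PX2} and \eqref{PC}. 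Once this is shown to be a module, linear independence follows from a leading-term analysis: applying $X^{\alpha}C^{\beta}T_w$ to $1\in V$ produces $X^{w(\alpha)}\otimes C^{w(\beta)}\cdot w$ (viewed as a coefficient in the Schubert decomposition) plus correction terms supported on permutations of strictly smaller length, so distinct basis candidates act as linearly independent operators.

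The hard part will be the braid relation $T_iT_{i+1}T_i = T_{i+1}T_iT_{i+1}$ for the operators on $V$. The non-trivial Clifford contribution $C_iC_{i+1}X_i$ in \eqref{PX1} forces $R_i$ to contain Clifford-valued (not merely scalar) rational terms, and under the transpositions these interact with one another via \eqref{PC}; one must carefully reconcile the resulting rational identities in $X_i,X_{i+1},X_{i+2}$ with coefficients in $\mathrm{Cl}_n$. The quadratic relation $T_i^2=\epsilon T_i+1$ and the far-commutation relations are comparatively routine, as is the verification of \eqref{XC} and the mixed $TC$ and $TX$ relations on $V$. With every defining relation checked in $\mathrm{End}(V)$, the set $\{X^{\alpha}C^{\beta}T_w\}$ both spans (by the straightening step) and acts by linearly independent operators (by the leading-term step), giving the PBW basis.
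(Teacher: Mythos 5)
The paper offers no proof of this lemma: it is quoted verbatim from \cite[Theorem 2.3]{BK1}, so there is nothing internal to compare your argument against. Judged on its own terms, your proposal is the right \emph{kind} of argument (spanning by straightening, linear independence via a Demazure--Lusztig-type module) and the straightening half is fine, but the linear-independence half as written does not go through. Applying the operator $X^{\alpha}C^{\beta}T_w$ to the vector $1\in V$ gives, for every $w$, something of the form $X^{\alpha}C^{\beta}+(\text{lower order})$, because the leading part of each $T_i$ is $s_i$ and $s_i\cdot 1=1$; in particular the proposed ``leading term'' $X^{w(\alpha)}\otimes C^{w(\beta)}\cdot w$ is not an element of $V$ at all, and evaluation at $1$ simply cannot separate the different $w$'s. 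To make this work you must argue at the level of operators: extend scalars to the localization $\mathbb{K}(X_1,\dots,X_n)\otimes\mathrm{Cl}_n$, observe that each $T_w$ decomposes there as $\sum_{\sigma\le w} f_{w,\sigma}\,\sigma$ with $f_{w,w}$ invertible, and then use the freeness of the twisted group algebra $\mathbb{K}(X)\otimes\mathrm{Cl}_n\rtimes\Sym_n$ over $\mathbb{K}(X)\otimes\mathrm{Cl}_n$ to conclude that the operators $X^{\alpha}C^{\beta}T_w$ are linearly independent. That triangularity-in-$\mathrm{End}$ argument is what your ``Schubert decomposition'' is gesturing at, but it has to be carried out for the operators, not for their values at $1$.

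There are two further unfinished pieces. First, you explicitly defer the braid relation $T_iT_{i+1}T_i=T_{i+1}T_iT_{i+1}$ for your candidate operators, and rightly flag it as the hard step; but without it there is no module and hence no linear independence, so it cannot be left as a remark. The Clifford correction forces $R_i$ to be $\mathrm{Cl}_2$-valued (solving \eqref{PX1} and \eqref{PC} yields a term proportional to $C_iC_{i+1}$ with a pole along $X_iX_{i+1}=1$ in addition to the usual pole along $X_i=X_{i+1}$, as one can read off from \eqref{intertwinNon-dege} or \eqref{Operater1Non-dege}), and verifying the braid identity for these operators is a genuine rational-function-with-Clifford-coefficients computation that needs to appear. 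Second, because $R_i$ has poles, $T_i$ as you have set it up lands in a localization of $V$, not in $V$; you must either check the Demazure-type cancellation that brings $T_i(V)$ back into $V$, or simply work throughout in the localized module $\mathbb{K}(X_1,\dots,X_n)\otimes\mathrm{Cl}_n$, which is in fact the cleaner route and dovetails with the corrected linear-independence step above.
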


{\color{black} The next lemma was first established in \cite[Proposition 3.2]{JN}
(for the case $\mathbb{K}=\mathbb{C}$).
\begin{lem}\cite[Theorem 2.2]{BK1}\label{lem:affine-center}
The (super)center of $\mHcn$ consists of all symmetric polynomials in
$X_1+X_1^{-1},X_2+X_2^{-1},\ldots,X_n+X_n^{-1}$. 
\end{lem}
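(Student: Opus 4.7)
The plan is to verify both inclusions, with the main work lying in $Z(\mHcn) \subseteq \mathbb{K}[Y_1, \ldots, Y_n]^{\mathfrak{S}_n}$, where I set $Y_j := X_j + X_j^{-1}$.

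\emph{Easy inclusion.} For a symmetric polynomial $p = p(Y_1, \ldots, Y_n)$, commutation with each $X_k$ is immediate from \eqref{Poly}. Commutation with each $C_k$ follows from the identity $(X_j + X_j^{-1}) C_j = C_j X_j^{-1} + C_j X_j = C_j (X_j + X_j^{-1})$, obtained from \eqref{XC}, together with the fact that $Y_j$ commutes with $C_k$ for $j \neq k$. For commutation with $T_i$, Newton's identities reduce the problem to the elementary symmetric polynomials $e_k(Y_1, \ldots, Y_n)$; relation \eqref{PX3} further reduces this to the local claim that $T_i$ commutes with $Y_i + Y_{i+1}$ and with $Y_i Y_{i+1}$. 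Both claims are then established by direct (somewhat lengthy) computation using \eqref{PX1}--\eqref{PX2}: the Clifford-twisted corrections coming from the summands $X_i^{\pm 1}$ and $X_{i+1}^{\pm 1}$ cancel pairwise, precisely because these combinations are invariant under the swaps $X_i \leftrightarrow X_{i+1}$ and $X_j \leftrightarrow X_j^{-1}$.

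\emph{Hard inclusion.} Let $z \in Z(\mHcn)$. Using Lemma \ref{lem:PBWNon-dege}, write $z = \sum_{w \in \mathfrak{S}_n} f_w T_w$ with coefficients $f_w$ in the subalgebra $\mathcal{A}_n$ generated by $X_1^{\pm 1}, \ldots, X_n^{\pm 1}, C_1, \ldots, C_n$. The plan is a three-stage whittling-down argument. The first, most delicate stage is a leading-term argument showing $f_w = 0$ for $w \neq e$: iterated application of \eqref{PX1}--\eqref{PX3} yields an expansion
\[ T_w X_j \equiv h_{w,j}(X,C) \cdot T_w \pmod{\textstyle\sum_{\ell(u) < \ell(w)} \mathcal{A}_n T_u}, \]
where $h_{w,j}$ is a Clifford-twisted combination of $X_{w(j)}^{\pm 1}$. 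If $w_0$ has maximal length among $w$ with $f_{w_0} \neq 0$, then comparing the $T_{w_0}$-coefficients of $zX_j$ and $X_jz$ yields a nontrivial relation on $f_{w_0}$ inside $\mathcal{A}_n$; choosing $j$ with $w_0(j) \neq j$ forces $f_{w_0}=0$, a contradiction unless $w_0 = e$. Hence $z \in \mathcal{A}_n$.

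In the second stage, expand $z = \sum_{\beta \in \mathbb{Z}_2^n} g_\beta(X) C^\beta$. From \eqref{XC} one gets $X_j C^\beta = C^\beta X_j^{(-1)^{\beta_j}}$, so the condition $X_j z = z X_j$ reads $\sum_\beta g_\beta(X) C^\beta\bigl(X_j^{(-1)^{\beta_j}} - X_j\bigr) = 0$; by PBW independence of the $C^\beta$'s and the fact that $X_j - X_j^{-1}$ is not a zero-divisor in the Laurent polynomial ring, $g_\beta = 0$ whenever some $\beta_j = 1$, leaving $z = g(X) \in \mathbb{K}[X_1^{\pm 1}, \ldots, X_n^{\pm 1}]$. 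In the third stage, commutation with $C_j$ combined with $C_j X_j = X_j^{-1} C_j$ forces $g$ to be invariant under $X_j \mapsto X_j^{-1}$ for each $j$, so $g \in \mathbb{K}[Y_1, \ldots, Y_n]$; commutation with each $T_i$ then yields the missing $\mathfrak{S}_n$-symmetry via the Demazure-type analysis of $[T_i, g]$ using \eqref{PX1}--\eqref{PX2}.

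The main obstacle is the leading-term argument in the first stage: the Clifford generators inside each $f_w$ intertwine $X_j$ with $X_j^{-1}$ via \eqref{XC}, so the ``leading'' $T_{w_0}$-coefficient of $[z, X_j]$ is not a pure Laurent polynomial in $X_j$ but a Clifford-twisted hybrid of $X_j$ and $X_j^{-1}$. Arranging the bookkeeping so that this coefficient can nonetheless be shown to vanish only when $f_{w_0} = 0$ is the technical heart of the proof; the remaining two stages are largely routine manipulation with the defining relations.
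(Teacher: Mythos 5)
The paper offers no proof of this lemma --- it simply cites \cite[Theorem~2.2]{BK1} --- so there is no in-paper argument to compare your proposal against; your proposal fills a genuine gap. Your three-stage reduction for the hard inclusion (descend from $\mHcn$ to $\mathcal{A}_n$, then to the Laurent subalgebra, then to $\mathbb{K}[X_j+X_j^{-1}]$, then to $\mathfrak{S}_n$-invariants) is the standard argument and is sound. One small correction to Stage~1: iterating \eqref{PX1}--\eqref{PX3} gives $T_wX_j = X_{w(j)}T_w + (\text{lower-length terms})$ with a \emph{pure} Laurent leading coefficient, not a ``Clifford-twisted combination''; the twist arises only when $X_{w_0(j)}$ is commuted past the Clifford factor of $f_{w_0}$, which still produces the desired relation $g_\beta\bigl(X_j - X_{w_0(j)}^{\pm1}\bigr)=0$, forcing $g_\beta=0$ whenever $w_0(j)\neq j$. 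You should also observe that the odd part of a supercentral element is already killed after Stage~2, since Stages 1--2 force it into the purely even subalgebra $\mathbb{K}[X_1^{\pm1},\ldots,X_n^{\pm1}]$.

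A more substantive caution concerns the ``easy'' inclusion. You assert that a direct computation from \eqref{PX1}--\eqref{PX2} shows $T_i$ commutes with $Y_i+Y_{i+1}$ and $Y_iY_{i+1}$. If you carry that out against \eqref{PX2} exactly as printed, namely $T_iX_{i+1}=X_iT_i+\epsilon(1+C_iC_{i+1})X_{i+1}$, the Clifford corrections do \emph{not} cancel and you obtain $T_i(Y_i+Y_{i+1})-(Y_i+Y_{i+1})T_i = 2\epsilon\,C_iC_{i+1}(X_{i+1}-X_i)\neq 0$. The relation as printed carries a sign error (and the stray ``$k$'' is a separate typo); the correct form is $T_iX_{i+1}=X_iT_i+\epsilon(1-C_iC_{i+1})X_{i+1}$, as one can confirm from the degenerate analogue $s_ix_{i+1}=x_is_i+(1-c_ic_{i+1})$ implied by \eqref{px1}, or by verifying the intertwiner identities \eqref{Xinter}, or by checking the $\mHfcn$-module action in the proof of Theorem~\ref{Construction}. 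With the corrected sign your computation does go through. More robustly, you can avoid the hand computation altogether: since $\widetilde{\Phi}_i$ interchanges $Y_i\leftrightarrow Y_{i+1}$ and commutes with $C_iC_{i+1}$, one has $\mathsf{z}_i^2\,[T_i,p]=[\widetilde{\Phi}_i,p]=0$ for any $s_i$-invariant $p\in\mathbb{K}[Y_1,\ldots,Y_n]$, and $\mathsf{z}_i^2$ is not a zero-divisor; this same observation also delivers Stage~3 of the hard inclusion cleanly from $(s_ig-g)\widetilde{\Phi}_i=0$.
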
}

 Let $\mathcal{A}_n$ be the subalgebra generated by even generators $X^{\pm}_1,\ldots,X^{\pm 1}_n$ and odd generators $C_1,\ldots,C_n$. By Lemma~\ref{lem:PBWNon-dege}, $\mathcal{A}_n$ actually can be identified with the superalgebra generated by even generators $X^{\pm 1}_1,\ldots,X^{\pm 1}_n$ and odd generators $C_1,\ldots,C_n$ subject to relations \eqref{Poly}, \eqref{Clifford}, \eqref{XC}.

{\color{black}Recall that $\mathbb{K}$ is an algebraically closed field of characteristic different from $2$. For any $a\in \mathbb{K}$, we fix a solution of the equation $x^2=a$ and denote it by $\sqrt{a}$.} For any  $x \in \mathbb{K}^*$, we define \begin{align}\label{substitution0}
\mathtt{q}(x):=2\frac{qx+(qx)^{-1}}{q+q^{-1}}, \quad b_{\pm}(x):=\frac{\mathtt{q}(x)}{2}\pm \sqrt{\frac{\mathtt{q}(x)^2}{4}-1}.
\end{align} We remark that $\mathtt{q}(q^{2i})$ is the definition of $q(i)$ in \cite[(4.5)]{BK1}. Clearly, $\mathtt{b}_{\pm}(x)$ are exactly two solutions satisfying the equation $z+z^{-1}=\mathtt{q}(x)$ and moreover 
\begin{equation}\label{bpm}
\mathtt{b}_-(x)=\mathtt{b}_+(x)^{-1}. 
\end{equation}
One can easily check that $\mathtt{q}(x)=\mathtt{q}(y)$ for $x,y\in\mathbb{K}^*$ if and only if either $x=y$ or $xy=q^{-2}$. Or equivalently 
$\{\mathtt{b}_+(x),\mathtt{b}_-(x) \}\cap\{\mathtt{b}_+(y),\mathtt{b}_-(y) \}\neq\emptyset$ if and only if either $x=y$ or $xy=q^{-2}$. 

We define an equivalent relation $\sim$ on $\mathbb{K}^*$ by $x\sim y$ if and only if $x=y$ or $xy=q^{-2}$. Let $\mathcal{K}$ be the subset of $\mathbb{K}^*$  which contains exactly one representative element of $\sim$ and contains $\pm 1$ {\color{black}(hence $\pm q^{-2}$ are excluded).} 
Clearly, for $\iota_1\neq \iota_2\in \mathcal{K}$, we have $q(\iota_1)\neq q(\iota_2)$ and hence $b_{\pm}(\iota_1) \neq b_{\pm}(\iota_2) $. 
Moreover, 
\begin{equation}\label{Kinvt}
\{\mathtt{b}_\pm(\iota)|\iota\in\mathcal{K}\}=\mathbb{K}^*
\end{equation}
{\color{black} 
For each $x\in \mathbb{K}^*$, let $\mathbb{L}(x)$ be the $2$-dimensional
$\mathcal{A}_1$-module $\mathbb{L}(x)=\mathbb{K}v_0\oplus \mathbb{K}v_1$  with 
$$
X^{\pm 1}_1 v_0=\mathtt{b}_{\pm}(x)v_0,\quad X^{\mp 1}_1 v_1=\mathtt{b}_{\mp}(x)v_1, \quad
C_1v_0=v_1,\quad C_1v_1=v_0.
$$
Clearly $\mathbb{L}(x)\cong \mathbb{L}(y)$ if and only if $x=y$ or $xy=q^{-2}$. That is, $\mathbb{L}(x)\cong \mathbb{L}(y)$ if and only if $x\sim y$. Therefore for each $\iota\in \mathcal{K}$, there exits a  $2$-dimensional 
$\mathcal{A}_1$-module denoted by $\mathbb{L}(\iota)=\mathbb{K}v_0\oplus \mathbb{K}v_1$ with 
}
$$
X^{\pm 1}_1 v_0=\mathtt{b}_{\pm}(\iota)v_0,\quad X^{\mp 1}_1 v_1=\mathtt{b}_{\mp}(\iota)v_1, \quad
C_1v_0=v_1,\quad C_1v_1=v_0. 
$$

\begin{lem} 
The $\mathcal{A}_1$-module $\mathbb{L}(\iota)$  is irreducible of type $\texttt{M}$ if $\iota^2\neq 1 $,
and irreducible of type $\texttt{Q}$ if $\iota^2 = 1$. Moreover, $\{\mathbb{L}(\iota)|\iota\in\mathcal{K}\}$ is a complete set of pairwise non-isomorphic finite dimensional irreducible $\mathcal{A}_1$-module. 

\end{lem}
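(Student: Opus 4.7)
The plan is to establish, in order, that $\mathbb{L}(\iota)$ is a well-defined $\mathcal{A}_1$-supermodule, that it is irreducible, that its type is determined by $\iota^2$, and finally that distinct $\iota\in\mathcal{K}$ give non-isomorphic modules and that every finite-dimensional irreducible arises this way. First I would verify the defining relations of $\mathcal{A}_1$ on $\mathbb{L}(\iota)$: among \eqref{Poly}, \eqref{Clifford}, \eqref{XC} restricted to one index, all are immediate except the twist $X_1C_1=C_1X_1^{-1}$; applying it to $v_0$ and $v_1$ reduces to the identity $\mathtt{b}_+(\iota)\mathtt{b}_-(\iota)=1$, which is \eqref{bpm}. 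For irreducibility, $v_0$ is even and $v_1=C_1v_0$ is odd (as $C_1$ is odd), so any graded submodule $W$ decomposes as $W_{\bar{0}}\oplus W_{\bar{1}}$ with $W_{\bar{0}}\subseteq\mathbb{K}v_0$ and $W_{\bar{1}}\subseteq\mathbb{K}v_1$; since $C_1$ swaps $v_0\leftrightarrow v_1$, the only possibilities are $W=0$ or $W=\mathbb{L}(\iota)$.

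Next I would classify the type by computing the super endomorphism ring. Writing an arbitrary homogeneous endomorphism in the basis $(v_0,v_1)$, any even endomorphism preserves the grading and must commute with both $X_1$ and $C_1$, forcing it to be a scalar; hence the even part of $\text{End}_{\mathcal{A}_1}(\mathbb{L}(\iota))$ is always $1$-dimensional. An odd endomorphism sends $v_0\mapsto\alpha v_1$, $v_1\mapsto\beta v_0$; super-commutation with the odd generator $C_1$ gives $\beta=-\alpha$, and commutation with the even generator $X_1$ gives $\alpha(\mathtt{b}_+(\iota)-\mathtt{b}_-(\iota))=0$. Thus the odd part is nontrivial exactly when $\mathtt{b}_+(\iota)=\mathtt{b}_-(\iota)$, which by \eqref{bpm} amounts to $\mathtt{b}_+(\iota)^2=1$, and by the definition of $\mathtt{b}_\pm$ to $\iota^2=1$. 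Applying the superalgebra Schur lemma recalled in Section~\ref{preliminary} yields type $\texttt{M}$ when $\iota^2\neq 1$ and type $\texttt{Q}$ when $\iota^2=1$.

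For pairwise non-isomorphism, the $X_1$-spectrum on $\mathbb{L}(\iota)$ is exactly $\{\mathtt{b}_+(\iota),\mathtt{b}_-(\iota)\}$, which by the observation just before $\mathcal{K}$ determines the class of $\iota$ modulo $\sim$; since $\mathcal{K}$ is a transversal for $\sim$, distinct elements of $\mathcal{K}$ give non-isomorphic modules. For exhaustion, let $V$ be any finite-dimensional irreducible $\mathcal{A}_1$-supermodule and choose a nonzero homogeneous $X_1$-eigenvector $v\in V$ with eigenvalue $\mu\in\mathbb{K}^*$ (which exists since $\mathbb{K}$ is algebraically closed, $X_1$ is invertible, and $V$ has a nonzero homogeneous component on which $X_1$ acts). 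The relation $X_1C_1=C_1X_1^{-1}$ makes $C_1v$ an $X_1$-eigenvector of eigenvalue $\mu^{-1}$ and of parity opposite to $v$; it is nonzero since $C_1^2v=v\neq 0$. Hence $\mathbb{K}v+\mathbb{K}C_1v$ is a nonzero graded $\mathcal{A}_1$-submodule of $V$, so equals $V$ by irreducibility. Using \eqref{Kinvt} I pick the unique $\iota\in\mathcal{K}$ with $\mu\in\{\mathtt{b}_+(\iota),\mathtt{b}_-(\iota)\}$; matching eigenvectors then gives $V\cong\mathbb{L}(\iota)$.

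The whole argument is elementary; no single step constitutes a real obstacle. The one point requiring care throughout is to keep the $\mathbb{Z}_2$-grading honest so that submodules, endomorphisms, and the dimension count entering the super Schur lemma are all interpreted in the super sense, in particular so that the 1-dimensional subspaces $\mathbb{K}v_0$ and $\mathbb{K}v_1$ individually do not count as submodules of $\mathbb{L}(\iota)$.
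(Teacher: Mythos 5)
Your proposal is correct and follows essentially the same route as the paper: irreducibility from $C_1$ swapping the two homogeneous lines, type from whether $\mathtt{b}_+(\iota)=\mathtt{b}_-(\iota)$ (you make the endomorphism-ring computation explicit where the paper merely asserts it is easy), and exhaustion by taking a homogeneous $X_1$-eigenvector in an arbitrary finite-dimensional irreducible and comparing spectra. The only cosmetic difference is that the paper routes the classification through the auxiliary family $U(z)$, $z\in\mathbb{K}^*$, parameterized directly by the $X_1$-eigenvalue, whereas you work directly with $\mathbb{L}(\iota)$ and the equivalence $\sim$; the content is the same.
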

\begin{proof}
One can easily check the first statement holds since  $\mathtt{b}_+(\iota)=\mathtt{b}_-(\iota)=1$ in the case $\iota^2=1$.
Observe that for each $z\in \mathbb{K}^*$, there exists a $2$-dimensional
$\mathcal{A}_1$-module $U(z)=\mathbb{K}u_0\oplus \mathbb{K}u_1$  with 

$$
X^{\pm 1}_1 u_0=z^{\pm 1}u_0,\quad X^{\mp 1}_1 u_1=z^{\mp 1}u_1, \quad
C_1u_0=u_1,\quad C_1u_1=u_0.
$$
Moreover $U(z)\cong U(z')$ if and only if $z=z'$ or $z'=z^{-1}$. Hence $\mathbb{L}(\iota)=U(\mathtt{b}_+(\iota))\cong U(\mathtt{b}_-(\iota))$. 
Meanwhile given a finite dimensional irreducible $\mathcal{A}_1$-module $U$, there exists an eigenvector $u_0$ of $X_1$ on the action of the even space $U_{\bar{0}}$ with eigenvalue $z$ for some $z\in\mathbb{K}^*$. 
Then it is straightforward to check that $U\cong U(z)$.  Putting together, we obtain that $\{\mathbb{L}(\iota)|\iota\in\mathcal{K}\}$ is a complete set of pairwise non-isomorphic finite dimensional irreducible $\mathcal{A}_1$-module. 
\end{proof}
Clearly we have $$\mathcal{A}_n\cong \mathcal{A}_1\otimes\cdots\otimes \mathcal{A}_1.$$
{\color{black} For each $\underline{a}=(a_1,a_2,\ldots,a_n)\in(\mathbb{K}^*)^n$, set 
\begin{equation}\label{L-under-a}
\mathbb{L}(\underline{a})=\mathbb{L}(a_1)\circledast\mathbb{L}(a_2)\circledast\cdots\circledast\mathbb{L}(a_n), 
\end{equation}
 then $\mathbb{L}(\underline{a})\cong \mathbb{L}(\underline{b})$ if and only if $a_i\sim b_i$ for $1\leq i\leq n$. }
Then by Lemma \ref{tensorsmod}, we have the following result which can be view as a generalization of \cite[Lemma 4.8]{BK1}:

\begin{cor} \label{lem:irrepAn}
The $\mathcal{A}_n$-modules
$$
\{\mathbb{L}(\underline{\iota})=\mathbb{L}(\iota_1)\circledast
\mathbb{L}(\iota_2)\circledast\cdots\circledast
\mathbb{L}(\iota_n)|~\underline{\iota}=(\iota_1,\ldots,\iota_n)\in\mathcal{K}^n\}
$$
forms a complete set of pairwise non-isomorphic finite dimensional irreducible
$\mathcal{A}_n$-module. 
Moreover, denote by $\Gamma_0$ the number of $1\leq j\leq n$ with
$\iota_j^2=1$. Then $\mathbb{L}(\underline{\iota})$ is of type $\texttt{M}$ if
$\Gamma_0$ is even and type $\texttt{Q}$ if $\Gamma_0$ is odd.
Furthermore,
$\text{dim}~\mathbb{L}(\underline{\iota})=2^{n-\lfloor\frac{\Gamma_0}{2}\rfloor}$,
where $\lfloor\frac{\Gamma_0}{2}\rfloor$ denotes the greatest
integer less than or equal to $\frac{\Gamma_0}{2}$ .
\end{cor}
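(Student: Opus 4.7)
The plan is to proceed by induction on $n$, using the isomorphism $\mathcal{A}_n \cong \mathcal{A}_1 \otimes \cdots \otimes \mathcal{A}_1$ together with Lemma~\ref{tensorsmod} applied iteratively, with the base case $n=1$ being the preceding lemma.

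For the irreducibility, the type, and the dimension formula, I will track what happens as we tensor one factor $\mathbb{L}(\iota_j)$ at a time. Recall each $\mathbb{L}(\iota_j)$ is $2$-dimensional, and it is of type $\texttt{Q}$ precisely when $\iota_j^2=1$. The three cases of Lemma~\ref{tensorsmod} show: tensoring $\texttt{M}\circledast\texttt{M}$ yields irreducible type $\texttt{M}$ of dimension equal to the product, $\texttt{M}\circledast\texttt{Q}$ yields irreducible type $\texttt{Q}$ of dimension equal to the product, while $\texttt{Q}\circledast\texttt{Q}$ yields irreducible type $\texttt{M}$ of dimension \emph{half} of the product (since $V\boxtimes W\cong X\oplus\Pi X$). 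So the type of $\mathbb{L}(\underline{\iota})$ is determined by the parity of the number of type $\texttt{Q}$ factors, namely $\Gamma_0$: even $\Gamma_0$ pairs up the $\texttt{Q}$'s into $\texttt{M}$ blocks, while odd $\Gamma_0$ leaves one unpaired $\texttt{Q}$. Each of the $\lfloor\Gamma_0/2\rfloor$ $\texttt{Q}$-$\texttt{Q}$ pairings divides the na\"ive dimension $2^n$ by $2$, producing exactly $\dim\mathbb{L}(\underline{\iota})=2^{n-\lfloor\Gamma_0/2\rfloor}$.

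For pairwise non-isomorphism, I would use the fact that $X_i$ acts on $\mathbb{L}(\underline{\iota})$ only through the $i$-th tensor factor, where its eigenvalues are exactly $\{\mathtt{b}_+(\iota_i),\mathtt{b}_-(\iota_i)\}$. If $\mathbb{L}(\underline{\iota})\cong\mathbb{L}(\underline{\iota}')$, then the spectra of $X_i$ must agree for each $i$, forcing $\{\mathtt{b}_+(\iota_i),\mathtt{b}_-(\iota_i)\}=\{\mathtt{b}_+(\iota_i'),\mathtt{b}_-(\iota_i')\}$. By the discussion preceding \eqref{bpm} together with the fact that $\mathcal{K}$ picks exactly one representative of the equivalence relation $\sim$, this forces $\iota_i=\iota_i'$ for every $i$.

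For completeness, I invoke the final sentence of Lemma~\ref{tensorsmod}: every irreducible $\mathcal{A}_n$-module is a constituent of $V\boxtimes W$ for some irreducible modules $V$ of $\mathcal{A}_{n-1}$ and $W$ of $\mathcal{A}_1$ (under the identification $\mathcal{A}_n\cong\mathcal{A}_{n-1}\otimes\mathcal{A}_1$). By induction on $n$, $V$ is of the form $\mathbb{L}(\iota_1)\circledast\cdots\circledast\mathbb{L}(\iota_{n-1})$ and $W=\mathbb{L}(\iota_n)$ for some $\iota_j\in\mathcal{K}$, so any irreducible constituent of $V\boxtimes W$ is (a shift of) $\mathbb{L}(\underline{\iota})$, establishing the list is exhaustive. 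The main subtle point I expect is to phrase the non-isomorphism argument cleanly for the type $\texttt{Q}$ case, where one must be careful that $\mathbb{L}(\underline{\iota})$ is only defined up to the choice of irreducible component in the iterated $\circledast$; however, because the $X_i$-eigenvalue data is an intrinsic invariant of the module and does not depend on that choice, the argument goes through uniformly.
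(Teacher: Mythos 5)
Your proof is correct and follows the same approach the paper implicitly uses: the paper states this corollary without a detailed proof, simply citing Lemma~\ref{tensorsmod} (the iterated $\circledast$ construction), and your argument is precisely the expected elaboration — tracking types and dimensions under the three cases of that lemma, separating isomorphism classes via the $X_i$-eigenvalue data (which as you note is an invariant independent of the choice of irreducible constituent), and deducing exhaustiveness from the last sentence of the lemma by induction on $n$.
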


\begin{rem}\label{rem:Ltau1}
Following \cite[Remark 2.5]{Wa}, we observe that each permutation $\tau\in {\mathfrak{S}_n}$ defines a superalgebra
isomorphism $\tau:\mathcal{A}_n \rightarrow \mathcal{A}_n$ by mapping $X^{\pm 1} _k$ to
$X^{\pm 1}_{\tau(k)}$ and $C_k$ to  $C_{\tau(k)}$, for $1\leq k\leq n$. For
$\underline{\iota}\in\mathcal{K}^n$, the twist of the action of
$\mathcal{A}_n$ on $\mathbb{L}(\underline{\iota})$ with
$\tau^{-1}$ leads to a new $\mathcal{A}_n$-module denoted by
$\mathbb{L}(\underline{\iota})^{\tau}$ with
$$
\mathbb{L}(\underline{\iota})^{\tau}=\{z^{\tau}~|~z\in \mathbb{L}(\underline{\iota})\} ,\quad
fz^{\tau}=(\tau^{-1}(f)z)^{\tau}, \text{ for any }f\in
\mathcal{A}_n, z\in \mathbb{L}(\underline{\iota}).
$$
So in particular we have 
\begin{equation}\label{X-z-tau}
(X^{\pm 1} _kz)^{\tau}=X^{\pm 1}_{\tau(k)}z^{\tau}, 
(C_kz)^{\tau}=C_{\tau(k)}z^{\tau}
\end{equation}
 for each $1\leq k\leq n$. It is easy to see that $\mathbb{L}(\underline{\iota})^{\tau}\cong \mathbb{L}(\tau\cdot \underline{\iota})$, where
$\tau\cdot \underline{\iota}=(\iota_{\tau^{-1}(1)},\ldots,\iota_{\tau^{-1}(n)})$
for $\underline{\iota}=(\iota_1,\ldots,\iota_n)\in\mathcal{K}^n$ and $\tau\in {\mathfrak{S}_n}$. Moreover it is straightforward to show that the following holds 
\begin{equation}\label{L-tau-sigma}
((\mathbb{L}(\underline{\iota}))^\tau)^\sigma\cong \mathbb{L}(\underline{\iota})^{\sigma\tau}. 
\end{equation}

\end{rem}

\subsection{Intertwining elements for $\mHcn$}
Given $1\leq i<n$, we define the intertwining element $\widetilde{\Phi}_i$ in $\mHcn$ as follows: 
\begin{equation} \label{eq:zi}
\mathsf{z}_i:= (X_i+X^{-1}_i)-(X_{i+1}+X^{-1}_{i+1})= X^{-1}_i(X_iX_{i+1}-1)(X_iX^{-1}_{i+1}-1),
\end{equation}
\begin{equation}\label{intertwinNon-dege}
\widetilde{\Phi}_i:=\mathsf{z}^2_i T_i+\epsilon\frac{\mathsf{z}^2_i}{X_i X^{-1}_{i+1}-1}-\epsilon\frac{\mathsf{z}^2_i}{X_i X_{i+1}-1}C_i C_{i+1}.
\end{equation} These elements satisfy the following properties (cf. \cite[(3.7),Proposition 3.1]{JN} and \cite[(4.11)-(4.15)]{BK1}) 
\begin{align}
\widetilde{\Phi}^2_i&=\mathsf{z}^2_i\bigl(\mathsf{z}^2_i-\epsilon^2 (X^{-1}_i X^{-1}_{i+1}(X_i X_{i+1}-1)^2-X^{-1}_iX_{i+1}(X_i X^{-1}_{i+1}-1)^2)\bigr)\label{Sqinter},\\
\widetilde{\Phi}_i X^{\pm 1}_i&=X^{\pm 1}_{i+1}\widetilde{\Phi}_i, \widetilde{\Phi}_iX^{\pm 1}_{i+1}=X^{\pm 1}_i\tilde{\Phi}_i,
\widetilde{\Phi}_i X^{\pm 1}_l=X^{\pm 1}_l\widetilde{\Phi}_i \label{Xinter},\\
\widetilde{\Phi}_i C_i&=C_{i+1}\widetilde{\Phi}_i, \widetilde{\Phi}_i C_{i+1}=C_i \widetilde{\Phi}_i,
\widetilde{\Phi}_iC_l=C_l\widetilde{\Phi}_i \label{Cinter},\\
\widetilde{\Phi}_j \widetilde{\Phi}_i&=\widetilde{\Phi}_i \widetilde{\Phi}_j,
\widetilde{\Phi}_i\tilde{\Phi}_{i+1}\widetilde{\Phi}_i=\widetilde{\Phi}_{i+1}\widetilde{\Phi}_i , \widetilde{\Phi}_{i+1}\label{Braidinter}
\end{align}
for all admissible $i,j,l$ with $l\neq i, i+1$ and $|j-i|>1$. {\color{black} Observe that we can rewrite $\widetilde{\Phi}^2_i$ as 
$$
\widetilde{\Phi}^2_i=\mathsf{z}^4_i\epsilon^2\bigl(\frac{1}{\epsilon^2}-\frac{X_iX_{i+1}^{-1}}{(X_iX_{i+1}^{-1}-1)^2}-\frac{X_iX_{i+1}}{(X_iX_{i+1}-1)^2}\bigr). 
$$
Inspired by the above formula, }
{\color{black}for any pair of $(x,y)\in (\mathbb{K}^*)^2$, we consider the following condition 
\begin{align}\label{invertible}
	\frac{x^{-1}y}{(x^{-1}y-1)^2}+\frac{xy}{(xy-1)^2}=\frac{1}{\epsilon^2}.
\end{align}
According to \cite{JN}, via the substitution 
\begin{align}\label{substitute}
	x+x^{-1}=2\frac{qu+q^{-1}u}{q+q^{-1}}=\mathtt{q}(u),\qquad\qquad y+y^{-1}=2\frac{qv+q^{-1}v^{-1}}{q+q^{-1}}=\mathtt{q}(v)
\end{align} 
the condition \eqref{invertible}  is  equivalent to the condition which states that $u,v$ satisfy one of the following four equations 
\begin{align}\label{invertible2}
	v=q^2u,\quad v=q^{-2}u,\quad v=u^{-1},\quad v=q^{-4}u^{-1}, 
\end{align}
}
{\color{black} which will be useful later on in the construction of simple modules.}
\subsection{Cyclotomic Hecke-Clifford algebra $\mHfcn$}
To define the cycltomic Hecke-Clifford algebra $\mHfcn$, we need to take a $f=f(X_1)\in \mathbb{K}[X_1^\pm]$ satisfying \cite[(3.2)]{BK1}. Since we are working over algebraically closed field $\mathbb{K}$,  it is straightforward to check that $f(X_1)\in \mathbb{K}[X_1^\pm]$ satisfying \cite[(3.2)]{BK1} must be one of the following four forms:
$$\begin{aligned}
f^{(\mathsf{0})}_{\underline{Q}}&=\prod_{i=1}^m\biggl[\biggl(X_1+X^{-1}_1-\mathtt{q}(Q_i)\biggr)\biggr];\\
f^{\mathsf{(s)}}_{\underline{Q}}&=(X_1-1)\prod_{i=1}^m\biggl[\biggl(X_1+X^{-1}_1-\mathtt{q}(Q_i)\biggr)\biggr]=(X_1-1)f^{(\mathsf{0})}_{\underline{Q}};\\
f^{\mathsf{(ss})}_{\underline{Q}}&={\color{black} (X_1+1)(X_1-1)\prod_{i=1}^m\biggl[\bigl(X_1+X^{-1}_1-\mathtt{q}(Q_i)\biggr)\biggr]=(X_1+1)(X_1-1)f^{(\mathsf{0})}_{\underline{Q}};}\\
f^{\mathsf{(s')}}_{\underline{Q}}&=(X_1+1)\prod_{i=1}^m\biggl[\biggl(X_1+X^{-1}_1-\mathtt{q}(Q_i)\biggr)\biggr]=(X_1+1)f^{(\mathsf{0})}_{\underline{Q}},\\
\end{aligned}$$
for some $m\geq 0$ and $\underline{Q}=(Q_1,\cdots,Q_m)$ with $Q_1,\ldots,Q_m\in \mathbb{K}^*$. Here use the convention $f^{(\mathsf{0})}_{\underline{Q}}=1$ when $m=0$.

The non-degenerate cyclotomic Hecke-Clifford algebra $\mHfcn$ is defined as $$\mHfcn:=\mHcn/\mathcal{I}_f,
 $$ where $\mathcal{I}_f$ is the two sided ideal of $\mHcn$ generated by $f$. We shall denote the image of $X^{\alpha}, C^{\beta}, T_w$ in the cyclotomic quotient $\mHfcn$ still by the same symbol. Recall that degree of a Laurant polynomial $f=f(X_1)=\sum_{k=s}^ta_kX_1^k$ with $s\leq t$ and $a_s\neq 0, a_t\neq 0$ is $\deg(f)=t-s$.  Then we have the following due to \cite{BK1}. 
 \begin{lem}\cite[Theorem 3.6]{BK1}\label{lem:dim-Hf}
 	The set $\{X^{\alpha}C^{\beta}T_w~|~ \alpha\in\{0,1,\cdots,r-1\}^n,
 	\beta\in\mathbb{Z}_2^n, w\in {\mathfrak{S}_n}\}$ forms a basis of $\mHfcn$, where $r=\deg(f)$. 
 \end{lem}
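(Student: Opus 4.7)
The plan is to prove this in two stages: spanning and linear independence.

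For spanning, the pivotal step is to show $f(X_i) \in \mathcal{I}_f$ for every $1 \le i \le n$; once this is available, the invertibility of $X_i$ together with the fact that $f$ has Laurent support of length $r+1$ lets us reduce any Laurent power $X_i^{\alpha_i}$ modulo $\mathcal{I}_f$ to a $\mathbb{K}$-linear combination of $X_i^0, X_i^1, \ldots, X_i^{r-1}$. I would prove $f(X_i) \in \mathcal{I}_f$ by induction on $i$, the base case $i = 1$ being definitional. For the inductive step, use the specific form of $f$ (a product of factors of the types $X_1 + X_1^{-1} - \mathtt{q}(Q_j)$, possibly $X_1 - 1$, and possibly $X_1 + 1$) together with relations \eqref{PX1}--\eqref{PX3}, \eqref{PC} and \eqref{XC} to show $T_i f(X_i) T_i^{-1} \equiv f(X_{i+1}) \pmod{\mathcal{I}_f}$. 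Combined with the PBW basis of $\mHcn$ from Lemma~\ref{lem:PBWNon-dege} and the standard commutation moves to collect all $X^\alpha C^\beta$-factors on the left of $T_w$, this yields the spanning conclusion.

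For linear independence, I would exhibit a faithful $\mHfcn$-module of the required dimension $2^n n! r^n$. A natural candidate is the induced module $M := \mHcn \otimes_{\mathcal{A}_n} V$, where $V$ is an $\mathcal{A}_n$-module of dimension $2^n r^n$ annihilated by $f(X_j)$ for every $j$. Such a $V$ can be built as a direct sum of suitable irreducibles $\mathbb{L}(\underline{\iota})$ from Corollary~\ref{lem:irrepAn}, choosing the $\iota_j$'s so that $X_1 + X_1^{-1} - \mathtt{q}(Q_k)$ vanishes for some $k$ (and including the $\iota_j = \pm 1$ cases in the $\mathsf{s}$ and $\mathsf{ss}$ types). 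By the PBW basis of $\mHcn$ one has $\dim M \le 2^n n! r^n$; verifying the bound is attained, i.e.\ that the vectors $T_w \otimes v$ are linearly independent for $w \in \mathfrak{S}_n$ and $v$ ranging over a basis of $V$, then forces the proposed spanning set to be linearly independent in $\mHfcn$.

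The main obstacle is the inductive propagation $f(X_i) \in \mathcal{I}_f \Rightarrow f(X_{i+1}) \in \mathcal{I}_f$. In the purely Hecke-algebra setting this would be a routine commutation, but here the Clifford relation \eqref{XC}, $X_i C_i = C_i X_i^{-1}$, is precisely what forces $f$ to be a Laurent polynomial in $X_1 + X_1^{-1}$ times at most $(X_1 - 1)(X_1 + 1)$; and the cross-relations \eqref{PX1}--\eqref{PX2} introduce $\epsilon C_i C_{i+1}$-twists that must be reabsorbed modulo $\mathcal{I}_f$. This requires a careful case-by-case analysis of the admissible forms of $f$, together with systematic use of the centrality of the symmetric polynomials in the $X_j + X_j^{-1}$ from Lemma~\ref{lem:affine-center}.
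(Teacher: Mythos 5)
The paper does not prove this statement; it simply cites \cite[Theorem 3.6]{BK1}. Your two-stage outline (spanning, then linear independence via a module) is the right overall shape, but both steps have genuine gaps as written. On the spanning side, the claimed conjugation $T_i f(X_i) T_i^{-1}\equiv f(X_{i+1})\pmod{\mathcal{I}_f}$ is not correct: already for $f(X_1)=X_1-1$, relation \eqref{PX1} gives $T_i(X_i-1)T_i^{-1}=(X_{i+1}-1)-\epsilon(X_{i+1}+C_iC_{i+1}X_i)T_i^{-1}$, and the error term is not visibly in $\mathcal{I}_f$. The clean commutation identity is the intertwiner one, $\widetilde{\Phi}_i f(X_i)=f(X_{i+1})\widetilde{\Phi}_i$ from \eqref{Xinter}, but $\widetilde{\Phi}_i$ is a zero-divisor, so $f(X_i)\in\mathcal{I}_f$ only yields $f(X_{i+1})\widetilde{\Phi}_i\in\mathcal{I}_f$, not $f(X_{i+1})\in\mathcal{I}_f$. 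Closing that gap is the actual content of the spanning step in \cite{BK1}.

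The deeper gap is in linear independence. You take for granted that $M=\mHcn\otimes_{\mathcal{A}_n}V$ descends to a $\mHfcn$-module whenever $f(X_j)V=0$ for all $j$. That is not automatic: $f(X_1)$ does not commute past $T_w$, so $f(X_1)(T_w\otimes v)$ equals $T_w\otimes f(X_{w^{-1}(1)})v$ only up to lower-order terms in the Bruhat filtration, and the $\mathcal{A}_n$-coefficients of those correction terms need not lie in the ideal $(f(X_1),\ldots,f(X_n))$ of $\mathcal{A}_n$. Proving $\mathcal{I}_f M=0$ here is essentially equivalent to the structural statement you are after. There is also a dimension bookkeeping error: for $\bullet=\mathsf{0}$ a direct sum of \emph{distinct} irreducibles $\mathbb{L}(\underline{\iota})$ with each $\iota_j\sim Q_k$ has dimension $2^n m^n$, not $2^n r^n=2^n(2m)^n$, because each $\mathbb{L}(Q_k)$ already carries both eigenvalues $\mathtt{b}_\pm(Q_k)$ of $X_1$; to reach $2^n r^n$ one must take $V=\mathcal{A}_n/(f(X_1),\ldots,f(X_n))$, which is in general not semisimple and not a sum of the $\mathbb{L}(\underline{\iota})$. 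These are precisely the hard points of \cite[\S 3]{BK1} and cannot be absorbed into a case-by-case check.
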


Note that the map 
\begin{align*}
\tau: \mHcn\rightarrow \mHcn, \quad C_l\mapsto C_l, X_l\mapsto -X_l, T_j\mapsto T_j
\end{align*}
gives an algebra automorphism on $\mHcn$ and moreover $\tau(f^{\mathsf{(s')}}_{\underline{Q}})=f^{\mathsf{(s)}}_{\underline{-Q}}$, where $\underline{-Q}=(-Q_1,-Q_2,\ldots,-Q_m)$. This means the study of the representation theory of $\mHfcn$ for $f$ being of the form $f^{\mathsf{(s')}}_{\underline{Q}}$ is equivalent to that of $\mHfcn$ for $f$ being of the form $f^{\mathsf{(s)}}_{\underline{Q}}$. So it suffices to consider the first three situations. 


From now on, we fix $m\geq 0$ and $\underline{Q}=(Q_1,Q_2,\ldots,Q_m)\in(\mathbb{K}^*)^m$ and let $f=f^{\mathsf{(0)}}_{\underline{Q}}$ or $f=f^{\mathsf{(s)}}_{\underline{Q}}$ or $f=f^{\mathsf{(ss)}}_{\underline{Q}}$. Setting $r=\deg(f),$ then 
 $$\begin{aligned}
 f=\begin{cases}
     f^{\mathsf{(0)}}_{\underline{Q}}=\prod_{i=1}^m\biggl[\biggl(X_1+X^{-1}_1-\mathtt{q}(Q_i)\biggr)\biggr] , & \mbox{if $r=\deg(f)=2m$  };  \\
      f^{\mathsf{(s)}}_{\underline{Q}}=(X_1-1)\prod_{i=1}^m\biggl[\biggl(X_1+X^{-1}_1-\mathtt{q}(Q_i)\biggr)\biggr] , & \mbox{if $r=\deg(f)=2m+1$  };  \\
     f^{\mathsf{(ss)}}_{\underline{Q}} =(X_1+1)(X_1-1)\prod_{i=1}^m\biggl[\biggl(X_1+X^{-1}_1-\mathtt{q}(Q_i)\biggr)\biggr], & \mbox{if $r=\deg(f)=2m+2$ }.
    \end{cases} 
    \end{aligned}$$ 
We set $Q_0=Q_{0_+}=1, Q_{0_-}=-1$.
\begin{defn} Suppose  $\undla\in\mathscr{P}^{\bullet,m}_{n}$ with $\bullet\in\{\mathsf{0},\mathsf{s},\mathsf{ss}\}$ and $(i,j,l)\in \undla$, we define the residue of box $(i,j,l)$ with respect to the parameter $\undQ$ as follows:
\begin{equation}\label{eq:residue}
\res(i,j,l):=Q_lq^{2(j-i)}.
\end{equation}
If $\mathfrak{t}\in \Std(\undla)$ and $\mathfrak{t}(i,j,l)=a$, we set 
\begin{align}
        \res_\mathfrak{t}(a)&:=Q_lq^{2(j-i)};\label{resNon-dege-1}\\
\res(\mathfrak{t})&:=(\res_\mathfrak{t}(1),\cdots,\res_\mathfrak{t}(n)),\label{resNon-dege-2}\\
       \mathtt{q}(\res(\mathfrak{t}))&:=(\mathtt{q}(\res_{\mathfrak{t}}(1)), \mathtt{q}(\res_{\mathfrak{t}}(2)),\ldots, \mathtt{q}(\res_{\mathfrak{t}}(n))). \label{resNon-dege-3}
       \end{align}
\end{defn}
{\color{black} Recall the irreducible  $\mathcal{A}_n$-module $\mathbb{L}(\res(\mathfrak{t}))$ defined in \eqref{L-under-a} via $\underline{a}=\res(\mathfrak{t})$.} The following lemma follows directly from \eqref{resNon-dege-2} and Corollary \ref{lem:irrepAn}. 
\begin{lem}\label{lem:eigen-Xk}
Let  $\undla\in\mathscr{P}^{\bullet,m}_{n}$ with $\bullet\in\{\mathsf{0},\mathsf{s},\mathsf{ss}\}$. Suppose $\mathfrak{t}\in\Std(\undla)$. The eigenvalue of $X_k$ acting on the $\mathcal{A}_n$-module $\mathbb{L}(\res(\mathfrak{t}))$ is $\mathtt{b}_{\pm}(\res_{\mathfrak{t}}(k)$ for each $1\leq k\leq n$. 
Hence, the eigenvalue of $X_k+X_k^{-1}$ acting on the $\mathcal{A}_n$-module $\mathbb{L}(\res(\mathfrak{t}))$ is $\mathtt{q}(\res_{\mathfrak{t}}(k))$ for each $1\leq k\leq n$. 

\end{lem}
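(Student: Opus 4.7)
The plan is to unwind the definitions and apply Corollary \ref{lem:irrepAn} factor-by-factor. By \eqref{L-under-a} and \eqref{resNon-dege-2}, one has
$$\mathbb{L}(\res(\mathfrak{t})) \;=\; \mathbb{L}(\res_{\mathfrak{t}}(1)) \circledast \mathbb{L}(\res_{\mathfrak{t}}(2)) \circledast \cdots \circledast \mathbb{L}(\res_{\mathfrak{t}}(n)),$$
which realizes $\mathbb{L}(\res(\mathfrak{t}))$ as an $\mathcal{A}_n = \mathcal{A}_1 \otimes \cdots \otimes \mathcal{A}_1$-module in which $X_k^{\pm 1}$ and $C_k$ act only through the $k$-th tensor factor. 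First I would recall from the explicit formulas defining $\mathbb{L}(x)$ that $X_1^{\pm 1}$ acts on the two-dimensional module $\mathbb{L}(\res_{\mathfrak{t}}(k))$ with eigenvalues $\mathtt{b}_{\pm}(\res_{\mathfrak{t}}(k))$ on the basis vectors $v_0, v_1$ respectively. Transporting this through the tensor product identification immediately gives the first assertion: the eigenvalues of $X_k$ on $\mathbb{L}(\res(\mathfrak{t}))$ are precisely $\mathtt{b}_{\pm}(\res_{\mathfrak{t}}(k))$.

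For the second assertion I would simply invoke \eqref{bpm}, namely $\mathtt{b}_-(x) = \mathtt{b}_+(x)^{-1}$. Consequently an $X_k$-eigenline in $\mathbb{L}(\res(\mathfrak{t}))$ carrying eigenvalue $\mathtt{b}_+(\res_{\mathfrak{t}}(k))$ is automatically an $X_k^{-1}$-eigenline with eigenvalue $\mathtt{b}_-(\res_{\mathfrak{t}}(k))$, and the pair of eigenvalues is swapped on the companion line. On either line the operator $X_k + X_k^{-1}$ therefore acts by the single scalar $\mathtt{b}_+(\res_{\mathfrak{t}}(k)) + \mathtt{b}_-(\res_{\mathfrak{t}}(k))$, which equals $\mathtt{q}(\res_{\mathfrak{t}}(k))$ by the defining property of $\mathtt{b}_{\pm}$ in \eqref{substitution0}.

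This argument is essentially definition-chasing, so no substantive obstacle is expected. The only bookkeeping point is to keep track of the pairing of the two $X_k$-eigenvalues with the two basis vectors in the $k$-th tensor factor; however the symmetric combination $X_k + X_k^{-1}$ is insensitive to this choice and collapses to the single scalar $\mathtt{q}(\res_{\mathfrak{t}}(k))$, as asserted.
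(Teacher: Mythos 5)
Your proof is correct and is exactly the unwinding the paper has in mind: the paper simply states that the lemma "follows directly from \eqref{resNon-dege-2} and Corollary \ref{lem:irrepAn}," and your factor-by-factor tensor argument together with the identities $\mathtt{b}_-(x)=\mathtt{b}_+(x)^{-1}$ and $\mathtt{b}_+(x)+\mathtt{b}_-(x)=\mathtt{q}(x)$ from \eqref{substitution0} and \eqref{bpm} is precisely that computation made explicit.
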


{\color{black}\subsection{Separate parameters.}

{\color{black}Recall the polynomial$P_{\mathscr{H}}(v,\underline{Q})$  introduced in \cite{Ar1}}. It's easy to check that $P_{\mathscr{H}}(v,\underline{Q})\neq 0$ if and only if {\color{black}the following holds for any $\undla\in\mathscr{P}^{m}_{n+1}$ and any $\mathfrak{t}\in\Std(\undla)$: 
\begin{equation}\label{Ar-separate}
\textbf{$\res_{\mathfrak{t}}(k)\neq\res_{\mathfrak{t}}(k+1)$} \begin{matrix}\textbf{ for any $k=1,\cdots,n.$ }
\end{matrix}
\end{equation}
In the rest of this section, analogous to \eqref{Ar-separate} we shall introduce a separate condition on the choice of the parameters $(q,\underline{Q})$} and $f=f^{(\bullet)}_{\underline{Q}}$ with $\bullet\in\{\mathtt{0},\mathtt{s},\mathtt{ss}\}$ and $r=\deg(f)$. Let $[1,n]:=\{1,2,\ldots,n-1\}$. 
\begin{defn}\label{defn:separate}
Let {\color{black}  $\bullet\in\{\mathsf{0},\mathsf{s},\mathsf{ss}\}$ and $\undQ=(Q_1,\ldots,Q_m)$.  Assume $\undla\in\mathscr{P}^{\bullet,m}_{n}$.} Then $(q,\undQ)$ is said to be {\em separate} with respect to $\undla$ if for any $\mathfrak{t}\in \undla$, the $\mathtt{q}$-sequence for $\mathfrak{t}$ defined via \eqref{resNon-dege-3} satisfy the following condition:
{\color{black}
$$
\mathtt{q}(\res_{\mathfrak{t}}(k))\neq\mathtt{q}(\res_{\mathfrak{t}}(k+1)) \text{ for any } k=1,\cdots,n-1. 
$$
}
\end{defn}

\begin{lem}\label{separate in residue}
Let {\color{black}  $\bullet\in\{\mathsf{0},\mathsf{s},\mathsf{ss}\}$ and $\undQ=(Q_1,\ldots,Q_m)$.  Assume $\undla\in\mathscr{P}^{\bullet,m}_{n}$.} Then $(q,\undQ)$ is separate with respect to $\undla$ if and only if for any $\mathfrak{t}\in \undla$ and $k=1,\cdots,n-1,$ 
{\color{black}
 $$
 \res_{\mathfrak{t}}(k)\neq \res_{\mathfrak{t}}(k+1)\text{ and }\res_{\mathfrak{t}}(k)\res_{\mathfrak{t}}(k+1)q^2\neq 1.
 $$}
\end{lem}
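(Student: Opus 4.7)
The statement is essentially a rephrasing of the separate condition in terms of residues rather than $\mathtt{q}$-values, so my plan is to unpack Definition \ref{defn:separate} and apply the earlier observation about when two $\mathtt{q}$-values coincide.

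First, I recall from the discussion just after \eqref{substitution0} the key elementary fact that for $x,y\in\mathbb{K}^*$, one has $\mathtt{q}(x)=\mathtt{q}(y)$ if and only if either $x=y$ or $xy=q^{-2}$. This is the only nontrivial ingredient and it has already been established, so the lemma reduces to a bookkeeping exercise.

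Next, I would fix $\mathfrak{t}\in\Std(\undla)$ and $k\in\{1,\ldots,n-1\}$, and apply the contrapositive. By Definition \ref{defn:separate}, the separate condition asserts $\mathtt{q}(\res_{\mathfrak{t}}(k))\neq \mathtt{q}(\res_{\mathfrak{t}}(k+1))$. Setting $x=\res_{\mathfrak{t}}(k)$ and $y=\res_{\mathfrak{t}}(k+1)$ in the elementary fact, this inequality is equivalent to the simultaneous failure of both $x=y$ and $xy=q^{-2}$, that is,
\begin{equation*}
\res_{\mathfrak{t}}(k)\neq \res_{\mathfrak{t}}(k+1)\quad\text{and}\quad \res_{\mathfrak{t}}(k)\res_{\mathfrak{t}}(k+1)\neq q^{-2}.
\end{equation*}
Since $q\in\mathbb{K}^*$, the second condition can be rewritten as $\res_{\mathfrak{t}}(k)\res_{\mathfrak{t}}(k+1)q^2\neq 1$, which is exactly the condition in the lemma.

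Finally, since these equivalences hold for every $\mathfrak{t}\in\Std(\undla)$ and every $k$, quantifying over all such $\mathfrak{t}$ and $k$ gives the stated biconditional. There is no real obstacle: the only care needed is to ensure that $q^{-2}$ is well-defined (which is guaranteed by $q\neq\pm 1\in\mathbb{K}^*$) and that one is working with the correct statement of the $\mathtt{q}$-coincidence criterion recalled above.
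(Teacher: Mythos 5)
Your proof is correct and follows the same approach as the paper: both rely on the single elementary fact that $\mathtt{q}(x)=\mathtt{q}(y)$ if and only if $x=y$ or $xyq^2=1$, and then unwind Definition~\ref{defn:separate} accordingly. The paper states this in one line; you merely spell out the bookkeeping more explicitly.
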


\begin{proof}
By \eqref{substitution0}, we have $\mathtt{q}(x)=\mathtt{q}(y)$ if and only if $x=y$ or $xyq^2=1$. This proves the Lemma.
	\end{proof}

Recall that $\undQ=(Q_1,\ldots,Q_m)\in (\mathbb{K}^*)^n$ and  $\pm 1\neq q \in \mathbb{K}^*$. Then for any $n\in \N$, we define $P^{\bullet}_{n}(q^2,\undQ)$ as follows:
    
         $$\begin{aligned}
 P_n^{(\bullet)}(q^2,\undQ):=\begin{cases}
    \prod\limits_{t=1}^{n}\bigl(q^{2t}-1\bigr)\prod\limits_{i=1}^m\biggl(\prod\limits_{t=3-n}^{n-1}\bigl(Q^2_i-q^{-2t}\bigr)\prod\limits_{t=1-n}^{n}\bigl(Q^2_i-q^{-4t}\bigr)\biggr)&\\
     \cdot \prod\limits_{1\leq i<i'\leq m}\biggl(\prod\limits_{t=1-n}^{n-1}\bigl({Q_i}-Q_{i'}q^{-2t}\bigr)
   \bigl(Q_iQ_{i'}-q^{-2(t+1)}\bigr)\biggr), \quad \mbox{if $\bullet=\mathsf{0}$ };  \\
    &\\
        \prod\limits_{t=1}^{n}\biggl(\bigl(q^{2t}-1\bigr)\bigl(q^{2t}+1\bigr)\biggr)\prod\limits_{i=1}^m\biggl(\prod\limits_{t=3-n}^{n-1}\bigl(Q^2_i-q^{-2t}\bigr)\prod\limits_{t=1-n}^{n}\bigl(Q^2_i-q^{-4t}\bigr)\biggr)&\\
      \cdot \prod\limits_{1\leq i<i'\leq m}\biggl(\prod\limits_{t=1-n}^{n-1}\bigl({Q_i}-Q_{i'}q^{-2t}\bigr)
      \bigl(Q_iQ_{i'}-q^{-2(t+1)}\bigr)\biggr),  \quad \mbox{if $\bullet=\mathsf{s}$ or  $\mathsf{ss},$}\\
    \end{cases} 
    \end{aligned}$$ where for  $n=1,$  the product $\prod\limits_{t=3-n}^{n-1}\bigl(Q^2_i-q^{-2t}\bigr)$ is understood to be $1$.

\begin{prop}\label{separate formula} 
Let $n\geq 1,\,m\geq 0$,  $\undQ=(Q_1,\ldots,Q_m)$ and  $\bullet\in\{\mathsf{0},\mathsf{s},\mathsf{ss}\}$. Then $(q,\undQ)$ is separate with respect to $\underline{\mu}$ for any  $\underline{\mu}\in\mathscr{P}^{\bullet,m}_{n+1}$ if and only if $P_n^{(\bullet)}(q^2,\undQ)\neq 0$.
\end{prop}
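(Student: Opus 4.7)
The plan is to prove the equivalence by matching each factor of $P_n^{(\bullet)}(q^2,\underline{Q})$ with a specific configuration that causes separateness to fail. By Lemma~\ref{separate in residue}, separateness with respect to $\underline{\mu}\in\mathscr{P}^{\bullet,m}_{n+1}$ fails iff there exist $\mathfrak{t}\in\Std(\underline{\mu})$ and $k\in[1,n]$ with either $\res_\mathfrak{t}(k)=\res_\mathfrak{t}(k+1)$ or $\res_\mathfrak{t}(k)\res_\mathfrak{t}(k+1)q^2=1$. Writing the boxes $\mathfrak{t}^{-1}(k)=(i,j,l)$ and $\mathfrak{t}^{-1}(k+1)=(i',j',l')$, these failure conditions become explicit polynomial equations in $q,\underline{Q}$ determined only by the quadruple $(l,l',j-i,j'-i')$. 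The proposition will follow once one shows that, as $\underline{\mu}$ and the pair $(\mathfrak{t}^{-1}(k),\mathfrak{t}^{-1}(k+1))$ vary, these equations trace out exactly the vanishing loci of the factors of $P_n^{(\bullet)}$.

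The combinatorial core is to classify which pairs of boxes can serve as consecutive entries in some standard tableau of some multipartition of size $n+1$. When $l=l'$, the two boxes cannot lie on the same diagonal (intermediate boxes along a row or column must carry strictly-between entries), and one argues that they must either be row/column-adjacent or form two distinct removable corners of a hook-shaped subpartition. When $l\neq l'$, any addable corner $\alpha$ of $\mu^{(l)}$ and any addable corner $\beta$ of $\mu^{(l')}$ can be realized as consecutive entries by filling all other boxes first. Computing the sharp ranges of $(j-i,j'-i')$ from minimal realizations---rows, columns, hooks $(c+1,1^{c'})$, and single-row/column partitions in the cross-component case---under the size constraint $|\underline{\mu}|=n+1$ yields ranges matching, factor-by-factor, the index ranges in $P_n^{(\bullet)}$: same-component row/column adjacency $\leftrightarrow (Q_i^2-q^{-4t})$ with $t\in[1-n,n]$; same-component two-corner (hook) pairs $\leftrightarrow (Q_i^2-q^{-2t})$ with $t\in[3-n,n-1]$; same-component differing-content pairs $\leftrightarrow (q^{2t}-1)$ with $t\in[1,n]$; and cross-component configurations $\leftrightarrow (Q_i-Q_{i'}q^{-2t})$ and $(Q_iQ_{i'}-q^{-2(t+1)})$ with $t\in[1-n,n-1]$.

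For $\bullet\in\{\mathsf{s},\mathsf{ss}\}$, the strict-partition components impose $Q_0=1$ or $Q_{0_\pm}=\pm 1$ with nonnegative contents. A row-adjacent pair in a strict component yields $q^{4(c+1)}=1$ for $c\in[0,n-1]$, which combined with the regular $(q^{2t}-1)$ factors produces the additional factor $\prod_{t=1}^n(q^{2t}+1)$ distinguishing $P_n^{(\mathsf{s})}$ and $P_n^{(\mathsf{ss})}$ from $P_n^{(\mathsf{0})}$. Cross-pairings between a strict and a regular component (or between the two strict components in the $\mathsf{ss}$ case) produce only equations already covered, because $Q_{0_\pm}^2=1$ makes the induced conditions coincide with same-component factors already enumerated.

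The main obstacle will be the careful case analysis for the strict components: one must verify that the nonnegativity of contents in shifted diagrams together with the distinguished values $Q_{0_\pm}=\pm 1$ produces exactly the extra $(q^{2t}+1)$ factors and introduces no spurious constraints beyond those in $P_n^{(\bullet)}$. A secondary technical step is a ``consecutive corners'' realization lemma: given two removable corners of a (possibly shifted) Young diagram of size $\leq n+1$, one must exhibit an explicit standard tableau placing them at entries $k$ and $k+1$, which is achieved by ordering the remaining entries so that both corners become addable in sequence. Once these ingredients are in place, the proposition follows by comparing vanishing loci factor-by-factor.
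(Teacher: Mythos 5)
Your approach is essentially the same as the paper's. The paper's own proof applies Lemma~\ref{separate in residue} and then simply asserts that it is ``straightforward to compute'' the resulting list of inequalities, which it writes out and matches against the factors of $P_n^{(\bullet)}(q^2,\undQ)$; you are spelling out the combinatorial classification of consecutive-entry box pairs (adjacent in a row/column, two removable corners of a common subshape, or cross-component addable corners) that underlies that computation, including the hook-minimal realizations needed to obtain the sharp ranges of $t$. This is the correct structure and reproduces the paper's factor-by-factor matching, including the extra $\prod_{t=1}^n(q^{2t}+1)$ that appears for $\bullet\in\{\mathsf{s},\mathsf{ss}\}$ from adjacency within the $0$-th (resp.\ $0_\pm$-th) strict component where $Q_0=Q_{0_\pm}^2=1$.

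One small imprecision worth noting: two non-adjacent consecutive entries in the same component need not be removable corners of a \emph{hook}-shaped subpartition; they are removable corners of \emph{some} subshape, and the hook is merely the minimal witness used to compute the extreme values of the content sum $a+b+1$. As written, your statement of the classification is slightly too strong, though the conclusion about ranges is unaffected. You should also be explicit that both row- and column-adjacent pairs are needed to cover the full interval $t\in[1-n,n]$ in $\prod_t(Q_i^2-q^{-4t})$ (row-adjacency in a single-row shape gives $t\in[1,n]$, column-adjacency in a single-column shape gives $t\in[1-n,0]$). Finally, the borderline case $n=1$, where certain products are empty and the allowable subshapes collapse to single boxes or two-box shapes, deserves a one-line separate check, as the paper also notes.
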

\begin{proof}
We assume $n>1$. {\color{black}  In the case $\bullet=\mathsf{0}$, by Lemma \ref{separate in residue} it is straightforward to compute that $(q,\undQ)$ is separate with respect to $\underline{\mu}$  for any $\underline{\mu}\in\mathscr{P}^{\mathsf{0},m}_{n+1}$  if and only if}
 \begin{align*}
           &\bigl((q^2)^{t}\bigr)\neq 1,\quad \forall 1\leq t\leq n;\\
           &\bigl(Q^2_i(q^2)^{t}\bigr)\neq 1,\quad \forall 3-n\leq t\leq n-1,\,1\leq i\leq m,; \\  
           &\bigl(Q^2_i(q^2)^{2t}\bigr)\neq 1,\quad \forall 1-n\leq t\leq n,\,1\leq i\leq m,; \\            
                            &\bigl(\frac{Q_i}{Q_{i'}}(q^2)^{t}\bigr) \neq 1  ,\quad \forall 1-n\leq t\leq n-1,\,1\leq i\neq i'\leq m;   \\
                             &\bigl(Q_iQ_{i'}(q^2)^{t}\bigr)\neq 1 ,\quad \forall 2-n\leq t\leq n,\,1\leq i\neq i'\leq m. 
\end{align*} 
Meanwhile, {\color{black}  in the case $\bullet=\mathsf{s}$, by Lemma \ref{separate in residue} it is straightforward to compute that $(q,\undQ)$ is separate with respect  to $\underline{\mu}$ for  any $\underline{\mu}\in\mathscr{P}^{\mathsf{s},m}_{n+1}$ if and only if}
\begin{align*}
           &\bigl((q^2)^{t}\bigr)\neq 1,\quad \forall 1\leq t\leq n;\\
 &\bigl((q^2)^{2t}\bigr)\neq 1,\quad \forall 1\leq t\leq n;\\
            &\bigl(Q_i(q^2)^{t}\bigr)\neq 1,\quad \forall 1-n\leq t\leq n,\,1\leq i\leq m; \\
              &\bigl(Q^2_i(q^2)^{t}\bigr)\neq 1,\quad \forall 3-n\leq t\leq n-1,\,1\leq i\leq m,; \\  
           &\bigl(Q^2_i(q^2)^{2t}\bigr)\neq 1,\quad \forall 1-n\leq t\leq n,\,1\leq i\leq m,; \\       
                            &\bigl(\frac{Q_i}{Q_{i'}}(q^2)^{t}\bigr) \neq 1  ,\quad \forall 1-n\leq t\leq n-1,\,1\leq i\neq i'\leq m;   \\
                             &\bigl(Q_iQ_{i'}(q^2)^{t}\bigr)\neq 1 ,\quad \forall 2-n\leq t\leq n,\,1\leq i\neq i'\leq m
                                              \end{align*}
In addition,   {\color{black}  in the case $\bullet=\mathsf{ss}$, by Lemma \ref{separate in residue} it is straightforward to compute that $(q,\undQ)$ is separate with respect  to $\underline{\mu}$ for  any $\underline{\mu}\in\mathscr{P}^{\mathsf{ss},m}_{n+1}$ if and only if}
    \begin{align*}
    	&\bigl(\pm (q^2)^{t}\bigr)\neq 1,\quad \forall 1\leq t\leq n;\\
    	 &\bigl((q^2)^{2t}\bigr)\neq 1,\quad \forall 1\leq t\leq n;\\
    	 &\bigl(\pm Q_i(q^2)^{t}\bigr)\neq 1,\quad \forall 1-n\leq t\leq n,\,1\leq i\leq m; \\
    	  &\bigl(Q^2_i(q^2)^{t}\bigr)\neq 1,\quad \forall 3-n\leq t\leq n-1,\,1\leq i\leq m,; \\  
    	&\bigl(Q^2_i(q^2)^{2t}\bigr)\neq 1,\quad \forall 1-n\leq t\leq n,\,1\leq i\leq m,; \\       
    	&\bigl(\frac{Q_i}{Q_{i'}}(q^2)^{t}\bigr) \neq 1  ,\quad \forall 1-n\leq t\leq n-1,\,1\leq i\neq i'\leq m;   \\
    	&\bigl(Q_iQ_{i'}(q^2)^{t}\bigr)\neq 1 ,\quad \forall 2-n\leq t\leq n,\,1\leq i\neq i'\leq m
    \end{align*}

                                            The case $n=1$ can be checked similarly by observing that  the range sets for $t$ in some of the inequalities are slightly different.  Then the Proposition follows from a direct computation. 
\end{proof}



We shall use the following observation repatedly.

\begin{lem}\label {important condition}
	Let $\undQ=(Q_1,\ldots,Q_m)$ and  $\bullet\in\{\mathsf{0},\mathsf{s},\mathsf{ss}\}$. Suppose $P_n^{(\bullet)}(q^2,\undQ)\neq 0$. Then for any $n'\leq n$ and any  $\undla'\in\mathscr{P}^{\bullet,m}_{n'}$, $(q,\undQ)$ is separate with respect to $\undla'$. 
	\end{lem}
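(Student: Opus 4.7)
The plan is to reduce the statement to Proposition \ref{separate formula} by first establishing a monotonicity property of the polynomials $P_n^{(\bullet)}(q^2,\undQ)$ in the parameter $n$. Specifically, I want to show that if $n''\leq n$, then every factor in the defining product of $P_{n''}^{(\bullet)}(q^2,\undQ)$ already appears among the factors of $P_{n}^{(\bullet)}(q^2,\undQ)$, so that
\[
P_{n}^{(\bullet)}(q^2,\undQ)\neq 0\ \Longrightarrow\ P_{n''}^{(\bullet)}(q^2,\undQ)\neq 0\quad \text{for all } n''\leq n.
\]
This is essentially a bookkeeping exercise: inspecting the explicit formulas in each of the three cases $\bullet\in\{\mathsf{0},\mathsf{s},\mathsf{ss}\}$, each product is taken over an index range of the form $[1,n]$, $[3-n,n-1]$, $[1-n,n]$ or $[1-n,n-1]$, and all of these ranges are manifestly monotone in $n$ in the sense that replacing $n$ by $n''\leq n$ yields a sub-range. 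Hence the product defining $P_{n''}^{(\bullet)}$ is obtained from the one defining $P_n^{(\bullet)}$ by discarding some factors, which proves the implication above.

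Once this monotonicity is in hand, the lemma follows easily. Given $n'\leq n$ and $\undla'\in\mathscr{P}^{\bullet,m}_{n'}$, the case $n'=0$ is vacuous (there are no adjacent indices to compare), so assume $n'\geq 1$ and set $n''=n'-1$. Then $n''\leq n$, so the monotonicity gives $P_{n''}^{(\bullet)}(q^2,\undQ)\neq 0$. Applying Proposition \ref{separate formula} with $n$ replaced by $n''$ then shows that $(q,\undQ)$ is separate with respect to every $\underline{\mu}\in\mathscr{P}^{\bullet,m}_{n''+1}=\mathscr{P}^{\bullet,m}_{n'}$; in particular, it is separate with respect to $\undla'$, as required.

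I do not expect any real obstacle here: the only step requiring care is the factor-comparison, and that reduces to checking inclusions of integer intervals in each of the three cases. The argument is purely formal and uses no new ideas beyond Proposition \ref{separate formula}.
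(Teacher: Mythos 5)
Your argument is correct, but it takes a genuinely different route from the paper's. The paper proves the lemma by a purely combinatorial embedding: any $\mathfrak{t}'\in\Std(\undla')$ with $n'\leq n$ can be extended to a standard tableau $\mathfrak{t}\in\Std(\underline{\mu})$ for some $\underline{\mu}\in\mathscr{P}^{\bullet,m}_{n+1}$, Proposition~\ref{separate formula} (applied once, at level $n$) gives separability of $(q,\undQ)$ with respect to $\underline{\mu}$, and since $\mathfrak{t}'\subset\mathfrak{t}$ the condition restricts. You instead establish a monotonicity property of the polynomials $P_n^{(\bullet)}$ in the parameter $n$ and then apply Proposition~\ref{separate formula} at level $n'-1$. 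Via the equivalence in Proposition~\ref{separate formula} the two monotonicities (of separability in the tableau size, and of non-vanishing of $P_n^{(\bullet)}$ in $n$) are really two sides of the same coin; the paper's version avoids a second inspection of the explicit product formulas, while yours makes the algebraic divisibility $P_{n''}^{(\bullet)}\mid P_n^{(\bullet)}$ explicit, which is a clean and potentially reusable observation in its own right.

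One small slip: you dismiss only $n'=0$ as vacuous and then set $n''=n'-1$, but when $n'=1$ this gives $n''=0$, for which $P_{0}^{(\bullet)}$ is undefined and Proposition~\ref{separate formula} requires $n\geq 1$. The fix is immediate: the separate condition for $\undla'\in\mathscr{P}^{\bullet,m}_{n'}$ with $n'\leq 1$ is vacuous (it ranges over $k=1,\dots,n'-1$, an empty set), so both $n'=0$ and $n'=1$ should be dispatched this way, leaving $n''\geq 1$ in the remaining cases.
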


\begin{proof}
	Note that any $\mathfrak{t'}\in\Std(\undla)$ can be embedded into some  $\mathfrak{t}\in\Std(\underline{\mu})$, where $\underline{\mu}\in\mathscr{P}^{\bullet,m}_{n+1}$. Since $P_n^{(\bullet)}(q^2,\undQ)\neq 0$, by Proposition \ref{separate formula}, $(q,\undQ)$ is separate with respect to $\underline{\mu}$  and hence  the Lemma follows as $\mathfrak{t'}\subset\mathfrak{t}$.
	\end{proof}
The following is key for our construction in the next section. 

\begin{lem}\label {important condition1}
	Let $\undQ=(Q_1,\ldots,Q_m)$ and  $\bullet\in\{\mathsf{0},\mathsf{s},\mathsf{ss}\}$. Suppose $P_n^{(\bullet)}(q^2,\undQ)\neq 0$. Then for any $\undla\in\mathscr{P}^{\bullet,m}_{n}$ and any $\mathfrak{t}\in\Std(\undla)$, we have the following
	\begin{enumerate}
		\item  $\mathtt{b}_{\pm}(\res_{\mathfrak{t}}(k))\neq \pm 1$ for $k\notin \mathcal{D}_{\mathfrak{t}}$;
		\item   $\mathtt{q}(\res_{\mathfrak{t}}(k))\neq \mathtt{q}(\res_{\mathfrak{t}}(k+1))$ for $k=1,\cdots,n-1$;
		\item  $\res_{\mathfrak{t}}(k)$ and $\res_{\mathfrak{t}}(k+1)$ does not satisfy any one of the four equations in \eqref{invertible2} {\color{black}if $k,k+1$ are not in the adjacent diagonals of $\mathfrak{t}.$}
	\end{enumerate}
\end{lem}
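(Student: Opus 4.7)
The plan is to apply Lemma \ref{important condition} to reduce each of (1), (2), (3) to a verification against the factor structure of $P_n^{(\bullet)}(q^2,\underline{Q})$, using the explicit formula $\res_{\mathfrak{t}}(k)=Q_l q^{2(j-i)}$ when $\mathfrak{t}(i,j,l)=k$.

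Part (2) is immediate: Lemma \ref{important condition} yields that $(q,\underline{Q})$ is separate with respect to $\undla$, and separateness is by Definition \ref{defn:separate} precisely the assertion $\mathtt{q}(\res_{\mathfrak{t}}(k))\neq\mathtt{q}(\res_{\mathfrak{t}}(k+1))$ for all $k$. For part (1), first observe that $\mathtt{b}_\pm(x)=\pm1$ is equivalent to $\mathtt{q}(x)=\pm 2$; solving the quadratic $z+z^{-1}=\pm 2$ shows this happens exactly when $x\in\{1,-1,q^{-2},-q^{-2}\}$. Writing $\mathfrak{t}(i,j,l)=k\notin\mathcal{D}_{\mathfrak{t}}$, the required statement becomes $Q_l q^{2(j-i)}\notin\{\pm1,\pm q^{-2}\}$. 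When $l\in\{1,\dots,m\}$, each of the four identities squares to $Q_l^{2}=q^{-4t}$ for some $t\in\{j-i,j-i+1\}$, excluded by the appropriate $(Q_l^{2}-q^{-4t})$ factor of $P_n^{(\bullet)}$. When $l\in\{0,0_+,0_-\}$, the condition $(i,j,l)\notin\mathcal{D}_{\undla}$ forces $i\neq j$, so $|j-i|\geq 1$, and the identities reduce to $q^{2(j-i)}\in\{\pm1,\pm q^{-2}\}$, handled by the $(q^{2t}\pm 1)$ factors that are adjoined in the $\mathsf{s}$ and $\mathsf{ss}$ cases.

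For part (3), let $\mathfrak{t}(i_1,j_1,l_1)=k$, $\mathfrak{t}(i_2,j_2,l_2)=k+1$ and set $a=j_1-i_1$, $b=j_2-i_2$. The four equations in \eqref{invertible2} translate, in order, into
\[
Q_{l_2}/Q_{l_1}=q^{2(a+1-b)},\quad Q_{l_2}/Q_{l_1}=q^{2(a-1-b)},\quad Q_{l_1}Q_{l_2}=q^{-2(a+b)},\quad Q_{l_1}Q_{l_2}=q^{-2(a+b+2)}.
\]
When $l_1\neq l_2$, each identity violates one of the cross-component factors $(Q_i-Q_{i'}q^{-2t})$ or $(Q_iQ_{i'}-q^{-2(t+1)})$ of $P_n^{(\bullet)}$; the bound that the two components together have at most $n$ boxes keeps the required exponent inside the polynomial's range $[1-n,n-1]$. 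When $l_1=l_2=l$, equations (i) and (ii) force $b-a=\pm 1$ (using $q^{2t}\neq 1$ for $1\leq t\leq n$), which places $k$ and $k+1$ in adjacent diagonals of the same component and contradicts the hypothesis of (3); equations (iii) and (iv) become $Q_l^{2}=q^{-2(a+b)}$ or $Q_l^{2}=q^{-2(a+b+2)}$, eliminated for $l\geq 1$ by the $(Q_l^{2}-q^{-2t})$ and $(Q_l^{2}-q^{-4t})$ factors, and for $l\in\{0,0_+,0_-\}$ (where $Q_l^{2}=1$) by reducing to $q^{2t}=\pm 1$ with $t\neq 0$, ruled out by the $(q^{2t}\pm 1)$ factors.

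The main obstacle in all of this is the bookkeeping of ranges: one must verify, for each residue identity produced above, that the exponent $t$ that appears falls inside the explicit range for the relevant factor of $P_n^{(\bullet)}$. This reduces to controlling $|a\pm b|$ for the boxes labelled $k$ and $k+1$ in a standard tableau of total size $n$, and the delicate cases are precisely those where $|a-b|=1$ or where $k,k+1$ lie on the special diagonals in $\mathcal{D}_{\undla}$: these are exactly the configurations already excluded by the hypotheses of (1) and (3), so the remaining ranges are guaranteed to match the polynomial's factor ranges.
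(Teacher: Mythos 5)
Your strategy is a direct factor-by-factor verification against $P_n^{(\bullet)}(q^2,\undQ)$, whereas the paper's own proof of (1) and (3) is a \emph{combinatorial reconstruction}: starting from a hypothetical failure it builds an auxiliary standard tableau of size $\le n+1$ in which two consecutive entries violate the separateness criterion of Lemma~\ref{separate in residue}, and then invokes Lemma~\ref{important condition} and Proposition~\ref{separate formula}. The two routes are genuinely different. The paper's route has the advantage that it never needs to track exponent ranges or distinguish components: the auxiliary tableau always has at most $n+1$ boxes, so separateness applies automatically, in all component types uniformly. Your route trades that uniformity for explicitness, but at the cost of a substantial amount of casework which, as written, is not complete.

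Two concrete gaps. First, in part (3) with $l_1\neq l_2$ you assert that each identity ``violates one of the cross-component factors $(Q_i-Q_{i'}q^{-2t})$ or $(Q_iQ_{i'}-q^{-2(t+1)})$.'' Those factors are indexed only by $1\le i<i'\le m$, so they say nothing when one of $l_1,l_2$ lies in $\{0,0_+,0_-\}$. In those cases one must instead square and fall back on the $(Q_i^2-q^{-2t})$, $(Q_i^2-q^{-4t})$, or $(q^{2t}\pm 1)$ factors, and the resulting exponent ranges are different from the $[1-n,n-1]$ you quote; none of this is addressed. (Also untreated is $\{l_1,l_2\}=\{0_-,0_+\}$, where $Q_{l_1}Q_{l_2}=-1$ and one needs the $(q^{2t}+1)$ factors.) Second, you explicitly defer the ``bookkeeping of ranges,'' but this is not merely bookkeeping: it requires the non-obvious observation that when $k,k+1$ are not in adjacent diagonals both boxes are removable corners of $\mathfrak{t}\!\downarrow_{k+1}$, from which one deduces bounds such as $a+b\in[3-n,n-3]$ (and similarly for $a-b$) that are what make the exponents land inside the ranges in $P_n^{(\bullet)}$. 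As written, the proposal asserts the conclusion of this step without supplying the argument. Parts (1) and (2) are fine, and the overall plan can be made to work, but (3) needs the missing case analysis and the corner-box/range argument to be carried out.
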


\begin{proof}
	
	(1)  Suppose $\alpha=\mathfrak{t}^{-1}(k)\in \undla$ satisfying $\alpha\notin \mathcal{D}_{\undla}$ and $\mathtt{b}_{\pm}(\res(\alpha))=\pm1$. By \eqref{substitution0}, we have $\mathtt{q}(\res(\alpha))=\pm2$. Hence, $\res(\alpha)^2=1$ or $\res(\alpha)^2=q^{-4}$. That is, $\res(\alpha)(\res(\alpha)q^{-2})q^2=1$ or $\res(\alpha)(\res(\alpha)q^2)q^2=1$. If $\res(\alpha)(\res(\alpha)q^{-2})q^2=1$,  then we claim that there is no additive node in $\mathfrak{t}\downarrow_k$ below  $\alpha$. Otherwise, we can add this node to $\mathfrak{t}\downarrow_k$  and label it by $k+1$. We denote this new tableau by $\mathfrak{t'}$. Now we have $\res_k(\mathfrak{t'})\res_{k+1}(\mathfrak{t'})q^{2}=\res(\alpha)(\res(\alpha)q^{-2})q^2=1$, which contradicts to  Lemma \ref{separate in residue} and Lemma \ref{important condition}. Combing together with $\alpha\notin \mathcal{D}_{\undla}$, we deduce that there is no node below the node $\alpha'$ which is exactly on the left of $\alpha$. Hence we can reconstruct a new tableau $\mathfrak{t''}$ such that $\alpha=\mathfrak{t''}^{-1}(k)$ and $\alpha'=\mathfrak{t''}^{-1}(k-1)$. Now we have $\res_k(\mathfrak{t''})\res_{k-1}(\mathfrak{t''})q^{2}=\res(\alpha)(\res(\alpha)q^{-2})q^2=1$, which again contradicts to Lemma \ref{separate in residue} and Lemma \ref{important condition}. 
%
	  If $\res(\alpha)(\res(\alpha)q^2)q^2=1$, then we can also derive contradiction in a similar way as one can show that in this case there is no addative node in $\mathfrak{t}\downarrow_k$ on the right of $\alpha$. Then one can reconstruct a new tableau and eventually this results in a contradiction  to the Lemma \ref{separate in residue} and Proposition \ref{separate formula}.
	
	
	(2) This follows from Lemma \ref{separate in residue} and Lemma \ref{important condition}.
	
	(3) Suppose $\alpha_1=\mathfrak{t}^{-1}(k),\,\alpha_2=\mathfrak{t}^{-1}(k+1)\in\undla$ are not in the adjacent diagonals of $\undla$. {\color{black} If $\res(\alpha_1)=q^2\res(\alpha_2)$, we claim that in $\mathfrak{t}\downarrow_{k+1}$, there {\color{black}exists} no additive node on the right-hand-side of $\alpha_2$. Otherwise, we can add this node to $s_k(\mathfrak{t}\downarrow_{k+1})$ and label it by $k+2$. We denote this new tableau by $\mathfrak{t'}$. Then in $\mathfrak{t'}$, $\res_{k+1}(\mathfrak{t'})=\res(\alpha_1)=q^2\res(\alpha_2)=\res_{k+2}(\mathfrak{t'})$. This contradicts to Lemma \ref{separate in residue} and Lemma \ref{important condition}. Hence, in $\mathfrak{t}\downarrow_{k}$, there is a removable node $\alpha_3$ above $\alpha_2$. Note that $\alpha_1$ is also a removable node in $\mathfrak{t}\downarrow_{k}$. This means we can reconstruct a new tablaeu $\mathfrak{t''}$ such that $\alpha_2=\mathfrak{t''}^{-1}(k)$ and $\alpha_3=\mathfrak{t''}^{-1}(k-1)$. Now in $\mathfrak{t''}$, we have $\res_{k}(\mathfrak{t'})=\res(\alpha_1)=q^2\res(\alpha_2)=\res_{k-1}(\mathfrak{t''})$, which again contradicts to Lemma \ref{separate in residue} and Lemma \ref{important condition}. 
		
	The same arguement applies to the case $\res(\alpha_1)=q^{-2}\res(\alpha_2)$ as well as $\res(\alpha_1)=q^{-4}\res(\alpha_2)^{-1}$. For the case $\res(\alpha_1)=\res(\alpha_2)^{-1}$, we rewrite it as $\res(\alpha_1)(\res(\alpha_2)q^{-2})q^2=1$. 
In the case $\alpha_1\notin\mathcal{D}_{\undla}$ or $\alpha_2\notin\mathcal{D}_{\undla}$, we can apply the argument  similar to the proof of (1) to deduce a contradiction to Lemma \ref{separate in residue} and Proposition \ref{separate formula}.
 Otherwise, $\alpha_1\in\mathcal{D}_{\undla}$ and $\alpha_2\in\mathcal{D}_{\undla}$. In this case, we have $\bullet=\mathsf{ss}$ and $\res(\alpha_1)\res(\alpha_2)=-1$ which is contradicts to $\res(\alpha_1)=\res(\alpha_2)^{-1}$. 
 Putting together, we obtain that $(\res(\alpha_1),\,\res(\alpha_2))$ does not satisfy any one of the four equations in \eqref{invertible2}.	
	}
	
\end{proof}

\begin{lem}\label{lem:action property-1}
	Let $\undQ=(Q_1,\ldots,Q_m)$ and  $\bullet\in\{\mathsf{0},\mathsf{s},\mathsf{ss}\}$. Suppose $P_n^{(\bullet)}(q^2,\undQ)\neq 0$.  Let $\undla\in\mathscr{P}^{\bullet,m}_{n}$, then 
	any pair of  $(a_1,a_2)$ with $a_1,a_2$ being the eigenvalues of $X_{k}$ and $X_{k+1}$ on $\mathbb{L}(\res(\mathfrak{t}))$, respectively, does not satisfy \eqref{invertible}, for any $\mathfrak{t}\in\Std(\undla)$ and $k,k+1$ {\color{black} being not in the adjacent diagonals  }of $\mathfrak{t}$. 
\end{lem}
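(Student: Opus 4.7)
The plan is to reduce the claim directly to Lemma \ref{important condition1}(3) via the explicit correspondence between the condition \eqref{invertible} on eigenvalues and the four equations in \eqref{invertible2} on residues.

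First, I would identify the eigenvalues explicitly. By Lemma \ref{lem:eigen-Xk}, the eigenvalues of $X_k$ on $\mathbb{L}(\res(\mathfrak{t}))$ belong to $\{\mathtt{b}_+(\res_{\mathfrak{t}}(k)),\mathtt{b}_-(\res_{\mathfrak{t}}(k))\}$, and analogously the eigenvalues of $X_{k+1}$ belong to $\{\mathtt{b}_+(\res_{\mathfrak{t}}(k+1)),\mathtt{b}_-(\res_{\mathfrak{t}}(k+1))\}$. Writing $u:=\res_{\mathfrak{t}}(k)$ and $v:=\res_{\mathfrak{t}}(k+1)$, and using \eqref{bpm}, any such eigenvalue pair $(a_1,a_2)$ satisfies
\[
a_1+a_1^{-1}=\mathtt{q}(u),\qquad a_2+a_2^{-1}=\mathtt{q}(v),
\]
which is precisely the substitution recorded in \eqref{substitute} (with $x=a_1$, $y=a_2$ and $u,v$ playing the role indicated).

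Next, I would invoke the equivalence stated just after \eqref{substitute}: under the substitution \eqref{substitute}, the condition \eqref{invertible} on $(a_1,a_2)$ holds if and only if $(u,v)$ satisfies one of the four equations
\[
v=q^2u,\qquad v=q^{-2}u,\qquad v=u^{-1},\qquad v=q^{-4}u^{-1}
\]
listed in \eqref{invertible2}. Thus proving that $(a_1,a_2)$ does not satisfy \eqref{invertible} is equivalent to proving that $(u,v)=(\res_{\mathfrak{t}}(k),\res_{\mathfrak{t}}(k+1))$ does not satisfy any of the four equations in \eqref{invertible2}.

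Finally, under the standing hypothesis $P_n^{(\bullet)}(q^2,\undQ)\neq 0$ and the assumption that $k,k+1$ are not in adjacent diagonals of $\mathfrak{t}$, Lemma \ref{important condition1}(3) asserts exactly that $\res_{\mathfrak{t}}(k)$ and $\res_{\mathfrak{t}}(k+1)$ do not satisfy any one of the four equations in \eqref{invertible2}. Combining this with the equivalence of the previous step yields the desired conclusion. There is essentially no genuine obstacle here since all the substantive combinatorial and arithmetic work has been carried out in Lemma \ref{important condition1}; the proof is a short bookkeeping argument chaining Lemma \ref{lem:eigen-Xk}, the substitution \eqref{substitute}, and Lemma \ref{important condition1}(3).
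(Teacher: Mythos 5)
Your proposal is correct and takes essentially the same route as the paper: identify the eigenvalues via Lemma \ref{lem:eigen-Xk}, translate \eqref{invertible} into the residue conditions \eqref{invertible2} via the substitution \eqref{substitute}, and conclude by Lemma \ref{important condition1}(3). The only cosmetic difference is that the paper phrases the argument in terms of boxes $\alpha_1,\alpha_2$ and a pair of indices $k_1,k_2$, while you work directly with consecutive indices $k,k+1$; the content is identical.
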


\begin{proof}
	Suppose $\mathfrak{t}\in\Std(\undla)$.  Fix any $k_1,k_2\in[1,n]$ such that $\alpha_1=(\mathfrak{t}^{\undla})^{-1}(k_1),\alpha_2=(\mathfrak{t}^{\undla})^{-1}(k_2)$ are not in the adjacent diagonals.  
	Let $a_1,a_2$ be eigenvalues of $X_{k_1}$ and $X_{k_2}$ acting on $\mathbb{L}(\res(\mathfrak{t}))$. 
		By Lemma \ref{lem:eigen-Xk} we have $a_1=\mathtt{b}_\pm(\res(\alpha_1)) $ and $a_2=\mathtt{b}_\pm(\res(\alpha_2))$. That is, $a_1+a_1^{-1}=\mathtt{q}(\res(\alpha_1))$ and $a_2+a_2^{-1}=\mathtt{q}(\res(\alpha_2))$. 
		Then by the fact that  \eqref{invertible} is equivalent to \eqref{invertible2} via the substitution \eqref{substitute}, we obtain that  the pair $(a_1,a_2)$ does not satisfy \eqref{invertible} by Lemma \ref{important condition1}(3). 
\end{proof}

The following lemma will be useful in the subsequent section. 

\begin{lem}\label{lem:different residues}
Let $m\geq 0,\,n\geq 1$, $\undQ=(Q_1,\ldots,Q_m)\in(\mathbb{K}^*)^m$ and $\bullet\in\{\mathsf{0},\mathsf{s},\mathsf{ss}\}$. Suppose $P_n^{(\bullet)}(q^2,\undQ)\neq 0$.  
Then for any $\undla,\,\underline{\mu}\in\mathscr{P}^{\bullet,m}_{n},\,\mathfrak{t}\in\Std(\undla),\,\mathfrak{t'}\in\Std(\underline{\mu})$, we have  $\mathtt{q}(\res(\mathfrak{t}))\neq \mathtt{q}(\res(\mathfrak{t}'))$ if $\mathfrak{t}\neq \mathfrak{t'}$.
\end{lem}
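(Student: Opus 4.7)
The plan is to prove this by induction on $n$. For the inductive step I would assume the statement at size $n-1$; this is legitimate because Lemma~\ref{important condition} applied to $P_n^{(\bullet)}(q^2,\undQ)\neq 0$ gives separateness with respect to every shape in $\mathscr{P}^{\bullet,m}_n$, which by Proposition~\ref{separate formula} is equivalent to $P_{n-1}^{(\bullet)}(q^2,\undQ)\neq 0$, so the inductive hypothesis is available at the previous level. Arguing contrapositively, I would suppose $\mathtt{q}(\res(\mathfrak{t}))=\mathtt{q}(\res(\mathfrak{t}'))$ and show $\mathfrak{t}=\mathfrak{t}'$.

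Restricting to the first $n-1$ entries, the tableaux $\mathfrak{t}\downarrow_{n-1}$ and $\mathfrak{t}'\downarrow_{n-1}$ are standard of size $n-1$ and have identical $\mathtt{q}$-residue sequences, obtained by truncation of the assumed equality. The inductive hypothesis then forces $\mathfrak{t}\downarrow_{n-1}=\mathfrak{t}'\downarrow_{n-1}$; write $\underline{\nu}$ for their common shape, and set $\alpha:=\mathfrak{t}^{-1}(n)$, $\alpha':=(\mathfrak{t}')^{-1}(n)$, both addable nodes of $\underline{\nu}$. If $\alpha=\alpha'$ then $\mathfrak{t}=\mathfrak{t}'$ and we are finished, so assume toward contradiction that $\alpha\neq\alpha'$.

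The key sublemma I would invoke is that two distinct addable nodes of a multipartition always coexist: the shape $\underline{\nu}\cup\{\alpha,\alpha'\}$ is again a valid element of $\mathscr{P}^{\bullet,m}_{n+1}$, and $\alpha$ remains addable after adjoining $\alpha'$ (and symmetrically). This is automatic when $\alpha,\alpha'$ lie in different components, and within a single (shifted) Young diagram it reduces to the standard fact that distinct outer corners sit in pairwise distinct rows and columns. Granting this, extend $\mathfrak{t}\downarrow_{n-1}$ to a standard tableau $\mathfrak{s}$ of shape $\underline{\nu}\cup\{\alpha,\alpha'\}$ by setting $\mathfrak{s}(\alpha):=n$ and $\mathfrak{s}(\alpha'):=n+1$. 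Applying the separate condition to $\mathfrak{s}$ at positions $n,n+1$ (legal since $P_n^{(\bullet)}(q^2,\undQ)\neq 0$ delivers separateness for shapes in $\mathscr{P}^{\bullet,m}_{n+1}$ via Proposition~\ref{separate formula}) yields $\mathtt{q}(\res(\alpha))\neq \mathtt{q}(\res(\alpha'))$. But position $n$ of the assumed equality reads $\mathtt{q}(\res(\alpha))=\mathtt{q}(\res(\alpha'))$, a contradiction.

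The base case $n=1$ fits the same pattern with $\underline{\nu}=\emptyset$, using that any two distinct single boxes are addable to the empty multipartition and hence can be combined into a shape in $\mathscr{P}^{\bullet,m}_{2}$. The main obstacle is verifying the coexistence sublemma in the $\mathsf{s}$ and especially the $\mathsf{ss}$ setting, where one must be careful about addable nodes on or near the main diagonal of a shifted Young diagram and about the interaction between the two strict components $0_-$ and $0_+$; however, since outer corners of a (shifted) partition never share a row or column, and the components are independent, the sublemma reduces to a routine case check.
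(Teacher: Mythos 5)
Your proposal is correct and matches the paper's argument in essence: the paper also reduces to the first index where $\mathfrak{t}$ and $\mathfrak{t}'$ disagree (your induction on $n$ is just a reformulation of this), adjoins the two distinct addable nodes of the common restriction to obtain a standard tableau of size at most $n+1$, and invokes separateness via Proposition~\ref{separate formula} to conclude the $\mathtt{q}$-residue sequences differ there. The "coexistence" sublemma you flag is used implicitly in the paper as well, so you are not relying on anything the paper does not.
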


\begin{proof}
Let $k<n$ be the maximal integer such that $\mathfrak{t}\downarrow_k=\mathfrak{t'}\downarrow_k$ but $ \mathfrak{t}\downarrow_{k+1}\neq\mathfrak{t'}\downarrow_{k+1}$.   Observe that $\mathfrak{t}^{-1}(k+1)$ and $\mathfrak{t'}^{-1}(k+1)$ are two different additive node for the shape of $\mathfrak{t}\downarrow_k=\mathfrak{t'}\downarrow_k$. Adding these two nodes to $\mathfrak{t}\downarrow_k$ and labelling them by $k+1,k+2$, respectively, one can obtain a standard tableau $\mathfrak{s}$ of some shape $\underline{\gamma}\in \mathscr{P}^{\bullet,m}_{k+2}$ with $k+2\leq n+1$. Then apply Lemma \ref{separate in residue} and Lemma \ref{important condition}, we deduce that $\mathtt{q}(\res_{k+1}(\mathfrak{t}))\neq \mathtt{q}(\res_{k+1}(\mathfrak{t'}))$.   This proves the Lemma.
\end{proof}

\begin{example}\label{deform} 
	
 When $q,Q_1,\ldots,Q_m$ are algebraically independent over $\Z$, and $\mathbb{F}$ is the algebraic closure of $\mathbb{Q}(q,Q_1,\ldots,Q_m)$, i.e., for generic non-degenerate cyclotomic Hecke-Clifford algebra, the separate condition clearly holds by Proposition \ref{separate formula}.
\end{example}
}
\section{Semi-simplicity on  non-degenerate cyclotomic Hecke-Clifford superalgebras}

\subsection{Construction of Simple modules}

{\bf For this subsection, we shall fix the parameter $\undQ=(Q_1,Q_2,\ldots,Q_m)\in(\mathbb{K}^*)^m$ and  $f=f^{(\bullet)}_{\undQ}$ with $\bullet\in\{\mathsf{0},\mathsf{s},\mathsf{ss}\}$.  Accordingly, we define the residue of boxes in the young diagram $\undla$ via \eqref{eq:residue} as well as $\res(\mathfrak{t})$ for each $\mathfrak{t}\in\Std(\undla)$ with $\undla\in\mathscr{P}^{\bullet,m}_{n}$ with $m\geq 0$.}

\begin{defn}\label{defn:admissible}
Let {\color{black}$\bullet\in\{\mathsf{0},\mathsf{s},\mathsf{ss}\}$ and $\undla\in\mathscr{P}^{\bullet,m}_{n}$.} Suppose $\mathfrak{t}\in\Std(\undla)$ and $1\leq l\leq n$. 
If  $s_l\cdot\mathtt{q}(\res(\mathfrak{t}))=\mathtt{q}(\res(\mathfrak{u}))$ for some $\mathfrak{u}\in \Std(\undla)$, then the simple transposition $s_l$ is said to be admissible with respect to the sequence $\mathtt{q}(\res(\mathfrak{t}))$. 
\end{defn}
\begin{lem}\label{lem:admissible-residue}
{\color{black}Let $\bullet\in\{\mathsf{0},\mathsf{s},\mathsf{ss}\}$, $\undla\in\mathscr{P}^{\bullet,m}_{n}$ and $\mathfrak{t}\in\Std(\undla)$. Suppose $(q,\undQ)$ is separate with respect to $\undla$. }Then $s_l$ is admissible with respect to $\mathfrak{t}$ if and only if $s_l$ is admissible with respect to $\mathtt{q}(\res(\mathfrak{t}))$ for $1\leq l\leq n-1$.  
\end{lem}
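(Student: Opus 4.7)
For the forward direction, suppose $s_l\mathfrak{t}\in\Std(\undla)$ and set $\mathfrak{u}:=s_l\mathfrak{t}$. This operation exchanges the boxes occupied by $l$ and $l+1$ and fixes all others, so $\res(\mathfrak{u})=s_l\cdot\res(\mathfrak{t})$ and consequently $\mathtt{q}(\res(\mathfrak{u}))=s_l\cdot\mathtt{q}(\res(\mathfrak{t}))$, showing that $s_l$ is admissible with respect to the $\mathtt{q}$-residue sequence of $\mathfrak{t}$.

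For the converse, I start from some $\mathfrak{u}\in\Std(\undla)$ satisfying $\mathtt{q}(\res(\mathfrak{u}))=s_l\cdot\mathtt{q}(\res(\mathfrak{t}))$ and argue by contradiction, assuming $s_l\mathfrak{t}\notin\Std(\undla)$. Then $l$ and $l+1$ occupy adjacent boxes of $\mathfrak{t}$ (same row or same column of some component), forcing $\res_\mathfrak{t}(l+1)=q^{\pm 2}\res_\mathfrak{t}(l)$. By Lemma~\ref{separate in residue}, separateness with respect to $\undla$ yields $\res_\mathfrak{t}(l)\res_\mathfrak{t}(l+1)q^2\neq 1$, hence $\res_\mathfrak{t}(l)\neq\pm q^{-2}$, and in particular $\mathtt{q}(\res_\mathfrak{t}(l))\neq\mathtt{q}(\res_\mathfrak{t}(l+1))$ so that $\mathfrak{u}\neq\mathfrak{t}$.

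The plan is then to compare $\mathfrak{u}$ and $\mathfrak{t}$ entry by entry using the fibre description $\mathtt{q}(x)=\mathtt{q}(y)$ if and only if $x=y$ or $xyq^2=1$. I expect to show inductively that $\mathfrak{u}\!\downarrow_{l-1}=\mathfrak{t}\!\downarrow_{l-1}$: at each step $p<l$, separateness combined with the standardness of $\mathfrak{u}\!\downarrow_p$ rules out the alternative fibre point $\res_\mathfrak{u}(p)=q^{-2}\res_\mathfrak{t}(p)^{-1}$, because this choice would place $\mathfrak{u}^{-1}(p)$ in a box whose residue, paired with that of a neighbouring entry of $\mathfrak{u}\!\downarrow_p$, would violate the separate condition on consecutive $\mathtt{q}$-residues. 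Once this reduction is in place, $\mathfrak{u}^{-1}(l)$ must be an addable box $\alpha$ of $\mathfrak{t}\!\downarrow_{l-1}$ with $\mathtt{q}(\res(\alpha))=\mathtt{q}(\res_\mathfrak{t}(l+1))$. The adjacency assumption on $\mathfrak{t}$ makes $\mathfrak{t}^{-1}(l+1)$ itself non-addable to $\mathfrak{t}\!\downarrow_{l-1}$, so $\alpha$ is a distinct addable box. A short case split on whether $l$ and $l+1$ are in the same row or in the same column of $\mathfrak{t}$ then produces a standard sub-tableau of $\undla$ with two consecutive entries whose $\mathtt{q}$-residues coincide, contradicting Lemma~\ref{separate in residue}.

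The main obstacle is the inductive identification $\mathfrak{u}\!\downarrow_{l-1}=\mathfrak{t}\!\downarrow_{l-1}$: this is a local uniqueness statement in the spirit of Lemma~\ref{lem:different residues}, but it must be carried out under the weaker hypothesis of separateness with respect to $\undla$ alone, so one cannot simply invoke Proposition~\ref{separate formula}; the argument instead has to exploit the structural constraints of addable boxes inside the fixed shape $\undla$.
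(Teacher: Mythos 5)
Your overall strategy for the converse direction mirrors the paper's: establish $\mathfrak{u}\!\downarrow_{l-1}=\mathfrak{t}\!\downarrow_{l-1}$, observe that $\mathfrak{u}^{-1}(l)$ and $\mathfrak{t}^{-1}(l)$ are then two distinct addable nodes of a common shape, and build a standard tableau with two consecutive equal $\mathtt{q}$-residues to contradict separateness. You have also correctly spotted a real imprecision in the paper: its own proof cites Lemma~\ref{lem:different residues} and Lemma~\ref{important condition}, both of which invoke the global hypothesis $P_n^{(\bullet)}(q^2,\undQ)\neq 0$ rather than mere separateness with respect to the fixed $\undla$.

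The genuine gap is in the inductive step you sketch for $\mathfrak{u}\!\downarrow_{l-1}=\mathfrak{t}\!\downarrow_{l-1}$. Ruling out the alternative fibre point $\res_{\mathfrak{u}}(p)=q^{-2}\res_{\mathfrak{t}}(p)^{-1}$ at each $p<l$ does not suffice: even when $\res_{\mathfrak{u}}(p)=\res_{\mathfrak{t}}(p)$, the boxes $\mathfrak{u}^{-1}(p)$ and $\mathfrak{t}^{-1}(p)$ could a priori differ (distinct boxes of a multipartition can share a residue), and your argument as written does not address box-uniqueness. The correct move, which is exactly what the proof of Lemma~\ref{lem:different residues} does, is to take the \emph{minimal} $p\le l-1$ with $\mathfrak{u}\!\downarrow_p\neq\mathfrak{t}\!\downarrow_p$; then $\mathfrak{u}^{-1}(p)$ and $\mathfrak{t}^{-1}(p)$ are two \emph{distinct addable boxes} of the common shape $\mathfrak{u}\!\downarrow_{p-1}=\mathfrak{t}\!\downarrow_{p-1}$, and adding both, labelled $p$ and $p+1$, produces a standard tableau $\mathfrak{s}$ with $\mathtt{q}(\res_p(\mathfrak{s}))=\mathtt{q}(\res_{p+1}(\mathfrak{s}))$. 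Your hypothesis worry is then resolvable but needs to be made explicit, not left as an obstacle: since both $\mathfrak{u}^{-1}(p)$ and $\mathfrak{t}^{-1}(p)$ are boxes of $\undla$, every box of $\mathfrak{s}$ lies in $\undla$, so $\mathfrak{s}$ is a standard tableau of a subshape of $\undla$ and extends to a standard tableau of $\undla$ itself; separateness with respect to $\undla$ alone then gives the contradiction. The same remark applies to your final contradiction step (the tableau you build from $\alpha=\mathfrak{u}^{-1}(l)$, $\alpha'=\mathfrak{t}^{-1}(l)$ and $\mathfrak{t}^{-1}(l+1)$ also has all its boxes inside $\undla$). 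As the proposal stands, the crucial uniqueness step is both incorrectly described and not carried out, so the proof is incomplete.
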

\begin{proof}
If $s_l$ is admissible with respect to $\mathfrak{t}$, then $s_l\cdot\mathfrak{t}\in\Std(\undla)$ and moreover $\mathtt{q}(\res(s_l\cdot\mathfrak{t}))=s_l\cdot\mathtt{q}(\res(\mathfrak{t}))$ which means $s_l$ is admissible with respect to $\mathtt{q}(\res(\mathfrak{t}))$. Conversely, if $s_l$ is admissible with respect to $\res(\mathfrak{t})$, then we have 
\begin{equation}\label{eq:standard-u}
\mathtt{q}(\res(s_l\cdot\mathfrak{t}))=s_l\cdot\mathtt{q}(\res(\mathfrak{t}))=\mathtt{q}(\res(\mathfrak{u}))
\end{equation} 
for some $\mathfrak{u}\in \Std(\undla)$. {\color{black}We claim that $s_l\cdot\mathfrak{t}$ is standard. Otherwise, $l,\,l+1$ are in the same row or in the same column of $\mathfrak{t}$. Suppose $l,\,l+1$ are in the same row.  Firstly, we have $\mathtt{q}(\res(\mathfrak{t}\downarrow_{l-1}))=\mathtt{q}(\res(s_l\cdot\mathfrak{t}\downarrow_{l-1}))=\mathtt{q}(\res(\mathfrak{u}\downarrow_{l-1}))$ and both of $\mathfrak{t}\downarrow_{l-1}$ and $\mathfrak{u}\downarrow_{l-1}$ are standard. Thus by Lemma  \ref{lem:different residues} we have $\mathfrak{t}\downarrow_{l-1}=\mathfrak{u}\downarrow_{l-1}$. Moreover by Lemma \ref{separate in residue}, we obtain $\mathtt{q}(\res_l(\mathfrak{t}))\neq \mathtt{q}(\res_{l+1}(\mathfrak{t}))=\mathtt{q}(\res_l(\mathfrak{u}))$. This implies that $\alpha=\mathfrak{u}^{-1}(l)$ and $\alpha'=\mathfrak{t}^{-1}(l)$ are two different additive nodes of $\mathfrak{t}\downarrow_{l-1}=\mathfrak{u}\downarrow_{l-1}$. Now we can reconstruct a new tableau $\mathfrak{t'}$ such that $\mathfrak{t'}^{-1}(l)=\alpha',\mathfrak{t'}^{-1}(l+1)=\alpha,\,\mathfrak{t'}^{-1}(l+2)=\mathfrak{t}^{-1}(l+1)$. Then in $\mathfrak{t'}$, we have  $\mathtt{q}(\res_{l+1}(\mathfrak{t'}))= \mathtt{q}(\res_{l}(\mathfrak{u}))=\mathtt{q}(\res_{l+1}(\mathfrak{t}))=\mathtt{q}(\res_{l+2}(\mathfrak{t'}))$. This contradicts to Lemma \ref{separate in residue} and Lemma \ref{important condition} since $l+2\leq n+1$. Similarly argument applies if $l,\,l+1$ are in the same row. Hence $s_l\cdot\mathfrak{t}$ is standard.  Then $s_l\cdot\mathfrak{t}=\mathfrak{u}$ by Lemma \ref{lem:different residues}.}
\end{proof}

\begin{defn}{\color{black}For $\bullet\in\{\mathsf{0},\mathsf{s},\mathsf{ss}\}$ and $\undla\in\mathscr{P}^{\bullet,m}_{n}$}, we define the $\mathcal{A}_n$-module
$$
\mathbb{D}(\undla):=\oplus_{\tau\in P(\undla)}\mathbb{L}(\res(\mathfrak{t}^{\undla}))^{\tau}. 
$$ 
\end{defn}
{\bf In the remaining part of this section, we shall fix  $\bullet\in\{\mathtt{0},\mathtt{s},\mathtt{ss}\}$ and assume that  the parameters $q$ and $\undQ=(Q_1,Q_2,\ldots,Q_m)\in(\mathbb{K}^*)^m$ satisfy $P^{(\bullet)}_{n}(q^2,\undQ)\neq 0$.} {\color{black}By
 Lemma \ref{important condition1} (1) and \eqref{eq:residue}, we deduce $\{k|1\leq k\leq n, (\res_\mathfrak{t^{\undla}}(k))\sim \pm 1\}=\mathcal{D}_{\mathfrak{t}^{\undla}}$ and }
\begin{equation}\label{diagonal-D}
\sharp\mathcal{D}_{\mathfrak{t}^{\undla}}=\sharp\mathcal{D}_{\undla}
=\left\{
\begin{array}{ll}
0,&\text{if }\undla=(\lambda^{(1)},\ldots,\lambda^{(m)})\in\mathscr{P}^{\mathsf{0},m}_{n},\\
\ell(\lambda^{(0)}),&\text{if }\undla=(\lambda^{(0)},\lambda^{(1)},\ldots,\lambda^{(m)})\in\mathscr{P}^{\mathsf{s},m}_{n},\\
\ell(\lambda^{(0_-)})+\ell(\lambda^{(0_+)}),&\text{if }\undla=(\lambda^{(0_-)},\lambda^{(0_+)}, \lambda^{(1)},\ldots,\lambda^{(m)})\in\mathscr{P}^{\mathsf{ss},m}_{n}.
\end{array}
\right. 
\end{equation}
 Hence, by Corollary \ref{lem:irrepAn} we have 
 \begin{equation}\label{eq:dimDla}
 \text{dim}~\mathbb{D}(\undla)=
  2^{n-\lfloor\frac{\sharp \mathcal{D}_{\undla}}{2}\rfloor}\cdot |\Std(\undla)|.
 \end{equation}

The following is due to  Remark \ref{rem:Ltau1} and Lemma \ref{lem:eigen-Xk}.
\begin{lem}\label{lem:action property-2}
		{\color{black}Let $\bullet\in\{\mathtt{0},\mathtt{s},\mathtt{ss}\}$ and $\undla\in\mathscr{P}^{\bullet,m}_{n}$.} The eigenvalue of $X_k$ acting on the $\mathcal{A}_n$-module $\mathbb{L}(\res(\mathfrak{t}^{\undla}))^{\tau}$  is $\mathtt{b}_{\pm}(\res_{\tau\cdot\mathfrak{t}^{\underline{\lambda}}}(k))$ for each $1\leq k\leq n$. 
		Hence, the eigenvalue of $X_k+X^{-1}_k$ acting on the $\mathcal{A}_n$-module $\mathbb{L}(\res(\mathfrak{t}^{\undla}))^{\tau}$  is $\mathtt{q}(\res_{\tau\cdot\mathfrak{t}^{\underline{\lambda}}}(k))$ for each $1\leq k\leq n$. 
		\end{lem}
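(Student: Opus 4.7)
The plan is to prove both eigenvalue assertions by transporting the eigenvalues from the untwisted module $\mathbb{L}(\res(\mathfrak{t}^{\undla}))$ across the twist, using the intertwining formula \eqref{X-z-tau} (equivalently, the isomorphism $\mathbb{L}(\underline{\iota})^\tau\cong \mathbb{L}(\tau\cdot\underline{\iota})$ recorded in Remark \ref{rem:Ltau1}).

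First I would recall that $\mathbb{L}(\res(\mathfrak{t}^{\undla}))\cong \mathbb{L}(\res_{\mathfrak{t}^{\undla}}(1))\circledast\cdots\circledast\mathbb{L}(\res_{\mathfrak{t}^{\undla}}(n))$, so by Lemma \ref{lem:eigen-Xk} (applied with $\mathfrak{t}=\mathfrak{t}^{\undla}$) the operator $X_j$ acts on this module with eigenvalues $\mathtt{b}_{\pm}(\res_{\mathfrak{t}^{\undla}}(j))$ for each $1\leq j\leq n$. By Remark \ref{rem:Ltau1} we have the isomorphism $\mathbb{L}(\res(\mathfrak{t}^{\undla}))^\tau\cong \mathbb{L}(\tau\cdot\res(\mathfrak{t}^{\undla}))$, where the $k$-th component of $\tau\cdot\res(\mathfrak{t}^{\undla})$ is $\res_{\mathfrak{t}^{\undla}}(\tau^{-1}(k))$. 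Applying Lemma \ref{lem:eigen-Xk} again to this right-hand side module immediately yields that $X_k$ acts with eigenvalues $\mathtt{b}_{\pm}(\res_{\mathfrak{t}^{\undla}}(\tau^{-1}(k)))$.

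Next I would identify $\res_{\mathfrak{t}^{\undla}}(\tau^{-1}(k))$ with $\res_{\tau\cdot\mathfrak{t}^{\undla}}(k)$ by chasing definitions: for $\mathfrak{u}:=\tau\cdot\mathfrak{t}^{\undla}$ and a box $\alpha$ with $\mathfrak{u}(\alpha)=k$, one has $\tau(\mathfrak{t}^{\undla}(\alpha))=k$, i.e.\ $\alpha=(\mathfrak{t}^{\undla})^{-1}(\tau^{-1}(k))$, and hence $\res_{\mathfrak{u}}(k)=\res(\alpha)=\res_{\mathfrak{t}^{\undla}}(\tau^{-1}(k))$. Combining the two previous steps gives the first assertion. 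Alternatively, one can avoid invoking the isomorphism and argue directly from \eqref{X-z-tau}: for a homogeneous weight vector $v\in\mathbb{L}(\res(\mathfrak{t}^{\undla}))$ with $X_{\tau^{-1}(k)}^{\pm 1}v=\mathtt{b}_{\pm}(\res_{\mathfrak{t}^{\undla}}(\tau^{-1}(k)))v$, we have $X_k^{\pm 1}v^\tau=(X_{\tau^{-1}(k)}^{\pm 1}v)^\tau$ by the definition of the twisted action, which makes $v^\tau$ a weight vector with the claimed eigenvalue.

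The second assertion is then immediate: since $\mathtt{b}_+(x)$ and $\mathtt{b}_-(x)=\mathtt{b}_+(x)^{-1}$ are the two solutions of $z+z^{-1}=\mathtt{q}(x)$ by \eqref{substitution0} and \eqref{bpm}, we get $X_k+X_k^{-1}$ acting as $\mathtt{b}_{\pm}(\res_{\tau\cdot\mathfrak{t}^{\undla}}(k))+\mathtt{b}_{\pm}(\res_{\tau\cdot\mathfrak{t}^{\undla}}(k))^{-1}=\mathtt{q}(\res_{\tau\cdot\mathfrak{t}^{\undla}}(k))$ on the corresponding joint eigenvector. There is no real obstacle here; the only mildly delicate point is keeping the direction of the $\tau/\tau^{-1}$ bookkeeping straight, which is why I would carry out the combinatorial identification $\res_{\mathfrak{t}^{\undla}}(\tau^{-1}(k))=\res_{\tau\cdot\mathfrak{t}^{\undla}}(k)$ explicitly.
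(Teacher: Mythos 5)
Your proposal is correct and follows essentially the same route as the paper: both invoke Remark \ref{rem:Ltau1} to identify $\mathbb{L}(\res(\mathfrak{t}^{\undla}))^\tau$ with $\mathbb{L}(\tau\cdot\res(\mathfrak{t}^{\undla}))$, verify the combinatorial identity $\tau\cdot\res(\mathfrak{t}^{\undla})=\res(\tau\cdot\mathfrak{t}^{\undla})$ by noting that the node labelled $k$ in $\tau\cdot\mathfrak{t}^{\undla}$ is the node labelled $\tau^{-1}(k)$ in $\mathfrak{t}^{\undla}$, and then apply Lemma \ref{lem:eigen-Xk}. Your remark that the second claim follows from $\mathtt{b}_\pm(x)$ being the roots of $z+z^{-1}=\mathtt{q}(x)$, and your alternative direct verification via \eqref{X-z-tau}, are both consistent with and no more than minor elaborations of the paper's argument.
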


\begin{proof}
By \eqref{resNon-dege-2}  and Remark \ref{rem:Ltau1} we have 
\begin{align*}
\mathbb{L}(\res_{\mathfrak{t}^{\undla}})^\tau &\cong\mathbb{L}(\res_{\mathfrak{t}^{\undla}}(\tau^{-1}(1)), \cdots, \res_{\mathfrak{t}^{\undla}}(\tau^{-1}(n))) \\
&=\mathbb{L}(\res_{\tau\cdot\mathfrak{t}^{\undla}}(1),\ldots, \res_{\tau\cdot\mathfrak{t}^{\undla}}(n))
\end{align*}

This is due to the fact that the node occupied by $k$ in $\tau\cdot\mathfrak{t}^{\undla}$ coincides with the node occupied by $\tau^{-1}(k)$ in $\mathfrak{t}^{\undla}$ for each $1\leq k\leq n$.  In other words, we have 
\begin{equation}
\tau\cdot\res(\mathfrak{t}^{\undla})=\res(\tau\cdot \mathfrak{t}^{\undla}). 
\end{equation}
Then the lemma follows by Lemma \ref{lem:eigen-Xk}. 
\end{proof}

To define a $\mHfcn$-module structure  on $\mathbb{D}(\undla)$, we introduce two operators on $\mathbb{L}(\res(\mathfrak{t}^{\undla}))^{\tau}$ for each $\tau\in P(\undla)$ in the following as a generalization of the operators in \cite{Wa}: 
\begin{align}
\widetilde{\Xi}_i u&:=\bigl(-\epsilon\frac{1}{X_i X^{-1}_{i+1}-1}+\epsilon\frac{1}{X_i X_{i+1}-1}C_i C_{i+1}\bigr)\, u, \label{Operater1Non-dege}\\
\widetilde{\Omega}_i u&:=\sqrt{1-\epsilon^2 \biggl(\frac{X_iX^{-1}_{i+1}}{(X_iX^{-1}_{i+1}-1)^2}
+\frac{X^{-1}_iX^{-1}_{i+1}}{(X^{-1}_iX^{-1}_{i+1}-1)^2}\biggr)}\, u\label{Operater2Non-dege},
\end{align} where $u\in \mathbb{L}(\res(\mathfrak{t}^{\undla}))^{\tau}$.
By the second part of Lemma \ref{important condition1} and Lemma \ref{lem:action property-2}, the eigenvalues of $X_i+X^{-1}_i$ and $X_{i+1}+X^{-1}_{i+1}$ on $\mathbb{L}(\res(\mathfrak{t}^{\undla}))^{\tau}$ are different, hence the operators $\widetilde{\Xi}_i$ and $\widetilde{\Omega}_i$  are well-defined on $\mathbb{L}(\res(\mathfrak{t}^{\undla}))^{\tau}$ for each $\tau\in P(\undla)$. 

\begin{thm}\label{Construction}
{\color{black}Let $\bullet\in\{\mathsf{0},\mathsf{s},\mathsf{ss}\}$ and $\undQ=(Q_1,\ldots,Q_m)$. Suppose $f=f^{(\bullet)}_{\undQ}(X_1)$ and  $P^{\bullet}_{n}(q^2,\undQ)\neq 0$. }
Then  $\mathbb{D}(\undla)$ affords a $\mHfcn$-module via
\begin{align}
T_iz^{\tau}= \left \{
 \begin{array}{ll}
 \widetilde{\Xi}_i z^{\tau}
 +\widetilde{\Omega}_i z^{s_i\tau},
 & \text{ if } s_i \text{ is admissible with respect to } \tau\cdot \res(\mathfrak{t}^{\undla}), \\
 \widetilde{\Xi}_i z^{\tau}
 , & \text{ otherwise},
 \end{array}
 \right.\label{actionformulaNon-dege}
\end{align}
 for any $1\leq i\leq n-1,\,z\in \mathbb{L}(\res(\mathfrak{t}^{\undla}))$ and $\tau\in P(\undla)$.
\end{thm}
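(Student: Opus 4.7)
The plan is to verify directly that the prescribed operators satisfy all defining relations of $\mHfcn$, namely the relations \eqref{Braid}--\eqref{XC} inherited from $\mHcn$ plus the cyclotomic relation $f(X_1)=0$. The module structure on $\mathcal{A}_n$ is already built in by construction, so only the $T_i$-relations and the cyclotomic condition need checking. Throughout, Lemma \ref{important condition1}, Lemma \ref{lem:action property-1}, and Lemma \ref{lem:admissible-residue} will be used to guarantee that all denominators in \eqref{Operater1Non-dege}--\eqref{Operater2Non-dege} are nonzero and that the admissibility of $s_i$ with respect to $\tau\cdot\mathfrak{t}^{\undla}$ matches the admissibility with respect to $\mathtt{q}(\res(\tau\cdot\mathfrak{t}^{\undla}))$.

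First I would handle the cyclotomic relation. By Lemma \ref{lem:action property-2}, on each summand $\mathbb{L}(\res(\mathfrak{t}^{\undla}))^{\tau}$ the element $X_1$ acts with eigenvalues $\mathtt{b}_{\pm}(\res_{\tau\cdot\mathfrak{t}^{\undla}}(1))$. Since the box labelled $1$ of $\tau\cdot\mathfrak{t}^{\undla}$ is forced to lie at a corner position $(1,1,l)$, its residue equals $Q_l$, and by the convention $Q_0=1$, $Q_{0_+}=1$, $Q_{0_-}=-1$ the factor of $f^{(\bullet)}_{\undQ}$ corresponding to $l$ annihilates $X_1$ on that eigenspace. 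Hence $f(X_1)\mathbb{D}(\undla)=0$.

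Next I would verify the relations between $T_i$ and $X_j, C_j$. The action formula \eqref{actionformulaNon-dege} was designed precisely so that, on the subspace where $X_i+X_i^{-1}\neq X_{i+1}+X_{i+1}^{-1}$, we may formally write $T_i=\mathsf{z}_i^{-2}\widetilde{\Phi}_i+\widetilde{\Xi}_i$, where the $\widetilde{\Omega}_i$ contribution is the image of the intertwining part $\mathsf{z}_i^{-2}\widetilde{\Phi}_i$ on an eigenvector (its precise scalar coming from \eqref{Sqinter}). The relations \eqref{PX1}--\eqref{XC} then follow by combining \eqref{Xinter}, \eqref{Cinter}, and the fact that $\widetilde{\Xi}_i$ only involves $X_i,X_{i+1},C_i,C_{i+1}$ and commutes with $X_l,C_l$ for $l\neq i,i+1$. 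For the quadratic relation $T_i^2=\epsilon T_i+1$, direct computation on an eigenvector $z^{\tau}$ (split into the admissible and non-admissible cases) reduces everything to the polynomial identity
\[
\Bigl(-\epsilon\tfrac{1}{x^{-1}y-1}+\epsilon\tfrac{1}{xy-1}\Bigr)^{2}+\Bigl(1-\epsilon^{2}\tfrac{x^{-1}y}{(x^{-1}y-1)^{2}}-\epsilon^{2}\tfrac{x^{-1}y^{-1}}{(x^{-1}y^{-1}-1)^{2}}\Bigr)=\epsilon\Bigl(-\epsilon\tfrac{1}{x^{-1}y-1}+\epsilon\tfrac{1}{xy-1}\Bigr)+1,
\]
with $x,y$ the eigenvalues of $X_i,X_{i+1}$; this is an algebraic identity over $\mathbb{K}[x^{\pm 1},y^{\pm 1}]$. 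The commutation $T_iT_j=T_jT_i$ for $|i-j|>1$ is immediate from the locality of $\widetilde{\Xi}_i,\widetilde{\Omega}_i$ together with Lemma \ref{lem:admissible}(2).

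The main obstacle is the braid relation $T_iT_{i+1}T_i=T_{i+1}T_iT_{i+1}$. The strategy is to perform a case analysis, governed by Lemma \ref{lem:admissible}(1), on the set of transpositions in $\{s_i,s_{i+1}s_i,s_is_{i+1}s_i\}$ that are admissible with respect to $\tau\cdot\mathfrak{t}^{\undla}$. In each case one expands both $T_iT_{i+1}T_iz^{\tau}$ and $T_{i+1}T_iT_{i+1}z^{\tau}$ into a sum over $\tau'\in P(\undla)$ of tensor expressions in $\widetilde{\Xi}$'s and $\widetilde{\Omega}$'s applied to $z^{\tau'}$, and then matches coefficients. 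The matching reduces to rational identities in the three eigenvalues $x,y,w$ of $X_i,X_{i+1},X_{i+2}$, which ultimately reflect the braid identity \eqref{Braidinter} for the intertwiners. Lemma \ref{lem:action property-1} ensures that no denominator vanishes in the cases that occur, and Lemma \ref{important condition1}(3) rules out the degenerate configurations that would spoil the identity. With this case-by-case reduction to polynomial identities, the braid relation follows, completing the verification that $\mathbb{D}(\undla)$ is a $\mHfcn$-module.
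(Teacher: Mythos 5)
Your strategy matches the paper's at a high level (verify the defining relations directly on simultaneous eigenvectors, handle the cyclotomic relation via the residue of the box labelled $1$, use the intertwiner $\widetilde{\Phi}_i$), and your idea of writing $T_i = \mathsf{z}_i^{-2}\widetilde{\Phi}_i + \widetilde{\Xi}_i$ and pushing the relations \eqref{PX1}, \eqref{PX2}, \eqref{PC} through \eqref{Xinter}--\eqref{Cinter} is actually a clean alternative to the eigenvalue-by-eigenvalue computations in the paper. However, there are two concrete issues.

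First, the quadratic relation is not the scalar identity you wrote. Since $\widetilde{\Xi}_i$ contains the term $\epsilon\frac{1}{X_iX_{i+1}-1}C_iC_{i+1}$ and $C_iC_{i+1}$ does not commute with rational functions of $X_i,X_{i+1}$ (recall $C_jX_j=X_j^{-1}C_j$), the operator $\widetilde{\Xi}_i^2$ acting on an eigenvector $z^\tau$ is not a scalar multiple of $z^\tau$; it mixes $z^\tau$ with $C_iC_{i+1}z^\tau$. One must expand $\widetilde{\Xi}_i^2 + \widetilde{\Omega}_i^2$ keeping the Clifford component explicitly, as the paper does, and the resulting identity has both a scalar piece and a $C_iC_{i+1}$-piece. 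Moreover, you also need the separate observation that in the non-admissible case $\widetilde{\Omega}_i^2 z^\tau=0$, which is a consequence of \eqref{invertible} holding for the relevant eigenvalues; your identity as stated does not incorporate that input.

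Second and more seriously, your treatment of the braid relation overlooks a genuine degenerate case. You assert that Lemma \ref{important condition1}(3) and Lemma \ref{lem:action property-1} ``rule out the degenerate configurations that would spoil the identity,'' but both of those lemmas concern residues of \emph{consecutive} entries $k,k+1$; they say nothing about $k$ and $k+2$. In fact, for a strict-partition component the boxes labelled $i,i+1,i+2$ may sit at a diagonal corner so that $\mathtt{q}(\res_{\tau\cdot\mathfrak t^{\undla}}(i))=\mathtt{q}(\res_{\tau\cdot\mathfrak t^{\undla}}(i+2))$, and then the element $\mathsf Z'$ (the product of the three differences of $X_j+X_j^{-1}$) acts by zero, so any argument that multiplies through by $\mathsf Z'$ or divides by it---which is exactly what the reduction to the intertwiner identity \eqref{Braidinter} requires---breaks down. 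The paper treats this as a separate Case I by a direct computation with $\iota_i=\iota_{i+2}=\pm1$, $\iota_{i+1}=\pm q^2$, showing that all three $\widehat T$'s vanish and $T_iT_{i+1}T_i=\widetilde{\Xi}_i\widetilde{\Xi}_{i+1}\widetilde{\Xi}_i$ matches $T_{i+1}T_iT_{i+1}$. Your proposal as written would not catch this case, and the ``coefficient-matching'' you describe is not actually carried out, so the braid relation is not established.

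In the generic Case II (when $\mathsf Z'$ is invertible on the eigenspace) the paper's argument is cleaner than a raw coefficient-match: one sets $\widehat T_i=T_i-\widetilde{\Xi}_i$, observes $\widehat T_iz^\tau=\widetilde{\Phi}_i\mathsf z_i^{-2}z^\tau$, uses \eqref{Braidinter} to get $\widehat T_i\widehat T_{i+1}\widehat T_i=\widehat T_{i+1}\widehat T_i\widehat T_{i+1}$, and then invokes the algebraic identity $\widetilde{\Phi}_i\widetilde{\Phi}_{i+1}\widetilde{\Phi}_i-\widetilde{\Phi}_{i+1}\widetilde{\Phi}_i\widetilde{\Phi}_{i+1}=(T_iT_{i+1}T_i-T_{i+1}T_iT_{i+1})\mathsf Z'$ together with the invertibility of $\mathsf Z'$ to conclude. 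I would encourage you to adopt this structure and then supply the separate direct calculation for the corner case $\mathtt{q}(\iota_i)=\mathtt{q}(\iota_{i+2})$.
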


\begin{proof}

 Fix $\tau\in P(\undla)$ and any $z^\tau\in \mathbb{L}(\res(\mathfrak{t}^{\undla}))^{\tau}$ with $z\in \mathbb{L}(\res(\mathfrak{t}^{\undla}))$. Since the action of $X^\pm_1,\cdots,X^\pm_n$ are semi-simple on $\mathbb{L}(\res(\mathfrak{t}^{\undla}))^{\tau}$, to show that $\mathbb{D}(\undla)$ affords a $\mHfcn$-module via \eqref{actionformulaNon-dege},  it suffices to show that the actions of $T_i, C_j, X_j$ on $z^\tau$ satisfy the relations \eqref{Braid}-\eqref{PC} and moreover the polynomial $f(X_1)$ satisfies $f(X_1)z^\tau=0$ in the case $z^\tau$ is a simultaneous eigenvector of $X^\pm_1,\cdots,X^\pm_n$ {\color{black} for $f(X_1)=f^{(\mathtt{\bullet})}_{\undQ}(X_1)$}. From now on, we assume $z^\tau$ is a simultaneous eigenvector of $X^\pm_1,\cdots,X^\pm_n$. 
  
Firstly,  by Lemma \ref{lem:action property-2}, 
\begin{equation}\label{f-action}
(X_1+X_1^{-1})z^\tau=\mathtt{q}(\res_{\tau\cdot\mathfrak{t}^{\underline{\lambda}}}(1))z^\tau
\end{equation}
Since $\tau\in P(\undla)$, we have that $\tau\cdot\mathfrak{t}^{\underline{\lambda}}$ is standard and hence the box occupied by the number $1$ must be at the position $(1,1)$ in one component of Young diagrams $\undla$. This means $\res_{\tau\cdot\mathfrak{t}^{\underline{\lambda}}}(1)=\pm 1$ or $\res_{\tau\cdot\mathfrak{t}^{\underline{\lambda}}}(1)=Q_t$ for some $1\leq t\leq m$. In the case $\res_{\tau\cdot\mathfrak{t}^{\underline{\lambda}}}(1)=\pm 1$, by \eqref{f-action} we have $X_1z^\tau=\pm z^\tau$. This together with \eqref{f-action} leads to 
$$
f(X_1)z^\tau=0
$$
for $f(X_1)=f^{(\mathtt{\bullet})}_{\undQ}(X_1)$. 

 As each  $\mathbb{L}(\res(\mathfrak{t}^{\undla}))^{\tau}$ is a $\mathcal{A}_n$-module, it remains to check the actions of  $T_i, C_j, X_j$ on $z^\tau$ satisify relations \eqref{Braid}, \eqref{PX1}, \eqref{PX2}, \eqref{PX3} and \eqref{PC}. Write $\tau\cdot \res(\mathfrak{t}^{\undla})=\res(\tau\cdot \mathfrak{t}^{\undla})=(\iota_1,\ldots,\iota_n)$. Then 
 \begin{equation}\label{Xi-ztau}
 X_i z^\tau=a_iz^\tau,\qquad a_i=\mathtt{b}_{\pm}(\mathtt{q}(\iota_i)) \text{ for each } 1\leq i\leq n. 
 \end{equation}
  
 {\bf Relations \eqref{Braid}}.  It is straightforward to check that $ T_iT_j z^\tau=T_jT_i z^\tau$ holds in the case $|i-j|>1$ by \eqref{actionformulaNon-dege}. Let $1\leq i\leq n-1$. Then by \eqref{actionformulaNon-dege} if  $s_i$ is admissible with respect to $\tau\cdot \res(\mathfrak{t}^{\undla})$, we can compute 
 \begin{align*}
       T^2_iz^{\tau}&=\widetilde{\Xi}_i^2 z^\tau+\widetilde{\Xi}_i\widetilde{\Omega}_iz^{s_i\tau}+\widetilde{\Omega}_i(\widetilde{\Xi}_iz^\tau)^{s_i}
     +\widetilde{\Omega}_i(\widetilde{\Omega}_i z^{s_i\tau})^{s_i}\\
       &=\widetilde{\Xi}_i^2 z^\tau+\widetilde{\Omega}_i\widetilde{\Xi}_iz^{s_i\tau}+\widetilde{\Omega}_i(\widetilde{\Xi}_iz^\tau)^{s_i}
     +\widetilde{\Omega}^2_i z^\tau\\
    & =(\widetilde{\Xi}_i^2 +\widetilde{\Omega}^2_i)z^\tau+\widetilde{\Omega}_i(\widetilde{\Xi}_iz^{s_i\tau}+(\widetilde{\Xi}_iz^\tau)^{s_i})\\
           &=(\widetilde{\Xi}_i^2 +\widetilde{\Omega}^2_i)z^\tau+\widetilde{\Omega}_i\bigl(-\epsilon\frac{1}{X_i X^{-1}_{i+1}-1}+\epsilon\frac{1}{X_i X_{i+1}-1}C_i C_{i+1}\bigr)z^{s_i\tau}\\
     &\qquad\qquad+\widetilde{\Omega}_i\biggl(\bigl(-\epsilon\frac{1}{X_i X^{-1}_{i+1}-1}+\epsilon\frac{1}{X_i X_{i+1}-1}C_i C_{i+1}\bigr)z^{\tau}\biggr)^{s_i}\\
     &=(\widetilde{\Xi}_i^2 +\widetilde{\Omega}^2_i)z^\tau+\widetilde{\Omega}_i\bigl(-\epsilon\frac{1}{X_i X^{-1}_{i+1}-1}+\epsilon\frac{1}{X_i X_{i+1}-1}C_i C_{i+1}\bigr)z^{s_i\tau}\\
     &\qquad\qquad+\widetilde{\Omega}_i\bigl(-\epsilon\frac{1}{X^{-1}_i X_{i+1}-1}+\epsilon\frac{1}{X_i X_{i+1}-1}C_{i+1} C_{i}\bigr)z^{s_i\tau}\\
       &=(\widetilde{\Xi}_i^2 +\widetilde{\Omega}^2_i)z^\tau+\widetilde{\Omega}_i(-\epsilon\frac{1}{X_iX^{-1}_{i+1}-1}-\epsilon\frac{1}{X^{-1}_iX_{i+1}-1})z^{s_i\tau}\\
       & =(\widetilde{\Xi}_i^2 +\widetilde{\Omega}^2_i)z^\tau+\epsilon\widetilde{\Omega}_iz^{s_i\tau}\\
       &=z^\tau+\epsilon(\widetilde{\Xi}_i)z^\tau+\epsilon\widetilde{\Omega}_iz^{s_i\tau},
     \end{align*}
     where the last equality is due to $\widetilde{\Xi}_i^2+\widetilde{\Omega}^2_i=(1-\epsilon^2\frac{1}{X_iX^{-1}_{i+1}-1})
     +\epsilon^2\frac{1}{X_iX_{i+1}-1}C_iC_{i+1}$. Thus  $T_i^2z^\tau=\epsilon T_i z^\tau+z^\tau$ in this case. If $s_i$ is not admissible with respect to $\tau\cdot \res(\mathfrak{t}^{\undla})$. This implies that $i,i+1$ are adjacent in tableau $\tau\cdot \mathfrak{t}^{\undla}$ and then $\iota_i=q^2\iota_{i+1}$ or $\iota_i=q^{-2}\iota_{i+1}$. By \eqref{Xi-ztau}, \eqref{substitute}, \eqref{invertible2} and \eqref{resNon-dege-1}, we know that the pair of eigenvalues $(a_i, a_{i+1})$ of $X_i,X_{i+1}$ on $z^\tau$ satisfies \eqref{invertible}, or equivalently, $\widetilde{\Omega}^2_iz^\tau=0$. This together with \eqref{actionformulaNon-dege} lead to 
     \begin{align*}
              T^2_iz^{\tau} =&\widetilde{\Xi}_i^2 z^{\tau}\\
              &=\epsilon^2\bigl(\frac{1}{(X_iX^{-1}_{i+1}-1)^2}
              -\frac{1}{(X_iX_{i+1}-1)(X^{-1}_iX^{-1}_{i+1}-1)}\bigr)z^{\tau}\\
              &\qquad \qquad +\epsilon^2\frac{1}{X_iX_{i+1}-1}C_iC_{i+1} z^{\tau} \\
                &=\epsilon T_i z^\tau+z^\tau-\widetilde{\Omega}^2_iz^\tau\\
                &=\epsilon T_i z^\tau+z^\tau. 
             \end{align*} 
             Hence $T_i^2z^\tau=\epsilon T_iz^\tau+z^\tau$ for each $1\leq i\leq n-1$. 
             
    \medskip          
  Next, we shall check $T_iT_{i+1}T_iz^\tau=T_{i+1}T_iT_{i+1}z^\tau$ {\color{black} for $1\leq i\leq n-2$}. 
 \medskip
 
  {\bf Case I:} $a_i=a^{\pm 1}_{i+2}$. This means $\mathtt{q}(\iota_i)=\mathtt{q}(\iota_{i+2})$. Then by Proposition \ref{separate formula} since $P^{(\bullet)}_n(q,\underline{Q})\neq 0$  and Lemma \ref{important condition1}, the numbers $i$ and $i+2$ lie on the same diagonal and hence $i,i+1,i+2$ must be located in $\tau\cdot \mathfrak{t}^{\undla}$ as the following way: 
 
\[\begin{matrix}
   i & i+1\\
    & i+2.
 \end{matrix}\]
  That is, either $r=\deg{f}$ is odd and $i,i+1,i+2$ are in the $0$-th component or $r=\deg{f}$ is even and $i,i+1,i+2$ are in the $0_-$-th component or $0_+$-component. In both cases, we have either $\iota_i=\iota_{i+2}=1,\,\iota_{i+1}=q^2$ and $a_i=a_{i+2}=1$ or $\iota_i=\iota_{i+2}=-1,\,\iota_{i+1}=-q^2$ and $a_i=a_{i+2}=-1$. Then it's easy to show $T_iT_{i+1}T_i z^\tau=\widetilde{\Xi}_i\widetilde{\Xi}_{i+1}\widetilde{\Xi}_i z^\tau=\widetilde{\Xi}_{i+1}\widetilde{\Xi}_i\widetilde{\Xi}_{i+1} z^\tau=T_{i+1}T_iT_{i+1} z^\tau$ holds by a direct computation.\\
  \medskip
  {\bf Case II:} $a_i\neq a^{\pm 1}_{i+2}$. Set $\widehat{T}_iz^\tau=T_iz^\tau-\widetilde{\Xi}_{i}z^\tau$ for $1\leq i\leq n-1$. It is clear by~(\ref{actionformulaNon-dege}) that
\begin{eqnarray*}
\widehat{T}_i z^{\tau}= \left \{
 \begin{array}{ll}
 \widetilde{\Omega}_i z^{s_i\tau},
 & \text{ if } s_i \text{ is admissible with respect to } \tau\cdot \res(\mathfrak{t}^{\undla}), \\
 0, & \text{ otherwise }.
 \end{array}
 \right.
\end{eqnarray*}
  If $i,i+1$ are adjacent or $i,i+2$ are adjacent, or $i+1,i+2$ are adjacent in $\tau\cdot \mathfrak{t}^{\undla}$, then by \eqref{substitute}, \eqref{invertible2} and \eqref{resNon-dege-1} one can show $\widehat{T}_i\widehat{T}_{i+1}\widehat{T}_iz^\tau
=0=\widehat{T}_{i+1}\widehat{T}_i\widehat{T}_{i+1}z^\tau$.
Otherwise, by~(\ref{Operater2Non-dege}) and \eqref{L-tau-sigma},  we obtain 
\begin{align*}
 \widehat{T}_i\widehat{T}_{i+1}\widehat{T}_iz^{\tau}
&=\sqrt{1-\epsilon^2 \biggl(\frac{a_ia_{i+1}^{-1}}{(a_ia_{i+1}^{-1}-1)^2}+\frac{a_i^{-1}a_{i+1}^{-1}}{(a_i^{-1}a_{i+1}^{-1}-1)^2}\biggr)}\\
&\qquad\sqrt{1-\epsilon^2 \biggl(\frac{a_ia_{i+2}^{-1}}{(a_ia_{i+2}^{-1}-1)^2}+\frac{a_i^{-1}a_{i+2}^{-1}}{(a_i^{-1}a_{i+2}^{-1}-1)^2}\biggr)}\\ 
&\qquad \qquad\qquad \sqrt{1-\epsilon^2 \biggl(\frac{a_{i+1}a_{i+2}^{-1}}{(a_{i+1}a_{i+2}^{-1}-1)^2}+\frac{a_{i+1}^{-1}a_{i+2}^{-1}}{(a_{i+1}^{-1}a_{i+2}^{-1}-1)^2}\biggr)}z^{s_is_{i+1}s_i\tau}\\
&=\widehat{T}_{i+1}\widehat{T}_i\widehat{T}_{i+1}z^{\tau}.
\end{align*}  
Putting together in this case, we have
\begin{align}
\widehat{T}_i\widehat{T}_{i+1}\widehat{T}_iz^{\tau}
=\widehat{T}_{i+1}\widehat{T}_i\widehat{T}_{i+1}z^{\tau}.\label{Braid'}
\end{align}
Since $a_{i}\neq a_{i+1}^{\pm1}, a_{i+1}\neq a_{i+2}^{\pm 1}, a_i\neq a_{i+2}^{\pm 1}$, we have that the element 
\begin{align}\mathsf{Z'}:=& ((X_i+X_i^{-1})-(X_{i+1}+X_{i+1}^{-1}))((X_i+X_i^{-1})-(X_{i+2}+X_{i+2}^{-1}))\cdot\notag\\
&((X_{i+1}+X_{i+1}^{-1})-(X_{i+2}+X_{i+2}^{-1}))\notag
\end{align}
 acts on $z^\tau$ as the non-zero scalar $${((a_i+a_{i}^{-1})-(a_{i+1}+a_{i+1}^{-1}))((a_{i}+a_{i}^{-1})-(a_{i+2}+a_{i+2}^{-1}))((a_{i+1}+a_{i+1}^{-1})-(a_{i+2}+a_{i+2}^{-1}))}. $$
   Recalling the intertwining elements
$\widetilde{\Phi}_i$ from~(\ref{intertwinNon-dege}), we see that
\begin{align}
\widehat{T}_iz^\tau=\widetilde{\Phi}_i \frac{1}{\mathsf{z}_i^2}z^\tau=\widetilde{\Phi}_i \frac{1}{(X_i+X_i^{-1})-(X_{i+1}+X_{i+1}^{-1})}z^\tau\notag
\end{align}
This together with~(\ref{Braidinter}) shows that
\begin{align*}
 \widehat{T}_i\widehat{T}_{i+1} \widehat{T}_iz^\tau
=\widetilde{\Phi}_i\widetilde{\Phi}_{i+1}\widetilde{\Phi}_i \frac{1}{\mathsf{Z'} }z^{\tau},\quad 
 \widehat{T}_{i+1}\widehat{T}_{i} \widehat{T}_{i+1}z^\tau 
=\widetilde{\Phi}_{i+1}\widetilde{\Phi}_{i}\widetilde{\Phi}_{i+1}\frac{1}{\mathsf{Z'}} z^{\tau}
\end{align*}
 Hence by~(\ref{Braid'}) we see that

\begin{align*}
 (\widetilde{\Phi}_i\widetilde{\Phi}_{i+1}\widetilde{\Phi}_i-\widetilde{\Phi}_{i+1}\widetilde{\Phi}_{i}\widetilde{\Phi}_{i+1})
\frac{1}{\mathsf{Z}'}z^{\tau}=0.
\end{align*}
A tedious calculation shows that
\begin{align*}
 \widetilde{\Phi}_i\widetilde{\Phi}_{i+1}\widetilde{\Phi}_i-\widetilde{\Phi}_{i+1}\widetilde{\Phi}_{i}\widetilde{\Phi}_{i+1}&=
(T_iT_{i+1}T_i-T_{i+1}T_iT_{i+1})\mathsf{Z}'. 
\end{align*}
Therefore we obtain $$(T_iT_{i+1}T_i-T_{i+1}T_iT_{i+1})z^\tau=0.
$$

{\bf Relations \eqref{PX1}, \eqref{PX2}, \eqref{PX3}}.  Clearly \eqref{PX3} holds for the action of $T_i$ and $X_j$ on $z^\tau$.  We shall only  check \eqref{PX1}, i.e. $T_iX_iz^\tau=X_{i+1}T_iz^\tau-\epsilon(X_{i+1}+C_iC_{i+1}X_k)z^\tau$ for $1\leq i\leq n-1$ and  the computation for \eqref{PX2} is similar and will be omitted. Observe that $T_iX_i z^\tau=a_iT_i z^\tau$.
      If $s_i $ is admissible with respect to $\tau\cdot \res(\mathfrak{t}^{\undla})$, then by \eqref{Xi-ztau} and \eqref{X-z-tau} we have $$
         T_iX_iz^\tau = a_iT_i z^\tau =a_i(-\frac{1}{a_ia_{i+1}^{-1}-1}+\frac{1}{a_i^{-1}a_{i+1}^{-1}-1}C_iC_{i+1})z^\tau
            +a_i\widetilde{\Omega}_iz^{s_i\tau}.$$ and 
            \begin{align*}
            X_{i+1}T_i z^\tau & =X_{i+1}\epsilon\bigl(-\frac{1}{a_ia_{i+1}^{-1}-1}+\frac{1}{a_i^{-1}a_{i+1}^{-1}-1}C_{i}C_{i+1}\bigr)z^\tau
            +X_{i+1}(\widetilde{\Omega}_iz^{s_i\tau}) \\
                        &=\epsilon\bigl(-\frac{a_{i+1}}{a_ia_{i+1}^{-1}-1}+\frac{a_{i+1}^{-1}}{a_i^{-1}a_{i+1}^{-1}-1}C_{i}C_{i+1}\bigr)z^\tau
            +a_i\widetilde{\Omega}_iz^{s_i\tau}, 
      \end{align*}
   which leads to 
   \begin{equation}\label{check-relation}
   T_iX_i z^\tau=X_{i+1}T_i z^\tau-\epsilon(X_{k+1}+C_iC_{i+1}X_k)z^\tau
   \end{equation}
   since  $(X_{i+1}+C_iC_{i+1}X_{i})z^\tau=(a_{i+1}+a_iC_iC_{i+1})z^\tau. $
  If $s_i $ is not admissible with respect to $\tau\cdot \res(\mathfrak{t}^{\undla})$, it is straightforward to check {\color{black} that \eqref{check-relation} also holds by a similar and easier calculation which we omit.}
  
   {\bf Relations \eqref{PC}}. It is trivial to show $T_iC_jz^\tau=C_jT_iz^\tau,\quad j\neq i, i+1$. It remains to check $T_iC_{i+1}z^\tau=C_iT_iz^\tau-\epsilon(C_i-C_{i+1})z^\tau$ for $1\leq i\leq n-1$ since the other one is similar. 
   Let $1\leq i\leq n-1$.  If $s_i $ is admissible with respect to $\tau\cdot \res(\mathfrak{t}^{\undla})$, we have  
   $$T_iC_{i+1} z^\tau= \epsilon(-\frac{1}{ab-1}C_{i+1}+\frac{1}{a_i^{-1}a_{i+1}-1}C_{i})z^\tau+\widetilde{\Omega}_i (C_{i+1}z^\tau)^{s_i}$$ and 
    \begin{align*}
                  C_iT_iz^\tau &=C_i\epsilon(-\frac{1}{a_ia_{i+1}^{-1}-1}+\frac{1}{a_i^{-1}a_{i+1}^{-1}-1}C_iC_{i+1})z^\tau
                  +C_i\widetilde{\Omega}_iz^{s_i\tau} \\
                  &=\epsilon(-\frac{1}{a_ia_{i+1}^{-1}-1}C_i+\frac{1}{a_i^{-1}a_{i+1}^{-1}-1}C_{i+1})z^\tau
                  +\widetilde{\Omega}_i(C_{i+1}z^\tau)^{s_i}, 
                \end{align*}
    which implies $T_iC_{i+1} z^\tau=C_iT_i z^\tau-\epsilon(C_i-C_{i+1})z^\tau$ for $1\leq i\leq n-1$ with $s_i $ being admissible with respect to $\tau\cdot \res(\mathfrak{t}^{\undla})$. Meanwhile for $1\leq i\leq n-1$ with  $s_i $ being not admissible with respect to $\tau\cdot \res(\mathfrak{t}^{\undla})$,  it can be proved via a similar and easier calculation as above and we omit the detail. 

Putting together, we obtain that $\mathbb{D}(\undla)$ is a $\mHfcn$-module.
\end{proof}

We shall show that $\mathbb{D}(\undla)$ is an irreducible $\mHfcn$-module. The following Lemma will be useful.

\begin{lem}\label{lem:newbijNon-dege}
{\color{black}Fix $\bullet\in\{\mathsf{0},\mathsf{s},\mathsf{ss}\}$, $\undla\in\mathscr{P}^{\bullet,m}_{n}$ and $\tau\in P(\undla)$.} Suppose $s_i$ is admissible with respect to $\tau \mathfrak{t}^{\undla}$ for some $1\leq i\leq n-1$. Then the action of the intertwining element $\widetilde{\Phi}_i$ on $\mathbb{D}(\undla)$ leads to a bijection from $\mathbb{L}(\res(\mathfrak{t}^{\undla}))^{\tau}$ to $\mathbb{L}(\res(\mathfrak{t}^{\undla}))^{s_i\tau}$.
\end{lem}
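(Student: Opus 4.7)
The plan is to show that $\widetilde{\Phi}_i$ acts on each weight vector $z^\tau\in\mathbb{L}(\res(\mathfrak{t}^{\undla}))^\tau$ as a non-zero scalar times the corresponding weight vector $z^{s_i\tau}$, whence the induced map between these two $\mathcal{A}_n$-modules of equal dimension is automatically a bijection.

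First, I will substitute the formula \eqref{actionformulaNon-dege} for $T_iz^\tau$ (valid because $s_i$ is admissible) into the definition \eqref{intertwinNon-dege} of $\widetilde{\Phi}_i$. The two rational ``correction'' terms in $\widetilde{\Phi}_i$ are designed precisely to cancel the $\mathsf{z}_i^2\widetilde{\Xi}_iz^\tau$ contribution of $\mathsf{z}_i^2T_iz^\tau$, and after this cancellation one is left with
\[
\widetilde{\Phi}_i z^\tau=\mathsf{z}_i^2\,\widetilde{\Omega}_i\, z^{s_i\tau}.
\]
Since $\mathsf{z}_i^2$ and $\widetilde{\Omega}_i$ are expressions in the $X_j$'s alone, both act as scalars on the weight vector $z^{s_i\tau}$.

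Next I verify that both scalars are non-zero. Writing $\tau\cdot\res(\mathfrak{t}^{\undla})=(\iota_1,\ldots,\iota_n)$, Lemma \ref{lem:action property-2} identifies the eigenvalues of $X_i+X_i^{-1}$ and $X_{i+1}+X_{i+1}^{-1}$ on $z^{s_i\tau}$ as $\mathtt{q}(\iota_{i+1})$ and $\mathtt{q}(\iota_i)$, and Lemma \ref{important condition1}(2) forces these to be distinct because $P_n^{(\bullet)}(q^2,\undQ)\neq 0$ and $\tau\mathfrak{t}^{\undla}\in\Std(\undla)$; hence $\mathsf{z}_i^2$ contributes a non-zero scalar. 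For $\widetilde{\Omega}_i$, admissibility of $s_i$ with respect to $\tau\mathfrak{t}^{\undla}$ means $s_i\tau\mathfrak{t}^{\undla}\in\Std(\undla)$, so the labels $i, i+1$ are neither in the same row nor in the same column of $s_i\tau\mathfrak{t}^{\undla}$; together with the separateness assumption, this places us in the hypothesis of Lemma \ref{lem:action property-1}, so the pair of $(X_i,X_{i+1})$-eigenvalues on $z^{s_i\tau}$ does not satisfy \eqref{invertible} and $\widetilde{\Omega}_i^2$ also acts as a non-zero scalar.

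Combining the two ingredients, $\widetilde{\Phi}_i$ sends a weight basis of $\mathbb{L}(\res(\mathfrak{t}^{\undla}))^\tau$ to non-zero scalar multiples of the matching weight basis of $\mathbb{L}(\res(\mathfrak{t}^{\undla}))^{s_i\tau}$, and since the two modules have the same dimension by Corollary \ref{lem:irrepAn} this gives the required bijection. The principal technical hurdle is the step that promotes ``$s_i$-admissibility'' (not adjacent in row or column) to the stronger ``not on adjacent diagonals'' hypothesis of Lemma \ref{lem:action property-1}; in the single-partition setting this is a classical fact about standard Young tableaux, and the condition $P_n^{(\bullet)}(q^2,\undQ)\neq 0$ is exactly what rules out the analogous cross-component coincidences when several components are involved.
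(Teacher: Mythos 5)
Your proof is correct and follows essentially the same route as the paper's. Where the paper writes $\widetilde{\Phi}_i z=\mathsf{z}_i^2(T_i-\widetilde{\Xi}_i)z$ and then shows $\widetilde{\Phi}_i^2$ acts by a non-zero scalar, you instead plug the action formula in directly to obtain $\widetilde{\Phi}_i z^\tau=\mathsf{z}_i^2\widetilde{\Omega}_i z^{s_i\tau}$ and show each factor is a non-zero scalar; these are the same computation parenthesized differently, and both rely on the identical pair of non-vanishing lemmas (Lemma~\ref{important condition1}(2) for $\mathsf{z}_i^2$, Lemma~\ref{lem:action property-1} for $\widetilde{\Omega}_i$). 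One small imprecision worth flagging: your final paragraph attributes the passage from ``$s_i$-admissible'' to ``not on adjacent diagonals'' partly to the separateness condition $P_n^{(\bullet)}(q^2,\undQ)\neq 0$. In fact this passage is purely combinatorial: within a single (shifted) Young diagram two consecutive entries of a standard tableau at non-adjacent cells cannot lie on adjacent diagonals, and across components the adjacent-diagonal hypothesis is vacuous since diagonals are defined component-wise. The parameter condition is used only afterwards, inside Lemma~\ref{important condition1}(3)/Lemma~\ref{lem:action property-1}, to conclude that the resulting eigenvalue pair avoids \eqref{invertible}. This is a harmless misattribution, not a gap.
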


\begin{proof} First, by Lemma \ref{lem:admissible-residue} and the assumption that  $s_i$ is admissible with respect to $\tau \mathfrak{t}^{\undla}$, we have that $s_i$ is admissible to $\tau\cdot\res(\mathfrak{t}^{\undla})$. Then 
by \eqref{eq:zi}, \eqref{intertwinNon-dege} and \eqref{Operater1Non-dege}, we have 
\begin{equation}\label{eq:Phiz}
\widetilde{\Phi}_i z=\mathsf{z}^2_i(T_i-\widetilde{\Xi}_i)z\in \mathbb{L}(\res(\mathfrak{t}^{\undla}))^{s_i\tau}
\end{equation} for any $z\in \mathbb{L}(\res(\mathfrak{t}^{\undla}))^{\tau}$ by \eqref{actionformulaNon-dege}.  Meanwhile by \eqref{Sqinter} we have 
 \begin{align*}
         \widetilde{\Phi}^2_iz&=\mathsf{z}^2_i\bigl(\mathsf{z}^2_i-\epsilon^2 (X^{-1}_i X^{-1}_{i+1}(X_i X_{i+1}-1)^2-X^{-1}_iX_{i+1}(X_i X^{-1}_{i+1}-1)^2)\bigr)\\
         &=\mathsf{z}^4_i\biggl(1-\epsilon^2\bigl(\frac{X_iX^{-1}_{i+1}}{(X_iX^{-1}_{i+1}-1)^2}
         +\frac{X^{-1}_iX^{-1}_{i+1}}{(X^{-1}_iX^{-1}_{i+1}-1)^2}\bigr)\biggr)z,
        \end{align*} for any $z\in \mathbb{L}(\res(\mathfrak{t}^{\undla}))^{\tau}$, which means $\widetilde{\Phi}^2_i$ act as a scalar on $\mathbb{L}(\res(\mathfrak{t}^{\undla}))^{\tau}$. We only need to show this scalar is non-zero. Actually,  the assumption that $s_i$ is admissible with respect to $\tau \mathfrak{t}^{\undla}$ means $i,i+1$ are neither adjacent nor in the same diagonal in $\mathfrak{t}^{\undla}$. {\color{black}By the second part of Lemma \ref{important condition1} and Lemma \ref{lem:action property-2}}, we deduce that $\mathsf{z}_i$ acts as non-zero scalars on $\mathbb{L}(\res(\mathfrak{t}^{\undla}))^{\tau}$. Moreover, by Lemma \ref{lem:action property-1} and Lemma \ref{lem:action property-2}, $1-\epsilon^2\bigl(\frac{X_iX^{-1}_{i+1}}{(X_iX^{-1}_{i+1}-1)^2}
         +\frac{X^{-1}_iX^{-1}_{i+1}}{(X^{-1}_iX^{-1}_{i+1}-1)^2}\bigr)$ acts as non-zero scalars on $\mathbb{L}(\res(\mathfrak{t}^{\undla}))^{\tau}$. This together with \eqref{eq:Phiz} means that 
       the action of  $\widetilde{\Phi}_i$ on $\mathbb{D}(\undla)$ gives rise to a bijection from $\mathbb{L}(\res(\mathfrak{t}^{\undla}))^{\tau}$ to $\mathbb{L}(\res(\mathfrak{t}^{\undla}))^{s_i\tau}$.
\end{proof}

\begin{prop}\label{irreducibleNon-dege}
 Let  $\undla,\,\underline{\mu}\in\mathscr{P}^{\bullet,m}_{n}$ with $\bullet\in\{\mathsf{0},\mathsf{s},\mathsf{ss}\}$.  Then \begin{enumerate}
  \item $\mathbb{D}(\undla)$ is an irreducible $\mHfcn$-module;
  \item $\mathbb{D}(\undla)$ has the same type as $\mathbb{L}(\res(\mathfrak{t}^{\undla}))$;
  \item $\mathbb{D}(\undla)\cong \mathbb{D}(\underline{\mu})$ if and only if $\undla=\underline{\mu}$.
  \end{enumerate}
\end{prop}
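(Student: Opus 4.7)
The plan is to exploit the fact that the $\mathcal{A}_n$-decomposition $\mathbb{D}(\undla) = \bigoplus_{\tau \in P(\undla)} \mathbb{L}(\res(\mathfrak{t}^{\undla}))^{\tau}$ is precisely the simultaneous generalized eigenspace decomposition for the commuting operators $X_k + X_k^{-1}$, $1 \le k \le n$. Indeed, by Lemma \ref{lem:action property-2} the eigenvalue tuple of $(X_1+X_1^{-1}, \ldots, X_n+X_n^{-1})$ on the summand labelled by $\tau$ is $\mathtt{q}(\res(\tau \cdot \mathfrak{t}^{\undla}))$, and Corollary \ref{bij} together with Lemma \ref{lem:different residues} guarantees that these tuples are pairwise distinct as $\tau$ ranges over $P(\undla)$. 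This eigenspace picture will drive the whole proof.

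For (1), I would take any nonzero $\mHfcn$-submodule $M$; stability under the $X_k + X_k^{-1}$ together with $\mathcal{A}_n$-irreducibility of each summand (Corollary \ref{lem:irrepAn}) forces $M \supseteq \mathbb{L}(\res(\mathfrak{t}^{\undla}))^{\tau_0}$ for some $\tau_0 \in P(\undla)$. To reach any other $\tau \in P(\undla)$, I would invoke Lemma \ref{lem:admissible}(3) to write $\tau \tau_0^{-1}$ as a reduced product $s_{i_k} \cdots s_{i_1}$ in which each simple reflection is admissible with respect to the intermediate tableau; combining Lemma \ref{lem:admissible-residue} with Lemma \ref{lem:newbijNon-dege}, the associated composition $\widetilde{\Phi}_{i_k} \cdots \widetilde{\Phi}_{i_1}$ restricts to a bijection $\mathbb{L}(\res(\mathfrak{t}^{\undla}))^{\tau_0} \to \mathbb{L}(\res(\mathfrak{t}^{\undla}))^{\tau}$. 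Hence $M$ contains every summand and $M = \mathbb{D}(\undla)$.

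For (2), any $\mHfcn$-endomorphism $\phi$ commutes with each $X_k + X_k^{-1}$, so preserves each eigenspace summand and restricts there to an $\mathcal{A}_n$-endomorphism; by Remark \ref{rem:Ltau1} every summand has the same type as $\mathbb{L}(\res(\mathfrak{t}^{\undla}))$. In the type $\texttt{M}$ case each restriction is a scalar $c_\tau$, and compatibility with the intertwining action of $\widetilde{\Phi}_i$ for admissible $s_i$, combined with Lemma \ref{lem:admissible}(3), forces all $c_\tau$ to coincide, giving $\text{End}_{\mHfcn}(\mathbb{D}(\undla)) = \mathbb{K}$. In the type $\texttt{Q}$ case, the odd $\mathcal{A}_n$-involution of $\mathbb{L}(\res(\mathfrak{t}^{\undla}))$ (built from Clifford generators indexed by $\mathcal{D}_{\mathfrak{t}^{\undla}}$) transports via the twist to an odd involution on each summand, and I would glue these together, with carefully chosen signs, into a global odd endomorphism of $\mathbb{D}(\undla)$; this yields a two-dimensional endomorphism superalgebra. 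Part (3) is then immediate in one direction; conversely, an isomorphism $\mathbb{D}(\undla) \cong \mathbb{D}(\underline{\mu})$ must match the sets of eigenvalue tuples of $X_k + X_k^{-1}$, so $\mathtt{q}(\res(\mathfrak{t}^{\undla})) = \mathtt{q}(\res(\sigma \cdot \mathfrak{t}^{\underline{\mu}}))$ for some $\sigma \in P(\underline{\mu})$, and Lemma \ref{lem:different residues}, which applies across shapes, then forces $\mathfrak{t}^{\undla} = \sigma \cdot \mathfrak{t}^{\underline{\mu}}$ and hence $\undla = \underline{\mu}$.

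The main obstacle I expect is the type $\texttt{Q}$ case of (2): gluing the local odd involutions into a genuine odd $\mHfcn$-endomorphism requires choosing signs on each summand so that the resulting operator super-commutes with every admissible $\widetilde{\Phi}_i$. One has to verify that such a sign assignment exists consistently over the whole connected graph underlying $P(\undla)$, rather than merely along one spanning path; this cocycle-style compatibility check is the most delicate bookkeeping step of the argument.
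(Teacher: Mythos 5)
Parts (1) and (3) of your argument match the paper essentially step for step: semisimple eigenspace decomposition forces a nonzero submodule to contain a full summand, intertwiners and Lemma~\ref{lem:admissible}(3) propagate to all of $\mathbb{D}(\undla)$, and the eigenvalue tuples of $X_k+X_k^{-1}$ together with Lemma~\ref{lem:different residues} settle (3).

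For (2), however, your route diverges from the paper's and leaves a genuine gap exactly where you flagged it. The paper does not case-split on type at all. Instead it proves directly that
$\text{End}_{\mHfcn}(\mathbb{D}(\undla)) \cong \text{End}_{\mathcal{A}_n}(\mathbb{L}(\res(\mathfrak{t}^{\undla})))$,
which gives the type match for free in one stroke. Concretely: for admissible $s_i$ one has
$\Psi(\widetilde{\Omega}_i z^{s_i\tau}) = T_i\Psi(z^\tau) - \widetilde{\Xi}_i\Psi(z^\tau)$,
and since $\widetilde{\Omega}_i$ acts by a nonzero \emph{scalar} on $\mathbb{L}(\res(\mathfrak{t}^{\undla}))^{s_i\tau}$, this pins down $\Psi$ on the $s_i\tau$-summand from its value on the $\tau$-summand, yielding $\Psi(z^\sigma) = (\Psi(z))^\sigma$ for all $\sigma \in P(\undla)$; conversely $\oplus_\tau \psi^\tau$ is checked to be an $\mHfcn$-endomorphism for \emph{any} $\mathcal{A}_n$-endomorphism $\psi$, odd or even, because $\widetilde{\Xi}_i$, $\widetilde{\Omega}_i$ are (completions of) even elements of $\mathcal{A}_n$ and hence super-commute with $\psi^\tau$. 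The point you worried about — choosing signs $\varepsilon_\tau$ so that the glued odd operator commutes with every admissible $\widetilde{\Phi}_i$, and then verifying a cocycle condition over cycles in $P(\undla)$ — simply evaporates: the natural choice $\varepsilon_\tau = 1$ works because the transport rule is dictated by a scalar $\widetilde{\Omega}_i$ and $\mathcal{A}_n$-elements, not by a sign-ambiguous unitary. As written, your proof of (2) is incomplete in the type $\texttt{Q}$ case; if you want to finish it along your own lines you must actually perform the sign verification, but the cleaner fix is to abandon the case split and establish the endomorphism-ring isomorphism directly as the paper does.
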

\begin{proof}
 (1) Suppose $N$ is a nonzero $\mHfcn$-submodule of $\mathbb{D}(\undla)$. Observe that the action of commuting elements $X^\pm_1,\ldots,X^\pm_n$ is semi-simple (or can be diagonalized simultaneously) on $\mathbb{D}(\undla)$ and hence on $N$.  Hence $N\cap  \mathbb{L}(\res(\mathfrak{t}^{\undla}))^\tau\neq 0$ for some $\tau\in P(\undla)$. Since $\mathbb{L}(\res(\mathfrak{t}^{\undla}))^{\tau}$ is irreducible as $\mathcal{A}_n$-module one can obtain $N\cap  \mathbb{L}(\res(\mathfrak{t}^{\undla}))^\tau= \mathbb{L}(\res(\mathfrak{t}^{\undla}))^\tau$ and hence $\mathbb{L}(\res(\mathfrak{t}^{\undla}))^\tau\subset N$. Moreover by Corollary \ref{bij}  for each $\sigma\neq \tau\in P(\undla)$, there exists $s_{k_1},s_{k_2},\ldots, s_{k_t}$ with $t\geq 1$ such that $s_{k_l}$ is admissible with respect to $s_{k_{l-1}}\cdots s_{k_1} \cdot \tau\mathfrak{t}^{\undla}$ for $1\leq l\leq t-1$. Then Lemma \ref{lem:newbijNon-dege} shows that that the action of $\widetilde{\Phi}_{k_t}\cdots\widetilde{\Phi}_{k_1}$ sends $\mathbb{L}(\res(\mathfrak{t}^{\undla}))^\tau\subset N$ to  $N\cap \mathbb{L}(\res(\mathfrak{t}^{\undla}))^{\sigma}$. This implies $N\cap \mathbb{L}(\res(\mathfrak{t}^{\undla}))^{\sigma}\neq 0$ and hence $\mathbb{L}(\res(\mathfrak{t}^{\undla}))^{\sigma}\subset N$ for any $\sigma\neq \tau\in P(\undla)$. {\color{black}Thus $N=\mathbb{D}(\undla)$ and hence $\mathbb{D}(\undla)$ is irreducible.}
 
(2) Suppose
$\Psi\in\text{End}_{\mHfcn}(\mathbb{D}(\undla))$. Note that for
each $\tau\in P(\undla)$ and $1\leq i\leq n-1$,  if $s_i$
is admissible with respect to $\tau \mathfrak{t}^{\undla}$, then for
any $z\in \mathbb{L}(\res(\mathfrak{t}^{\undla}))$,
\begin{align}
\Psi(\widetilde{\Omega}_iz^{s_i\tau})=\Psi(T_iz^{\tau}-\widetilde{\Xi}_iz^{\tau})
=T_i\Psi(z^{\tau})-\widetilde{\Xi}_i\Psi(z^{\tau}).\label{PsiskNon-dege}
\end{align} 
Since $s_i$ is admissible with respect to $\tau \mathfrak{t}^{\undla}$, {\color{black} we obtain that $i,i+1$} are not adjacent in  $s_i\tau \mathfrak{t}^{\undla}$. Now Lemma \ref{lem:action property-1}  implies that $\widetilde{\Omega}_i$ acts as a non-zero scalar on $\mathbb{L}(\res(\mathfrak{t}^{\undla}))^{s_i\tau}$. This together with \eqref{PsiskNon-dege} means that $\Psi(z^{s_i\tau})$ can be determined by $\Psi(z^{\tau})$. By Corollary \ref{bij} we deduce that  $\Psi$ is uniquely determined by its restriction on $\mathbb{L}(\res(\mathfrak{t}^{\undla}))$  and moreover by \eqref{actionformulaNon-dege} and \eqref{PsiskNon-dege} we have 
\begin{equation}\label{eq:Psizsigma}
\Psi(z^{\sigma})= (\Psi(z))^\sigma. 
\end{equation} for any $\sigma\in P(\undla)$.  Conversely, any $\mathcal{A}_n$-endomorphism
$\psi:\mathbb{L}(\res(\mathfrak{t}^{\undla}))\rightarrow\mathbb{L}(\res(\mathfrak{t}^{\undla}))$ gives rise to 
$\mHfcn$-endomorphism $\oplus_{\tau\in
P(\undla)}\psi^{\tau}:\mathbb{D}(\undla)\rightarrow
\mathbb{D}(\undla)$, where $\psi^{\tau}(z^{\tau})=(\psi(z))^{\tau}$.  This together with \eqref{eq:Psizsigma} gives rise to $\text{End}_{\mHcn}(\mathbb{D}(\undla))\cong\text{End}_{\mathcal{A}_n}(\mathbb{L}(\res(\mathfrak{t}^{\undla})))$. This proves (2). 

    (3) Clearly if $\undla=\underline{\mu}$ then  $\mathbb{D}(\undla)\cong \mathbb{D}(\underline{\mu})$. Conversely, suppose $\Psi: \mathbb{D}(\undla)\cong \mathbb{D}(\underline{\mu})$ is a $\mHfcn$-module isomorphism. Then there exist $\tau\in P(\undla)$ and $\sigma\in P(\underline{\mu})$ such that $\Psi(\mathbb{L}(\res(\mathfrak{t}^{\undla}))^\tau)=\mathbb{L}(\res(\mathfrak{t}^{\underline{\mu}}))^\sigma$. Since the elements  $X_1+X^{-1}_1,\ldots,X_n+X^{-1}_n$ act as scalars on both $\mathbb{L}(\res(\mathfrak{t}^{\undla}))^\tau$ and $\mathbb{L}(\res(\mathfrak{t}^{\underline{\mu}}))^\sigma$ via the $\mathtt{q}$-value sequence of residus  $\mathtt{q}(\res(\tau\cdot \mathfrak{t}^{\undla})$) and $\mathtt{q}(\res(\sigma\cdot \mathfrak{t}^{\underline{\mu}}))$  in the way given in Lemma \ref{lem:action property-2}. Hence $\mathtt{q}(\res(\tau\cdot \mathfrak{t}^{\undla}))= \mathtt{q}(\res(\sigma\cdot \mathfrak{t}^{\underline{\mu}}))$. 
    Then by Lemma \ref{lem:different residues} we have $\undla=\underline{\mu}$.  
\end{proof}

\subsection{Comparing Dimension}
The following formulae are well-known (cf. \cite[(3.26)]{DJM}, \cite[Theorem 3.1]{Sa}) and will be used in our computation.
\begin{lem}\cite[(3.26)]{DJM} \cite[Theorem 3.1]{Sa}\label{lem:formula}
	{\color{black}Let $n,m,t\in \N.$ We have
$$\sum_{\undla\in \mathscr{P}^{m}_{n}} |\Std(\undla)|^2= n!m^n,\qquad \sum_{\xi\in  \mathscr{P}^{\mathtt{s}}_{t}} \bigl( 2^{\frac{t-\ell(\xi)}{2}}|\Std(\xi)|\bigr)^2=t!.$$ 
}
\end{lem}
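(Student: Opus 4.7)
The plan is to reduce both identities to classical combinatorial facts: a multinomial-style decomposition of $|\Std(\undla)|$ together with the ordinary (resp.\ shifted) Robinson--Schensted correspondence.

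For the first identity, I will observe that any $\mathfrak{t}\in\Std(\undla)$ with $\undla=(\lambda^{(1)},\ldots,\lambda^{(m)})$ is determined by (i) an ordered set partition $\{1,\ldots,n\}=S_1\sqcup\cdots\sqcup S_m$ with $|S_l|=|\lambda^{(l)}|$ and (ii) an order-preserving standard filling of each $\lambda^{(l)}$ by $S_l$. Hence
$$|\Std(\undla)|=\binom{n}{|\lambda^{(1)}|,\ldots,|\lambda^{(m)}|}\prod_{l=1}^{m}|\Std(\lambda^{(l)})|.$$
Squaring, summing over $\undla$, and applying the classical RSK identity $\sum_{\lambda\vdash k}|\Std(\lambda)|^2=k!$ component-wise together with the multinomial theorem, I obtain
$$\sum_{\undla\in\mathscr{P}^m_n}|\Std(\undla)|^2 \;=\; n!\sum_{n_1+\cdots+n_m=n}\binom{n}{n_1,\ldots,n_m} \;=\; n!\cdot m^n.$$

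For the second identity, which is Schur's classical formula for strict partitions, I will invoke Sagan's shifted Robinson--Schensted correspondence \cite{Sa}. This yields a bijection between $\Sym_t$ and pairs $(P,Q)$, where $P\in\Std(\xi)$ for some $\xi\in\mathscr{P}^\mathtt{s}_t$ and $Q$ is a \emph{marked} (or circled) standard shifted tableau of the same shape $\xi$ --- that is, a standard shifted tableau together with an arbitrary primed/unprimed decoration on each of its $t-\ell(\xi)$ off-diagonal boxes. Since the number of such marked tableaux of shape $\xi$ equals $2^{t-\ell(\xi)}|\Std(\xi)|$, counting in two ways will give
$$t! \;=\; \sum_{\xi\in\mathscr{P}^\mathtt{s}_t}\bigl(2^{(t-\ell(\xi))/2}|\Std(\xi)|\bigr)^2.$$

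The main obstacle, for a fully self-contained presentation, is verifying the correctness of Sagan's shifted insertion algorithm and the associated marking convention for the recording tableau $Q$; this is the feature that genuinely distinguishes the shifted case from the ordinary one. Since both identities are classical and the bijective arguments above are thoroughly documented in \cite[(3.26)]{DJM} and \cite[Theorem 3.1]{Sa}, the cleanest course is simply to quote these references in the actual proof.
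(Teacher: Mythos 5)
Your proposal is correct. The paper itself offers no proof of this lemma --- it simply cites \cite[(3.26)]{DJM} and \cite[Theorem 3.1]{Sa} --- so your sketch merely unpacks those references, which is exactly what the situation calls for. The multinomial decomposition of $|\Std(\undla)|$ followed by RSK and the multinomial theorem is the standard derivation of the first identity, and the appeal to Sagan's shifted Robinson--Schensted bijection (pairs $(P,Q)$ with $Q$ carrying $2^{t-\ell(\xi)}$ possible markings on off-diagonal cells) is the standard source for the second. One minor point of care: in the literature the marks in shifted RSK are sometimes attached to $P$, sometimes to $Q$, and sometimes split between them, but in every convention the total count of off-diagonal markings is $t-\ell(\xi)$ per pair, so the resulting identity $t!=\sum_{\xi}2^{t-\ell(\xi)}|\Std(\xi)|^2$ is the same; your choice to leave that convention to the cited reference is sensible and matches what the authors do.
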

{\color{black} The following lemma is straightforward.}
\begin{lem}\label{lem:std-num} 
\begin{enumerate}
\item Let $\undla=(\lambda^{(0)},\lambda^{(1)},\ldots,\lambda^{(m)})=(\lambda^{(0)},\underline{\mu})\in\mathscr{P}^{\mathsf{s},m}_n$ with $|\lambda^{(0)}|=t$ and $\underline{\mu}=(\lambda^{(1)},\ldots,\lambda^{(m)})$, then 
\begin{equation*}
|\Std(\undla)|= \binom{n}{t}|\Std(\lambda^{(0)})|\cdot|\Std(\underline{\mu})|. 
\end{equation*}
\item
{\color{black}Let $\undla=(\lambda^{(0_-)},\lambda^{(0_+)},\lambda^{(1)},\ldots,\lambda^{(m)})=(\lambda^{(0_-)},\lambda^{(0_+)},\underline{\mu})\in\mathscr{P}^{\mathsf{ss},m}_n$} with $|\lambda^{(0_-)}|=a, |\lambda^{(0_+)}|=b$ and $\underline{\mu}=(\lambda^{(1)},\ldots,\lambda^{(m)})$, then 
\begin{equation*}
|\Std(\undla)|= \binom{n}{n-a-b} \binom{a+b}{a}|\Std(\lambda^{(0_-)})|\cdot |\Std(\lambda^{(0_+)})|\cdot|\Std(\underline{\mu})|. 
\end{equation*}
\end{enumerate}
\end{lem}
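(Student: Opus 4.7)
The plan is to prove this combinatorial identity directly by unpacking the definition of a standard tableau of a multi-partition and using the standard partition-into-components argument.

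Recall that a standard tableau of shape $\undla$ is a bijection $\mathfrak{t}:\undla\to\{1,2,\ldots,n\}$ whose restriction to each component $\lambda^{(l)}$ is a (shifted) standard tableau in the usual sense. Equivalently, such a $\mathfrak{t}$ is specified by two pieces of data: (i) an ordered partition of the index set $\{1,2,\ldots,n\}$ into blocks $I_l$ with $|I_l|=|\lambda^{(l)}|$, one block for each component of $\undla$; and (ii) for each component $l$, an order-preserving bijection between $I_l$ (with its induced linear order) and a standard tableau of the shape $\lambda^{(l)}$. Since only the relative order inside each $I_l$ matters, the number of standard tableaux on the component $\lambda^{(l)}$ with the prescribed entry set $I_l$ is just $|\Std(\lambda^{(l)})|$.

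For part (1), the data reduces to choosing the $t$-subset $I_0\subset\{1,\ldots,n\}$ that fills the strict component $\lambda^{(0)}$, which contributes $\binom{n}{t}$, together with a shifted standard filling of $\lambda^{(0)}$ (contributing $|\Std(\lambda^{(0)})|$) and a standard filling of the remaining multi-partition $\underline{\mu}$ using the complementary $(n-t)$-element set (contributing $|\Std(\underline{\mu})|$, independently of which subset was chosen). Multiplying the three factors gives the claimed formula.

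For part (2), the argument is the same but with one extra step. First choose the $(n-a-b)$-subset of $\{1,\ldots,n\}$ assigned to $\underline{\mu}$, giving $\binom{n}{n-a-b}$; then from the remaining $a+b$ elements choose the $a$-subset going to $\lambda^{(0_-)}$, contributing $\binom{a+b}{a}$; finally choose shifted standard fillings of $\lambda^{(0_-)}$ and $\lambda^{(0_+)}$ and a standard filling of $\underline{\mu}$, contributing $|\Std(\lambda^{(0_-)})|\cdot|\Std(\lambda^{(0_+)})|\cdot|\Std(\underline{\mu})|$. The product is the stated expression. There is no genuine obstacle; the only thing to be careful about is the convention that the two strict components of an element of $\mathscr{P}^{\mathsf{ss},m}_{n}$ are ordered (first $0_-$ then $0_+$), which is precisely why the factor $\binom{a+b}{a}$ (and not $\binom{a+b}{a}/2$) appears.
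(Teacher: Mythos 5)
Your proof is correct and is exactly the standard counting argument the paper intends; the paper itself omits the proof, labelling the lemma ``straightforward.'' The decomposition into (i) a choice of entry set for each component and (ii) an independent (shifted) standard filling of each component, with the binomial factors bookkeeping the choices, is precisely the right way to see both identities.
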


\begin{thm}\label{CompareDim}
{\color{black} Let $\bullet\in\{\mathtt{0},\mathtt{s},\mathtt{ss}\}$ and $\undQ=(Q_1,Q_2,\ldots,Q_m)\in(\mathbb{K}^*)^m$.}  Assume $f=f^{(\bullet)}_{\undQ}$ and  $P^{\bullet}_{n}(q^2,\undQ)\neq 0$. Then  $\mHfcn$ is a (split) semisimple algebra and 
  $$
  \{\mathbb{D}(\undla)|~ \undla\in\mathscr{P}^{\bullet,m}_{n}\}$$ forms a complete set of pairwise non-isomorphic irreducible $\mHfcn$-module. Moreover,  $\mathbb{D}(\undla)$ is of type  $\texttt{M}$ if and only if $\sharp \mathcal{D}_{\undla}$  is even and is of type  $\texttt{Q}$ if and only if $\sharp \mathcal{D}_{\undla}$  is odd.

\end{thm}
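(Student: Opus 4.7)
The plan is to apply Corollary \ref{cor:dim-compare} by verifying the dimension equality $\dim\mHfcn=\sum_{\text{type M}}(\dim\mathbb{D}(\undla))^2+\sum_{\text{type Q}}\tfrac{1}{2}(\dim\mathbb{D}(\undla))^2$, then deduce the rest. The type statement follows first: by Proposition \ref{irreducibleNon-dege}(2), $\mathbb{D}(\undla)$ has the same type as $\mathbb{L}(\res(\mathfrak{t}^{\undla}))$, and by Corollary \ref{lem:irrepAn} that type depends on the parity of $\Gamma_0=\sharp\{k:\mathtt{b}_{\pm}(\res_{\mathfrak{t}^{\undla}}(k))^2=1\}$. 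Since $k\in\mathcal{D}_{\mathfrak{t}^{\undla}}$ forces $\res_{\mathfrak{t}^{\undla}}(k)=\pm 1$ (hence $\mathtt{q}=\pm 2$ and $\mathtt{b}_{\pm}=\pm 1$), while Lemma \ref{important condition1}(1) gives $\mathtt{b}_{\pm}(\res_{\mathfrak{t}^{\undla}}(k))\neq\pm 1$ for $k\notin\mathcal{D}_{\mathfrak{t}^{\undla}}$, we get $\Gamma_0=\sharp\mathcal{D}_{\undla}$.

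For the dimension computation, by Lemma \ref{lem:dim-Hf}, $\dim\mHfcn=r^n\cdot 2^n\cdot n!$ with $r=\deg f$, while \eqref{eq:dimDla} gives $\dim\mathbb{D}(\undla)=2^{n-\lfloor\sharp\mathcal{D}_{\undla}/2\rfloor}|\Std(\undla)|$. The key algebraic identity is that, for $s=\sharp\mathcal{D}_{\undla}$, the combined factor $2^{2(n-\lfloor s/2\rfloor)}/2^{s\bmod 2}=2^{2n-s}$ is uniform across type M and type Q, so the right-hand side becomes
\begin{equation*}
\sum_{\undla\in\mathscr{P}^{\bullet,m}_n}2^{2n-\sharp\mathcal{D}_{\undla}}\,|\Std(\undla)|^2.
\end{equation*}
Now I handle the three cases. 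For $\bullet=\mathsf{0}$, one has $\sharp\mathcal{D}_{\undla}=0$ and the sum reduces to $4^n\sum_{\undla\in\mathscr{P}^m_n}|\Std(\undla)|^2=4^n\cdot n!\,m^n=(2m)^n 2^n n!$ by Lemma \ref{lem:formula}. For $\bullet=\mathsf{s}$, I split $\undla=(\lambda^{(0)},\underline{\mu})$ with $|\lambda^{(0)}|=t$, apply Lemma \ref{lem:std-num}(1) and Lemma \ref{lem:formula}:
\begin{align*}
\sum_{\undla}2^{2n-\ell(\lambda^{(0)})}|\Std(\undla)|^2
&=\sum_{t=0}^{n}\binom{n}{t}^2 2^{2(n-t)}\Bigl(\sum_{\lambda^{(0)}\in\mathscr{P}^{\mathsf{s}}_t}2^{2t-\ell(\lambda^{(0)})}|\Std(\lambda^{(0)})|^2\Bigr)(n-t)!\,m^{n-t}\\
&=n!\sum_{t=0}^n\binom{n}{t}2^{2(n-t)+t}m^{n-t}=n!\,2^n(2m+1)^n,
\end{align*}
using $\sum_{\xi\in\mathscr{P}^{\mathsf{s}}_t}2^{t-\ell(\xi)}|\Std(\xi)|^2=t!$. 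For $\bullet=\mathsf{ss}$, splitting $\undla=(\lambda^{(0_-)},\lambda^{(0_+)},\underline{\mu})$ with $a+b+c=n$, Lemma \ref{lem:std-num}(2) and the analogous computation yield $n!\sum_{a+b+c=n}\binom{n}{a,b,c}2^{n+c}m^c=n!\,2^n(2m+2)^n$ by the trinomial theorem. In each case the sum equals $r^n 2^n n!=\dim\mHfcn$.

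Having verified the dimension equality, Corollary \ref{cor:dim-compare} gives semisimplicity of $\mHfcn$. Pairwise non-isomorphism of the $\mathbb{D}(\undla)$ is Proposition \ref{irreducibleNon-dege}(3), and since the inequality in Corollary \ref{cor:dim-compare}'s proof is saturated, Wedderburn applied to $|\mHfcn|/\mathcal{J}(|\mHfcn|)$ together with Lemma \ref{lem:type MQ} forces the $\{\mathbb{D}(\undla)\}$ to exhaust all isomorphism classes of simple $\mHfcn$-modules; otherwise the right-hand side of \eqref{eq:dimequal} would strictly exceed $\dim\mHfcn$. The main obstacle is really only the bookkeeping in the dimension identities, particularly tracking how the $2$-powers coming from $2^{n-\lfloor s/2\rfloor}$ and the type Q factor of $\tfrac{1}{2}$ conspire to produce the clean $2^{2n-s}$, after which the strict-partition identity of Lemma \ref{lem:formula} closes each case cleanly.
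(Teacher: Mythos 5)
Your proof is correct and follows essentially the same route as the paper's: apply Proposition \ref{irreducibleNon-dege} for irreducibility, non-isomorphism, and type; use Lemma \ref{important condition1}(1) to identify $\Gamma_0 = \sharp\mathcal{D}_{\undla}$; then verify the dimension identity case-by-case using Lemma \ref{lem:formula} and Lemma \ref{lem:std-num} to invoke Corollary \ref{cor:dim-compare}. The only cosmetic difference is that you extract the uniform factor $2^{2n-\sharp\mathcal{D}_{\undla}}$ up front (the paper writes it as $\bigl(2^{n-\sharp\mathcal{D}_{\undla}/2}|\Std(\undla)|\bigr)^2$), and you make the completeness step at the end slightly more explicit; the algebraic verifications of the three cases are identical.
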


\begin{proof} By Proposition \ref{irreducibleNon-dege}, we have that $$
  \{\mathbb{D}(\undla)|~\undla\in \mathscr{P}^{\bullet,m}_{n}\}$$ is a set of pairwise non-isomorphic irreducible $\mHfcn$-module and $\mathbb{D}(\undla)$ has the same type as $\mathbb{L}(\res(\mathfrak{t}^{\undla}))$. Since $P^{\bullet}_{n}(q^2,\undQ)\neq 0$, by Proposition \ref{separate formula} the tuple of parameters $(q,\undQ)$ is separate with respect to any $\undla\in\mathscr{P}^{\bullet,m}_{n}$. Then by Definition \ref{defn:separate}(1)-(2) and Corollary \ref{lem:irrepAn}, we obtain that $\mathbb{L}(\res(\mathfrak{t}^{\undla}))$ is of type  $\texttt{M}$ if and only if $\sharp \mathcal{D}_{\undla}$  is even and is of type  $\texttt{Q}$ if and only if $\sharp \mathcal{D}_{\undla}$  is odd. 
%
  Now it remains to  prove $\mHfcn$ is semisimple. Firstly,  it is known that every simple $\mHfcn$-module is annihilated by the Jacobson radical $\mathcal{J}(\mHfcn)$ of $\mHfcn$ and hence every $$
  \{\mathbb{D}(\undla)|~\undla\in \mathscr{P}^{\bullet,m}_{n}\}$$ is also a set of pairwise non-isomorphic irreducible $\mHfcn/\mathcal{J}(\mHfcn)$-module. On the other hand, we shall compare the dimension of simple modules with the dimension of $\mHfcn$. {\color{black} Recall that $r=\deg{f}=2m,2m+1, 2m+2$ in the case $\bullet=\mathsf{0},\mathsf{s},\mathsf{ss}$ respectively.}

   {\bf Case 1:} $\bullet=\mathsf{0}$. In this case, $\mathscr{P}^{\bullet,m}_{n}=\mathscr{P}^{m}_{n}$ is the set of $m$-partitions of $n$. Hence all of $\mathbb{D}(\undla)$ are of type $\texttt{M}$. By \eqref{eq:dimDla} we have 
    \begin{align*}
     & \sum_{\undla\in\mathscr{P}^m_n} \dim \mathbb{D}(\undla)=\sum_{\undla\in \mathscr{P}^{m}_{n}} (2^n|\Std(\undla)|)^2\\
    &  =\sum_{\undla\in \mathscr{P}^{m}_{n}} 2^{2n}|\Std(\undla)|^2= 2^{2n}n!m^n=2^nr^nn! =\dim \mHfcn,
   \end{align*}
   where the second equation is due to Lemma \ref{lem:formula} and the last equality is due to Lemma \ref{lem:dim-Hf}. Then by Corollary \ref{cor:dim-compare} we obtain that $\mHfcn$ is semisimple.  
   
  {\bf Case 2:} $\bullet=\mathsf{s}$. In this case, $\mathscr{P}^{\bullet,m}_{n}=\mathscr{P}^{\mathsf{s},m}_{n}$. 
      Then by \eqref{diagonal-D}, \eqref{eq:dimDla} and the fact that $\mathbb{D}(\undla)$ is of type  $\texttt{M}$ if and only if $\sharp \mathcal{D}_{\undla}=\ell(\lambda^{(0)})$  is even and is of type  $\texttt{Q}$ if and only if $\sharp \mathcal{D}_{\undla}=\ell(\lambda^{(0)})$  is odd, which has been verified at the beginning, we obtain 
      \begin{align*}
      &\sum_{\undla\in \mathscr{P}^{\mathtt{s},m}_{n}, \sharp \mathcal{D}_{\undla}:\text{ even}}(\dim \mathbb{D}(\undla))^2+ \sum_{\undla\in \mathscr{P}^{\mathtt{s},m}_{n}, \sharp \mathcal{D}_{\undla}:\text{ odd}}\frac{(\dim \mathbb{D}(\undla))^2}{2}\\
     & =\sum_{\undla\in \mathscr{P}^{\mathtt{s},m}_{n}} \bigl(2^{n-\frac{\sharp \mathcal{D}_{\undla}}{2}}|\Std(\undla)|\bigr)^2\\
      &=\sum_{t=0}^{n} \sum_{\lambda^{(0)}\in  \mathscr{P}^{\mathtt{s}}_{t}}\sum_{\underline{\mu}\in  \mathscr{P}^{m}_{n-t}}\biggl(2^{n-\frac{\ell(\lambda^{(0)})}{2}}
      \biggl(\binom{n}{t}|\Std(\lambda^{(0)})||\Std(\underline{\mu})|\biggr)\biggr)^2\\
      &=\sum_{t=0}^{n} \sum_{\lambda^{(0)}\in  \mathscr{P}^{\mathtt{s}}_{t}}\biggl(2^{n-\frac{\ell(\lambda^{(0)})}{2}}
      \bigg(\binom{n}{t}|\Std(\lambda^{(0)})|\biggr)\biggr)^2(n-t)!m^{n-t}\qquad\text{(by Lemma \ref{lem:formula}  )}\\
      &=\sum_{t=0}^{n} \biggl(2^{n-\frac{t}{2}}
      \binom{n}{t}\biggr)^2(n-t)!m^{n-t}\sum_{\lambda^{(0)}\in  \mathscr{P}^{\mathtt{s}}_{t}} \bigl( 2^{\frac{t-\ell(\lambda^{(0)})}{2}}|\Std(\lambda^{(0)})|\bigr)^2\\
      &=\sum_{t=0}^{n} \biggl(2^{n-\frac{t}{2}}
      \binom{n}{t}\biggr)^2(n-t)!m^{n-t}t!\qquad\text{(by Lemma \ref{lem:formula} )}\\
       &=2^{2n}n!\sum_{t=0}^{n}\binom{n}{t}m^{n-t}(1/2)^t\\
       &=2^{2n}n!(m+1/2)^n=2^nr^nn!=\dim \mHfcn,
      \end{align*} where the second equality is due to Lemma \ref{lem:std-num}(1) and the last equality is due to Lemma \ref{lem:dim-Hf} as $r=2m+1$ in the case $\bullet=\mathsf{s}$. Then by Corollary \ref{cor:dim-compare} we obtain that $\mHfcn$ is semisimple.
        
       {\bf Case 3:} $\bullet=\mathtt{ss}$. In this case, $\mathscr{P}^{\bullet,m}_{n}=\mathscr{P}^{\mathtt{ss},m}_{n}$. 
      Again, by \eqref{eq:dimDla}, \eqref{diagonal-D}, Lemma \ref{lem:std-num}(2) and the fact that $\mathbb{D}(\undla)$ is of type  $\texttt{M}$ if and only if $\sharp \mathcal{D}_{\undla}=\ell(\lambda^{(0_-)})+\ell(\lambda^{(0_+)})$  is even and is of type  $\texttt{Q}$ if and only if $\sharp \mathcal{D}_{\undla}=\ell(\lambda^{(0_-)})+\ell(\lambda^{(0_+)})$  is odd, we obtain \begin{align*}
      	&\sum_{\undla\in \mathscr{P}^{\mathtt{ss},m}_{n}, \sharp \mathcal{D}_{\undla}:\text{ even}}(\dim \mathbb{D}(\undla))^2+ \sum_{\undla\in \mathscr{P}^{\mathtt{ss},m}_{n}, \sharp \mathcal{D}_{\undla}:\text{ odd}}\frac{(\dim \mathbb{D}(\undla))^2}{2}\\
      	&=\sum_{\undla\in \mathscr{P}^{\mathtt{ss},m}_{n}} \bigl(2^{n-\frac{\sharp \mathcal{D}_{\undla}}{2}}|\Std(\undla)|\bigr)^2\\
      &=\sum_{\substack{a,b,c\in\Z_{\geq 0}\\a+b+c=n }} \sum_{\substack{\lambda^{(0_-)}\in \mathscr{P}^{\mathtt{s}}_{a}\\ \lambda^{(0_+)}\in \mathscr{P}^{\mathtt{s}}_{b}}}\sum_{\underline{\mu}\in  \mathscr{P}^{m}_{c}}\biggl(2^{n-\frac{\sharp \mathcal{D}_{\undla}}{2}}
      	\binom{n}{c}\binom{n-c}{a}|\Std(\lambda^{(0_-)})|\cdot|\Std(\lambda^{(0_+)})|\cdot|\Std(\underline{\mu})|\biggr)^2\\
	&=\sum_{\substack{a,b,c\in\Z_{\geq 0}\\a+b+c=n }} 2^{2n}\binom{n}{c}^2\binom{n-c}{a}^2\cdot\\
	&\Big(\sum_{\substack{\lambda^{(0_-)}\in \mathscr{P}^{\mathtt{s}}_{a}}}2^{-\frac{\ell(\lambda^{(0_-)})}{2}}
      	|\Std(\lambda^{(0_-)})|\Big)\cdot\Big( \sum_{\substack{\lambda^{(0_+)}\in \mathscr{P}^{\mathtt{s}}_{b}}}2^{-\frac{\ell(\lambda^{(0_+)})}{2}}
      	|\Std(\lambda^{(0_-)})|\Big)\cdot\Big( \sum_{\substack{\underline{\mu}\in \mathscr{P}^{m}_{c}}}|\Std(\underline{\mu})|\Big)\\
	&=\sum_{\substack{a,b,c\in\Z_{\geq 0}\\a+b+c=n }} 2^{2n}\binom{n}{c}^2\binom{n-c}{a}^2 \frac{a!}{2^a}\frac{b!}{2^b}c!m^c \qquad\text{ (by Lemma \ref{lem:formula} )}\\
	&=\sum_{\substack{a,b,c\in\Z_{\geq 0}\\a+b+c=n }}2^{2n}n!\binom{n}{c}m^c\binom{a+b}{a}2^{-(a+b)}\\
	&=\sum_{c=0}^n2^{2n}n!\binom{n}{c}m^c\sum_{a=0}^{n-c}2^{-n+c}\binom{n-c}{a}\\
	&=\sum_{c=0}^n2^{2n}n!\binom{n}{c}m^c=2^{2n}n!(m+1)^n=2^n(2m+2)^nn!=dim \mHfcn, 
      \end{align*} where the last equality is due to Lemma \ref{lem:dim-Hf} as in this case $r=2m+2$. Then by Corollary \ref{cor:dim-compare} we obtain that $\mHfcn$ is semisimple.
   Hence in all cases, we obtain $ \mHfcn$  is semisimple.  This completes the proof of Theorem.

\end{proof}
 Following \cite{K1, Ru, Wa}, a $\mHfcn$-module $M$ is called completely splittable (or calibrated) if the elements $X_1,X_2,\ldots,X_n$ act semisimply on $M$. Then clearly each $\mathbb{D}(\undla)$ in Theorem \ref{Construction} is completely splittable.  Hence by Theorem \ref{CompareDim} we have the following. 
\begin{cor}\label{splittable}
Let $\undQ=(Q_1,Q_2,\ldots,Q_m)\in(\mathbb{K}^*)^m$.  Assume $f=f^{(\bullet)}_{\undQ}$ with $\bullet\in\{\mathtt{0},\mathtt{s},\mathtt{ss}\}$. If $P^{\bullet}_{n}(q^2,\undQ)\neq 0$,  
then every irreducible $\mHfcn$-module is completely splittable. 
\end{cor}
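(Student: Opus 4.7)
The plan is to derive this corollary as an immediate consequence of Theorem \ref{Construction} and Theorem \ref{CompareDim}. Under the standing hypothesis $P_n^{(\bullet)}(q^2,\undQ)\neq 0$, Theorem \ref{CompareDim} already asserts that every irreducible $\mHfcn$-module is isomorphic to $\mathbb{D}(\undla)$ for some $\undla\in\mathscr{P}^{\bullet,m}_n$. So the only thing left to verify is that each $\mathbb{D}(\undla)$ itself is completely splittable, i.e.\ that $X_1,\ldots,X_n$ act semisimply on it.

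To check this I would work directly from the construction $\mathbb{D}(\undla)=\bigoplus_{\tau\in P(\undla)}\mathbb{L}(\res(\mathfrak{t}^{\undla}))^{\tau}$. Each summand is a $\tau$-twist of the $\mathcal{A}_n$-module $\mathbb{L}(\iota_1)\circledast\cdots\circledast\mathbb{L}(\iota_n)$ built via \eqref{L-under-a} from the two-dimensional $\mathcal{A}_1$-modules $\mathbb{L}(\iota_k)$. By the defining action of $\mathbb{L}(\iota_k)$, together with the Clifford relation $X_k C_k = C_k X_k^{-1}$, each $\mathbb{L}(\iota_k)$ carries a basis on which $X_k$ acts diagonally with eigenvalues $\mathtt{b}_{\pm}(\iota_k)$. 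Taking the boxtimes tensor product then produces a basis of simultaneous eigenvectors for $X_1,\ldots,X_n$, and this property passes to the $\mathcal{A}_n$-stable irreducible component $\circledast$, and then to the $\tau$-twist by \eqref{X-z-tau}, which simply relabels eigenvalues via $\tau$ without affecting diagonalizability (it turns an $X_k$-eigenvector on $\mathbb{L}(\res(\mathfrak{t}^{\undla}))$ with eigenvalue $a$ into an $X_{\tau(k)}$-eigenvector on $\mathbb{L}(\res(\mathfrak{t}^{\undla}))^{\tau}$ with the same eigenvalue).

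Because $\mathbb{D}(\undla)$ is a direct sum of such twisted modules, the operators $X_1,\ldots,X_n$ act semisimply on $\mathbb{D}(\undla)$; in fact, by Lemma \ref{lem:action property-2} the eigenvalues are precisely the numbers $\mathtt{b}_{\pm}(\res_{\tau\cdot \mathfrak{t}^{\undla}}(k))$ indexed by the choices $\tau\in P(\undla)$, i.e.\ by standard tableaux. Hence each $\mathbb{D}(\undla)$ is completely splittable, and combining this with the exhaustion statement of Theorem \ref{CompareDim} yields the corollary. I do not anticipate any real obstacle: the module $\mathbb{D}(\undla)$ was engineered in Section 4 precisely on an eigenbasis for the polynomial generators, so completely splittability is built into the construction, and the classification theorem carries this property to every irreducible $\mHfcn$-module.
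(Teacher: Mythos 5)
Your proposal is correct and follows exactly the paper's own route: the paper notes that each $\mathbb{D}(\undla)$ is completely splittable by construction (since $X_1,\ldots,X_n$ act diagonally on the chosen basis) and then invokes Theorem \ref{CompareDim} to conclude that every irreducible $\mHfcn$-module is of this form. Your extra unwinding of the $\circledast$ and $\tau$-twist just makes explicit what the paper calls ``clearly''.
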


It is natural to ask whether $P^{\bullet}_{n}(q^2,\undQ)\neq 0$ is the necessary condition for $\mHfcn$ to be semisimple. In fact, we have the following conjecture. 
\begin{conjecture} \label{conjecture}
Let $\undQ=(Q_1,Q_2,\ldots,Q_m)\in(\mathbb{K}^*)^m$.  Assume $f=f^{(\bullet)}_{\undQ}$ with $\bullet\in\{\mathtt{0},\mathtt{s},\mathtt{ss}\}$.The following are equivalent:
\begin{enumerate}
\item The algebra $\mHfcn$ is semisimple. 
\item Every irreducible $\mHfcn$-module is completely splittable.  
\item $P^{\bullet}_{n}(q^2,\undQ)\neq 0$. 
\end{enumerate}
\end{conjecture}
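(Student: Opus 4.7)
The plan is to reduce the theorem to a dimension comparison via Corollary \ref{cor:dim-compare}. Proposition \ref{irreducibleNon-dege} already shows that $\{\mathbb{D}(\undla)\mid \undla\in\mathscr{P}^{\bullet,m}_n\}$ is a family of pairwise non-isomorphic irreducible $\mHfcn$-modules, whose types coincide with those of the $\mathcal{A}_n$-modules $\mathbb{L}(\res(\mathfrak{t}^{\undla}))$. By Corollary \ref{lem:irrepAn}, the latter is of type $\texttt{M}$ or $\texttt{Q}$ according to the parity of the number of indices $k$ for which $\res_{\mathfrak{t}^{\undla}}(k)\sim \pm 1$; Lemma \ref{important condition1}(1) combined with the definition \eqref{eq:residue} pins these down to be exactly the labels of the diagonal boxes in the strict components, so this parity equals $\sharp\mathcal{D}_{\undla}$ as computed in \eqref{diagonal-D}. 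This settles the type assertion.

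Next, I would invoke Corollary \ref{cor:dim-compare}: it suffices to establish
\[
\sum_{\substack{\undla\in\mathscr{P}^{\bullet,m}_n\\ \sharp\mathcal{D}_{\undla}\text{ even}}}(\dim \mathbb{D}(\undla))^2 \;+\; \sum_{\substack{\undla\in\mathscr{P}^{\bullet,m}_n\\ \sharp\mathcal{D}_{\undla}\text{ odd}}}\frac{(\dim \mathbb{D}(\undla))^2}{2} \;=\; \dim \mHfcn \;=\; 2^n r^n n!,
\]
where the last equality follows from Lemma \ref{lem:dim-Hf} with $r=\deg f \in\{2m,2m+1,2m+2\}$. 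Using \eqref{eq:dimDla}, the factor of $1/2$ carried by the type-$\texttt{Q}$ terms exactly absorbs the discrepancy between $\lfloor \sharp\mathcal{D}_{\undla}/2\rfloor$ and $\sharp\mathcal{D}_{\undla}/2$, so both sums unify into
\[
S := \sum_{\undla\in\mathscr{P}^{\bullet,m}_n}\bigl(2^{n-\sharp\mathcal{D}_{\undla}/2}|\Std(\undla)|\bigr)^2.
\]

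For $\bullet=\mathsf{0}$ there are no diagonal boxes, so $S=2^{2n}\sum |\Std(\undla)|^2 = 2^{2n}n!m^n = 2^n(2m)^n n!$ by Lemma \ref{lem:formula}, matching $\dim \mHfcn$. For $\bullet\in\{\mathsf{s},\mathsf{ss}\}$ I would stratify the sum over $\undla$ by the sizes of the strict component(s), factor $|\Std(\undla)|$ as in Lemma \ref{lem:std-num} into a product of binomials times tableau counts for the strict and non-strict pieces, and then apply Lemma \ref{lem:formula} twice: the non-strict contribution $\sum|\Std(\underline{\mu})|^2=c!\,m^c$, and the shifted identity $\sum_{\xi\in \mathscr{P}^{\mathsf{s}}_t}\bigl(2^{(t-\ell(\xi))/2}|\Std(\xi)|\bigr)^2=t!$ collapses the strict contribution. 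The residual binomial sums telescope via the binomial theorem to $(m+1/2)^n$ in the $\mathsf{s}$ case and $(m+1)^n$ in the $\mathsf{ss}$ case, yielding $2^{2n}n!(m+1/2)^n$ and $2^{2n}n!(m+1)^n$ respectively, each matching $\dim\mHfcn$. Completeness of the family then follows: Lemma \ref{lem:semisimple1} together with the super-Wedderburn decomposition forces the constructed simples to exhaust all isomorphism classes, since their combined dimension identity already fills up $\dim\mHfcn$.

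The main obstacle is purely bookkeeping: one must check with care that the power-of-two factor $2^{n-\lfloor\sharp\mathcal{D}_{\undla}/2\rfloor}$ from \eqref{eq:dimDla}, after combining with the type-$\texttt{Q}$ factor $1/2$ from Corollary \ref{cor:dim-compare}, produces the clean uniform exponent $2^{2n-\sharp\mathcal{D}_{\undla}}$ regardless of the parity of $\sharp\mathcal{D}_{\undla}$; and then that the nested summation over the strict/non-strict components in the $\mathsf{s}$ and $\mathsf{ss}$ cases is organized so that the two identities of Lemma \ref{lem:formula} can be applied successively without index confusion. No further conceptual input is required beyond the combinatorial identities and structural results already assembled in the paper.
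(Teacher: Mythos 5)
This statement is labeled a \emph{conjecture} in the paper, and the paper does not in fact prove it; the authors explicitly flag the converse direction as open and defer it to future work (see the remark immediately following, where they sketch a possible strategy via comparison with \cite{BK1} and \cite{Wa}). What the paper \emph{does} establish, in Theorem \ref{CompareDim} and Corollary \ref{splittable}, is the single implication $(3)\Rightarrow(1)$ and $(3)\Rightarrow(2)$.

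Your argument reproduces, essentially verbatim, the dimension-comparison proof of Theorem \ref{CompareDim}: you construct the family $\{\mathbb{D}(\undla)\}$, identify their types, sum the squares of their dimensions with the appropriate $1/2$ weights, and match the total against $\dim\mHfcn = 2^n r^n n!$ via Lemmas \ref{lem:formula}, \ref{lem:std-num}, and \ref{lem:dim-Hf}. That is a correct proof that $P^{(\bullet)}_n(q^2,\undQ)\neq 0$ implies semisimplicity and implies that all irreducibles are the completely splittable $\mathbb{D}(\undla)$. But it does not touch the harder converse: you do not show that if $\mHfcn$ is semisimple (or if every irreducible is completely splittable) then $P^{(\bullet)}_n(q^2,\undQ)$ must be nonzero. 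That direction requires either a classification of all irreducibles without the separation hypothesis (so one can exhibit a non-splittable simple, or a deficiency in the number of simples, whenever some factor of $P^{(\bullet)}_n$ vanishes), or some other obstruction argument; neither appears in your proposal. As written, your proposal does not prove the stated equivalence — it proves one implication that the paper already has — and the genuine content of the conjecture, namely necessity of condition $(3)$, is left unaddressed.
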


\begin{rem}  In \cite{Wa}, an explicit combinatorial construction of all non-isomorphic irreducible completely splittable modules in the degenerate situation has been obtained. An analogous construction for the non-degenerate case is expected to exist. One possible way to prove the conjecture in the case  $f(X_1)=X_1-1$ is to compare the two sets of partitions parametrizing the isomorphic classes of the irreducible modules given in \cite{BK1} with the set of strict partitions in \cite{Wa}. We  will work on it as well as general cases in a separate work. 

\end{rem}

\begin{cor}
Let $\undQ=(Q_1,Q_2,\ldots,Q_m)\in(\mathbb{K}^*)^m$.  Assume $f=f^{(\bullet)}_{\undQ}$ with $\bullet\in\{\mathtt{0},\mathtt{s},\mathtt{ss}\}$. If $P^{\bullet}_{n}(q^2,\undQ)\neq 0$. Then the center of $\mHfcn$ consists of symmetric polynomials in $X_1+X^{-1}_1,\cdots,X_n+X^{-1}_n$ with dimension $\sharp\mathscr{P}^{\mathtt{\bullet},m}_{n}$.
\end{cor}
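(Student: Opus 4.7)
The plan is to combine a dimension count of the super-center of $\mHfcn$ coming from its semisimple decomposition with the identification of the central elements as symmetric polynomials in $X_i+X_i^{-1}$. Since $\mHfcn$ is (split) semisimple by Theorem \ref{CompareDim}, the super-Wedderburn theorem yields
\[
\mHfcn \cong \bigoplus_{\undla\in\mathscr{P}^{\bullet,m}_n} A_\undla,
\]
where each $A_\undla$ is a simple superalgebra of type $M(p_\undla|q_\undla)$ or $Q(n_\undla)$ according to the type of $\mathbb{D}(\undla)$. A direct verification shows that in either case the super-center of $A_\undla$ is one-dimensional (scalars only), so $\dim Z(\mHfcn) = \sharp\mathscr{P}^{\bullet,m}_n$.

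By Lemma \ref{lem:affine-center}, the symmetric polynomials in $X_1+X_1^{-1},\ldots,X_n+X_n^{-1}$ exhaust the super-center of $\mHcn$, and their images in the quotient $\mHfcn$ form a central subalgebra $\mathcal{Z}\subseteq Z(\mHfcn)$. By Lemma \ref{lem:eigen-Xk}, any such $p(X_1+X_1^{-1},\ldots,X_n+X_n^{-1})$ acts on $\mathbb{D}(\undla)$ as the scalar $p(\mathtt{q}(\res(\undla)))$, depending only on the multiset $\{\mathtt{q}(\res(\alpha)) : \alpha\in\undla\}$ (by symmetry of $p$ and the fact that all elements of $\Std(\undla)$ have the same residue multiset). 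Under the super-Wedderburn identification $Z(\mHfcn)\cong\mathbb{K}^{\sharp\mathscr{P}^{\bullet,m}_n}$, the inclusion $\mathcal{Z}\hookrightarrow Z(\mHfcn)$ becomes the evaluation map $p\mapsto(p(\mathtt{q}(\res(\undla))))_\undla$.

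To conclude $\mathcal{Z}=Z(\mHfcn)$, I would show that this evaluation map is surjective; by Lagrange interpolation on $\mathfrak{S}_n$-orbits in $\mathbb{K}^n$, this is equivalent to the distinctness of the multisets $\mathtt{q}(\res(\undla))$ for distinct $\undla\in\mathscr{P}^{\bullet,m}_n$. Using $P_n^{\bullet}(q^2,\undQ)\neq 0$, I would first argue, via the factors $(Q_i-Q_{i'}q^{-2t})$ and $(Q_iQ_{i'}-q^{-2(t+1)})$ together with the universal bound $\lambda_1+\ell(\lambda)\leq n+1$ on each component, that residues of boxes from distinct components lie in disjoint $\sim$-equivalence classes (where $x\sim y$ iff $\mathtt{q}(x)=\mathtt{q}(y)$). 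The problem then reduces, component by component, to the classical statement that a partition is determined by its multiset of contents, combined with the factors on $Q_l^2$ and $q^{2t}$ controlling coincidences of $\mathtt{q}$-values within a single component.

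The main obstacle is this last step of multiset distinctness. Even under $P_n^{\bullet}(q^2,\undQ)\neq 0$, certain ``internal'' coincidences $\mathtt{q}(Q_lq^{2u})=\mathtt{q}(Q_lq^{2v})$ between two distinct contents $u,v$ within the same component may still occur (the uncovered ranges correspond to odd $u+v+1$ outside $[3-n,n-1]$); one must verify that such internal collisions never conspire to produce identical multisets for two distinct $\undla$. I would handle this via a case analysis on the extremal (corner) boxes of the shapes involved, showing that any putative swap of contents forced by such a collision fails to yield a valid partition profile, hence preserves distinctness of the multiset invariant.
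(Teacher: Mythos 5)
Your proof follows the same architecture as the paper's: both start from the semisimplicity established in Theorem \ref{CompareDim}, compute $\dim Z(\mHfcn)=\sharp\mathscr{P}^{\bullet,m}_n$ as the number of simple supermodules (you go through the super-Wedderburn blocks and check each has one-dimensional super-center; the paper invokes this implicitly), note via Lemma \ref{lem:affine-center} that symmetric polynomials in $X_i+X_i^{-1}$ land in the center, and then reduce everything to showing that the multisets $\{\mathtt{q}(\res(\alpha)):\alpha\in\undla\}$ are pairwise distinct across $\undla\in\mathscr{P}^{\bullet,m}_n$. Your Lagrange interpolation step is the same idea as the paper's explicit family $g_\undla = \prod_{\underline{\mu}\neq\undla}\bigl(e_{\undla,\underline{\mu}}-e_{\undla,\underline{\mu}}(\ldots)\bigr)$.

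The genuine point of divergence is how to establish multiset distinctness. The paper cites Lemma \ref{lem:different residues}, which as stated gives distinctness of the \emph{ordered} sequences $\mathtt{q}(\res(\mathfrak{t}))$ over all standard tableaux; passing from sequences to multisets does require a bit more, so you are right to flag this as the crux. However, your proposed route (first separate components, then invoke a classical content-multiset argument, then case-analyze corner boxes to rule out residual internal collisions) is more elaborate than necessary. Your worry about uncovered collisions is legitimate if one reads off the factors of $P_n^{(\bullet)}$ literally: $Q_i^2-q^{-2t}$ only covers $t\in[3-n,n-1]$ together with the even window, whereas $u+v+1$ for contents $u,v$ of two boxes in an $n$-box partition can a priori exceed this range. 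The tableau structure fills the gap: the cleaner route is a removable-node induction in the spirit of the proof of Lemma \ref{lem:different residues}. Removable boxes of a fixed shape carry pairwise distinct $\mathtt{q}$-residues (place $n-1,n$ in two such boxes and invoke the separate condition via Lemma \ref{important condition1}(2)); if $\undla\neq\underline{\mu}$ had the same multiset, one could strip a removable box of matching $\mathtt{q}$-residue from each and conclude by induction, using the characterization in Lemma \ref{lem:different residues} that two extensions of a common shape by boxes of equal $\mathtt{q}$-residue coincide. This avoids classifying the internal collisions directly. With that step tightened up, your argument is correct and equivalent to the paper's.
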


\begin{proof}
{\color{black}By Theorem \ref{CompareDim}, $\mHfcn$ is semisimple. Hence the dimension of the center equals to $\sharp\mathscr{P}^{\mathtt{\bullet},m}_{n}$ which is the number of simple modules. Clearly by Lemma \ref{lem:affine-center} the symmetric polynomials in $X_1+X^{-1}_1,\cdots,X_n+X^{-1}_n$ belong to the center of $\mHfcn$.  On the other hand, it is known that  For $\undla\neq\underline{\mu}\in\mathscr{P}^{\mathtt{\bullet},m}_{n}$, by Lemma \ref{lem:eigen-Xk} and Lemma \ref{lem:different residues} that the two multisets of eigenvalues of $X_1+X^{-1}_1,\cdots,X_n+X^{-1}_n$ on $\mathbb{D}(\undla)$ and $\mathbb{D}(\underline{\mu})$ are different. Hence there exists an elementary symmetric polynomial $e_{\undla,\underline{\mu}}$ in $X_1+X^{-1}_1,\cdots,X_n+X^{-1}_n$  such that the eigenvalue of the action of $e_{\undla,\underline{\mu}}(X_1+X^{-1}_1,\cdots,X_n+X^{-1}_n)$ on  $\mathbb{D}(\undla)$ and $\mathbb{D}(\underline{\mu})$ are different. Now we define $$g_{\undla}=\prod_{\underline{\mu}\in\mathscr{P}^{\mathtt{\bullet},m}_{n},\underline{\mu}\neq \undla}(e_{\undla,\underline{\mu}}-e_{\undla,\underline{\mu}}(a^1_{\underline{\mu}},\cdots,a^n_{\underline{\mu}})),
$$ where $a^i_{\underline{\mu}}=\mathtt{q}(\res_{\mathfrak{t}^{\underline{\mu}}}(i))$ is the eigenvalue of $X_i+X^{-1}_i$ acting on $\mathbb{L}(\res(\mathfrak{t}^{\underline{\mu}}))\subset\mathbb{D}(\underline{\mu})$ for $1\leq i\leq n$. Then $g_{\undla}(X_1+X^{-1}_1,\cdots,X_n+X^{-1}_n)$ is a symmetric polynomial in $X_1+X^{-1}_1,\cdots,X_n+X^{-1}_n$ and it acts on $\mathbb{D}(\undla)$ as a non-zero scalar and on $\mathbb{D}(\underline{\mu})$ as $0$ for $\underline{\mu}\neq\undla$. This implies that $\{g_{\undla}(X_1+X^{-1}_1,\cdots,X_n+X^{-1}_n)|\undla\in \mathscr{P}^{\mathtt{\bullet},m}_{n}\}$ is linearly independent in the center of $\mHfcn$ and hence forms a basis of the center of $\mHfcn$.}
\end{proof} 


{\color{black}	\begin{rem}Now we consider  generic non-degenerate cyclotomic Hecke-Clifford algebra, that is, $q,Q_1,\ldots,Q_m$ are algebraically independent over $\Z$. Let $\mathbb{F}$ is the algebraic closure of $\mathbb{Q}(q,Q_1,\ldots,Q_m)$, and  $f=f^{(\bullet)}_{\undQ}$ with $\bullet\in\{\mathtt{0},\mathtt{s},\mathtt{ss}\}$. Then $\mathcal{H}^{f}_{\Delta}(n)$ is a semsimple algebra over $\mathbb{F}$ by Example \ref{deform} and Theorem \ref{CompareDim}.
	\end{rem}
	}
%
%

\section{Degenerate Case}
\subsection{Affine Sergeev algebra}
For $n\in\mathbb{Z}_+$, the affine Sergeev (or degenerate Hecke-Clifford) algebra $\mhcn$ is
the superalgebra generated by even generators
$s_1,\ldots,s_{n-1},$ $x_1,\ldots,x_n$ and odd generators
$c_1,\ldots,c_n$ subject to the following relations
\begin{align}
s_i^2=1,\quad s_is_j =s_js_i, \quad
s_is_{i+1}s_i&=s_{i+1}s_is_{i+1}, \quad|i-j|>1,\label{braid}\\
x_ix_j&=x_jx_i, \quad 1\leq i,j\leq n, \label{poly}\\
c_i^2=1,c_ic_j&=-c_jc_i, \quad 1\leq i\neq j\leq n, \label{clifford}\\
s_ix_i&=x_{i+1}s_i-(1+c_ic_{i+1}),\label{px1}\\
s_ix_j&=x_js_i, \quad j\neq i, i+1, \label{px2}\\
s_ic_i=c_{i+1}s_i, s_ic_{i+1}&=c_is_i,s_ic_j=c_js_i,\quad j\neq i, i+1, \label{pc}\\
x_ic_i=-c_ix_i, x_ic_j&=c_jx_i,\quad 1\leq i\neq j\leq n.
\label{xc}
\end{align}

For $\alpha=(\alpha_1,\ldots,\alpha_n)\in\mathbb{Z}_+^n$ and
$\beta=(\beta_1,\ldots,\beta_n)\in\mathbb{Z}_2^n$, set
$x^{\alpha}=x_1^{\alpha_1}\cdots x_n^{\alpha}$ and
$c^{\beta}=c_1^{\beta_1}\cdots c_n^{\beta_n}$. 
{\color{black} Analogous to the non-degenerate case, we also have the following as mentioned in \cite[Section 2-k]{BK1}. 
\begin{lem}\cite[Theorem 2.2-2.3]{BK1}\label{lem:PBW}
(1) The set $\{x^{\alpha}c^{\beta}w~|~ \alpha\in\mathbb{Z}_+^n,
\beta\in\mathbb{Z}_2^n, w\in {\mathfrak{S}_n}\}$ forms a basis of $\mhcn$.

(2) The center of $\mhcn$ consists of symmetric polynomials in $x_1^2,x_2^2,\ldots,x_n^2$. 
\end{lem}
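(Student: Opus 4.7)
The plan is to prove (1) by the standard straightening-plus-faithful-representation argument and then prove (2) via a filtration on the PBW basis obtained in (1).

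For part (1), spanning follows by induction. Given any product of generators, repeated application of \eqref{pc}, \eqref{xc}, and \eqref{px1}--\eqref{px2} lets one push every $x_j$ to the far left, sort the $c_j$'s in the middle (reducing exponents to $\beta\in\{0,1\}^n$ via $c_j^2=1$), and collect the symmetric-group generators on the right. The only nontrivial rewriting is \eqref{px1}: commuting $s_i$ past $x_i$ introduces a correction $1+c_ic_{i+1}$ of strictly lower symmetric-group length, so an induction on the pair (total $x$-degree, $\ell(w)$) terminates. For linear independence, I would construct a faithful module $M=\mathbb{K}[y_1,\ldots,y_n]\otimes\mathcal{C}_n\otimes\mathbb{K}\mathfrak{S}_n$, with $c_i$ acting by Clifford multiplication on the second factor, $x_i$ by multiplication by $y_i$ on the first factor (with the sign twist forced by \eqref{xc}), and $s_i$ by a Dunkl-style operator engineered to realize \eqref{px1}. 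Verification of \eqref{braid}--\eqref{xc} on $M$ is direct. The PBW monomials $x^\alpha c^\beta w$ then act on $1\otimes 1\otimes 1$ as the obvious tensor basis $y^\alpha\otimes c^\beta\otimes w$, hence are linearly independent in $\mhcn$.

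For part (2), containment is a direct computation. From \eqref{xc}, each $x_j$ either commutes or anticommutes with each $c_k$, so $x_j^2$ commutes with every $c_k$. Commutation of any symmetric polynomial in the $x_i^2$ with $s_i$ reduces, via a short Clifford calculation iterating \eqref{px1}, to checking that the corrections $1+c_ic_{i+1}$ produced in $s_ix_i^2-x_{i+1}^2 s_i$ and $s_ix_{i+1}^2-x_i^2 s_i$ cancel when one passes to the symmetric combination $x_i^2+x_{i+1}^2$, and then to any $\mathfrak{S}_n$-invariant polynomial in the $x_i^2$.

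The reverse inclusion is the main obstacle. Using the PBW basis from (1), write a central $z$ uniquely as $z=\sum_{\beta,w}f_{\beta,w}(x)\,c^\beta w$. Imposing $zc_j=c_jz$ for every $j$, and comparing coefficients on the PBW basis, forces each $f_{\beta,w}$ to have a prescribed parity in $x_j$ determined by $\beta_j$; a leading-term comparison under the filtration where $x_i$ has degree one and $c^\beta w$ has degree zero then rules out any nonzero $\beta$, yielding $z=\sum_w f_w(x)\,w$ with $f_w\in\mathbb{K}[x_1^2,\ldots,x_n^2]$. Next, imposing $s_iz=zs_i$ and passing to the top symbol removes the correction terms in \eqref{px1}, reducing the remaining analysis to the classical center computation for the degenerate affine Hecke algebra applied to $\mathbb{K}[x_1^2,\ldots,x_n^2]$; this forces $w=1$ and $f_1$ to be $\mathfrak{S}_n$-invariant. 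The hardest step is setting up the filtration in (2) so the Clifford-super structure interacts compatibly with the symmetric-group action and controls both $\beta$ and $w$ simultaneously; once $\beta=0$ is in hand, the residual statement is a well-known even computation.
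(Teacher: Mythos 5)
The paper does not prove this lemma: it is cited verbatim from \cite[Theorems 2.2--2.3]{BK1} and no argument is reproduced. So there is no ``paper's own proof'' to compare against, only Brundan--Kleshchev's. Your proposal is in spirit the standard route --- straightening plus a faithfulness argument for the basis, then a filtration-by-$x$-degree argument for the center --- so the overall plan is reasonable; but several of the steps you label ``direct'' are exactly where the content sits, and one of them has a genuine unresolved issue.

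For (1), the spanning argument is fine: the relations \eqref{pc}, \eqref{xc}, \eqref{px1}--\eqref{px2} are all filtration-compatible if you set $\deg x_i = 1$ and $\deg c_i = \deg s_i = 0$, and the rewriting terminates by your double induction. But the proposed faithful module $M=\mathbb{K}[y_1,\dots,y_n]\otimes\mathcal{C}_n\otimes\mathbb{K}\mathfrak{S}_n$ is not as innocent as stated. If $c_i$ acts by odd left Clifford multiplication on the middle factor and $x_i$ by the even operator $y_i\cdot$ on the first, then as operators on $M$ they \emph{commute}, not anticommute, so \eqref{xc} fails. ``The sign twist forced by \eqref{xc}'' has to be an even operator on $\mathcal{C}_n$ that anticommutes with left multiplication by $c_i$ and commutes with left multiplication by $c_j$ for $j\ne i$; neither the parity operator nor conjugation by $c_i$ has this property, so some care is needed to find one, and you do not supply it. Likewise, ``verification of \eqref{braid}--\eqref{xc} on $M$ is direct'' glosses over the hardest check, namely that your Dunkl-type operators for $s_i$ satisfy the braid relation in the presence of the Clifford corrections. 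A cleaner and self-contained route, which avoids constructing $M$ at all, is the associated-graded argument: observe that $\operatorname{gr}\mhcn$ is (a quotient of) the smash product $\mathbb{K}[x_1,\dots,x_n]\rtimes(\mathcal{C}_n\rtimes\mathfrak{S}_n)$ in which $s_ix_i=x_{i+1}s_i$, and that this smash product is a well-defined associative algebra with the obvious monomial basis; combined with the spanning argument, this gives linear independence without any auxiliary module.

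For (2), the containment direction is essentially right; the relevant identities $s_i x_i^2 = x_{i+1}^2 s_i - (1-c_ic_{i+1})x_{i+1} - x_i(1-c_ic_{i+1})$ and $s_i x_{i+1}^2 = x_i^2 s_i + x_i(1-c_ic_{i+1}) + (1-c_ic_{i+1})x_{i+1}$ do make the corrections cancel in $s_i(x_i^2+x_{i+1}^2)$, and the usual polarization argument extends this to all symmetric polynomials in the $x_j^2$. For the reverse inclusion your outline is the right one (top-symbol analysis in the filtration, reducing to the center of $\operatorname{gr}\mhcn$), but you flag the filtration setup as ``the hardest step'' without doing it, and the remaining assertion that the top-symbol computation reduces to the degenerate affine Hecke center is not immediate, because $\operatorname{gr}\mhcn$ has nontrivial Clifford structure; one must actually compute the center of the smash product $\mathbb{K}[x]\rtimes(\mathcal{C}_n\rtimes\mathfrak{S}_n)$ and check it is the symmetric polynomials in the $x_j^2$. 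That computation is doable but it is precisely the substance of the proof, not a reduction to a known even result. In short: the strategy is correct and matches the standard literature, but the faithful-module sign issue is a real gap, and the center argument as written defers the main work rather than doing it.
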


}
Let $\mathcal{P}_n$ be the superalgebra generated by even generators
$x_1,\ldots,x_n$ and odd generators $c_1,\ldots,c_n$ subject to the
relations~(\ref{poly}),~(\ref{clifford}) and~(\ref{xc}). By
Lemma~\ref{lem:PBW}, $\mathcal{P}_n$ can be identified with the subalgebra
of $\mhcn$ generated by $x_1,\ldots,x_n$ and $c_1,\ldots,c_n$. 

We can define an equivalent relation $\approx$ on $\mathbb{K}$ by $x\approx y$ if and only if $x=y$ or $x+y+1=0$. Let $\mathbf{K}$ be the subset of $\mathbb{K}$  which contains exactly one representative element of $\approx$ and contains $0$ (hence $-1$ is excluded).

For any  $\iota \in \mathbb{K}$, we set
\begin{align}
\mathtt{q}(\iota)=\iota(\iota+1).\label{qi}
\end{align} For $\iota_1\neq \iota_2\in \mathbf{K}$, we have $\mathtt{q}(\iota_1)\neq \mathtt{q}(\iota_2)$.

{\color{black} For any  $x \in \mathbb{K}$, denote by $L(x)$ the $2$-dimensional
$\mathcal{P}_1$-module with
$L(x)_{\bar{0}}=\mathbb{K}v_0$ and $L(x)_{\bar{1}}=\mathbb{K}v_1$
and
$$
x_1v_0=\sqrt{\mathtt{q}(x)}v_0,\quad x_1v_1=-\sqrt{\mathtt{q}(x)}v_1, \quad
c_1v_0=v_1,\quad c_1v_1=v_0.
$$
Clearly $L(x)\cong L(y)$ if and only if $x\sim y\in\mathbb{K}$. Hence 
}for each $\iota\in\mathbf{K}$, we have $2$-dimensional
$\mathcal{P}_1$-module  $L(\iota)$ with 
$L(\iota)_{\bar{0}}=\mathbb{K}v_0$ and $L(\iota)_{\bar{1}}=\mathbb{K}v_1$
and
$$
x_1v_0=\sqrt{\mathtt{q}(\iota)}v_0,\quad x_1v_1=-\sqrt{\mathtt{q}(\iota)}v_1, \quad
c_1v_0=v_1,\quad c_1v_1=v_0.
$$

\begin{lem} The $\mathcal{P}_1$-module $L(\iota)$ is irreducible of type $\texttt{M}$ if $\iota\neq 0$,
and irreducible of type $\texttt{Q}$ if $i=0$. Moreover, $\{L(\iota)|~\iota\in\mathbf{K}\}$ is a complete set of pairwise non-isomorphic finite dimensional irreducible
$\mathcal{P}_1$-module.
\end{lem}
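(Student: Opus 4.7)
The plan is to parallel the proof of the non-degenerate analogue (the $\mathcal{A}_1$-module lemma above), with the substitution $\mathtt{b}_\pm(\cdot)\rightsquigarrow\pm\sqrt{\mathtt{q}(\cdot)}$. First I would verify the type assertion: for $\iota\neq 0$, note that $-1\notin\mathbf{K}$ so $\mathtt{q}(\iota)\neq 0$, hence $x_1$ acts on $L(\iota)$ with two distinct eigenvalues $\pm\sqrt{\mathtt{q}(\iota)}$; any $\mathcal{P}_1$-endomorphism commutes with $x_1$, so it preserves the two one-dimensional eigenspaces, and the constraint imposed by $c_1$ (together with the superalgebra sign rules) forces the endomorphism algebra to be one-dimensional, yielding type $\texttt{M}$. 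For $\iota=0$ the action of $x_1$ is zero, so a direct check shows that the linear map $v_0\mapsto v_1,\ v_1\mapsto -v_0$ is an odd $\mathcal{P}_1$-endomorphism; thus $\operatorname{End}_{\mathcal{P}_1}(L(0))$ has dimension $2$ and $L(0)$ is of type $\texttt{Q}$.

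Next I would check irreducibility and pairwise non-isomorphism. A nonzero graded submodule of $L(\iota)$ must contain at least one of $v_0,v_1$ and then the other via $c_1$, so $L(\iota)$ is irreducible for every $\iota$. If $L(\iota)\cong L(\iota')$, the multiset of $x_1$-eigenvalues must coincide, which translates to $\mathtt{q}(\iota)=\mathtt{q}(\iota')$; expanding $\iota(\iota+1)=\iota'(\iota'+1)$ gives $(\iota-\iota')(\iota+\iota'+1)=0$, equivalently $\iota\approx\iota'$. Since $\mathbf{K}$ was chosen to contain exactly one representative of each $\approx$-class, the modules $\{L(\iota)\mid\iota\in\mathbf{K}\}$ are pairwise non-isomorphic.

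For completeness I would argue as follows. Relations \eqref{poly}--\eqref{xc} imply $c_1 x_1^2 = -x_1 c_1 x_1 = x_1^2 c_1$, so the even element $x_1^2$ is central in $\mathcal{P}_1$. On a finite-dimensional irreducible $\mathcal{P}_1$-module $U$, the superalgebra Schur lemma recalled in Section \ref{preliminary} forces the even part of $\operatorname{End}_{\mathcal{P}_1}(U)$ to be one-dimensional, so $x_1^2$ acts by a scalar $z\in\mathbb{K}$. Pick the unique $\iota\in\mathbf{K}$ with $\mathtt{q}(\iota)=z$ and choose a homogeneous $x_1$-eigenvector $w\in U$ (taking $w\in\ker x_1$ when $z=0$). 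One then verifies that $w$ and $c_1 w$ are linearly independent (in the $z\neq 0$ case they have opposite nonzero $x_1$-eigenvalues; in the $z=0$ case they lie in opposite gradings and $c_1 w\neq 0$ because $c_1^2=1$) and that their span realizes the defining action of $L(\iota)$. By irreducibility $U\cong L(\iota)$. The only subtle point is the centrality/scalar-action argument for $x_1^2$; everything else is a direct two-dimensional calculation.
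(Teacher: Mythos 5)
Your proof is correct and mirrors, in the degenerate setting, the argument the paper gives for the analogous $\mathcal{A}_1$-lemma (this degenerate lemma itself is stated without proof in the paper). The only stylistic difference is that you route through the centrality of $x_1^2$ and Schur's lemma to pin down the scalar before choosing an eigenvector, whereas the paper's non-degenerate argument simply selects an $X_1$-eigenvector in the even part directly; this is a mild detour but introduces no gap, and the possible sign ambiguity in $\sqrt{\mathtt{q}(\iota)}$ is harmless since the two choices produce modules related by an odd isomorphism.
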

Observe that
$$\mathcal{P}_n\cong \mathcal{P}_1\otimes\cdots\otimes
\mathcal{P}_1.$$ 
{\color{black} For each $\underline{a}=(a_1,a_2,\ldots,a_n)\in(\mathbb{K}^*)^n$, set 
\begin{equation}\label{L-under-a}
L(\underline{a})=L(a_1)\circledast L(a_2)\circledast\cdots\circledast L(a_n), 
\end{equation}
 then $L(\underline{a})\cong L(\underline{b})$ if and only if $a_i\sim b_i$ for $1\leq i\leq n$. }By Lemma~\ref{tensorsmod}, we have the following
result which can be viewed as a generalization of \cite[Lemma 4.8]{BK1}.
\begin{cor} \label{lem:irrepPn}
The $\mathcal{P}_n$-modules
$$
\{L(\underline{\iota})=L(\iota_1)\circledast
L(\iota_2)\circledast\cdots\circledast
L(\iota_n)|~\underline{\iota}=(\iota_1,\ldots,\iota_n)\in\mathbf{K}^n\}
$$
forms a complete set of pairwise non-isomorphic  finite dimensional irreducible
$\mathcal{P}_n$-module.
Moreover, denote by $\gamma_0$ the number of $1\leq j\leq n$ with
$\iota_j=0$. Then $L(\underline{\iota})$ is of type $\texttt{M}$ if
$\gamma_0$ is even and type $\texttt{Q}$ if $\gamma_0$ is odd.
Furthermore,
$\text{dim}~L(\underline{\iota})=2^{n-\lfloor\frac{\gamma_0}{2}\rfloor}$.
\end{cor}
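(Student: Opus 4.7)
The plan is to mimic the proof of Corollary~\ref{lem:irrepAn} in the non-degenerate setting, exploiting the tensor product decomposition $\mathcal{P}_n \cong \mathcal{P}_1 \otimes \cdots \otimes \mathcal{P}_1$ together with Lemma~\ref{tensorsmod}. Combining the preceding lemma (which classifies the finite dimensional irreducible $\mathcal{P}_1$-modules as $\{L(\iota)\mid \iota\in\mathbf{K}\}$) with the final assertion of Lemma~\ref{tensorsmod} (every irreducible module over a tensor product is a constituent of some $V\boxtimes W$), an induction on $n$ will show that every finite dimensional irreducible $\mathcal{P}_n$-module is isomorphic to some $\circledast$-product $L(\iota_1)\circledast\cdots\circledast L(\iota_n)$ with $\underline{\iota}\in\mathbf{K}^n$, establishing the exhaustion half of the first claim.

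To compute the type and dimension of $L(\underline{\iota})$ I would induct on $n$ using Lemma~\ref{tensorsmod}(1)--(3). By the preceding lemma $L(\iota_j)$ is of type $\texttt{Q}$ precisely when $\iota_j=0$, so the number of type $\texttt{Q}$ tensor factors is exactly $\gamma_0$. Parts (1) and (2) of Lemma~\ref{tensorsmod} show that adjoining a type $\texttt{M}$ factor preserves both the type and the full $\boxtimes$-dimension, while part (3) shows that combining two type $\texttt{Q}$ factors yields a type $\texttt{M}$ irreducible of dimension equal to half of the $\boxtimes$-product. Pairing the $\gamma_0$ many type $\texttt{Q}$ factors two-by-two (with one leftover if $\gamma_0$ is odd) then gives the type-parity rule and $\dim L(\underline{\iota}) = 2^n / 2^{\lfloor \gamma_0/2\rfloor} = 2^{n-\lfloor \gamma_0/2\rfloor}$, since each $L(\iota_j)$ is 2-dimensional.

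For pairwise non-isomorphism, I would observe that the commuting even elements $x_1^2,\ldots,x_n^2$ act as the scalars $\mathtt{q}(\iota_1),\ldots,\mathtt{q}(\iota_n)$ on $L(\underline{\iota})$; indeed $x_1^2$ acts as $\mathtt{q}(\iota_j)$ on $L(\iota_j)$, and the Koszul sign picked up when $x_j$ is moved through the preceding Clifford generators squares to $1$. Since $\mathtt{q}$ is injective on $\mathbf{K}$ by the construction of $\mathbf{K}$ via the relation $\approx$, the ordered tuple $\underline{\iota}$ is recovered from the eigenvalue tuple, so distinct $\underline{\iota}\in\mathbf{K}^n$ yield non-isomorphic $\mathcal{P}_n$-modules. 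I do not anticipate any serious obstacle; the main point requiring care is that each eigenvalue $\mathtt{q}(\iota_j)$ is tied to the specific index $j$ (not merely to the multiset), which is exactly what the tensor decomposition furnishes.
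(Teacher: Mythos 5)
Your proposal is correct and follows essentially the same route the paper takes (the paper gives no separate proof, simply invoking the tensor decomposition $\mathcal{P}_n\cong\mathcal{P}_1^{\otimes n}$, the classification of irreducible $\mathcal{P}_1$-modules, and Lemma~\ref{tensorsmod}, exactly as you do). One small remark: since $x_j$ is even, there is in fact no Koszul sign at all when it acts through the supertensor product, so your parenthetical about the sign squaring to $1$ is unnecessary but harmless.
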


\begin{rem}\label{rem:Ltau2}
Note that each permutation $\tau\in {\mathfrak{S}_n}$ defines a superalgebra
isomorphism $\tau:\mathcal{P}_n\rightarrow \mathcal{P}_n$ by mapping $x_k$ to
$x_{\tau(k)}$ and $c_k$ to  $c_{\tau(k)}$, for $1\leq k\leq n$. For
$\underline{\iota}\in\mathbf{K}^n$, the twist of the action of
$\mathcal{P}_n$ on $L(\underline{\iota})$ with
$\tau^{-1}$ leads to a new $\mathcal{P}_n$-module denoted by
$L(\underline{\iota})^{\tau}$ with
$$
L(\underline{\iota})^{\tau}=\{z^{\tau}~|~z\in L(\underline{\iota})\} ,\quad
fz^{\tau}=(\tau^{-1}(f)z)^{\tau}, \text{ for any }f\in
\mathcal{P}_n, z\in L(\underline{\iota}).
$$
So in particular we have $$(x_kz)^{\tau}=x_{\tau(k)}z^{\tau},\,(c_kz)^{\tau}=c_{\tau(k)}z^{\tau}$$ for each $1\leq k\leq n$. It is easy to see that $
L(\underline{\iota})^{\tau}\cong L(\tau\cdot \underline{\iota})$. Moreover, it is straightforward to show that the following holds 
$$
	(L(\underline{\iota})^\tau)^\sigma\cong L(\underline{\iota})^{\sigma\tau}. 
$$
\end{rem}

\subsection{Intertwining elements for $\mhcn$}
Given 1$\leq i<n$, we define the intertwining element $\Phi_i$ in $\mhcn$ as follows:
\begin{align}
\Phi_i:=s_i(x_i^2-x^2_{i+1})+(x_i+x_{i+1})+c_ic_{i+1}(x_i-x_{i+1}),\quad
.\label{intertw}
\end{align}
These elements satisfy the following properties (cf.\cite[Proposition 3.2, (3.3),(3.4)]{N2}).
\begin{align}
\Phi_i^2=2(x_i^2+x^2_{i+1})-(x_i^2-x^2_{i+1})^2\label{sqinter},\\
\Phi_ix_i=x_{i+1}\Phi_i, \Phi_ix_{i+1}=x_i\Phi_i,
\Phi_ix_l=x_l\Phi_i\label{xinter},\\
\Phi_ic_i=c_{i+1}\Phi_i, \Phi_ic_{i+1}=c_i\Phi_i,
\Phi_ic_l=c_l\Phi_i\label{cinter},\\
\Phi_j\Phi_i=\Phi_i\Phi_j,
\Phi_i\Phi_{i+1}\Phi_i=\Phi_{i+1}\Phi_i\Phi_{i+1}\label{braidinter}
\end{align}
for all admissible $j,i,l$ with $l\neq i, i+1$ and $|j-i|>1$.

{\color{black} For any pair of $(x,y)\in \mathbb{K}^2$, we consider the following condition 
\begin{align}\label{invertible dege}
	(x+y)^2+(x-y)^2=(x^2-y^2)^2.
\end{align}
According to \cite{N1}, via the substitution\begin{align}\label{substitute dege}
	x^2=u(u+1),\qquad\qquad y^2=v(v+1)
\end{align}  the condition \eqref{invertible dege} is equivalent to the condition which states that $u,v$ satisfy one of the following four equations \begin{align}\label{invertible dege2}
	u-v=\pm 1,\quad u+v=0,\quad u+v=-2.
\end{align}
}
\subsection{Cyclotomic Sergeev algebra $\mhgcn$}
As before, to define the cyclotomic Sergeev algebra $\mhgcn$, we need to fix a polynomial $g=g(x_1)\in \mathbb{K}[x_1]$ satisfying  \cite[3-e]{BK1}. Since we are working over algebraically closed field  $\mathbb{K}$,  it is straightforward to check that $g=g(x_1)\in \mathbb{K}[x_1]$ satisfying  \cite[3-e]{BK1} must be one of the following two forms:
$$\begin{aligned}
	g^{(\mathsf{0})}_{\underline{Q}}&=\prod_{i=1}^m\biggl(x^2_1-\mathtt{q}(Q_i)\biggr);\\
	g^{\mathsf{(s)}}_{\underline{Q}}&=x_1\prod_{i=1}^m\biggl(x^2_1-\mathtt{q}(Q_i)\biggr),
\end{aligned}
$$ where $Q_1,\cdots, Q_m\in\mathbb{K}$.

The cyclotomic Sergeev algebra (or degenerate cyclotomic Hecke-Clifford algebra) $\mhgcn$ is defined as $$\mhgcn:=\mhcn/\mathcal{J}_g,
$$ where $\mathcal{J}_g$ is the two sided ideal of $\mhcn$ generated by $g$. Agian, we shall denote the image of $x^{\alpha}, c^{\beta}, w$ in the cyclotomic quotient $\mhgcn$ still by the same symbol. Then we have the following due to \cite{BK1}. 

\begin{lem}\cite[Theorem 3.6]{BK1}
	The set $\{x^{\alpha}c^{\beta}w~|~ \alpha\in\{0,1,\cdots,r-1\}^n,
	\beta\in\mathbb{Z}_2^n, w\in {\mathfrak{S}_n}\}$ forms a basis of $\mhgcn$, where $r=\deg(g)$. 
\end{lem}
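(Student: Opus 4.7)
The plan is to establish spanning and linear independence of the proposed set separately, since both share the cardinality $2^n r^n n!$ and so each implies the other once combined with a dimension bound. First I would prove spanning by descent from the affine PBW basis of Lemma \ref{lem:PBW}, using $g(x_1)=0$ together with the intertwining relation \eqref{px1} to reduce every $x_i^r$; then I would establish linear independence by producing a $\mhgcn$-module of dimension exactly $2^n r^n n!$.

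For spanning, filter $\mhcn$ by total $x$-degree, setting $\deg(x_i)=1$ and $\deg(c_i)=\deg(s_i)=0$. Relation \eqref{px1} gives $s_i p(x_i)\equiv p(x_{i+1})s_i$ modulo strictly lower filtration for every polynomial $p\in\mathbb{K}[x]$, so in $\mhgcn$ one obtains
\[
g(x_{i+1})\equiv s_i\, g(x_i)\, s_i \pmod{\text{lower }x\text{-degree}}.
\]
An induction on $i$, starting from $g(x_1)=0$, then writes each $g(x_i)$ as a polynomial of strictly smaller $x$-degree in $\mhgcn$. Iterating this substitution to any affine PBW monomial $x^\alpha c^\beta w$ with some $\alpha_i\geq r$ rewrites it in the span of monomials of strictly smaller total $x$-degree, and the descent terminates at the claimed set.

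For linear independence, consider $V_0 := \mathcal{P}_n/(g(x_1),\ldots,g(x_n))\mathcal{P}_n$. The isomorphism $\mathcal{P}_n\cong\mathcal{P}_1^{\otimes n}$ (as superalgebras), together with the observation that $\mathcal{P}_1/(g(x_1))$ has basis $\{x_1^a,\,c_1 x_1^a : 0\leq a<r\}$, yields $\dim V_0 = 2^n r^n$. Since $\mhcn$ is free as a right $\mathcal{P}_n$-module of rank $n!$ on $\{w : w\in\mathfrak{S}_n\}$ by Lemma \ref{lem:PBW}, the induced module $V := \mhcn\otimes_{\mathcal{P}_n} V_0$ has dimension $2^n r^n n!$; checking that $\mathcal{J}_g$ acts as zero on $V$ makes $V$ descend to a $\mhgcn$-module, and the surjection $\mhgcn\twoheadrightarrow V$ determined by $1\mapsto 1\otimes \bar 1$ sends the proposed spanning set onto a basis of $V$, forcing linear independence.

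The main obstacle is the verification that $\mathcal{J}_g$ annihilates all of $V$, not merely the cyclic vector $1\otimes\bar 1$. By PBW, every element of $\mathcal{J}_g$ is a sum of terms of the form $a\, g(x_1)\, q\, w$ with $a\in\mhcn$, $q\in\mathcal{P}_n$, $w\in\mathfrak{S}_n$, and one must show that each such term, applied to $1\otimes \bar 1$, reduces to zero modulo the defining relations of the tensor product. The subtlety is that while $g(x_1)$ commutes with $s_i$ for $i\neq 1$, passing $g(x_1)$ past $s_1$ produces $g(x_2)$ plus lower-degree corrections via \eqref{px1}, so one has to run the commutation argument inductively, absorbing the corrections into $\mathcal{J}_g$ using the spanning step. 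A cleaner alternative for both spanning and independence is a flatness/deformation argument: establish the basis claim on the separate locus, where $\mhgcn$ is semisimple via the degenerate analogue of Theorem \ref{CompareDim} developed in Section 5 and the total dimension can be counted as a sum of squares of irreducible dimensions, then extend to arbitrary $\undQ$ by upper-semicontinuity of dimension of a finitely generated quotient along the parameter space.
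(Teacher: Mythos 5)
The paper offers no proof here; it cites \cite[Theorem 3.6]{BK1} directly, so your proposal is a new argument rather than a reconstruction, and it must be judged on its own terms. The spanning step by descent on total $x$-degree is correct and standard. However, the linear-independence step via the induced module $V=\mhcn\otimes_{\mathcal{P}_n}V_0$ has a genuine gap, not merely a verification to be filled in: $\mathcal{J}_g$ does \emph{not} annihilate $V$, so $V$ is not a $\mhgcn$-module at all, and the ``corrections'' you flag cannot be absorbed because they are honestly nonzero on $V$. Take $n=2$ and $g=x_1$ (so $r=1$ and $V_0\cong\Cl_2$ with $x_1,x_2$ acting by zero). From \eqref{px1} one has $x_1 s_1 = s_1 x_2 - (1-c_1c_2)$, hence
\begin{equation*}
x_1\cdot(s_1\otimes 1) \;=\; s_1\otimes x_2\cdot 1 \;-\; (1-c_1c_2)\otimes 1 \;=\; -\,1\otimes(1-c_1c_2)\;\neq\;0,
\end{equation*}
so $\mathcal{J}_g V\neq 0$ already in the smallest nontrivial case. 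A working linear-independence argument requires a different module or a different technique (Brundan--Kleshchev's own proof proceeds by a Mackey-type filtration and induction on $n$), not this naive induction from $\mathcal{P}_n$.

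Your deformation alternative is also problematic as phrased. The polynomial $P_n^{(\bullet)}(1,\undQ)$ contains the factor $n!$, so the separate locus is \emph{empty} whenever $\operatorname{char}\mathbb{K}\leq n$; since the paper assumes only $\operatorname{char}\mathbb{K}\neq 2$, deforming $\undQ$ over $\mathbb{K}$ has no separate base point in characteristics $2<p\leq n$, and the argument never gets off the ground there. To make this route work one must instead deform over $\mathbb{Z}[\tfrac12][Q_1,\dots,Q_m]$, observe that the spanning set yields a surjection from a free module of rank $2^nr^nn!$ which becomes an isomorphism after base change to $\overline{\mathbb{Q}(Q_1,\dots,Q_m)}$, and conclude the kernel vanishes because it is a submodule of a free module over a domain that is generically zero; one then specializes to any $(\mathbb{K},\undQ)$. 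Finally, invoking Theorem \ref{CompareDim-dege} as a black box is circular, since its proof already relies on the dimension count supplied by this very lemma; one must instead combine the spanning inequality $\dim\mhgcn\leq 2^nr^nn!$ with the lower bound coming from the constructed irreducibles $D(\undla)$ to obtain both equality and semisimplicity simultaneously, which is probably what you intended but needs to be made explicit.
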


From now on, we fix $m\geq 0$ and $Q_1,\cdots, Q_m\in\mathbb{K}$ and let $g=g^{(\mathsf{0})}_{\underline{Q}}$ or $g=g^{(\mathsf{s})}_{\underline{Q}}$. Set $r=\deg(g)$. Then
 $$\begin{aligned}
 g=\begin{cases}
      g^{(\mathsf{0})}_{\underline{Q}}=\prod\limits_{i=1}^m\biggl(x^2_1-\mathtt{q}(Q_i)\biggr) , & \mbox{if $r=2m$ };  \label{res-dege-1}\\
     g^{(\mathsf{s})}_{\underline{Q}}=x_1\prod\limits_{i=1}^m\biggl(x^2_1-\mathtt{q}(Q_i)\biggr), & \mbox{if $r=2m+1$}\label{res-dege-2}.
    \end{cases} 
    \end{aligned}$$

We set $Q_0=0$. 
\begin{defn}Suppose  $\undla\in\mathscr{P}^{\bullet,m}_{n}$ with $\bullet\in\{\mathsf{0},\mathsf{s}\}$  and $(i,j,l)\in \undla$, we define the residue of box $(i,j,l)$ in the degenerate case as follows: \begin{equation}\label{eq:deg-residue}
		\res(i,j,l):=Q_l+j-i.
	\end{equation} If $\mathfrak{t}\in \Std(\undla)$ and $\mathfrak{t}(i,j,l)=a$, we set \begin{align}\label{res-dege}
        \res_\mathfrak{t}(a)&:=Q_l+j-i;\\
\res(\mathfrak{t})&:=(\res_\mathfrak{t}(1),\cdots,\res_\mathfrak{t}(n));\\
\mathtt{q}(\res(\mathfrak{t})):=(\mathtt{q}(\res_{\mathfrak{t}}(1)), \mathtt{q}(\res_{\mathfrak{t}}(2)),\ldots, \mathtt{q}(\res_{\mathfrak{t}}(n)))\label{res-dege2}. 
\end{align}

\end{defn}
The following lemma follows directly from \eqref{res-dege-2} and Corollary \ref{lem:irrepPn}. 
\begin{lem}\label{lem:deg-eigen-Xk}
	{\color{black} Let  $\bullet\in\{\mathsf{0},\mathsf{s}\}$ and $\undla\in\mathscr{P}^{\bullet,m}_{n}$.} Suppose $\mathfrak{t}\in\Std(\undla)$. The eigenvalue of $x_k$ acting on the {\color{black} $\mathcal{P}_n$-module  $L(\res(\mathfrak{t}))$} is $\pm \sqrt{\mathtt{q}(\res_{\mathfrak{t}}(k))}$ for each $1\leq k\leq n$. 
	Hence, the eigenvalue of $x^2_k$ acting on the $\mathcal{A}_n$-module $L(\res(\mathfrak{t}))$ is $\mathtt{q}(\res_{\mathfrak{t}}(k))$ for each $1\leq k\leq n$. 
	
\end{lem}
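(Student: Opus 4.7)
The plan is to reduce to the $n=1$ case via the outer tensor product decomposition of $L(\res(\mathfrak{t}))$, in complete parallel with the non-degenerate Lemma \ref{lem:eigen-Xk}. The statement is really a direct unpacking of definitions, so the argument should be short.

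First, I would unfold the definition: by construction,
\[
L(\res(\mathfrak{t})) = L(\res_{\mathfrak{t}}(1)) \circledast L(\res_{\mathfrak{t}}(2)) \circledast \cdots \circledast L(\res_{\mathfrak{t}}(n))
\]
is an irreducible constituent of the outer tensor product $L(\res_{\mathfrak{t}}(1)) \boxtimes \cdots \boxtimes L(\res_{\mathfrak{t}}(n))$, via the identification $\mathcal{P}_n \cong \mathcal{P}_1 \otimes \cdots \otimes \mathcal{P}_1$ in which the even generator $x_k$ corresponds to $1 \otimes \cdots \otimes x_1 \otimes \cdots \otimes 1$ with $x_1$ placed in the $k$-th factor.

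Second, from the explicit two-dimensional model of $L(\iota)$ given just before Corollary \ref{lem:irrepPn}, the generator $x_1$ acts on $L(\res_{\mathfrak{t}}(k))$ with eigenvalues $\pm\sqrt{\mathtt{q}(\res_{\mathfrak{t}}(k))}$ on the two basis vectors $v_0,v_1$. Since $x_k$ acts as $x_1$ on the $k$-th factor and as the identity on the others (with no Koszul sign, because $x_k$ is even), the eigenvalues of $x_k$ on the outer tensor product are $\pm\sqrt{\mathtt{q}(\res_{\mathfrak{t}}(k))}$. Passing to the irreducible constituent $\circledast$ does not change these eigenvalues, since $L(\res(\mathfrak{t}))$ is a (super)submodule or quotient of the $\boxtimes$-product and the commutative semisimple subalgebra generated by $x_1,\ldots,x_n$ preserves eigenvalues. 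This gives the first assertion.

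Finally, the claim for $x_k^2$ follows by squaring, and one should note that although the statement in the lemma writes ``$\mathcal{A}_n$-module'' this must be read as ``$\mathcal{P}_n$-module'' in the degenerate setting. There is no real obstacle in the argument; the mild point of care is simply verifying that each $x_k$ is even so that the Koszul sign in the definition of the action on $V\boxtimes W$ plays no role.
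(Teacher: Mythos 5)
Your proposal is correct and is essentially the paper's own (unstated) argument: the paper disposes of this lemma with the single line ``follows directly from \eqref{res-dege-2} and Corollary \ref{lem:irrepPn},'' and unpacking that reference is precisely the reduction to $n=1$ via $\mathcal{P}_n\cong\mathcal{P}_1\otimes\cdots\otimes\mathcal{P}_1$ and the explicit model of $L(\iota)$ that you carry out. Your side remark that ``$\mathcal{A}_n$'' in the second sentence of the lemma is a typo for ``$\mathcal{P}_n$'' is also correct.
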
 

\subsection{Separate parameters.}
Let $[1,n]:=\{1,2,\ldots,n\}$. In the rest of this section, we shall introduce a separate condition on the choice of the parameters $\underline{Q}$ and $g=g^{(\bullet)}_{\underline{Q}}$ with $\bullet\in\{\mathtt{0},\mathtt{s}\}$ and $r=\deg(g)$.
	
{\color{black}\begin{defn}
	Let $\undQ=(Q_1,\ldots,Q_m)$ and  $\undla\in\mathscr{P}^{\bullet,m}_{n}$ with $\bullet\in\{\mathsf{0},\mathsf{s}\}$. The parameter $\undQ$  is said to be {\em separate} with respect to $\undla$  if for any $\mathfrak{t}\in \undla$, the $\mathtt{q}$-sequence for $\mathfrak{t}$ defined via \eqref{res-dege2} satisfy the following condition:$$
	\textbf{$\mathtt{q}(\res_{\mathfrak{t}}(k))\neq\mathtt{q}(\res_{\mathfrak{t}}(k+1))$} \begin{matrix}\textbf{ for any $k=1,\cdots,n-1.$ }
	\end{matrix}
	$$
\end{defn}
}
 \begin{rem}
	Observe that in the case $m=0$ and $\bullet=\mathsf{s}$, by saying the parameter $\underline{Q}$ is separate with respect to $\undla=(\lambda^{(0)})$ we mean that the 
	residue of the boxes in the strict partition $\lambda^{(0)}$ defined via \eqref{eq:deg-residue} satisfies the above condition. In the following, we shall apply this convention. 
\end{rem}

\begin{lem}\label{separate in residue-dege}
	Let $\undQ=(Q_1,\ldots,Q_m)$ and  $\undla\in\mathscr{P}^{\bullet,m}_{n}$ with $\bullet\in\{\mathsf{0},\mathsf{s}\}$. Then $(\undQ)$ is separate with respect to $\undla$ if and only if  for any $\mathfrak{t}\in \undla$, and $k=1,\cdots,n-1,$ 
$${\color{black}	\res_{\mathfrak{t}}(k)\neq\res_{\mathfrak{t}}(k+1)\text{ and }\res_{\mathfrak{t}}(k)+\res_{\mathfrak{t}}(k+1)\neq -1 
	}$$
\end{lem}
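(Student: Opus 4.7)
The plan is to mirror the proof of Lemma~\ref{separate in residue} in the non-degenerate case, with the key input now being the formula $\mathtt{q}(x)=x(x+1)$ from \eqref{qi} rather than the formula \eqref{substitution0}. The whole statement reduces to a single algebraic identity characterizing when two residues have the same $\mathtt{q}$-value.

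First I would compute the difference
\[
\mathtt{q}(x)-\mathtt{q}(y) \;=\; x(x+1)-y(y+1)\;=\;(x-y)(x+y+1).
\]
Since $\mathbb{K}$ is a field, this vanishes if and only if either $x=y$ or $x+y+1=0$. Thus for any $x,y\in\mathbb{K}$ one has $\mathtt{q}(x)=\mathtt{q}(y)$ exactly when $x=y$ or $x+y+1=0$.

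Next I would apply this equivalence pointwise with $x=\res_{\mathfrak{t}}(k)$ and $y=\res_{\mathfrak{t}}(k+1)$, ranging over all $\mathfrak{t}\in\Std(\undla)$ and all $k\in\{1,\dots,n-1\}$. Negating gives that $\mathtt{q}(\res_{\mathfrak{t}}(k))\neq\mathtt{q}(\res_{\mathfrak{t}}(k+1))$ is equivalent to the conjunction
\[
\res_{\mathfrak{t}}(k)\neq\res_{\mathfrak{t}}(k+1)\quad\text{and}\quad\res_{\mathfrak{t}}(k)+\res_{\mathfrak{t}}(k+1)\neq -1.
\]
Requiring this for every $\mathfrak{t}$ and every $k$ is by definition the statement that $\undQ$ is separate with respect to $\undla$, which completes both directions of the equivalence.

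There is essentially no obstacle here: the content is the factorization identity above, and the rest is unwinding definitions. The only minor care needed is to make sure the argument does not rely on $\res_{\mathfrak{t}}(k)$ being nonzero or lying in any particular subset of $\mathbb{K}$, since in the degenerate setting the residues live in $\mathbb{K}$ (with $Q_0=0$) rather than in $\mathbb{K}^*$; but the factorization $(x-y)(x+y+1)$ is valid for all $x,y\in\mathbb{K}$, so this causes no trouble.
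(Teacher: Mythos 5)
Your proof is correct and matches the approach the paper implicitly uses: factoring $\mathtt{q}(x)-\mathtt{q}(y)=(x-y)(x+y+1)$ to characterize when two residues share a $\mathtt{q}$-value, exactly parallel to the one-line proof given for Lemma~\ref{separate in residue} in the non-degenerate case. Nothing further is needed.
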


 \begin{rem}\label{rem:PqQ}
	Recall that we assume $q^2\neq 1$ in order to define the notion of non-degenerate affine Hecke-Clifford superalgebras $\mHcn$. It is known that the notion of the degenerate affine Hecke-Clifford algebras $\mhcn$(or affine Sergeev superalgebras) can be viewed analog of $\mHcn$ the situation $q^2=1$ and we will introduce the polynomial $P_n^{(\bullet)}(q^2,\undQ)$ with $q^2=1$ analogous to $P_n^{(\bullet)}(q^2,\undQ)$ defined above Proposition \ref{separate formula}.
\end{rem}

 Recall that $\undQ=(Q_1,\ldots,Q_m)$. Then for any $n\in \N$, parallel to the polynomials $P^{(\bullet)}_{n}(q^2,\undQ)$ in the case $q^2\neq 1$ in Section \ref{cyclotomic}, we define $P^{(\bullet)}_{n}(1,\undQ)=P^{(\bullet)}_{n}(q^2,\undQ)$ with $q^2=1$ as follows:

\begin{align*}
	P^{(\bullet)}_{n}(1,\undQ):=&
		n! \prod\limits_{i=1}^m\biggl(\prod\limits_{t=3-n}^{n-1}\bigl(2Q_i+t\bigr)\prod\limits_{t=1-n}^{n}\bigl(Q_i+t\bigr)\biggr) \cdot\\
		&\prod_{1\leq i<i'\leq m}\biggl(\prod\limits_{t=1-n}^{n-1}\bigl(Q_i-Q_{i'}+t\bigr)\bigl(Q_i+Q_{i'}+t+1\bigr)\biggr)
\end{align*}for $n\in\N$ and $\bullet\in\{\mathsf{0},\mathsf{s}\}$. Again, when $n=1$, the product $\prod\limits_{t=3-n}^{n-1}\bigl(2Q_i+t\bigr)$ is understood to be $1$.

\begin{prop}\label{separate formula dege}
	Let $n\geq 1,\,m\geq 0$,  $\undQ=(Q_1,\ldots,Q_m)$ and $\bullet\in\{\mathsf{0},\mathsf{s}\}$. Then the  parameter $\undQ$ is separate with respect to $\undla$ for any  $\undla\in\mathscr{P}^{\bullet,m}_{n+1}$ if and only if $P_n^{(\bullet)}(1,\undQ)\neq 0$.
\end{prop}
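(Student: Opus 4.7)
The proof follows the structure of Proposition \ref{separate formula}, with the substitution $\mathtt{q}(x) = x(x+1)$ replacing the non-degenerate one and with Lemma \ref{separate in residue-dege} playing the role of Lemma \ref{separate in residue}. The first step is to rephrase separateness of $\undQ$ with respect to every $\undla \in \mathscr{P}^{\bullet,m}_{n+1}$ as the requirement that
\[
\res_\mathfrak{t}(k) \neq \res_\mathfrak{t}(k+1), \qquad \res_\mathfrak{t}(k) + \res_\mathfrak{t}(k+1) \neq -1
\]
hold for every $\mathfrak{t} \in \Std(\undla)$ and every $1 \leq k \leq n$. Writing $\res_\mathfrak{t}(k) = Q_l + (j-i)$ and $\res_\mathfrak{t}(k+1) = Q_{l'} + (j'-i')$ and setting $Q_0 := 0$, each inequality becomes an affine-linear non-vanishing statement in the parameters $Q_i$, indexed by an integer shift determined by the offsets of the boxes occupied by $k$ and $k+1$.

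Next, I would split the analysis into cases according to whether $l = l'$ or $l \neq l'$, and when $\bullet = \mathsf{s}$ further single out the subcase in which one of the two boxes lies in the strict component indexed by $0$. Two short combinatorial observations collapse the trivial subcases: (a) two consecutive entries in a standard tableau of a partition cannot occupy the same diagonal, as one sees from the addability of the box containing $k+1$ combined with column containment of the Young diagram, so $\res_\mathfrak{t}(k) \neq \res_\mathfrak{t}(k+1)$ is automatic in the same-component subcase; (b) every box of a strict partition satisfies $j - i \geq 0$, so $\res_\mathfrak{t}(k) + \res_\mathfrak{t}(k+1) \geq 0 > -1$ whenever both entries lie in the strict component. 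These two observations explain why the polynomial $P^{(\bullet)}_n(1, \undQ)$ contributes only the combinatorial prefactor $n!$ from these two subcases.

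For each remaining subcase the condition on parameters has the shape $Q_i - Q_{i'} + t \neq 0$, $Q_i + Q_{i'} + t + 1 \neq 0$, $Q_i + t \neq 0$, or $2Q_i + t \neq 0$, and the heart of the proof is to identify the exact set of integer shifts $t$ realized as $k, k+1$ range over all standard tableaux of shapes in $\mathscr{P}^{\bullet,m}_{n+1}$. A direct enumeration using $j - i \in [-(a-1), a-1]$ for a component of size $a$ (respectively $[0, a-1]$ for the strict component), together with the constraint that the component sizes sum to $n+1$, yields the ranges $[1-n, n-1]$ for cross-component differences and sums, $[1-n, n]$ for same-component sums of odd parity and for interactions with $Q_0 = 0$ (producing the linear factors $Q_i + t$), and $[3-n, n-1]$ for same-component sums of even parity (producing the genuinely quadratic factors $2Q_i + t$). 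Matching these ranges against the factors in the definition of $P^{(\bullet)}_n(1,\undQ)$ establishes the equivalence. The main obstacle is the careful bookkeeping of the attainable shifts in each subcase, made harmless by the redundancy between the $2Q_i + t$ factors with even $t$ and the linear factors $Q_i + t/2$; the edge case $n = 1$ is handled by direct inspection since several of the index ranges collapse to singletons or empty products.
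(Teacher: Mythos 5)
Your proposal follows the same route as the paper: reduce via Lemma \ref{separate in residue-dege} to the two residue conditions, split by component configuration, enumerate the attainable integer shifts, and match these against the factors of $P_n^{(\bullet)}(1,\undQ)$; the paper's proof is precisely this case-by-case enumeration stated as two displayed lists of non-vanishing conditions. One small point of care: your observations (a) and (b) are phrased as if the same-component and both-strict subcases were genuinely vacuous (``automatic'', ``$\geq 0 > -1$''), but those are statements over $\mathbb{Z}$ --- over $\mathbb{K}$ of positive characteristic the non-vanishing of a nonzero integer difference is exactly the content of the $n!$ factor, as you do note in the following sentence; it would be cleaner to say from the outset that in these subcases one obtains positive (or nonzero) integer shifts whose images in $\mathbb{K}$ must not vanish. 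With that rephrasing, your range bookkeeping agrees with the paper's lists in both the $\bullet=\mathsf{0}$ and $\bullet=\mathsf{s}$ cases, including the observation that the even-shift quadratic factors $2Q_i+2t$ reduce (char $\neq 2$) to the linear factors $Q_i+t$, which is why $P_n^{(\mathsf{0})}(1,\undQ)=P_n^{(\mathsf{s})}(1,\undQ)$.
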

\begin{proof}
	We assume $n>1$. The condition in Lemma \ref{separate in residue-dege} holds for any $\undla\in\mathscr{P}^{\mathsf{0},m}_{n+1}$ if and only if \begin{align*}
		&\bigl(n\bigr)!\neq 0; \quad \bigl(2Q_i+t\bigr)\neq 0,\quad \forall 3-n\leq t\leq n-1,\,1\leq i\leq m; \\    
	& \bigl(2Q_i+2t\bigr)\neq 0,\quad \forall 1-n\leq t\leq n,\,1\leq i\leq m;\\   
		&\bigl(Q_i-Q_{i'}+t\bigr) \neq 0  ,\quad \forall 1-n\leq t\leq n-1,\,1\leq i\neq i'\leq m;   \\
		&\bigl(Q_i+Q_{i'}+t\bigr)\neq 0 ,\quad \forall 2-n\leq t\leq n,\,1\leq i\neq i'\leq m,
	\end{align*} and the condition in Lemma \ref{separate in residue-dege} holds for any $\undla\in\mathscr{P}^{\mathsf{s},m}_{n+1}$ if and only if
	\begin{align*}
	&\bigl(n\bigr)!\neq 0; 2t\neq 0, \quad \forall 1\leq t\leq n;\quad \bigl(2Q_i+t\bigr)\neq 0,\quad \forall 3-n\leq t\leq n-1,\,1\leq i\leq m; \\    
	& \bigl(Q_i+t\bigr)\neq 0,\bigl(2Q_i+2t\bigr)\neq 0,\quad \forall 1-n\leq t\leq n,\,1\leq i\leq m;\\   
	&\bigl(Q_i-Q_{i'}+t\bigr) \neq 0  ,\quad \forall 1-n\leq t\leq n-1,\,1\leq i\neq i'\leq m;   \\
	&\bigl(Q_i+Q_{i'}+t\bigr)\neq 0 ,\quad \forall 2-n\leq t\leq n,\,1\leq i\neq i'\leq m,
	\end{align*}
	The case $n=1$ can be checked similarly by observing that  the range sets for $t$ in some of the inequalities are slightly different.  Then the Proposition follows from a direct computation. 
\end{proof}

Analogous to the non-degenerate case, we have an the following  parallel Lemma.
 
 \begin{lem}\label{important conditionequi2}
Let $\undQ=(Q_1,\ldots,Q_m)$ and $\bullet\in\{\mathsf{0},\mathsf{s}\}$. Suppose $P^{(\bullet)}_{n}(1,\undQ)\neq 0$. Then for any $\undla\in\mathscr{P}^{\bullet,m}_{n}$ and any $\mathfrak{t}\in\Std(\undla)$, we have the following
\begin{enumerate}
  \item   $\mathtt{q}(\res_{\mathfrak{t}}(k))\neq 0$ for $k\notin \mathcal{D}_{\mathfrak{t}}$;  
  \item   $\mathtt{q}(\res_{\mathfrak{t}}(k))\neq \mathtt{q}(\res_{\mathfrak{t}}(k+1))$  for $k=1,\cdots,n-1$;
    \item  $\res_{\mathfrak{t}}(k)$ and $\res_{\mathfrak{t}}(k+1)$ does not satisfy any one of the four equations in \eqref{invertible dege2}{\color{black}if $k,k+1$ are not in the adjacent diagonals of $\mathfrak{t}.$}
\end{enumerate}  \end{lem}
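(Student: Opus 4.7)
The plan is to adapt the proof of Lemma \ref{important condition1} mutatis mutandis to the degenerate setting via the dictionary: $\mathtt{q}(x) = x(x+1)$ in place of its non-degenerate counterpart, the multiplicative separateness condition $\res_{\mathfrak{t}}(k)\res_{\mathfrak{t}}(k+1)q^2 \neq 1$ becomes the additive $\res_{\mathfrak{t}}(k) + \res_{\mathfrak{t}}(k+1) \neq -1$ from Lemma \ref{separate in residue-dege}, the four equations \eqref{invertible2} are replaced by the additive \eqref{invertible dege2}, and the condition $\mathtt{b}_{\pm}(\res(\alpha)) = \pm 1$ translates to $\mathtt{q}(\res(\alpha)) = 0$, i.e., $\res(\alpha) \in \{0, -1\}$.

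As a preliminary step I would establish the degenerate analog of Lemma \ref{important condition}: if $P_n^{(\bullet)}(1,\undQ) \neq 0$, then $\undQ$ is separate with respect to every $\undla' \in \mathscr{P}^{\bullet,m}_{n'}$ with $n' \leq n$. The proof is identical to that of Lemma \ref{important condition}, embedding $\mathfrak{t}' \in \Std(\undla')$ into some $\mathfrak{t} \in \Std(\underline{\mu})$ with $\underline{\mu} \in \mathscr{P}^{\bullet,m}_{n+1}$ and invoking Proposition \ref{separate formula dege}. Part (2) then follows immediately from Lemma \ref{separate in residue-dege}.

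For part (1), suppose $\alpha = \mathfrak{t}^{-1}(k) \notin \mathcal{D}_{\undla}$ satisfies $\mathtt{q}(\res(\alpha)) = 0$, so $\res(\alpha) \in \{0, -1\}$. If $\res(\alpha) = 0$, an additive node below $\alpha$ in $\mathfrak{t}\downarrow_k$ would have residue $-1$; adjoining it with label $k+1$ gives a standard tableau whose $k$-th and $(k+1)$-th residues sum to $-1$, contradicting the preliminary step. No such node exists, and since $\alpha \notin \mathcal{D}_{\undla}$, there is a left-neighbour $\alpha'$ with no node beneath it; reconstructing a tableau $\mathfrak{t}''$ with $\alpha$ labelled $k$ and $\alpha'$ labelled $k-1$ yields the same contradiction. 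The case $\res(\alpha) = -1$ is symmetric, using a right-neighbour with residue $0$.

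Part (3) follows the same four-case recipe as Lemma \ref{important condition1}(3). For $\res(\alpha_1) - \res(\alpha_2) = 1$, attempt to add a node to the right of $\alpha_2$ (its residue would be $\res(\alpha_2) + 1 = \res(\alpha_1)$) and reconstruct so that $\alpha_1$ and the new node receive consecutive labels with equal residues; if the addition fails, use a removable node above $\alpha_2$ instead. The cases $\res(\alpha_1) - \res(\alpha_2) = -1$ and $\res(\alpha_1) + \res(\alpha_2) = -2$ are mirror-symmetric. For $\res(\alpha_1) + \res(\alpha_2) = 0$, rewrite the equation as $\res(\alpha_1) + (\res(\alpha_2) - 1) = -1$ and argue as in part (1); the only residual subcase is $\alpha_1, \alpha_2 \in \mathcal{D}_{\undla}$, which forces $\bullet = \mathsf{s}$ and $\res(\alpha_1) = \res(\alpha_2) = 0$, but this already contradicts part (2). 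The main obstacle is the combinatorial bookkeeping of part (3): one must, in each of the four cases, verify that whenever the preferred additive node is unavailable, the required removable node is present and the reconstruction yields a valid standard tableau whose consecutive residues violate separateness. The additive algebra is slightly cleaner than the multiplicative $q$-algebra, but the Young-diagram combinatorics carries over verbatim.
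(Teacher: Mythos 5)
Since the paper gives no proof here — it simply states the lemma is ``analogous to the non-degenerate case'' — your proposal of replicating the proof of Lemma \ref{important condition1} via the multiplicative-to-additive dictionary is exactly what the authors intend, and your translation is correct: $\mathtt{q}(x)=x(x+1)$ in place of the non-degenerate $\mathtt{q}$, $\res_k+\res_{k+1}\neq -1$ replacing $\res_k\res_{k+1}q^2\neq 1$, the four conditions in \eqref{invertible dege2} replacing \eqref{invertible2}, and $\mathtt{q}(\res(\alpha))\neq 0$ replacing $\mathtt{b}_{\pm}(\res(\alpha))\neq\pm 1$. You also rightly note the need for a degenerate analogue of Lemma \ref{important condition} as a preliminary, which the paper leaves implicit.

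Two small points worth tightening. First, in part (1) for $\res(\alpha)=-1$ you write ``using a right-neighbour with residue $0$''; the box to the right of $\alpha$ always bears a larger label in any standard tableau, so it cannot be given label $k-1$ in a reconstruction. What actually happens is: the addable cell to the right of $\alpha$ (residue $0$) is ruled out by separateness, and non-addability of that cell forces the box above $\alpha$ (also residue $0$) to be a removable corner of $\mathfrak{t}\downarrow_{k-1}$, which is the box used for the reconstruction. This mirrors the paper's own phrasing for the case $\res(\alpha)(\res(\alpha)q^2)q^2=1$ and is similarly terse there. Second, your treatment of the residual subcase $\alpha_1,\alpha_2\in\mathcal{D}_{\undla}$ in part (3) — invoking part (2) since both residues equal $0$ — is in fact cleaner than the non-degenerate argument, where the two diagonal residues are $\pm 1$ and a separate multiplicative computation is needed; that complication simply does not arise since $\bullet\in\{\mathsf{0},\mathsf{s}\}$ here. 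Overall the proposal is correct and at the same level of detail as the paper's non-degenerate proof.
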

 
 \begin{lem}\label{lem:deg-action property-1}
 	Let $\undQ=(Q_1,Q_2,\ldots,Q_m)\in(\mathbb{K}^*)^m$ and $\bullet\in\{\mathsf{0},\mathsf{s}\}$.  Suppose $P^{\bullet}_{n}(1,\undQ)\neq 0$. Let $\undla\in\mathscr{P}^{\bullet,m}_{n}$, then 
 	any pair of $(a_1,a_2)$ with $a_1,a_2$ being the eigenvalues of $x_{k}$ and $x_{k+1}$ on $L(\res(\mathfrak{t}))$, respectively, does not satisfy \eqref{invertible dege}, for any $\mathfrak{t}\in\Std(\undla)$ and $k,\,k+1$ being not in the adjacent diagonals of $\mathfrak{t}$. 
 \end{lem}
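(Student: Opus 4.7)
The plan is to transplant the argument of Lemma \ref{lem:action property-1} from the non-degenerate setting to the degenerate one, using the substitution \eqref{substitute dege} in place of \eqref{substitute}. Concretely, I would fix $\mathfrak{t}\in\Std(\undla)$ and an index $k\in[1,n-1]$ such that the boxes $\alpha_1:=\mathfrak{t}^{-1}(k)$ and $\alpha_2:=\mathfrak{t}^{-1}(k+1)$ are not on adjacent diagonals of $\mathfrak{t}$, and let $(a_1,a_2)$ be any pair of eigenvalues of $(x_k,x_{k+1})$ on $L(\res(\mathfrak{t}))$.

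The first step is to identify the $a_i$ in terms of residues. By Lemma \ref{lem:deg-eigen-Xk} applied to $L(\res(\mathfrak{t}))$, the eigenvalue of $x_k$ (respectively $x_{k+1}$) is $\pm\sqrt{\mathtt{q}(\res_{\mathfrak{t}}(k))}$ (respectively $\pm\sqrt{\mathtt{q}(\res_{\mathfrak{t}}(k+1))}$), so
\[
a_1^2=\mathtt{q}(\res(\alpha_1))=\res(\alpha_1)(\res(\alpha_1)+1),\qquad a_2^2=\mathtt{q}(\res(\alpha_2))=\res(\alpha_2)(\res(\alpha_2)+1).
\]
Observe that \eqref{invertible dege} only depends on $a_1^2$ and $a_2^2$, since $(a_1+a_2)^2+(a_1-a_2)^2=2(a_1^2+a_2^2)$; thus the validity of \eqref{invertible dege} for $(a_1,a_2)$ depends only on $\mathtt{q}(\res(\alpha_1))$ and $\mathtt{q}(\res(\alpha_2))$, and not on the choice of signs of the square roots.

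The second step is to invoke the equivalence recalled just before the lemma: via the substitution \eqref{substitute dege} with $u=\res(\alpha_1)$ and $v=\res(\alpha_2)$, the condition \eqref{invertible dege} on $(a_1,a_2)$ is equivalent to the assertion that the pair $(u,v)$ satisfies one of the four equations in \eqref{invertible dege2}. Finally, since $P_n^{(\bullet)}(1,\underline{Q})\neq 0$ by hypothesis and $\alpha_1,\alpha_2$ are not in adjacent diagonals of $\mathfrak{t}$, Lemma \ref{important conditionequi2}(3) tells us that $(\res(\alpha_1),\res(\alpha_2))$ satisfies none of the four equations in \eqref{invertible dege2}. Combining these two facts yields that $(a_1,a_2)$ cannot satisfy \eqref{invertible dege}, as required.

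The argument is essentially a translation, so I do not expect a serious obstacle; the only subtle point is to make sure that \eqref{invertible dege} really is sign-independent in the $a_i$, so that passing from the two possible eigenvalue pairs on $L(\res(\mathfrak{t}))$ to the single statement at the level of residues is legitimate. This is clear from the displayed identity $(a_1+a_2)^2+(a_1-a_2)^2=2(a_1^2+a_2^2)$, after which the equivalence \eqref{invertible dege}$\Leftrightarrow$\eqref{invertible dege2} together with Lemma \ref{important conditionequi2}(3) finishes the proof.
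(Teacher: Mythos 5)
Your proof is correct and takes essentially the same route as the paper's proof of the non-degenerate analogue Lemma \ref{lem:action property-1} (the paper states Lemma \ref{lem:deg-action property-1} without proof, as part of its ``parallel to the non-degenerate case'' discussion): identify the eigenvalues via Lemma \ref{lem:deg-eigen-Xk}, pass through the equivalence of \eqref{invertible dege} and \eqref{invertible dege2} via \eqref{substitute dege}, and invoke Lemma \ref{important conditionequi2}(3). The explicit check that \eqref{invertible dege} depends only on $a_1^2,a_2^2$ is a useful clarification of the sign ambiguity in $\pm\sqrt{\mathtt{q}(\cdot)}$, though it is implicit in the stated equivalence.
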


\begin{lem}\label{lem:different residues deg}
	{\color{black}Let $\bullet\in\{\mathsf{0},\mathsf{s}\}$,  $m\geq 0$ and $\undQ=(Q_1,\ldots,Q_m)\in\mathbb{K}^m$. Suppose $P_n^{(\bullet)}(1,\undQ)\neq 0$. }Then for any $\undla,\,\underline{\mu}\in\mathscr{P}^{\bullet,m}_{n},\,\mathfrak{t}\in\Std(\undla),\,\mathfrak{t'}\in\Std(\underline{\mu})$, we have  $\mathtt{q}(\res(\mathfrak{t}))\neq \mathtt{q}(\res(\mathfrak{t}))$ if $\mathfrak{t}\neq \mathfrak{t'}$.
\end{lem}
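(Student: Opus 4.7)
The plan is to adapt the argument of Lemma \ref{lem:different residues} to the degenerate setting; the structural skeleton of the proof transfers, only the input lemmas need to be replaced by their degenerate analogs (Lemma \ref{separate in residue-dege} and Proposition \ref{separate formula dege}).

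Assuming $\mathfrak{t}\neq\mathfrak{t}'$, first I would choose $k$ with $0\leq k<n$ maximal subject to $\mathfrak{t}\downarrow_k=\mathfrak{t}'\downarrow_k$ and $\mathfrak{t}\downarrow_{k+1}\neq\mathfrak{t}'\downarrow_{k+1}$ (existence is immediate, and if no such $k$ existed we would have $\mathfrak{t}=\mathfrak{t}'$). Set $\alpha=\mathfrak{t}^{-1}(k+1)$ and $\alpha'={\mathfrak{t}'}^{-1}(k+1)$; these are two \emph{distinct} addable nodes of the common shape $\mathfrak{t}\downarrow_k=\mathfrak{t}'\downarrow_k$.

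Next I would construct a standard tableau $\mathfrak{s}$ of some shape $\underline{\gamma}\in\mathscr{P}^{\bullet,m}_{k+2}$ by adjoining both $\alpha$ and $\alpha'$ to $\mathfrak{t}\downarrow_k$, labeling them by $k+1$ and $k+2$ in an order which keeps the result standard (since $\alpha\neq\alpha'$ are two addable nodes of the same Young diagram, at least one of the two orders produces a standard tableau). Note $k+2\leq n+1$. Because $P_n^{(\bullet)}(1,\undQ)\neq 0$, Proposition \ref{separate formula dege} together with the evident embedding of any $\underline{\gamma}\in\mathscr{P}^{\bullet,m}_{k+2}$ into some element of $\mathscr{P}^{\bullet,m}_{n+1}$ (the degenerate analog of Lemma \ref{important condition}) shows that $\undQ$ is separate with respect to $\underline{\gamma}$.

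Applying Lemma \ref{separate in residue-dege} to the consecutive entries $k+1, k+2$ of $\mathfrak{s}$, I obtain $\res_{k+1}(\mathfrak{s})\neq\res_{k+2}(\mathfrak{s})$ and $\res_{k+1}(\mathfrak{s})+\res_{k+2}(\mathfrak{s})\neq -1$, which by \eqref{qi} is exactly $\mathtt{q}(\res_{k+1}(\mathfrak{s}))\neq\mathtt{q}(\res_{k+2}(\mathfrak{s}))$. Since $\res_{k+1}(\mathfrak{s})=\res(\alpha)=\res_{k+1}(\mathfrak{t})$ and $\res_{k+2}(\mathfrak{s})=\res(\alpha')=\res_{k+1}(\mathfrak{t}')$ (up to swapping the roles of $\mathfrak{t}$ and $\mathfrak{t}'$), this gives $\mathtt{q}(\res_{k+1}(\mathfrak{t}))\neq\mathtt{q}(\res_{k+1}(\mathfrak{t}'))$, hence $\mathtt{q}(\res(\mathfrak{t}))\neq\mathtt{q}(\res(\mathfrak{t}'))$, which is what we need. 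The only mildly delicate point is verifying that at least one ordering $(\alpha\mapsto k+1,\alpha'\mapsto k+2)$ or $(\alpha'\mapsto k+1,\alpha\mapsto k+2)$ yields a standard tableau, but this is automatic because two distinct addable nodes of a Young diagram can always be added in at least one order without violating the standard condition.
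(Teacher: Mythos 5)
Your proof is correct and follows essentially the same strategy as the paper's proof of the non-degenerate analog (Lemma~\ref{lem:different residues}): take the maximal $k$ at which the tableaux agree, add the two distinct addable nodes to the common restriction to form a standard tableau in $\mathscr{P}^{\bullet,m}_{k+2}$, and apply the separate condition via Lemma~\ref{separate in residue-dege} and Proposition~\ref{separate formula dege}. The extra care you take about which order the two addable nodes must be labeled so that the result is standard is a sound addition (two distinct addable nodes of a (shifted) Young diagram can never lie in the same row or column, so indeed at least one ordering works) and fills in a detail the paper leaves implicit.
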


\begin{example}\label{deform2} 
	When $Q_1,\ldots,Q_m$ are algebraically independent over $\Z$, and $\mathbb{E}$ is the algebraic closure of $\mathbb{Q}(Q_1,\ldots,Q_m)$, i.e., for generic degenerate cyclotomic Sergeev algebra, the separate condition clearly holds by Proposition \ref{separate formula dege}.
\end{example}

\subsection{Construction of Simple modules}
{\color{black}{\bf For this subsection, we shall fix $\bullet\in\{\mathsf{0},\mathsf{s}\}$, the parameter $\undQ=(Q_1,Q_2,\ldots,Q_m)\in\mathbb{K}^m$ and  the polynomial $g=g^{(\bullet)}_{\undQ}$.  Accordingly, we define the residue of boxes in the Young diagram $\undla$ via \eqref{eq:residue} as well as $\res(\mathfrak{t})$ for each $\mathfrak{t}\in\Std(\undla)$ with $\undla\in\mathscr{P}^{\bullet,m}_{n}$ with $m\geq 0$.}}

{\color{black}
Let $\bullet\in\{\mathsf{0},\mathsf{s}\}$ and $\undla\in\mathscr{P}^{\bullet,m}_{n}$. Suppose $\mathfrak{t}\in\Std(\undla)$ and $1\leq l\leq n$. Similar to Definition \ref{defn:admissible}, if $s_l\cdot\mathtt{q}(\res(\mathfrak{t}))=\mathtt{q}(\res(\mathfrak{u}))$ for some $\mathfrak{u}\in \Std(\undla)$ then the simple transposition $s_l$ is said to be admissible with respect to the sequence $\res(\mathfrak{t})$. }Then analogous to Lemma \ref{lem:admissible-residue} if in addition $\undQ$ is separate with respect to $\undla$, then $s_l$ is admissible with respect to $\mathfrak{t}$ if and only if $s_l$ is admissible with respect to $\res(\mathfrak{t})$ for $1\leq l\leq n$.  
Analogous to $\mathbb{D}(\undla)$ in the non-degenerate case, we define the $\mathcal{P}_n$-module
	$$
D(\undla):=\oplus_{\tau\in P(\undla)}L(\res(\mathfrak{t}^{\undla}))^{\tau}. 
	$$



{\bf In the remaining part of this section, we shall assume that  the parameter $\undQ=(Q_1,Q_2,\ldots,Q_m)\in(\mathbb{K})^m$ satisfies $P_n^{(\bullet)}(1,\undQ)\neq 0$ with $\bullet\in\{\mathtt{0},\mathtt{s}\}$.} {\color{black}By Lemma \ref{important conditionequi2}(1) and \eqref{eq:deg-residue}, we deduce $\{k|1\leq k\leq n, (\res_\mathfrak{t^{\undla}}(k))\sim  0\}=\mathcal{D}_{\mathfrak{t}^{\undla}}$} and \begin{equation}\label{diagonal-D-deg}
	\sharp\mathcal{D}_{\mathfrak{t}^{\undla}}=\sharp\mathcal{D}_{\undla}
	=\left\{
	\begin{array}{ll}
		0,&\text{if }\undla=(\lambda^{(1)},\ldots,\lambda^{(m)})\in\mathscr{P}^{\mathsf{0},m}_{n}\\
		\ell(\lambda^{(0)}),&\text{if }\undla=(\lambda^{(0)},\lambda^{(1)},\ldots,\lambda^{(m)})\in\mathscr{P}^{\mathsf{s},m}_{n}
	\end{array}
	\right. 
\end{equation}
Hence, by Corollary \ref{lem:irrepPn} we have 
\begin{equation}\label{eq:dimDla-deg}
	\text{dim}~D(\undla)=
	2^{n-\lfloor\frac{\sharp \mathcal{D}_{\undla}}{2}\rfloor}\cdot |\Std(\undla)|.
\end{equation}

The following is due to  Remark \ref{rem:Ltau2} and Lemma \ref{lem:deg-eigen-Xk}.
\begin{lem}\label{lem:action property-deg-2}
	Let $\undla\in\mathscr{P}^{\bullet,m}_{n}$ with $\bullet\in\{\mathtt{0},\mathtt{s}\}$. The eigenvalue of $x_k$ acting on the $\mathcal{P}_n$-module $L(\res(\mathfrak{t}^{\undla}))^{\tau}$  is $\pm\sqrt{\mathtt{q}(\res_{\tau\cdot\mathfrak{t}^{\underline{\lambda}}}(k))}$ for each $1\leq k\leq n$. 
	Hence, the eigenvalue of $x^2_k$ acting on the $\mathcal{P}_n$-module $L(\res(\mathfrak{t}^{\undla}))^{\tau}$  is $\mathtt{q}(\res_{\tau\cdot\mathfrak{t}^{\underline{\lambda}}}(k))$ for each $1\leq k\leq n$. 
\end{lem}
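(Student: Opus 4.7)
The plan is to mirror verbatim the argument used in the proof of Lemma \ref{lem:action property-2} from the non-degenerate setting, substituting the degenerate analogues at each step. The whole argument is a book-keeping exercise that passes through three ingredients: Remark \ref{rem:Ltau2}, the combinatorial identity $\tau\cdot\res(\mathfrak{t}^{\undla})=\res(\tau\cdot\mathfrak{t}^{\undla})$, and Lemma \ref{lem:deg-eigen-Xk}.

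First I would invoke Remark \ref{rem:Ltau2}, which identifies
$$L(\res(\mathfrak{t}^{\undla}))^{\tau}\cong L(\res_{\mathfrak{t}^{\undla}}(\tau^{-1}(1)),\ldots,\res_{\mathfrak{t}^{\undla}}(\tau^{-1}(n)))$$
as $\mathcal{P}_n$-modules; this is just the degenerate version of the isomorphism used to open the proof of Lemma \ref{lem:action property-2}, and uses only the action of the intertwining permutation on $x_k$ and $c_k$ recorded in that remark.

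Next I would establish that $\tau\cdot\res(\mathfrak{t}^{\undla})=\res(\tau\cdot\mathfrak{t}^{\undla})$ as tuples in $\mathbb{K}^n$. This is a purely combinatorial statement: the box in $\tau\cdot\mathfrak{t}^{\undla}$ which carries the label $k$ is exactly the box that carries the label $\tau^{-1}(k)$ in $\mathfrak{t}^{\undla}$, so the $k$-th entry of both sides agrees by the definition of residue in \eqref{res-dege}. Combining this with the previous step yields $L(\res(\mathfrak{t}^{\undla}))^{\tau}\cong L(\res(\tau\cdot\mathfrak{t}^{\undla}))$.

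Finally I would apply Lemma \ref{lem:deg-eigen-Xk} to the standard tableau $\tau\cdot\mathfrak{t}^{\undla}$ (which is indeed standard because $\tau\in P(\undla)$) to conclude that $x_k$ acts on $L(\res(\tau\cdot\mathfrak{t}^{\undla}))$ with eigenvalue $\pm\sqrt{\mathtt{q}(\res_{\tau\cdot\mathfrak{t}^{\undla}}(k))}$, and squaring gives the corresponding eigenvalue $\mathtt{q}(\res_{\tau\cdot\mathfrak{t}^{\undla}}(k))$ for $x_k^2$. There is no real obstacle here: the entire content is the indexing identity $\tau\cdot\res(\mathfrak{t}^{\undla})=\res(\tau\cdot\mathfrak{t}^{\undla})$, which is the same observation that made the non-degenerate argument work, and everything else is a direct quotation of earlier results.
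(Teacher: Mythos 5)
Your proof is correct and follows exactly the route the paper intends (the paper simply states the lemma "is due to Remark \ref{rem:Ltau2} and Lemma \ref{lem:deg-eigen-Xk}"), which in turn mirrors the explicit proof of the non-degenerate analogue Lemma \ref{lem:action property-2}: twist via the permutation isomorphism, apply the indexing identity $\tau\cdot\res(\mathfrak{t}^{\undla})=\res(\tau\cdot\mathfrak{t}^{\undla})$, and then quote the eigenvalue computation on the untwisted module. Your added remark that $\tau\cdot\mathfrak{t}^{\undla}$ is standard because $\tau\in P(\undla)$ correctly verifies the hypothesis of Lemma \ref{lem:deg-eigen-Xk}.
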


To define a $\mhgcn$-module structure  on $D(\undla)$, we introduce two operators on $L(\res(\mathfrak{t}^{\undla}))^{\tau}$ for each $\tau\in P(\undla)$ as follows which are similar with the operators in \cite{Wa}: 
\begin{align}
	\Xi_i u&:=-\Big(\frac{x_i+x_{i+1}}{x_i^2-x_{i+1}^2}+c_ic_{i+1}\frac{x_i-x_{i+1}}{x_i^2-x_{i+1}^2}\Big)u,\label{Operater1-dege}\\
	\Omega_i u&:=\Bigg(\sqrt{1-\frac{2(x_i^2+x_{i+1}^2)}{(x_i^2-x_{i+1}^2)^2}}\Bigg)u.\label{Operater2-dege}
\end{align} where $u\in L(\res(\mathfrak{t}^{\undla}))^{\tau}$.
By the second part of Lemma \ref{important conditionequi2} and Lemma \ref{lem:action property-deg-2}, the eigenvalues of $x^2_i$ and $x^2_{i+1}$ on $L(\res(\mathfrak{t}^{\undla}))^{\tau}$ are different {\color{black} and moreover hence the operators $\Xi_i$ and $\Omega_i$  are well-defined on $L(\res(\mathfrak{t}^{\undla}))^{\tau}$ for each $\tau\in P(\undla)$.  Similar to Theorem \ref{Construction} in non-degenerate case, we have the following theorem which can be proved using a similar way as in \cite[Theorem 4.5]{Wa}. }

\begin{thm}
Let $\bullet\in\{\mathsf{0},\mathsf{s}\}$ and $\undQ=(Q_1,\ldots,Q_m)$. Suppose $g=g(x_1)=g^{(\bullet)}_{\undQ}(x_1)$ and  $P_n^{(\bullet)}(1,\undQ)\neq 0$.
  $D(\undla)$ affords a $\mhgcn$-module via
\begin{align}
s_i z^{\tau}= \left \{
 \begin{array}{ll}
 \Xi_i z^{\tau}
 +\Omega_i z^{s_i\tau},
 & \text{ if } s_i \text{ is admissible with respect to } \tau\cdot \res(\mathfrak{t}^{\undla}), \\
 \Xi_iz^{\tau}
 , & \text{ otherwise },
 \end{array}
 \right.\label{actionformula}
\end{align}
 for  $1\leq i\leq n-1, z\in L(\res(\mathfrak{t}^{\undla}))$ and $\tau\in
P(\undla)$.
\end{thm}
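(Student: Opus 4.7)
The plan is to follow the same strategy as in the proof of Theorem \ref{Construction} for the non-degenerate case, replacing the use of Proposition \ref{separate formula}, Lemma \ref{important condition1}, Lemma \ref{lem:action property-1}, Lemma \ref{lem:different residues} and Lemma \ref{lem:action property-2} by their degenerate analogues, namely Proposition \ref{separate formula dege}, Lemma \ref{important conditionequi2}, Lemma \ref{lem:deg-action property-1}, Lemma \ref{lem:different residues deg} and Lemma \ref{lem:action property-deg-2}. First I would fix $\tau\in P(\undla)$ and a simultaneous eigenvector $z^\tau$ for $x_1,\ldots,x_n$ inside $L(\res(\mathfrak{t}^{\undla}))^\tau$, since the action of the $x_i$'s is semisimple on $D(\undla)$. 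It then suffices to verify relations \eqref{braid}--\eqref{pc} (the relations \eqref{poly}, \eqref{clifford}, \eqref{xc} already hold because each summand is a $\mathcal{P}_n$-module), together with the cyclotomic relation $g(x_1) z^\tau = 0$.

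For the cyclotomic relation, by Lemma \ref{lem:action property-deg-2} we have $x_1^2 z^\tau = \mathtt{q}(\res_{\tau\cdot\mathfrak{t}^{\undla}}(1)) z^\tau$. Since $\tau\cdot\mathfrak{t}^{\undla}$ is standard, the box occupied by $1$ sits in position $(1,1,l)$ of some component, so $\res_{\tau\cdot\mathfrak{t}^{\undla}}(1) = Q_l$ for some $l\in\{1,\ldots,m\}$ (giving $x_1^2 z^\tau = \mathtt{q}(Q_l) z^\tau$) or $\res_{\tau\cdot\mathfrak{t}^{\undla}}(1) = 0$ (giving $x_1 z^\tau = 0$, in the case $\bullet = \mathsf{s}$). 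In either case $g^{(\bullet)}_{\undQ}(x_1)$ annihilates $z^\tau$. The Coxeter relations $s_i^2 = 1$ and $s_i s_j = s_j s_i$ for $|i-j|>1$ are routine computations using \eqref{Operater1-dege}--\eqref{Operater2-dege}, together with the identity $\Xi_i^2 + \Omega_i^2 = 1$ obtained from squaring, and the observation that when $s_i$ is not admissible Lemma \ref{important conditionequi2}(3) and Lemma \ref{lem:deg-action property-1} force $\Omega_i^2 z^\tau = 0$. Relations \eqref{px1}, \eqref{px2}, \eqref{pc} involving the mixed commutations with $x_j, c_j$ follow from a short calculation exactly as in the non-degenerate case, using \eqref{xinter}--\eqref{cinter}.

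The main obstacle, as in the non-degenerate proof, is the braid relation $s_i s_{i+1} s_i z^\tau = s_{i+1} s_i s_{i+1} z^\tau$. Writing $\hat s_i = s_i - \Xi_i$, one reduces to two cases depending on the eigenvalues $a_i, a_{i+1}, a_{i+2}$ of $x_i, x_{i+1}, x_{i+2}$ on $z^\tau$. When $\mathtt{q}(\iota_i) = \mathtt{q}(\iota_{i+2})$ (with $\iota_k = \res_{\tau\cdot\mathfrak{t}^{\undla}}(k)$), Lemma \ref{important conditionequi2} forces $i$ and $i+2$ to lie on the same diagonal; combined with the standardness of $\tau\cdot\mathfrak{t}^{\undla}$, the numbers $i,i+1,i+2$ must form an $\mathrm{L}$-shape in the $0$-th component, so $\iota_i = \iota_{i+2} = 0$ and $\iota_{i+1} = 1$ (equivalently $\iota_{i+1} = -1$, identified via $\approx$), and a direct evaluation shows both sides equal $\Xi_i \Xi_{i+1}\Xi_i z^\tau = \Xi_{i+1}\Xi_i\Xi_{i+1} z^\tau$. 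Otherwise, using \eqref{Operater2-dege} and the $\tau$-twist formula, $\hat s_i \hat s_{i+1}\hat s_i z^\tau$ and $\hat s_{i+1}\hat s_i \hat s_{i+1} z^\tau$ both equal the same product of three square roots applied to $z^{s_i s_{i+1} s_i \tau}$. To pass from the $\hat s_i$-braid to the $s_i$-braid, one uses the intertwining operators $\Phi_i$ from \eqref{intertw}: the element
\[
\mathsf{Z}' := (x_i^2 - x_{i+1}^2)(x_i^2 - x_{i+2}^2)(x_{i+1}^2 - x_{i+2}^2)
\]
acts by a nonzero scalar on $z^\tau$ (by Lemma \ref{important conditionequi2}(2) applied to pairs $(i,i+1), (i,i+2), (i+1,i+2)$ which are pairwise non-equivalent in the second case), and $\hat s_i z^\tau = \Phi_i (x_i^2 - x_{i+1}^2)^{-1} z^\tau$. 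The braid identity \eqref{braidinter} for $\Phi_i$'s, together with the identity $\Phi_i \Phi_{i+1} \Phi_i - \Phi_{i+1} \Phi_i \Phi_{i+1} = (s_i s_{i+1} s_i - s_{i+1} s_i s_{i+1}) \mathsf{Z}'$, then yields the braid relation for the $s_i$ action on $z^\tau$, completing the proof.
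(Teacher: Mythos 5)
Your proposal is correct and takes exactly the approach the paper intends: the paper gives no self-contained proof of this degenerate-case theorem, stating only that it ``can be proved using a similar way as in [Wa, Theorem 4.5]'' (mirroring Theorem~\ref{Construction}), and your argument is precisely that mimicry with the degenerate analogues (Lemmas~\ref{important conditionequi2}, \ref{lem:deg-action property-1}, \ref{lem:different residues deg}, \ref{lem:action property-deg-2} and Proposition~\ref{separate formula dege}) substituted throughout. Two harmless slips worth fixing: in the L-shape case the residue $\iota_{i+1}=1$ is $\approx$-equivalent to $-2$, not $-1$ (since $x\approx y$ means $x+y+1=0$), though the parenthetical plays no role; and the non-vanishing of $x_i^2-x_{i+2}^2$ on $z^\tau$ is the standing hypothesis of Case~II rather than a consequence of Lemma~\ref{important conditionequi2}(2), which only covers consecutive indices.
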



The following is an analogue of Lemma \ref{lem:newbijNon-dege}.

\begin{lem}\label{lem:newbij-dege}
 Fix $\bullet\in\{\mathsf{0},\mathsf{s}\}$ and  $\undla\in\mathscr{P}^{\bullet,m}_{n}$. Let $\tau\in P(\undla)$. Suppose $s_i$ is admissible with respect to $\tau \mathfrak{t}^{\undla}$ for some $1\leq i\leq n-1$. Then the action of the intertwining element $\Phi_i$ on $D(\undla)$ leads to a bijection from $L(\res(\mathfrak{t}^{\undla}))^{\tau}$ to $L(\res(\mathfrak{t}^{\undla}))^{s_i\tau}$.
\end{lem}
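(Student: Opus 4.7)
The plan is to adapt the proof of Lemma \ref{lem:newbijNon-dege} to the degenerate setting. First, the intertwining relations \eqref{xinter} and \eqref{cinter} together with Lemma \ref{lem:action property-deg-2} force $\Phi_i\bigl(L(\res(\mathfrak{t}^{\undla}))^{\tau}\bigr)\subseteq L(\res(\mathfrak{t}^{\undla}))^{s_i\tau}$, since $\Phi_i$ swaps the roles of $x_i,x_{i+1}$ and of $c_i,c_{i+1}$ while commuting with the remaining $x_k,c_k$.

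Second, I would make this map explicit. From \eqref{intertw} and \eqref{Operater1-dege}, a direct computation gives the algebraic identity $\Phi_i=(s_i-\Xi_i)(x_i^2-x_{i+1}^2)$. By the degenerate analogue of Lemma \ref{lem:admissible-residue}, admissibility of $s_i$ with respect to $\tau\mathfrak{t}^{\undla}$ is equivalent to admissibility with respect to $\tau\cdot\res(\mathfrak{t}^{\undla})$, so the action formula \eqref{actionformula} yields $(s_i-\Xi_i)z^{\tau}=\Omega_i z^{s_i\tau}$ for any $z\in L(\res(\mathfrak{t}^{\undla}))$. Consequently, for such a $z^{\tau}$ on which $x_i^2,x_{i+1}^2$ act by the scalars $a,b$,
\begin{equation*}
\Phi_i z^{\tau}=(a-b)\,\Omega_i\, z^{s_i\tau}\in L(\res(\mathfrak{t}^{\undla}))^{s_i\tau}.
\end{equation*}

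Third, to establish the bijectivity it suffices to show that $\Phi_i^2$ acts on $L(\res(\mathfrak{t}^{\undla}))^{\tau}$ as a non-zero scalar, for then $\Phi_i$ is injective and surjectivity follows from equality of dimensions of the two summands. From \eqref{sqinter}, $\Phi_i^2=2(x_i^2+x_{i+1}^2)-(x_i^2-x_{i+1}^2)^2$, which by Lemma \ref{lem:action property-deg-2} acts as the scalar $2(a+b)-(a-b)^2$. Its non-vanishing is precisely the statement that the pair of eigenvalues of $(x_i,x_{i+1})$ on $z^{\tau}$ fails to satisfy \eqref{invertible dege}; via the substitution \eqref{substitute dege}, this is equivalent to the residues $\res_{\tau\cdot\mathfrak{t}^{\undla}}(i),\res_{\tau\cdot\mathfrak{t}^{\undla}}(i+1)$ failing to satisfy any of the four relations in \eqref{invertible dege2}.

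The main obstacle is this last verification. Admissibility of $s_i$ with respect to $\tau\mathfrak{t}^{\undla}$ ensures that $i$ and $i+1$ do not sit in adjacent cells of $\tau\cdot\mathfrak{t}^{\undla}$; combined with $P^{(\bullet)}_n(1,\undQ)\neq 0$, a short case analysis rules out $i,i+1$ lying in adjacent diagonals (the intra-component case reduces to adjacent cells by a standardness argument on the surrounding rectangle, already excluded by admissibility; the inter-component case is prevented by the linear factors $Q_l-Q_{l'}+t$ appearing in the defining product of $P^{(\bullet)}_n(1,\undQ)$). Lemma \ref{lem:deg-action property-1} (equivalently, Lemma \ref{important conditionequi2}(3)) then guarantees that the residues do not satisfy \eqref{invertible dege2}, so $\Phi_i^2$ acts as a non-zero scalar and $\Phi_i$ is the desired bijection.
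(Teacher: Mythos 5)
Your proof is correct and follows essentially the same route as the paper's (which simply declares Lemma \ref{lem:newbij-dege} to be the analogue of Lemma \ref{lem:newbijNon-dege} and transposes its argument): you rewrite $\Phi_i$ as $(s_i-\Xi_i)(x_i^2-x_{i+1}^2)$ so that the action formula \eqref{actionformula} gives $\Phi_i z^{\tau}\in L(\res(\mathfrak{t}^{\undla}))^{s_i\tau}$, you use \eqref{sqinter} to show $\Phi_i^2$ acts as the scalar $2(a+b)-(a-b)^2$, and you invoke Lemma \ref{lem:deg-action property-1} (equivalently Lemma \ref{important conditionequi2}(3)) together with the separateness hypothesis $P_n^{(\bullet)}(1,\undQ)\neq 0$ to see that this scalar is non-zero. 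One small remark: the case analysis in your final paragraph is a little heavier than needed — within a single component, two removable corners of a partial Young diagram automatically have contents differing by at least $2$, so admissibility already rules out adjacent diagonals there, and the cross-component situation is absorbed into Lemma \ref{important conditionequi2}(3) directly — but the conclusion is the same.
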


Hence we can deduce the analogue of Proposition \ref{irreducibleNon-dege} as follows. 
\begin{prop}\label{irreducible-dege}
	Fix $\bullet\in\{\mathsf{0},\mathsf{s}\}$ and let  $\undla,\,\underline{\mu}\in\mathscr{P}^{\bullet,m}_{n}$.  Then \begin{enumerate}
		\item $D(\undla)$ is an irreducible $\mhgcn$-module;
		\item $D(\undla)$ has the same type as $L(\res(\mathfrak{t}^{\undla}))$;
		\item $D(\undla)\cong D(\underline{\mu})$ if and only if $\undla=\underline{\mu}$.
	\end{enumerate}
\end{prop}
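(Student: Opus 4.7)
The plan is to mirror the proof of Proposition \ref{irreducibleNon-dege} in the non-degenerate case, with the intertwining elements $\widetilde{\Phi}_i$ replaced by their degenerate counterparts $\Phi_i$ from \eqref{intertw}, and with the eigenvalue machinery from Lemma \ref{lem:action property-deg-2} taking the place of Lemma \ref{lem:action property-2}. Throughout, the hypothesis $P_n^{(\bullet)}(1,\undQ)\neq 0$ together with Lemma \ref{important conditionequi2} and Lemma \ref{lem:different residues deg} will supply the non-degeneracy facts needed.

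For part (1), I would start by letting $N$ be a nonzero $\mhgcn$-submodule of $D(\undla)$. Since the commuting even elements $x_1^2,\ldots,x_n^2$ act semisimply on each $L(\res(\mathfrak{t}^{\undla}))^\tau$ (by Lemma \ref{lem:action property-deg-2}), their joint weight decomposition forces $N\cap L(\res(\mathfrak{t}^{\undla}))^\tau\neq 0$ for some $\tau\in P(\undla)$. Because $L(\res(\mathfrak{t}^{\undla}))^\tau$ is irreducible as a $\mathcal{P}_n$-module, we get $L(\res(\mathfrak{t}^{\undla}))^\tau\subset N$. Then for any other $\sigma\in P(\undla)$, Corollary \ref{bij} gives a sequence of admissible transpositions connecting $\tau\mathfrak{t}^{\undla}$ to $\sigma\mathfrak{t}^{\undla}$, and Lemma \ref{lem:newbij-dege} shows that the corresponding product of $\Phi_i$'s is a bijection between the $\tau$-summand and the $\sigma$-summand. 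Hence $L(\res(\mathfrak{t}^{\undla}))^\sigma\subset N$ for all $\sigma$, so $N=D(\undla)$.

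For part (2), I would analyze $\Psi\in\operatorname{End}_{\mhgcn}(D(\undla))$. When $s_i$ is admissible with respect to $\tau\mathfrak{t}^{\undla}$, the identity
\begin{align*}
\Psi(\Omega_i z^{s_i\tau})=\Psi(s_i z^{\tau}-\Xi_i z^{\tau})=s_i\Psi(z^{\tau})-\Xi_i\Psi(z^{\tau})
\end{align*}
holds. The key point is that $\Omega_i$ acts as a non-zero scalar on $L(\res(\mathfrak{t}^{\undla}))^{s_i\tau}$: indeed $s_i$ admissible means the corresponding positions are neither equal nor in adjacent diagonals, so Lemma \ref{lem:deg-action property-1} rules out \eqref{invertible dege}, which is precisely the vanishing locus of $\Omega_i^2$. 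Therefore $\Psi(z^{s_i\tau})$ is determined by $\Psi(z^\tau)$. By Corollary \ref{bij}, $\Psi$ is uniquely determined by its restriction to $L(\res(\mathfrak{t}^{\undla}))$, and conversely every $\mathcal{P}_n$-endomorphism extends by $\psi^\tau(z^\tau):=(\psi z)^\tau$, yielding $\operatorname{End}_{\mhgcn}(D(\undla))\cong \operatorname{End}_{\mathcal{P}_n}(L(\res(\mathfrak{t}^{\undla})))$. Hence $D(\undla)$ and $L(\res(\mathfrak{t}^{\undla}))$ have the same type.

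For part (3), an isomorphism $\Psi:D(\undla)\xrightarrow{\sim} D(\underline{\mu})$ must carry some weight subspace $L(\res(\mathfrak{t}^{\undla}))^\tau$ onto $L(\res(\mathfrak{t}^{\underline{\mu}}))^\sigma$; by Lemma \ref{lem:action property-deg-2}, the eigenvalue sequence of $x_1^2,\ldots,x_n^2$ gives $\mathtt{q}(\res(\tau\cdot \mathfrak{t}^{\undla}))=\mathtt{q}(\res(\sigma\cdot \mathfrak{t}^{\underline{\mu}}))$, and then Lemma \ref{lem:different residues deg} forces $\tau\cdot \mathfrak{t}^{\undla}=\sigma\cdot \mathfrak{t}^{\underline{\mu}}$, whence $\undla=\underline{\mu}$. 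The main obstacle is part (2), specifically verifying that $\Omega_i$ is invertible on $L(\res(\mathfrak{t}^{\undla}))^{s_i\tau}$; everything else is a transcription of the non-degenerate argument once the substitution \eqref{substitute dege} and its equivalence with \eqref{invertible dege2} are in hand.
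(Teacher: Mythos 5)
Your proposal is correct and follows essentially the same approach as the paper, which explicitly presents Proposition \ref{irreducible-dege} as the degenerate analogue of Proposition \ref{irreducibleNon-dege}, obtained by replacing $\widetilde{\Phi}_i$, $\widetilde{\Xi}_i$, $\widetilde{\Omega}_i$, Lemma \ref{lem:action property-2}, Lemma \ref{lem:newbijNon-dege}, and Lemma \ref{lem:different residues} with their degenerate counterparts $\Phi_i$, $\Xi_i$, $\Omega_i$, Lemma \ref{lem:action property-deg-2}, Lemma \ref{lem:newbij-dege}, and Lemma \ref{lem:different residues deg}. Your identification of the nonvanishing of $\Omega_i$ via Lemma \ref{lem:deg-action property-1} and the equivalence of \eqref{invertible dege} with \eqref{invertible dege2} as the key point in part (2) matches the structure of the paper's argument.
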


%

Finally, we obtain the analogue of Theorem \ref{CompareDim}.

\begin{thm}\label{CompareDim-dege}
	{\color{black}Let $\bullet\in\{\mathtt{0},\mathtt{s}\}$ and $\undQ=(Q_1,Q_2,\ldots,Q_m)\in(\mathbb{K})^m$.  Assume $g=g^{(\bullet)}_{\undQ}$ and   $P_n^{(\bullet)}(1,\undQ)\neq 0$.} Then  $\mhgcn$ is a (split) semisimple algebra and 
	$$
	\{D(\undla)|~ \undla\in\mathscr{P}^{\bullet,m}_{n}\}$$ forms a complete set of pairwise non-isomorphic irreducible $\mhgcn$-module. Moreover,  $D(\undla)$ is of type  $\texttt{M}$ if and only if $\sharp \mathcal{D}_{\undla}$  is even and is of type  $\texttt{Q}$ if and only if $\sharp \mathcal{D}_{\undla}$  is odd.

\end{thm}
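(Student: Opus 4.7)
The plan is to follow \emph{mutatis mutandis} the proof of Theorem \ref{CompareDim}, replacing each non-degenerate ingredient with its degenerate counterpart developed in this section. First, Proposition \ref{irreducible-dege} already guarantees that $\{D(\undla) \mid \undla \in \mathscr{P}^{\bullet,m}_n\}$ is a collection of pairwise non-isomorphic irreducible $\mhgcn$-modules, with $D(\undla)$ having the same type as $L(\res(\mathfrak{t}^{\undla}))$. Since $P_n^{(\bullet)}(1,\undQ)\neq 0$, Lemma \ref{important conditionequi2}(1) together with \eqref{eq:deg-residue} shows that the indices $k$ for which $\res_{\mathfrak{t}^{\undla}}(k)=0$ are exactly the elements of $\mathcal{D}_{\mathfrak{t}^{\undla}}=\mathcal{D}_{\undla}$. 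Corollary \ref{lem:irrepPn} then identifies the type of $L(\res(\mathfrak{t}^{\undla}))$, and hence of $D(\undla)$, as $\texttt{M}$ precisely when $\sharp\mathcal{D}_{\undla}$ is even and as $\texttt{Q}$ precisely when it is odd.

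For semisimplicity (and automatic completeness of the list), I would appeal to Corollary \ref{cor:dim-compare}: it suffices to verify
\begin{equation*}
\sum_{\undla \in \mathscr{P}^{\bullet,m}_n} \bigl(2^{\,n - \sharp\mathcal{D}_{\undla}/2}\,|\Std(\undla)|\bigr)^{2} \;=\; \dim \mhgcn \;=\; 2^{n} r^{n} n!,
\end{equation*}
where $r=\deg g$, i.e.\ $r=2m$ for $\bullet=\mathsf{0}$ and $r=2m+1$ for $\bullet=\mathsf{s}$. The left-hand side already encodes the type-M contribution $(\dim D(\undla))^{2}$ and the type-Q contribution $(\dim D(\undla))^{2}/2$ uniformly via \eqref{eq:dimDla-deg}. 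In the case $\bullet=\mathsf{0}$, every $\mathcal{D}_{\undla}$ is empty, so the left-hand side collapses to $2^{2n}\sum_{\undla\in\mathscr{P}^m_n}|\Std(\undla)|^{2}=2^{2n}n!\,m^{n}=2^{n}(2m)^{n}n!$ by Lemma \ref{lem:formula}.

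For $\bullet=\mathsf{s}$, I would write $\undla=(\lambda^{(0)},\underline{\mu})$ with $|\lambda^{(0)}|=t$ and $\underline{\mu}\in\mathscr{P}^m_{n-t}$, use Lemma \ref{lem:std-num}(1) to factor $|\Std(\undla)|$, and apply Lemma \ref{lem:formula} twice: once over $\underline{\mu}\in\mathscr{P}^m_{n-t}$ to produce $(n-t)!\,m^{n-t}$, and once over $\lambda^{(0)}\in\mathscr{P}^{\mathsf{s}}_t$ to absorb the factor $2^{\,t-\ell(\lambda^{(0)})}$ and produce $t!$. Collecting binomial coefficients yields $2^{2n}n!\sum_{t=0}^{n}\binom{n}{t}m^{n-t}(1/2)^{t}=2^{2n}n!(m+1/2)^{n}=2^{n}(2m+1)^{n}n!$, in perfect agreement with $\dim\mhgcn$, exactly as in Case 2 of Theorem \ref{CompareDim}. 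Note that there is no $\bullet=\mathsf{ss}$ subcase here, since the admissible cyclotomic polynomials $g$ only come in the two forms listed above. The only point requiring attention is the bookkeeping of powers of $2$: the factor $2^{-\sharp\mathcal{D}_{\undla}}$ arising from the type correction in Corollary \ref{cor:dim-compare} must mesh exactly with the $2^{(t-\ell(\lambda^{(0)}))/2}$ appearing inside the strict-partition identity of Lemma \ref{lem:formula}. Since this combinatorial matching is formally identical to the non-degenerate case, no new obstacle arises.
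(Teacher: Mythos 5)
Your proof is correct and is precisely the adaptation of the proof of Theorem~\ref{CompareDim} that the paper leaves implicit; all the ingredients (Proposition~\ref{irreducible-dege}, Lemma~\ref{important conditionequi2}, \eqref{eq:dimDla-deg}, Lemmas~\ref{lem:formula}, \ref{lem:std-num}, and Corollary~\ref{cor:dim-compare}) are used exactly as the authors intend, and the dimension count in both subcases $\bullet\in\{\mathsf{0},\mathsf{s}\}$ matches $\dim\mhgcn=2^n r^n n!$. The only minor point worth noting is that the relevant index set is $\{k:\res_{\mathfrak{t}^{\undla}}(k)\approx 0\}$ rather than equality with $0$, but since the diagonal boxes of the zeroth component always give residue exactly $0$, this does not affect the argument.
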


\begin{cor}
	{\color{black}Let $\bullet\in\{\mathtt{0},\mathtt{s}\}$ and $\undQ=(Q_1,Q_2,\ldots,Q_m)\in(\mathbb{K})^m$.  Assume $g=g^{(\bullet)}_{\undQ}$ and   $P_n^{(\bullet)}(1,\undQ)\neq 0$.}  Then the center of $\mhgcn$ consists of symmetric polynomials in $x^2_1,\cdots,x^2_n$ with dimension $\sharp\mathscr{P}^{\mathtt{\bullet},m}_{n}$.
\end{cor}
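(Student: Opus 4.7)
The plan is to mirror the argument given for the non-degenerate corollary at the end of Section 4, adapting each ingredient to the degenerate setting. First, Theorem \ref{CompareDim-dege} tells us that $\mhgcn$ is (split) semisimple under the hypothesis $P_n^{(\bullet)}(1,\undQ)\neq 0$, with a complete set of pairwise non-isomorphic simple modules $\{D(\undla)\mid \undla\in\mathscr{P}^{\bullet,m}_{n}\}$. By general representation theory of split semisimple algebras, the dimension of the center of $\mhgcn$ therefore equals the number of isomorphism classes of simple modules, namely $\sharp\mathscr{P}^{\bullet,m}_{n}$.

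Next, by Lemma \ref{lem:PBW}(2) the center of the affine Sergeev algebra $\mhcn$ consists of all symmetric polynomials in $x_1^2,\ldots,x_n^2$. Passing to the cyclotomic quotient, the images of these symmetric polynomials remain central in $\mhgcn$. Hence the subalgebra $Z'$ of $\mhgcn$ generated by the images of $e_k(x_1^2,\ldots,x_n^2)$ for $k\ge 1$ lies in the center. It remains only to show $\dim Z' \ge \sharp\mathscr{P}^{\bullet,m}_{n}$, i.e.\ that the symmetric polynomials in $x_1^2,\ldots,x_n^2$ already span a subspace of dimension $\sharp\mathscr{P}^{\bullet,m}_{n}$ in $\mhgcn$.

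For this, I will use simultaneous diagonalisability. By Lemma \ref{lem:deg-eigen-Xk} and Lemma \ref{lem:action property-deg-2}, the operator $x_i^2$ acts on each weight space $L(\res(\mathfrak{t}^{\undla}))^{\tau}\subseteq D(\undla)$ as the scalar $\mathtt{q}(\res_{\tau\cdot\mathfrak{t}^{\undla}}(i))$, so any symmetric polynomial $p(x_1^2,\ldots,x_n^2)$ acts on $D(\undla)$ as the scalar $p(\mathtt{q}(\res_{\mathfrak{t}^{\undla}}(1)),\ldots,\mathtt{q}(\res_{\mathfrak{t}^{\undla}}(n)))$ (since the multiset of eigenvalues is $\mathfrak{S}_n$-invariant, any representative $\tau\cdot\mathfrak{t}^{\undla}$ may be used, in particular $\tau=\mathrm{id}$). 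By Lemma \ref{lem:different residues deg}, for $\undla\neq\underline{\mu}\in\mathscr{P}^{\bullet,m}_{n}$ the multisets $\{\mathtt{q}(\res_{\mathfrak{t}^{\undla}}(i))\}_{i=1}^n$ and $\{\mathtt{q}(\res_{\mathfrak{t}^{\underline{\mu}}}(i))\}_{i=1}^n$ differ, so some elementary symmetric polynomial $e_{\undla,\underline{\mu}}(x_1^2,\ldots,x_n^2)$ takes different scalar values on $D(\undla)$ and $D(\underline{\mu})$. Concretely, imitating the non-degenerate proof, set
\[
h_{\undla}:=\prod_{\underline{\mu}\in\mathscr{P}^{\bullet,m}_{n},\,\underline{\mu}\neq\undla}\bigl(e_{\undla,\underline{\mu}}(x_1^2,\ldots,x_n^2)-e_{\undla,\underline{\mu}}(\mathtt{q}(\res_{\mathfrak{t}^{\underline{\mu}}}(1)),\ldots,\mathtt{q}(\res_{\mathfrak{t}^{\underline{\mu}}}(n)))\bigr).
\]
Then $h_{\undla}$ is a symmetric polynomial in $x_1^2,\ldots,x_n^2$, acts by a non-zero scalar on $D(\undla)$, and annihilates every $D(\underline{\mu})$ with $\underline{\mu}\neq\undla$.

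Finally, since $\mhgcn$ is semisimple and the $D(\undla)$ exhaust its simple modules, the family $\{h_{\undla}\mid \undla\in\mathscr{P}^{\bullet,m}_{n}\}$ is linearly independent in $\mhgcn$. This forces $\dim Z'\ge \sharp\mathscr{P}^{\bullet,m}_{n}$, and combined with the upper bound from semisimplicity yields $Z'=Z(\mhgcn)$ with $\dim Z(\mhgcn)=\sharp\mathscr{P}^{\bullet,m}_{n}$. The main (minor) obstacle is simply verifying that the $\mathfrak{S}_n$-invariance of the eigenvalue multiset ensures well-definedness of the scalar by which a symmetric polynomial acts on $D(\undla)$, independently of the choice of $\tau\in P(\undla)$; this is immediate from Lemma \ref{lem:action property-deg-2}.
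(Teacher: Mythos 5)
Your proposal is correct and mirrors exactly the argument the paper gives for the non-degenerate analogue of this corollary at the end of Section~4 (semisimplicity from Theorem~\ref{CompareDim-dege} gives the upper bound $\dim Z = \sharp\mathscr{P}^{\bullet,m}_n$; Lemma~\ref{lem:PBW}(2) gives centrality of symmetric polynomials in $x_i^2$; and the family $\{h_{\undla}\}$ built via Lemma~\ref{lem:deg-eigen-Xk} and Lemma~\ref{lem:different residues deg} gives the matching lower bound). The paper states the degenerate corollary without proof, deferring to the analogy, and your proof is precisely that intended adaptation.
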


Again, let's consider  generic cyclotomic Sergeev algebra, that is, $Q_1,\ldots,Q_m$ are algebraically independent over $\Z$. Let $\mathbb{E}$ is the algebraic closure of $\mathbb{Q}(Q_1,\ldots,Q_m)$, and  $g=g^{(\bullet)}_{\undQ}$ with $\bullet\in\{\mathtt{0},\mathtt{s}\}$. Then $\mhgcn$ is a semsimple algebra over $\mathbb{E}$ by Example \ref{deform2} and Theorem \ref{CompareDim-dege}. Analogous to Conjecture \ref{conjecture}, we also have a parallel conjecture on the semisimplicity of $\mhgcn$.

\end{document}